\newcommand\N{{\mathbb N}}
\newcommand\R{{\mathbb R}}
\renewcommand{\d}{\mathrm{d}}
\newcommand{\dv}{\mathrm{d} v}
\newcommand{\dt}{\mathrm{d} t}
\newcommand{\dx}{\mathrm{d} x}
\def\AA{{\mathcal A}}
\def\BB{{\mathcal B}}
\def\CC{{\mathcal C}}
\def\DD{{\mathcal D}}
\def\EE{{\mathcal E}}
\def\HH{{\mathcal H}}
\def\LL{{\mathcal L}}
\def\MM{{\mathcal M}}
\def\OO{{\mathcal O}}
\def\RR{{\mathcal R}}
\def\TT{{\mathcal T}}
\def\UU{{\mathcal U}}
\def\VV{{\mathcal V}}
\def\VV{{\mathcal V}}
\def\XX{{\mathcal X}}
\def\BBB{{\mathscr B}}
\def\CCC{{\mathscr C}}
\def\DDD{{\mathscr D}}
\def\EEE{{\mathscr E}}
\def\HHH{{\mathscr H}}
\def\KKK{{\mathscr K}}
\def\MMM{{\mathscr M}}
\def\RRR{{\mathscr R}}
\def\SSS{{\mathscr S}}
\def\TTT{{\mathscr T}}
\def\XXX{{\mathscr X}}
\def\Wloc{W_{\mathrm{loc}}}
\def\Lloc{L_{\mathrm{loc}}}
\def\Hloc{H_{\mathrm{loc}}}
\def\eps{{\varepsilon}}
\def\wto{{\,\rightharpoonup\,}}
\newcommand{\la}{\langle}
\newcommand{\ra}{\rangle}
\newcommand{\rra}{\rangle\!\rangle}
\newcommand{\lla}{\langle\!\langle}
\newcommand{\Nt}{|\hskip-0.04cm|\hskip-0.04cm|}
\DeclareMathOperator{\Div}{div}
\newtheorem{theo}{Theorem}[section]
\newtheorem{prop}[theo]{Proposition}
\newtheorem{lem}[theo]{Lemma}
\newtheorem{cor}[theo]{Corollary}
\newtheorem*{thm*}{Theorem}
\theoremstyle{remark}
\newtheorem{rem}[theo]{Remark}
\newtheorem*{ex*}{Example}
\theoremstyle{definition}
\numberwithin{equation}{section}
\newcommand{\be}{\begin{equation}}
\newcommand{\ee}{\end{equation}}
\newcommand{\ba}{\begin{aligned}}
\newcommand{\ea}{\end{aligned}}
\newcommand{\beqn}{\begin{equation}}
\newcommand{\eeqn}{\end{equation}}
\newcommand{\bear}{\begin{eqnarray}}
\newcommand{\eear}{\end{eqnarray}}
\newcommand{\bean}{\begin{eqnarray*}}
\newcommand{\eean}{\end{eqnarray*}}
\newcommand{\bal}{\begin{aligned}}
\newcommand{\eal}{\end{aligned}}
\title[The Landau equation in a domain]{The Landau equation in a domain}
\author[K. Carrapatoso]{Kleber Carrapatoso}
\author[S. Mischler]{St\'ephane Mischler}
\address[K.~Carrapatoso]{Centre de Math\'ematiques Laurent Schwartz, \'Ecole polytechnique, Institut Polytechnique de Paris, 91128 Palaiseau cedex, France}
\email{kleber.carrapatoso@polytechnique.edu}
\address[S.~Mischler]{Centre de Recherche en Math\'ematiques de
  la D\'ecision (CEREMADE, CNRS UMR 7534),
  Universit\'es PSL \& Paris-Dauphine, Place de Lattre de
  Tassigny, 75775 Paris 16, France \& Institut Universitaire de France (IUF)}
\email{mischler@ceremade.dauphine.fr}
\date{\today}
\subjclass[2020]{35Q20, 82C40, 35B40}
\keywords{Landau equation, Maxwell boundary condition, specular reflection, diffusive reflection, hypocoercivity, ultracontractivity, large-time behavior}
\thanks{K.C.\ was partially supported by the Project CONVIVIALITY ANR-23-CE40-0003 of the French National Research Agency.}
\begin{document}

\begin{abstract}
This work deals with the Landau equation in a bounded domain with the Maxwell reflection condition on the boundary 
for any (possibly smoothly position dependent)  accommodation coefficient and for the full range of interaction potentials, including the Coulomb case. We establish the global existence and a constructive asymptotic  decay of solutions in a close-to-equilibrium regime. 
This is the first existence result for a Maxwell reflection condition on the boundary and that generalizes the similar results established for the Landau equation for other geometries in 
\cite{GuoLandau1,GS1,GS2,MR3625186,MR4076068}. We also answer to Villani's program \cite{MR2116276,MR2407976} about constructive accurate rate of convergence to the equilibrium {(quantitative H-Theorem)} for solutions to collisional kinetic equations satisfying a priori uniform bounds.  
The proofs rely on the study of a suitably linear problem for which we prove that the associated operator is hypocoercive, the associated 
semigroup is ultracontractive, and finally that it is asymptotically stable in many weighted $L^\infty$ spaces. %We next conclude thanks to the Schauder-Tychonoff fixed point theorem. 
\end{abstract}

\maketitle

\tableofcontents

%%%%%%%%%%%%%%%%%%%%%%%%%%%%%%%%%%%%%%%%%
\section{Introduction and main results}

%%%%%%%%%%%%%%%%%%%%%%
\subsection{The Landau equation in a domain}
In this paper we are concerned with the existence  and long-time behavior in a perturbative regime for the Landau equation  (1936, \cite{Landau,LifchitzPitaevskii}) in a bounded domain, which is a fundamental model in kinetic theory describing the evolution of a dilute plasma.
We thus consider the Landau equation
\beqn\label{eq:Landau_F}
\partial_t F  = - v \cdot \nabla_x F +Q(F,F)
\quad\text{in}\quad (0,\infty) \times \Omega \times \R^3
\eeqn
for a distribution $F=F(t,x,v) \ge 0$ of particles which at time $t \ge 0$ and position $x \in \Omega \subset \R^3$ move with velocity $v \in \R^3$. The Landau equation in the interior of the domain \eqref{eq:Landau_F} is complemented with the Maxwell reflection condition \cite{Maxwell,MR1313028} on the incoming part of the boundary
\beqn\label{eq:reflect_F}
\gamma_- F = \RRR(\gamma_+F) \quad\text{on}\quad (0,\infty) \times \Sigma_- ,
\eeqn
as well as with an initial datum $F_{|t=0} = F_0$.

The Landau collision operator $Q$ in \eqref{eq:Landau_F} is a bilinear operator acting only on the velocity variable which, in the kinetic theory of gas, classically models the interacting through binary collisions. It is defined by one of the following equivalent formulations, using the convention of summation over repeated indices,
\begin{align}
Q(g,f) (v) 
&= \partial_{v_i} \int_{\R^3} a_{ij}(v-v_*) \left\{ g_* \partial_{v_j} f - f \partial_{v_j}g_* \right\} \d v_* \label{eq:oplandau0}\\
&= \partial_{v_i} \left\{ (a_{ij}*g) \partial_{v_j} f - (b_i * g) f \right\} \label{eq:oplandau1}\\
&= (a_{ij}*g) \partial_{v_i, v_j} f - (c*g) f \label{eq:oplandau2}\\
&= \partial^2_{v_iv_j} \left\{ (a_{ij}*g) f \right\}  - 2  \partial_{v_i} \left\{ (b_i * g) f  \right\}, \label{eq:oplandau3}
\end{align}
where $*$ stands for the convolution on the velocity variable $v \in \R^3$, the matrix $a$ is given by
$$
a_{ij} (z) = |z|^{\gamma+2} \left( \delta_{ij} - \frac{z_iz_j}{|z|^2} \right), \quad \gamma \in [-3,1],
$$ 
and
\begin{equation}\label{eq:defbic}
\begin{aligned}
b_i (z) &= \partial_{v_j} a_{ij}(z) = -2 |z|^\gamma z_i \\
c(z) &= \partial_{v_i , v_j} a_{ij}(z) = -2(\gamma+3) |z|^\gamma \quad \text{if} \quad -3 < \gamma \le 1 \\ 
c(z) &= \partial_{v_i , v_j} a_{ij}(z) = -8\pi \delta_0 \quad \text{if} \quad  \gamma =-3.
\end{aligned}
\end{equation} 
The parameter  $\gamma \in [-3,1]$ is supposed to be connected to the power of the interaction potential involved in the binary collisions. 
The cases $\gamma \in (0,1]$ correspond to hard potentials,  $\gamma \in [-2,0]$ to moderately soft potentials,  $\gamma \in (-3,-2)$ to very soft potentials, and $\gamma=-3$ to Coulomb potential. It is worth mentioning that the Coulomb potential is the most (if not only) physically relevant case.

\smallskip
The Maxwell reflection operator in \eqref{eq:reflect_F} is given by
\begin{equation}\label{eq:reflection}
\RRR (\gamma_{+} F) =(1-\iota) \SSS  \gamma_{\!+}  F + \iota \DDD  \gamma_{\!+}  F  , 
\end{equation}
where $\iota : \partial\Omega \to [0,1]$ is the accommodation coefficient that we assume to be a smooth function on $\partial\Omega$, $\SSS$ is the specular reflection operator, and $\DDD$ is the diffusive reflection operator defined below.
More precisely,  denoting by $n_x$ the outward unit normal vector at a point $x \in \partial\Omega$ of the boundary, we define the sets 
$$
\Sigma_{\pm}^x := \left\{ v \in \R^3;   \, \pm \, {v} \cdot n_x > 0 \right\} 
$$ 
of outgoing ($\Sigma_{+}^x$) and incoming ($\Sigma_{-}^x$) velocities, then the sets
\begin{equation}\label{eq:def:Sigma_Lambda}
\begin{aligned}
&\Sigma :=  \partial\Omega \times \R^3, \quad \Sigma_{\pm} := \left\{ (x,v) \in \Sigma;  \,  v \in \Sigma^x_{\pm} \right\}, 
\\
&\Gamma :=  (0,T) \times \Sigma, \quad \Gamma_{\pm} := (0,T) \times \Sigma_{\pm}, 
\quad T \in (0,\infty], 
\end{aligned}
\end{equation}
%$$
%\Sigma_{\pm} := \left\{ (x,v) \in \partial\Omega \times \R^3 \,\Big|\,  v \in \Sigma^x_{\pm} \right\}, 
%\quad
%\Gamma_{\pm} := (0,T) \times \Sigma_{\pm},
%\quad T \in (0,\infty]
%$$
and finally the outgoing and incoming trace functions 
\begin{equation}\label{eq:def:gammaf}
\gamma_\pm f := \mathbf{1}_{\Gamma_\pm}  \gamma f.
\end{equation}

The specular reflection operator $\SSS$ is defined by 
\begin{equation}\label{eq:def_SSS}
\SSS_x (g (x,\cdot))(v)  =  g (x , \VV_x v), \quad \VV_x v = v - 2 n_x (n_x \cdot v),
\end{equation}
and the diffusive operator  $\DDD$ is defined on $\Sigma_+$ by 
\begin{equation}\label{eq:def_DDD}
\DDD_x (g (x,\cdot))(v) = \MMM(v) \widetilde g (x), \quad \widetilde g (x) = \int_{\Sigma^x_+} g(x,w) \, (n_x \cdot w)_+ \, \d w,
\end{equation}
where $\MMM$ stands for the Maxwellian distribution
\begin{equation}\label{eq:def_MMM}
\MMM  := \sqrt{2 \pi} \mu , \quad  \mu(v) := (2\pi)^{-3/2} \exp(-|v|^2/2), \end{equation}
so that   $\widetilde{\MMM} = 1$ and $\mu$ is the standard Maxwellian function with integral one.
It is worth emphasizing that, for a dilute plasma or gaz, it seems to be not completely clear which are the physically convenient reflection conditions to be imposed at the boundary of $\Omega$. 
However, the Maxwell reflection condition \eqref{eq:reflection} is one of the most commun and general reflection condition considered  in kinetic theory.

\smallskip

We shall suppose throughout the paper that $\Omega$ is a bounded open smooth and connected subset of $\R^3$. More precisely, we assume that there exists $\delta  \in W^{2,\infty}(\R^3)$ such that $\delta(x) = \mathrm{dist}(x,\partial \Omega)$ is the distance to the boundary in a neighborhood of $\partial\Omega$, and we denote
\begin{equation}\label{eq:def:OO_UU}
\OO := \Omega \times \R^3 \quad \text{and} \quad \UU := (0,T) \times \OO
\end{equation}
for $T \in (0,\infty]$.
Moreover we assume that $\iota$ is the restriction of a $W^{1,\infty}(\R^3)$ function.

%%%%%%%%%%%%%%%%%%%%%%%%%%%%%%%%%%%%%%%%%%%%
\subsection{Collisional invariants and conservation laws}
\label{subsec:conservationsLaw}
Let us briefly discuss at a formal level the physical properties of the solutions to the Landau equation \eqref{eq:Landau_F}-\eqref{eq:reflect_F}.
We refer to the introduction of \cite{MR4581432} for more details (see also \cite{MR1776840,MR2721875,MR2679358,MR4076068,Guoetal-specular2}). 

\smallskip\noindent
{\it The reflection operator.}  
Whatever is the accommodation coefficient $\iota$, we have  
\beqn\label{eq:invariantsBoundary1}
\int_{\R^3} \RRR (\gamma_+ F) \, (n_x \cdot v)_- \, \dv  =  \int_{\R^3} \gamma_+ F \, (n_x\cdot v)_+ \, \dv, %\quad \forall \, x \in \partial\Omega, 
\eeqn
which means that there is no flux of mass at the boundary (no particle leaves nor enters in the domain).
On the other hand, in the case of pure specular boundary condition $\iota \equiv 0$, we additionally have  
\beqn\label{eq:invariantsBoundary2}
\int_{\R^3} \RRR (\gamma_+ F) \, |v|^2 (n_x \cdot v)_- \, \dv  =  \int_{\R^3} \gamma_+ F \, |v|^2 (n_x\cdot v)_+ \, \dv, %\quad \forall \, x \in \partial\Omega, 
\eeqn
which means that  there is no flux of energy at the boundary in the case of the pure specular reflection boundary condition. 
Furthermore, still when $\iota \equiv 0$, we also have 
\beqn
\label{eq:invariantsBoundary3}
%&&\int_{\R^3} \RRR \gamma_+ F \,  v (n_x \cdot v)_- \, \dv  -  \int_{\R^3} \gamma_+ F \, v (n_x\cdot v)_+ \, \dv 
 \int_{\R^3} [\RRR (\gamma_+ F) \,  v (n_x \cdot v)_-    -    \gamma_+ F \, v (n_x\cdot v)_+ ] \, \dv 
=  - 2 n_x \int_{\R^3} \gamma_+ F \,   (n_x\cdot v)^2_+ \, \dv , 
%\quad \forall \, x \in \partial\Omega, 
\eeqn
which means that the flux of momentum at the boundary is normal to the boundary in the case of the pure specular reflection boundary condition. 
 
\smallskip\noindent
{\it The collisional operator.} From the formulation \eqref{eq:oplandau0}, we have
$$
\int_{\R^3} Q(F,F) \varphi \, \d v
= \frac12 \int_{\R^3}\! \int_{\R^3}  a_{ij}(v-v_*) \left\{ F_* \partial_{v_j} F - F  \partial_{v_j} F _* \right\} \left( \partial_{v_i} \varphi_* - \partial_{v_i} \varphi \right) \, \d v_* \, \d v,
$$
and thus the Landau operator enjoys the microscopic or collisional invariants
\beqn\label{eq:local-conservations}
\int_{\R^3} Q(F,F) \varphi \, \d v = 0, \quad \varphi = 1, \, v_i, \, |v|^2, 
\eeqn
where we use that $a_{ij} (z) z_j = 0$ for the energy identity.  The microscopic Landau operator formulation of the celebrated Boltzmann H-theorem may be expressed as 
$$
\int_{\R^3} Q(F,F) \log F \, \d v \le 0, \quad \forall \, F \ge 0, 
$$
with equality if, and only if, $F$ is a Gaussian function in $v$.  
 
\smallskip\noindent
{\it Macroscopic laws.} 
One easily obtains from \eqref{eq:local-conservations}, the  Green-Ostrogradski formula and   \eqref{eq:invariantsBoundary1} that any solution $F$ to the Landau equation \eqref{eq:Landau_F}-\eqref{eq:reflect_F}
satisfies 
%$f$ to equation \eqref{eq:dtf=Lf}--\eqref{eq:BdyCond} satisfies the conservation of mass
$$ 
\frac{\d}{\dt} \int_\OO F \, \d v \, \d x  
= \int_\OO (Q(F,F) - v \cdot \nabla_x F) \, \d v \, \d x
= 0, 
$$
so that the  total mass is conserved, namely 
$$
\lla   F(t,\cdot) \rra = \lla   F_0  \rra , \quad \forall \, t \ge 0, \quad   \lla G  \rra := \int_\OO G \, \d x \, \d v. 
$$

In the case of the specular reflection boundary condition ($\iota \equiv 0$), some additional conservation laws appear. On the one hand, one also has the conservation of energy
$$%\begin{equation}
\frac{\d}{\dt} \int_\OO |v|^2 F \, \dv \, \dx 
= \int_\OO |v|^2(Q(F,F)  - v \cdot \nabla_x F) \, \dv \, \dx
=0,
$$%\end{equation}
because of  \eqref{eq:local-conservations}, the   Green-Ostrogradski formula again and   \eqref{eq:invariantsBoundary2}. 
On the other hand, if the domain~$\Omega$ possesses a rotational symmetry,   we also have the conservation of the corresponding angular momentum. 
In order to be more precise, we define the set of all infinitesimal rigid displacement fields by 
\begin{equation}\label{eq:RR}
\RR := \left\{ x \in \Omega \mapsto Ax + b \in \R^3 \,; A  \in \MM^a_{3} (\R), \; b \in \R^3 \right\},
\end{equation}
where $\MM^a_{3} (\R)$ denotes the set of skew-symmetric $3 \times 3$-matrices with real coefficients,
as well as the  manifold of  infinitesimal rigid displacement fields preserving $\Omega$ by 
\begin{equation}\label{eq:RROmega}
\RR_\Omega = \left\{ R \in \RR \mid   R(x) \cdot n_x  = 0, \; \forall \, x \in \partial\Omega  \right\}.
\end{equation}
When the set $\RR_\Omega$ is not reduced to $\{ 0 \}$, that is when $\Omega$ has rotational symmetries, then for any $R \in \RR_\Omega$, one deduces the conservation of associated angular momentum 
\begin{align*}
&\frac{\d}{\dt} \int_\OO  R(x) \cdot v F \, \dv \, \dx 
= \int_\OO R(x) \cdot v  (- v \cdot \nabla_{x} F + Q(F,F)) \, \dv \, \dx \\
%&= \int_\OO \partial_{x_k} (Ax \cdot v) v_k  F \, \dv \, \dx 
%- \int_\Sigma Ax \cdot v \, { \gamma F } \, n_x \cdot v \, \dv \, \d\sigma_{\! x} \\ %f_{|\Sigma}
%& = \int_{\Sigma_+} Ax \cdot (\VV_x v - v) \gamma_{+}f \, |n_x \cdot v| \, \dv \, \d\sigma_{\! x} \\ 
&\quad  =  \int_\OO F   (v \cdot \nabla_{x}  (R(x) \cdot v)) \, \dv \, \dx  
-  2\int_{\Sigma_+} (R(x) \cdot n_x) \gamma_{+}f \, |n_x \cdot v|^2 \, \dv \, \d\sigma_{\! x} =0,
\end{align*}
because of  \eqref{eq:local-conservations}, the  Green-Ostrogradski formula, the fact that $R(x) = A x$ with $A$ is skew-symmetric, the identity   \eqref{eq:invariantsBoundary3}
and the fact that $R(x)$ is tangential to the boundary. Summing up, in the case of the specular reflection boundary condition ($\iota \equiv 0$), 
the  total energy and the angular momentum associated to  infinitesimal rigid displacement fields preserving $\Omega$ are conserved, namely 
$$
\lla   F(t,\cdot) |v|^2 \rra = \lla   F_0 |v|^2 \rra, \quad \lla   F(t,\cdot) R(x) \cdot v \rra = \lla   F_0 R(x) \cdot v \rra, \ \forall \, R \in \RR_\Omega, % , \quad \forall \, t \ge 0, 
$$
for any $t \ge 0$.  

\smallskip
Finally, using the above recalled microscopic formulation of the Boltzmann H-theorem, we deduce that global equilibria are global Maxwellian distributions that are independent of time and position.
The only mass normalized global Maxwellian distribution which is compatible with the Maxwell reflection condition \eqref{eq:reflection} is the distribution $\mu/|\Omega|$, with $\mu$ defined in \eqref{eq:def_MMM}, 
and we will fix this particular choice of equilibrium in all the paper. 
In view of the above discussion, we introduce the following conditions on the initial datum $F_0$ 
\begin{align}
&\lla   F_0 - \mu \rra =0 , \tag{C1}\label{eq:C1}\\ 
& \lla  (F_0 - \mu) |v|^2 \rra  =   \lla (F_0 - \mu) R(x) \cdot v  \rra  =0, \quad \forall \, R \in \RR_\Omega, \tag{C2}\label{eq:C2}
\end{align}
and we will assume that \eqref{eq:C1} always holds and that \eqref{eq:C2} additionally holds in the case of the specular reflection boundary condition ($\iota \equiv 0$).

%%%%%%%%%%%%%%%%%%%%%%%%%%%%%%%
\subsection{The main results}

In order to state our main result, we need to introduce some functional spaces. For a weight function $\omega : \R^3 \to (0,\infty)$ and an exponent $p \in [1,\infty]$, we define the weighted Lebesgue space 
$L^p_\omega = L^p(\omega) = L^p_\omega(\R^3)$ 
associated to the norm 
$$
\| g \|_{L^p_\omega} = \| \omega g \|_{L^p}, 
$$
and similarly the Lebesgue spaces $L^p_\omega(\OO) = L^p(\Omega;L^p_\omega)$. We fix 
\beqn
\label{eq:def-k0} 
k_0 > 8 + \gamma.
\eeqn

We call admissible weight function $\omega$, a  function
\be\label{eq:omega}
\ba
\omega &= \la v \ra^k := (1 + |v|^2)^{k/2}  \text{ with }   k > k_0 ; \\
\omega &= \exp(\kappa \la v \ra^s)  \text{ with }  s \in (0,2) \text{ and } \kappa > 0, 
\text{ or } s=2 \text{ and }  \kappa \in (0,1/2);
\ea
\ee
and throughout the paper we denote $s=0$ when $\omega$ is a polynomial weight and $k := \kappa s$ when $\omega$ is an exponential weight. For two admissible weight functions $\omega_1$ and $\omega_2$ (or inverse of admissible weight functions), we write $\omega_2 \prec \omega_1$ (or $\omega_1 \succ \omega_2$) if  $\lim_{|v| \to \infty} \frac{\omega_2}{\omega_1}(v) = 0$. Similarly, we write $\omega_2 \preceq \omega_1$ (or $\omega_1 \succeq \omega_2$) if $\omega_2 \prec \omega_1$ or $\lim_{|v| \to \infty} \frac{\omega_2}{\omega_1}(v) \in (0,\infty)$.

For any admissible weight $\omega$ we associate the decay function
\be\label{eq:Theta_omega}
\Theta_{\omega}(t) =
\left\{
\ba
&  C \left( \frac{\log \la t \ra}{\la t \ra} \right)^{\frac{(k - k_0)}{|\gamma|}} ,   & \quad   \text{if } \omega = \la v \ra^{k} \text{ and } \gamma \in [-3,0), \\
&  C \exp\left( - \lambda t \right) ,   & \quad   \text{if } \omega = \la v \ra^{k} \text{ and } \gamma \in [0,1], \\
& C \exp\left( -\lambda \, t^{ \min (1,\frac{s}{|\gamma|} )}  \right),   & \quad\text{if } \omega = e^{\kappa \la v \ra^s},  \\
\ea
\right.
\ee
for some constants $C, \lambda \in (0,\infty)$.  

\smallskip
Our first main result reads as follows. 

\begin{theo}\label{thm:stabNL-inhom}
For any admissible weight function $\omega$ in the sense of  \eqref{eq:omega}, there exists $\eps_0 >0$, small enough, so that, if $\| F_0 - \mu \|_{L^\infty_\omega(\OO)} \le \eps_0$ and 
$F_0$  satisfies the condition~\eqref{eq:C1} (as well as the additional condition  \eqref{eq:C2}  in the specular reflection case $\iota\equiv 0$ in \eqref{eq:reflection}), then there exists a global weak solution $F$ to \eqref{eq:Landau_F}--\eqref{eq:reflect_F} (in a sense which will be specified later) associated to the initial datum $F_0$ such that
\be\label{eq:bound-g-inhom} 
\sup_{t \ge 0} \| F(t) - \mu \|_{L^\infty_\omega(\OO)} \le \eps_0.
\ee
This solution also verifies the decay estimate 
\be\label{eq:decay-g-inhom}
\| F(t) - \mu \|_{L^\infty_{\omega_\sharp} (\OO)} \le   \Theta_{\omega}(t) \, \| F_0 - \mu \|_{L^\infty_\omega(\OO)}, \quad \forall \, t \ge 0,
\ee
with $\omega_\sharp = \omega$ if $\gamma +s \ge 0$ and $\omega_\sharp = \omega_0 := \la v \ra^{k_0}$ if $\gamma +s < 0$.

\end{theo}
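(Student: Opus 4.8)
The plan is to reduce the nonlinear inhomogeneous problem to the linearized one via the standard perturbative ansatz $F = \mu + f$, so that $f$ solves $\partial_t f = \Lambda f + Q(f,f)$ with $\Lambda f := -v\cdot\nabla_x f + Q(\mu,f) + Q(f,\mu)$ together with the linearized Maxwell reflection condition $\gamma_- f = \RRR(\gamma_+ f)$, and then to run a fixed-point / a priori estimate scheme. The crucial inputs, which I would take as already established in the preceding (unseen) sections, are: (i) hypocoercivity of $\Lambda$ on the relevant weighted $L^2$ space, giving a decay estimate for the linear semigroup $S_\Lambda(t)$ on the hyperspace orthogonal to the conservation laws \eqref{eq:C1}--\eqref{eq:C2}; (ii) ultracontractivity, i.e. a regularizing $L^2_\omega(\OO)\to L^\infty_\omega(\OO)$ smoothing estimate for $S_\Lambda(t)$ for short times $t\in(0,1]$, possibly with a loss $t^{-\theta}$; and (iii) the semigroup decay estimate in the various weighted $L^\infty$ spaces with rate $\Theta_\omega$, obtained by combining (i)+(ii) with an interpolation/enlargement argument and an analysis of the boundary terms. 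Granting these, the nonlinear theorem follows by a contraction argument in a space like $X := \{ f \in C([0,\infty);L^\infty_\omega(\OO)) : \sup_t \|f(t)\|_{L^\infty_\omega} \le \eps_0,\ \sup_t \Theta_\omega(t)^{-1}\|f(t)\|_{L^\infty_{\omega_\sharp}} \le 2\|f_0\|_{L^\infty_\omega} \}$.

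The key steps, in order, would be: First, establish the bilinear estimate for the collision operator in the $L^\infty_\omega$ scale — something of the form $\|Q(f,g)\|_{L^\infty_{\omega'}} \lesssim \|f\|_{L^\infty_\omega}\|g\|_{L^\infty_\omega}$ with a suitable loss of weight $\omega'\prec\omega$ encoding the $\la v\ra^{\gamma+2}$ growth of the coefficients (and the Coulomb singularity for $\gamma=-3$, which requires treating the $c*g$ term with its delta-mass contribution). Since this estimate loses weight, the mild formulation $f(t) = S_\Lambda(t) f_0 + \int_0^t S_\Lambda(t-\tau) Q(f(\tau),f(\tau))\,\d\tau$ must be run in a slightly enlarged weight: pick $\omega$ admissible and an auxiliary admissible $\omega_1$ with $\omega \prec \omega_1$, prove decay on $L^\infty_\omega$ while controlling the source term in $L^\infty$ with the smaller weight obtained from $\omega_1$ after the bilinear loss. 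Second, verify that $Q(f,f)$ and the perturbation $f$ are compatible with the linearized reflection condition and the conservation laws: the conditions \eqref{eq:C1}--\eqref{eq:C2} are preserved by the flow because $Q$ conserves the microscopic invariants and the boundary operator conserves mass (and, when $\iota\equiv0$, energy and the admissible angular momenta), so $f(t)$ stays in the hyperspace where the hypocoercive decay applies — this is what lets us feed the semigroup estimate with rate $\Theta_\omega$. Third, close the estimates: for $\sup_t \|f(t)\|_{L^\infty_\omega}$ one bounds $S_\Lambda(t)f_0$ by $\eps_0$ (uniform bound part of the semigroup estimate) plus a Duhamel term controlled by $\int_0^t \Theta(t-\tau)\, \eps_0^2\,\d\tau \lesssim \eps_0^2$, absorbed for $\eps_0$ small; for the decay part one uses $\|Q(f,f)(\tau)\|_{L^\infty} \lesssim \|f(\tau)\|_{L^\infty_\omega}\|f(\tau)\|_{L^\infty_{\omega_\sharp}} \le \eps_0 \,\Theta_\omega(\tau)\|f_0\|$, plug into Duhamel, and check the convolution inequality $\int_0^t \Theta_\omega(t-\tau)\Theta_\omega(\tau)\,\d\tau \lesssim \Theta_\omega(t)$ in each of the three regimes of \eqref{eq:Theta_omega} (this is immediate for exponential rates, and is a standard stretched-exponential / polynomial-with-log computation in the soft-potential case). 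Contraction in the difference of two solutions goes through with the same bilinear estimate and the bilinearity $Q(f,f)-Q(g,g) = Q(f-g,f)+Q(g,f-g)$.

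For existence of the weak solution, rather than only an a priori estimate, I would regularize — e.g. truncate the collision kernel and the velocity domain, or use an iterative scheme $f^{n+1}$ solving the linear equation with source $Q(f^n,f^n)$ — obtain uniform bounds from the above estimates, and pass to the limit using weak-$*$ compactness in $L^\infty_\omega$ and the stability of the linear transport-reflection problem with trace theory on $\Sigma_\pm$ (well-posedness of the linear boundary-value problem, with the trace $\gamma f$ well-defined in a suitable $L^2$ or $L^\infty$ sense, is part of the earlier groundwork). The limiting object satisfies \eqref{eq:Landau_F}--\eqref{eq:reflect_F} in the weak/renormalized sense specified earlier, and inherits \eqref{eq:bound-g-inhom} and \eqref{eq:decay-g-inhom}.

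The main obstacle I anticipate is the interplay between the weight loss in the nonlinear term and the decay rate in the soft-potential regime $\gamma\in[-3,0)$. There the semigroup only decays at the subalgebraic rate $(\log\la t\ra/\la t\ra)^{(k-k_0)/|\gamma|}$ and only into the fixed low-order space $L^\infty_{\omega_0}$ (hence $\omega_\sharp = \omega_0$ when $\gamma+s<0$), so one must carefully track how many powers of weight are available: the bilinear estimate must not consume more weight than the gap $k - k_0$ allows, and the Duhamel self-improvement must be arranged so that the nonlinear iteration does not degrade the exponent. Handling $\gamma=-3$ specifically — where $c*g$ contains the Dirac mass $-8\pi\delta_0$, contributing a term $8\pi\,\mu\, g$ or $8\pi\, g\, \mu$ — requires making sure this zeroth-order term is harmless in $L^\infty_\omega$ (it is, since $\mu$ is a bounded rapidly decaying factor), but it does need to be isolated in the estimates. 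The boundary contribution in the ultracontractivity/decay estimates — showing the Maxwell reflection does not destroy the $L^2\to L^\infty$ smoothing, which in whole space or torus follows from an explicit Duhamel-along-characteristics (De Giorgi / Moser or averaging-lemma) argument — is genuinely more delicate in a domain, but I am assuming that difficulty is resolved in the linear part of the paper and only invoked here.
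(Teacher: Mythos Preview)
Your scheme has a genuine gap at the very first step: the bilinear estimate $\|Q(f,g)\|_{L^\infty_{\omega'}} \lesssim \|f\|_{L^\infty_\omega}\|g\|_{L^\infty_\omega}$ is simply false for the Landau operator. Recall from \eqref{eq:oplandau2} that $Q(g,f) = (a_{ij}*g)\,\partial^2_{v_iv_j} f - (c*g)\,f$ is a second-order differential operator in $v$ acting on $f$; with $f$ merely in $L^\infty_\omega$ and no derivative control, $Q(f,f)$ does not lie in any $L^\infty$ space, and no amount of weight loss repairs this. One can write $Q(f,f)$ in the divergence form \eqref{eq:oplandau3} and place it in a negative-regularity space, but then your Duhamel integral requires $S_\Lambda$ to map $H^{-2}_v$-type data into $L^\infty$ with an integrable short-time blow-up, which the ultracontractivity estimate \eqref{eq:SBBg_ultra} does not deliver. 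This is exactly why all prior results in this $L^\infty$ framework (Guo and successors) impose velocity-derivative norms on the initial datum --- and exactly what the paper aims to avoid. For the same reason, your contraction step and your claim of uniqueness cannot go through; the paper explicitly does \emph{not} establish uniqueness.

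The paper's route is genuinely different and is designed to sidestep this obstruction. Instead of treating $Q(f,f)$ as a source for the fixed semigroup $S_\LL$, it absorbs the nonlinearity into the coefficients: since $\pi Q(f,f)=0$, the equation reads $\partial_t f = \LL_f f$ with $\LL_g f := \LL f + Q^\perp(g,f)$, a \emph{time-dependent linear} operator once $g$ is frozen. All the hard work (hypocoercivity, ultracontractivity of $S_{\BB_g}$, enlargement to $L^\infty_\omega$) is carried out for the non-autonomous semigroup $S_{\LL_g}$ uniformly over $\|g\|_{\XX_0}\le\eps_0$, culminating in the uniform bound \eqref{eq:ThSLg-Linfty-bound} and the decay \eqref{eq:ThSLg-Linfty-decay}. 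The map $\Phi:g\mapsto S_{\LL_g}f_0$ then sends the ball $\BBB_0\subset\XX_0$ into itself, and the argument closes not by Banach contraction but by the Schauder--Tychonoff theorem: $\BBB_0$ is weak-$*$ compact, and continuity of $\Phi$ is obtained via the $L^2_tL^2_xH^{1,*}_v$ dissipation estimate \eqref{eq:SLg-L2H1*} plus Bouchut's hypoelliptic $H^{1/4}$ regularity to upgrade weak to strong local $L^2$ convergence and pass to the limit in $Q(g_n,G_n)$. The price is the loss of uniqueness; the gain is that no derivative of $f_0$ ever appears.
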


We remark that by global weak solution $F$, we mean that the perturbation $f:= F - \mu$ is a global weak solution to the equation~\eqref{eq:Landau_perturb} below in the sense of Theorem~\ref{theo-SG-LLg}. It is worth emphasizing that the small constant $\eps_0$ and the decay function $\Theta_\omega$ are definitively constructive although we will not track the constants along the proof.

\smallskip

The well-posedness and convergence of solutions to collisional kinetic equations in a close-to-equilibirum setting has received a lot of attention in recent years. 
On the one hand, several results were obtained for kinetic equations in the torus. We refer for instance to \cite{MR0363332,UkaiAsano,Caflisch2,Guo-boltzmann,MR3779780} and the references therein for similar results for the cutoff Boltzmann equation. Concerning the Landau equation, we only mention \cite{GuoLandau1,GS1,GS2,CTW,MR3625186,MR4230064} and the references therein. Finally, for the non-cutoff Boltzmann equation we refer to \cite{MR2784329,AMUXY1,AMUXY2,HTT,MR4201411,MR4526062}.

On the other hand, in the case of a bounded domain the literature is scarser. The first results were obtained for the cutoff Boltzmann equation in \cite{MR2679358}, and then extended in \cite{MR3562318,MR3762275, MR3840911}. It was only recently that long-range interactions were considered: The work \cite{MR4076068,Guoetal-specular2} treated the Landau equation with specular boundary condition by introducing an extension method. Very recently, this method was then extended by \cite{deng2023noncutoff} to the non-cutoff Boltzmann equation with Maxwell boundary condition (but excluding the specular case). We also mention the work \cite{MR4755548} which considers conditional regularization of large solutions of the non-cutoff Boltzmann equation.

In particular our result in Theorem~\ref{thm:stabNL-inhom} extends the result of \cite{MR4076068,Guoetal-specular2} to general boundary conditions as well as to larger functional spaces, however we do not prove uniqueness. It is worth emphasizing that our boundary conditions are very general and in particular  we do not impose any restriction on the accommodation coefficient, as it is the case in \cite{MR4076068,Guoetal-specular2,MR2679358,MR3562318}. Our boundary conditions are similar but slightly more general than those considered in the recent paper \cite{deng2023noncutoff}. We also stress on the fact that the conditions on the initial datum $F_0$ are very natural and does not involve velocity derivative as it is the case in  \cite{GuoLandau1,GS1,GS2,MR4076068,Guoetal-specular2}. 
The drawback is that, as in \cite{deng2023noncutoff}, we are not able to prove the uniqueness of the solution for this class of initial data and solutions, but contrarily to \cite{GuoLandau1,MR2679358,MR3779780,MR3625186,MR4076068,Guoetal-specular2}.

\smallskip
As in many previous works, the proof relies on the $L^2$ exponential stability of the Maxwellian equilibrium $\mu$ obtained through hypocoercivity arguments which are by-now available for a general class of Boltzmann like collisional kinetic operators (see e.g.\ \cite{MR4581432}) and on some regularization properties of De Giorgi-Nash-Moser ultracontractivity type available for the Landau equation because of its hypoelliptic nature. These regularization properties
make possible to extend the  exponential stability property to a weighted  $L^\infty$ Lebesgue space and thus to deal with the nonlinearity of the equation. 

\smallskip
Although in many aspects our approach is similar to the one of our previous work \cite{MR3625186} dealing with the torus case, we stress on the two main new ideas that are introduced in the present paper. We will explain them with more details in the Section~\ref{subsec:strategy} below, but we summarize them now:

\smallskip
(1) On the one hand, we introduce a  energy estimate based on new multipliers, a first one being related to Darroz\`es-Guiraud convexity argument \cite{DarrozesG1966,MR1776840,MR2721875}, 
a second one being related to general trace results \cite{MR1765137} (see also \cite{MR4179249}), and a third one being related to Lions-Perthame's multiplier for the gain of velocity moment 
%in averaging lemmas 
\cite{MR1166050,MR3591133}, in order to deal with general reflection condition.  Roughly speaking, this  energy estimate tells us that the density does not concentrate near the boundary. Then this estimate is combined with hypocoercivity result in the spirit of \cite{MR4581432}, De Giorgi-Nash-Moser ultracontractivity result for kinetic Fokker-Planck equation in the spirit of \cite{MR2068847,MR3923847} and  enlargement space for semigroup decay trick in the spirit of \cite{MR3779780,MR4265692,MR3625186} in order to obtain the above mentioned exponential stability in a weighted  $L^\infty$ Lebesgue space.

\smallskip
(2) On the other hand, most of the argument is performed at the level of a linearized problem. The  considered  problem is however a time-dependent perturbation of the linearized equation around the steady state and it is thus different from the   linearized equation around the steady state itself which is usually considered.
The estimates for the time-dependent perturbation problem are not really more complicated to establish than for the linearized problem around the steady state itself, but the former makes possible to get a very direct and simple proof of the existence  and stability result as well as to avoid the control of velocity derivative on the initial datum contrarily to  \cite{MR4076068,Guoetal-specular2}.

\medskip
We next focus on Villani's program \cite{MR2116276,MR2407976} about constructive accurate rate of convergence to the equilibrium for solutions satisfying {\it a priori uniform bounds} in large spaces. 
More precisely, we consider a global weak solution $F$ to   the Landau equation \eqref{eq:Landau_F}--\eqref{eq:reflect_F}, in the sense of Theorem~\ref{theo-SG-LLg}, satisfying  
\beqn\label{eq:intro-iftheo1}
\| \omega_\infty F \|_{L^\infty( (0,\infty) \times \OO)} + \| \omega_\infty F \|_{L^\infty( (0,\infty) ; L^1(\OO))} \le C_0,  \quad
\inf_{(0,\infty) \times \Omega}  \int_{\R^3} F \, \d v  \ge \rho_0,  
\eeqn
for an admissible weight function $\omega_\infty$ and some constants $C_0,\rho_0 \in (0,\infty)$. We also assume that the conclusions \cite[Theorems 2 \& 3]{MR2116276} of the quantitative $H$-theorem theory developed by  Desvillettes and Villani hold true, namely 
\beqn\label{eq:intro-iftheo2}
\| F_t - \mu \|_{L^1(\OO)} \le \eps_1 (t) \to 0, \ \hbox{as} \ t \to \infty, 
\eeqn
for some polynomial function $\eps_1$, although \cite{MR2116276} establishes \eqref{eq:intro-iftheo2} only for the specular reflection boundary condition ($\iota \equiv 0$) but not for a general Maxwell condition (when $\iota\not\equiv0$). Our second main result answers to Villani's program by drastically improving the rate of convergence \eqref{eq:intro-iftheo2} up to the one given by the  linearized regime.

\begin{theo}\label{theo:iftheo} 
Assume $\gamma \in [-3,0]$. Any global weak solution $F$ to   the Landau equation \eqref{eq:Landau_F}--\eqref{eq:reflect_F} satisfying \eqref{eq:intro-iftheo1} and  \eqref{eq:intro-iftheo2}
also satisfies the more accurate decay estimate 
\be\label{eq:decay-g-inhom-bis}
\| F(t) - \mu \|_{L^\infty_{\omega_\sharp} (\OO)} \le   \Theta_{\omega}(t) , \quad \forall \, t \ge 0,
\ee
for any admissible weight function $\omega \prec \omega_\infty$. In the case $s+\gamma \ge 0$, this decay is exponentially fast.

\end{theo}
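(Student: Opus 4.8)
The plan is to bootstrap from the weak $L^1$-convergence \eqref{eq:intro-iftheo2} provided by the Desvillettes--Villani theory to the quantitative decay rate \eqref{eq:decay-g-inhom-bis} coming from the linearized regime, using the solution's uniform bounds \eqref{eq:intro-iftheo1} as the only extra input. The key idea is that, even though $F_0$ may be far from $\mu$, the convergence \eqref{eq:intro-iftheo2} guarantees that \emph{at some large but finite time $t_* = t_*(\eps_0)$} the solution has entered the close-to-equilibrium neighborhood, after which Theorem~\ref{thm:stabNL-inhom} (or rather its linear-regime ingredients) takes over. Concretely, first I would use the uniform upper and lower bounds in \eqref{eq:intro-iftheo1} together with the $L^1$-decay \eqref{eq:intro-iftheo2} to upgrade the $L^1(\OO)$ smallness of $F_t - \mu$ to $L^\infty_{\omega}(\OO)$ smallness for a slightly weaker admissible weight $\omega \prec \omega_\infty$: this is an interpolation step, exploiting that $\|\omega_\infty(F_t-\mu)\|_{L^\infty}\le C_0 + \sqrt{2\pi}^{-1}\ldots$ is bounded while the $L^1$ norm is small, so that for any $\omega\prec\omega_\infty$ one has $\|F_t-\mu\|_{L^\infty_\omega}\le \Phi(\eps_1(t))$ with $\Phi(r)\to 0$ as $r\to 0$. (The De Giorgi--Nash--Moser ultracontractivity for the kinetic Fokker--Planck equation, already established in the body of the paper for the linearized operator, may also be invoked here to gain the $L^\infty$ control from an $L^1\cap L^2$ bound on a time interval $[t-\tau,t]$.)

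Second, fix the threshold $\eps_0>0$ furnished by Theorem~\ref{thm:stabNL-inhom} for the admissible weight $\omega$, and choose $t_*$ so large that $\Phi(\eps_1(t_*))\le \eps_0$; this is possible since $\eps_1$ is a polynomial tending to $0$. Then $f_* := F(t_*)-\mu$ satisfies $\|f_*\|_{L^\infty_\omega(\OO)}\le\eps_0$, and $f_*$ automatically satisfies the conservation conditions \eqref{eq:C1}--\eqref{eq:C2} because $F$ is a global weak solution of \eqref{eq:Landau_F}--\eqref{eq:reflect_F} and the total mass (and, in the specular case, energy and angular momenta) are conserved along the flow — so the discrepancies $\lla F(t_*)-\mu\rra$, etc., equal their values at $t=0$, and by \eqref{eq:intro-iftheo2} they must vanish in the limit, hence vanish identically. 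Now apply the close-to-equilibrium theory starting from time $t_*$: by the semigroup decay estimate underlying \eqref{eq:decay-g-inhom} (Theorem~\ref{theo-SG-LLg}), for all $t\ge t_*$,
\be
\| F(t)-\mu\|_{L^\infty_{\omega_\sharp}(\OO)} \le \Theta_\omega(t-t_*)\,\|f_*\|_{L^\infty_\omega(\OO)} \le \eps_0\,\Theta_\omega(t-t_*).
\ee
Since $\gamma\in[-3,0]$, the function $\Theta_\omega$ is (up to the constant $C$) either $(\log\la t\ra/\la t\ra)^{(k-k_0)/|\gamma|}$ or $\exp(-\lambda t^{\min(1,s/|\gamma|)})$; in both cases $\Theta_\omega(t-t_*)\le C'\,\Theta_\omega(t)$ for $t\ge 2t_*$ (the polynomial case uses $\la t-t_*\ra\ge \la t\ra/2$; the stretched-exponential case is immediate after adjusting $\lambda$ and $C$), and for $t\in[0,2t_*]$ the bound \eqref{eq:decay-g-inhom-bis} holds trivially by enlarging $C$ in the definition \eqref{eq:Theta_omega}, using the uniform bound \eqref{eq:intro-iftheo1}. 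Absorbing all constants into $C$ yields \eqref{eq:decay-g-inhom-bis}. When $s+\gamma\ge0$, the middle and third lines of \eqref{eq:Theta_omega} show $\Theta_\omega$ is a genuine exponential (note $\omega_\sharp=\omega$ in that range), so the decay is exponentially fast.

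The main obstacle is the \emph{first} step: converting qualitative $L^1$-convergence into quantitative, uniformly weighted $L^\infty$-smallness with an explicit modulus $\Phi$. One must be careful that the interpolation/ultracontractivity argument only loses an arbitrarily small amount of weight ($\omega\prec\omega_\infty$, not $\omega\preceq\omega_\infty$), that the lower bound $\inf F\ge\rho_0$ is genuinely used to keep the solution from degenerating (it enters the ellipticity of the diffusion matrix $a_{ij}*F$, hence the De Giorgi--Nash--Moser estimate), and that the Maxwell boundary condition is compatible with the ultracontractivity regularization up to the boundary — which is precisely where the new boundary energy estimate of the paper (the Darrozès--Guiraud / trace / Lions--Perthame multipliers mentioned in the introduction) is needed. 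Once the solution is shown to reach a fixed small $L^\infty_\omega$-ball in finite time, the rest is a soft consequence of the already-established linear theory, so no genuinely new estimate on the Landau operator itself is required.
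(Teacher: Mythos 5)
Your high-level strategy coincides with the paper's: show that $F_t$ reaches the small $L^\infty_\omega$-ball of Theorem~\ref{thm:stabNL-inhom} in finite time, verify the conservation constraints \eqref{eq:C1}--\eqref{eq:C2}, then apply (or repeat the proof of) Theorem~\ref{thm:stabNL-inhom} from that later time. Your observation that the conserved quantities must vanish identically because they are time-constant and tend to zero is correct, as is the point that the lower bound $\inf\rho_F\ge\rho_0$ is needed for ellipticity, and the final translation $\Theta_\omega(t-t_*)\lesssim\Theta_\omega(t)$ is fine.

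However, the first step as you wrote it has a genuine gap. Interpolating an $L^1(\OO)$ bound tending to zero against a uniform $L^\infty_{\omega_\infty}(\OO)$ bound does \emph{not} give $L^\infty_\omega(\OO)$ smallness, even for $\omega\prec\omega_\infty$: the weight-comparison argument only controls the region $|v|>R$, while on $\Omega\times B_R$ the $L^1$ smallness imposes no pointwise constraint (think of a thin spike of height $O(1)$). The standard interpolation actually only yields $\|F_t-\mu\|_{L^p_{\omega_p}}\to 0$ for $p<\infty$, which is where the paper stops the interpolation. The upgrade to $L^\infty_\omega$ in the paper is a genuinely hard step: one must establish $L^q$ dissipativity and a De Giorgi--Nash--Moser / ultracontractivity bound for the operator $\mathbf{B}_F f := -v\cdot\nabla_x f + Q(F,f)$ (\emph{not} $\BB_g$ with $g$ small — that theory does not apply because $F-\mu$ is not small here), using the ellipticity \eqref{eq:aij*F-inIFth} which comes from the a priori bounds via \cite{MR1737547,MR3375485}, and then feed this into a Duhamel formula $f(t)=S_{\mathbf{B}_F}(t,t-1)f_{t-1}+\int_{t-1}^t S_{\mathbf{B}_F}(t,\tau)Q(f_\tau,\mu)\,\d\tau$ to convert the $L^p$ smallness of $f_{t-1}$ and $Q(f_\tau,\mu)$ into $L^\infty_\omega$ smallness of $f_t$. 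Your parenthetical remark gestures at the ultracontractivity but frames it as optional and as ``already established,'' whereas in fact the existing results are only for $\BB_g$ with $\|g\|_{\XX_0}$ small; rebuilding them for $\mathbf{B}_F$ with $F$ far from $\mu$ is precisely the new work the paper must do, so the concluding claim that ``no genuinely new estimate on the Landau operator itself is required'' is not accurate.
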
 

It is likely that a variant of this result should be true also for $\gamma \in (0,1]$, but we do not follow this line of research in the present work.

%%%%%%%%%%%%%%%%%%%%%%%%%%%%%%%%%%%% 
\subsection{Strategy of the proof of the main result}\label{subsec:strategy}

Since we are concerned with the existence and long-time behavior of solutions in a regime near to the Maxwellian equilibrium, we introduce a small variation of distribution $f$ defined by  
$$
F = \mu + f.
$$
We next denote by $\CC$ the linearized collision operator
\begin{equation}\label{eq:def_CC}
\CC f = Q(\mu,f) + Q(f,\mu),
\end{equation}
and by $\LL$ the full linearized operator
\begin{equation}\label{eq:def_LL}
\LL f = - v \cdot \nabla_x f +  \CC f,
\end{equation}
so that the perturbation $f$ verifies the equation
\begin{equation}
\label{eq:Landau_perturb}
\left\{
\begin{aligned}
& \partial_t f =  \LL f + Q(f,f) \quad &\text{in}&\quad (0,\infty) \times \OO \\
& \gamma_{-} f  = \RRR \gamma_{+}  f \quad &\text{on}&\quad (0,\infty) \times \Sigma_{-} \\
& f_{| t=0} = f_0 ,
\end{aligned}
\right.
\end{equation}
with initial datum $f_0 = F_0 - \mu$ satisfying \eqref{eq:C1} (as well \eqref{eq:C2} in the specular reflection case $\iota\equiv 0$).
We then observe that, from \eqref{eq:local-conservations}, we have 
$$
\pi Q(f,f) = 0, 
$$
where $\pi$ stands the projector onto $\operatorname{Ker}(\CC) = \operatorname{span} \{ \mu , v_1 \mu , v_2 \mu , v_3 \mu , |v|^2 \mu  \}$  given by 
\beqn\label{def:pi}
\begin{aligned}
\pi f (x,v) 
&= \left( \int_{\R^3} f(x,w) \, \d w \right) \mu (v)
+ \left( \int_{\R^3} w \, f(x,w) \, \d w  \right) \cdot v \mu (v) \\
&\quad
+ \left( \int_{\R^3} \frac{|w|^2-3}{\sqrt{6}} \, f(x,w)  \, \d w   \right) \frac{(|v|^2-3)}{\sqrt{6}} \, \mu(v).
\end{aligned}
\eeqn 
As a consequence, the first equation in \eqref{eq:Landau_perturb} also writes 
$$
\partial_t f = \LL_f f, 
$$
with 
$$
\LL_g f := \LL f + Q^\perp(g,f),
$$
where we have set $ Q^\perp (g,f) :=  ( I - \pi) Q(g,f)$.

\medskip
For a given function $g=g(t,x,v)$ and for any $t_0 \ge 0$, we shall first consider the linear equation associated 
 to the operator $\LL_g$ defined by 
\begin{equation}\label{eq:linear_g}
\left\{
\begin{aligned}
& \partial_t f = \LL_g f \quad &\text{in}&\quad (t_0,\infty) \times \OO \\
%\LL f + Q(g,f) =: \LL_g f \quad &\text{in}&\quad (0,\infty) \times \Omega \times \R^3 \\
& \gamma_{-} f  = \RRR \gamma_{+}  f \quad &\text{on}&\quad (t_0,\infty) \times \Sigma_{-} \\
& f_{| t=t_0} = f_{t_0}  \quad &\text{in}&\quad   \OO.
\end{aligned}
\right.
\end{equation}
We introduce a splitting of the operator $\LL_g = \BB_g + \AA_g$, where we  define the dissipative part by 
\beqn\label{def:BBg}
\BB_g f :=  - v \cdot \nabla_x f + Q(\mu,f) + Q(g,f) - M \chi_R f ,
\eeqn
and the remainder part, which takes into account zero order and integral terms, by
\beqn\label{def:AAg}
\AA_g f :=   Q(f,\mu) - \pi Q(g,f) + M \chi_R f ,
\end{equation}
for some compactly supported smooth function $M \chi_R$ with constants $M,R>0$ to be chosen, namely $\chi_R(v) = \chi (v/R)$ for $\chi \in C^\infty_c(\R^3)$ such that $\mathbf 1_{B_1} \le \chi \le \mathbf 1_{B_2}$. We shall also consider the linear equation \eqref{eq:linear_g} associated to the operator $\BB_g$ instead of~$\LL_g$.

\smallskip
        From now on, we fix some weight function 
\begin{equation}\label{eq:defomega0}
\omega_0 := \langle v \rangle^{k_0} ,
%\quad\text{with}\quad \Red j_0 \in (8+\gamma,9+\gamma),
\end{equation}
with $k_0$ defined in \eqref{eq:def-k0}, and we define the space 
$$
\XX_0 := L^\infty_{\omega_0}((0,\infty) \times \OO).
$$
We denote by $P_v$  the projection operator  on the $v$-direction for any given $v \in \R^3 \backslash \{ 0 \}$ defined by 
\beqn\label{eq:def:Pv}
P_v \xi = \left( \xi \cdot \frac{v}{|v|}   \right) \frac{v}{|v|}, \quad \forall \, \xi \in \R^3 ,
\eeqn
and we denote by $\widetilde \nabla_v f$  the anisotropic gradient of a function $f$ 
defined by 
\beqn\label{eq:def:anisoG}
\widetilde \nabla_v f = P_v \nabla_v f + \la v \ra (I-P_v) \nabla_v f.
\eeqn
We next define the {\it dissipation norm} $H^{1,*}_\omega$ associated to the norm of $L^2_\omega$ by 
$$
\| f \|_{H^{1,*}_v (\omega)}^2 :=   \| \la v \ra^{\frac{\gamma}{2}+\frac{s}2} f \|_{L^2_v(\omega)}^2  + \| \la v \ra^{\frac{\gamma}{2}} \widetilde \nabla_v (f\omega) \|_{L^2_v }^2,
$$
where we recall that $s=0$ when $\omega$ is a polynomial weight function.

\smallskip

At least for $g \in \XX_0$   small enough, we successively establish the following properties for both non-autonomous semigroups $S_{\LL_g} = S_{\LL_g}(t,t_0)$ and $S_{\BB_g} = S_{\BB_g}(t,t_0)$ associated to the 
 above equations. 

\smallskip\noindent
{\sl (1) The semigroup $S_{\BB_g}$ is bounded.} For any admissible weight function $\omega$ and exponent $p \in [1,\infty]$, there holds
\beqn\label{eq:SBBg_bd}
S_{\BB_g} : L^p_\omega(\OO) \to L^p_\omega(\OO), \hbox{ uniformly bounded}.
\eeqn
More precisely,  thanks to a multiplier trick, we exhibit an equivalent weight function $\widetilde\omega$ such that 
$S_{\BB_g}$ is a semigroup of contractions on $L^p_{\tilde \omega}(\OO)$, see Proposition~\ref{prop:BBg_Lp}. 

\smallskip\noindent
{\sl (2) The semigroup $S_{\BB_g}$ is ultracontractive.} For a class of admissible weight functions $\omega_2$ and $\omega$, there holds
\beqn\label{eq:SBBg_ultra}
S_{\BB_g}(t,t_0) : L^2_{\omega}(\OO) \to L^\infty_{\omega_2}(\OO), \hbox{ with bound } \OO((t-t_0)^{-\eta}), 
\eeqn
for any $t > t_0 \ge 0$ and for some $\eta>0$. Modifying again the weight function, we are indeed able to exhibit  a dissipation estimate associated to the $L^2(\widetilde\omega)$ norm which prevents the concentration near the boundary of the solution to the linear problem associated to $\BB_g$. Together with available gain of integrability estimates in the interior \cite{MR2068847,MR3923847} in the spirit of De Giorgi-Nash-Moser theory for parabolic equations, we then establish that $S_{\BB_g}$ is ultracontractive, see Theorem~\ref{theo:BBg_L2_Linfty}.

\smallskip\noindent  
{\sl (3) The operator $\LL_g$ is (weakly) hypocoercive:} % in $L^2(\omega_{eq})$, $\omega_{eq} := \mu^{-1/2}$:} 
there exist a constant $ \sigma_0 > 0$ and a twisted Hilbert norm $\Nt \cdot \Nt_{L^2(\mu^{-1/2})}$, %_{\omega_{eq}}(\OO)}$, 
equivalent to the usual $L^2_{xv} (\mu^{-1/2})$-norm such that for the associated scalar product $(\!( \cdot, \cdot  )\!)_{L^2(\mu^{-1/2})}$, %_{\omega_{eq}}}$, 
we have 
\beqn\label{eq:LLgff_L2mu}
(\!( \LL_g f,f  )\!)_{L^2(\mu^{-1/2})}  \le - \sigma_0 \| f \|^2_{L^2_x H^{1,*}_{v}(\mu^{-1/2})} , 
\eeqn
for any $f$ in the domain of $\LL_g$, see Theorem~\ref{theo:hypo}.

\smallskip\noindent
{\sl (4) The semigroup $S_{\LL_g}$ is decaying and enjoys compactness properties.}  For any admissible weight function $\omega$, there holds
\beqn\label{eq:SLLg_decay}
\| S_{\LL_g} (t,\tau) f_\tau \|_{L^\infty_{\omega_\sharp} (\OO)} \le C  \Theta_{\omega}(t - \tau) \| f_\tau \|_{L^\infty_{\omega}} \,  , \quad \forall \, t\ge \tau \ge 0, \quad
\forall f_\tau \in L^\infty_\omega,
\eeqn
with the notations of \eqref{eq:decay-g-inhom}, see Theorem~\ref{theo:LLg_Linfty}. That last estimate follows from the three previous steps together with an extension trick in the spirit of \cite{MR3779780,MR4265692,MR3625186}. 
There also holds, for any $T>0$, 
\beqn\label{eq:SLg-L2H1*}
 \int_0^T \| S_{\LL_g}(t,\tau) f_\tau \|_{L^2_xH_v^{1,*}(\omega)}^2  \, \d t \le   C(T) \| f_\tau \|^2_{L^2(\omega)}, 
\eeqn
as a consequence of a variant estimate of \eqref{eq:LLgff_L2mu}, from which we deduce a compactness property in $L^2$ thanks to a Aubin-Lions type argument, see Theorem~\ref{theo-SG-LLg}.

\smallskip\noindent
{\sl (5) Conclusion.} We finally consider the mapping 
$$
g \mapsto S_{\LL_g} f_0,
$$ 
for which we deduce from the last step that it leaves invariant a small ball of $L^\infty_{\omega_0}( (0,\infty) \times \OO)$ and it is continuous for the weak topology. 
We conclude to the existence of a fixed-point for that mapping thanks to the Schauder-Tychonoff fixed-point theorem and thus a solution to equation~\eqref{eq:Landau_perturb} which satisfies the announced decay property in Theorem~\ref{thm:stabNL-inhom}. 

\smallskip
The proof of Theorem~\ref{theo:iftheo} uses similar arguments as those described above.

%%%%%%%%%%%%%%%%%%%%%%%%%%%%%%%%%%%
\subsection{Structure of the paper} 

In Section~\ref{sec-toolbox}, we recall some more or less standard results we use in the next sections. In Section~\ref{sec-SLLg-first}, we establish some a priori bound in $L^p_\omega$ for the solutions to the linear problem \eqref{eq:linear_g} and we deduce the existence of an associated semigroup $S_{\LL_g}$. In Section~\ref{sec-SBBg-decay}, we establish the bound  \eqref{eq:SBBg_bd} and we deduce a decay estimate of the form \eqref{eq:SLLg_decay} for the  semigroup $S_{\BB_g}$. In Section~\ref{sec-SBBg-ultracontractivity}, we establish the ultracontractivity estimate  \eqref{eq:SBBg_ultra} for the  semigroup $S_{\BB_g}$. In Section~\ref{sec-LLg-hypo}, we establish the hypocoercivity estimate  \eqref{eq:LLgff_L2mu} for the  operator $\LL_g$. 
In Section~\ref{sec-proof-LLg}, we establish the decay property \eqref{eq:SLLg_decay} on $S_{\LL_g}$. We finally prove the main results Theorem~\ref{thm:stabNL-inhom} and Theorem~\ref{theo:iftheo} in the last Section~\ref{sec-proofMainTheo}.

%%%%%%%%%%%%%%%%%%%%%%%%%%%%%%%%%%%%%%%%%%%%%
\section{Toolbox}
\label{sec-toolbox}

We introduce in this section some more or less classical material that we will use several times in the sequel.

%%%%%%%%%%%%%%%%%
%\subsection{Notations}
%
%Throughout the article we shall write $a \lesssim b$ or $b \gtrsim a$ if there is some constant $C>0$ such that $a \le C b$; and $a \simeq b$ if both $a \lesssim b$ and $b \lesssim a$ hold.
%

%%%%%%%%%%%%%%%%%%%%%%%%%%%%%%%%%%%%%%%%%%%%%
\subsection{Estimates for the collision operator}

We recall some (variants of) classical results on the Landau collision operator.  
We denote
\be\label{eq:barabc}
\bar a_{ij} = a_{ij}*\mu, \quad
\bar b_i = b_i * \mu, \quad
\bar c = c*\mu,  
\ee
where $*$ stand for the convolution in the velocity variable $v$, and we remark in particular that  
\begin{equation}\label{eq:barc=-3}
\bar c  = -8\pi \mu  \quad\hbox{when} \quad \gamma = - 3.
\end{equation}

We recall the following result from \cite[Propositions 2.3 and 2.4]{MR1463805} and \cite[Lemma 3]{GuoLandau1} (see also \cite[Lemma 2.1(e)]{MR3625186}). 

%(see also \cite[Lemma~2.5]{MR3407515} and  \cite[Lemma~2.5]{MR3365830}).

\begin{lem}\label{lem:elementary-abc*bar} 
The matrix $\bar a(v)$ has a simple eigenvalue $\ell_1(v)>0$ associated with the eigenvector $v$ and a double eigenvalue $\ell_2(v)>0$ associated with the eigenspace $v^{\perp}$, 
so that 
$$
\bar a_{ij} \xi_i \xi_j = \ell_1(v) |P_v\xi|^2 + \ell_2(v) |(I-P_v)\xi|^2. 
$$
Furthermore, when $|v|\to +\infty$,  we have
$$
\ba
\ell_1(v) \sim  2 \la v \ra^\gamma, \quad
\ell_2(v) \sim  \la v \ra^{\gamma+2}, 
\ea
$$
and thus %, when $|v|\to +\infty$,  we have
$$
\bar a_{ij}v_iv_j \sim 2  \la v \ra^{\gamma+2}, \quad \bar a_{ii} \sim 2 \la v \ra^{\gamma+2}. 
%= \ell_1(v) + 2 \ell_2(v) \sim 2 \la v \ra^{\gamma} + 2 \la v \ra^{\gamma+2}. % \approx_{|v| \to \infty} 
%\bar a_{ii} = \ell_1(v) + 2 \ell_2(v) \sim 2 \la v \ra^{\gamma} + 2 \la v \ra^{\gamma+2}. % \approx_{|v| \to \infty} 
$$
On the other hand, there hold
\beqn\label{eq:barb}
  \bar b  =  - \ell_1(v) v  
\eeqn
and
\beqn\label{eq:barc>-3}
\bar c \sim 
\begin{cases}
-2(\gamma+3)  \langle v \rangle^\gamma \quad &\text{if}\quad  \gamma \in (-3,1] , \\
-8\pi \mu(v) \quad &\text{if}\quad  \gamma = -3,
\end{cases}
\eeqn
when $|v|\to +\infty$. % and   $\gamma \in (-3,1]$. 
\end{lem}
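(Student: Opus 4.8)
The plan is to reduce every quantity to an explicit scalar integral against the Gaussian $\mu$, from which both the spectral structure and the large-velocity asymptotics can be read off.

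\emph{Spectral structure and positivity.} From $a_{ij}(z) = |z|^{\gamma+2}(\delta_{ij} - z_iz_j/|z|^2)$ one has the rotational covariance $a_{ij}(Rz) = R_{ik}R_{jl}\,a_{kl}(z)$ for every $R \in SO(3)$; since $\mu$ is radial this passes to the convolution, so $\bar a(Rv) = R\,\bar a(v)\,R^{\top}$ for all $R$. Fixing $v$ and letting $R$ range over the rotations fixing the line $\R v$ forces $\bar a(v)$ to commute with all such $R$, hence $v$ is an eigenvector (eigenvalue $\ell_1(v)$) and $v^{\perp}$ an eigenspace (eigenvalue $\ell_2(v)$), both depending only on $|v|$; this gives the claimed quadratic-form identity. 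For the explicit formulas, write $z = v - v_*$ and split $v_* = (v_*\cdot\widehat v)\widehat v + (v_*)_\perp$ with $\widehat v = v/|v|$; then the component of $z$ orthogonal to $v$ equals $-(v_*)_\perp$, so $|z|^{\gamma+2}\big(1 - (\widehat z\cdot\widehat v)^2\big) = |z|^{\gamma}\,|(v_*)_\perp|^2$ and
$$
\ell_1(v) = \bar a_{ij}(v)\widehat v_i\widehat v_j = \int_{\R^3} |v - v_*|^{\gamma}\,|(v_*)_\perp|^2\,\mu(v_*)\,\dv > 0 .
$$
Taking the trace of $a$ (which equals $2|z|^{\gamma+2}$) gives $\bar a_{ii}(v) = 2\int_{\R^3}|v-v_*|^{\gamma+2}\mu(v_*)\,\dv$, and since $\bar a_{ii} = \ell_1 + 2\ell_2$ we get $\ell_2(v) = \tfrac12(\bar a_{ii}(v) - \ell_1(v))$, which is positive (evaluate $\bar a_{ij}\xi_i\xi_j$ on any $\xi \perp v$).

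\emph{Asymptotics.} These follow from the two integral representations by dominated convergence. For fixed $v_*$ one has $|v - v_*|^{\gamma}/\la v\ra^{\gamma} \to 1$ and $|v-v_*|^{\gamma+2}/\la v\ra^{\gamma+2}\to 1$ as $|v|\to\infty$; a Peetre-type inequality $\la v-v_*\ra^{\gamma}\le C_\gamma\la v\ra^{\gamma}\la v_*\ra^{|\gamma|}$ (and similarly with $\gamma+2$), together with the fact that $|(v_*)_\perp|^2$ vanishes to second order at $v_*=v$ — which controls the mild singularity of $|v-v_*|^\gamma$ there since $\gamma>-5$ — provides an $L^1$ dominating function uniform in $v$. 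Using $\int_{\R^3}|(v_*)_\perp|^2\mu\,\dv = 2$ and $\int_{\R^3}\mu\,\dv = 1$ we obtain $\ell_1(v)\sim 2\la v\ra^{\gamma}$ and $\bar a_{ii}(v)\sim 2\la v\ra^{\gamma+2}$, hence $\ell_2(v)\sim\la v\ra^{\gamma+2}$ and $\bar a_{ij}v_iv_j = \ell_1(v)|v|^2 \sim 2\la v\ra^{\gamma+2}$. The one genuinely technical point is making this domination airtight: the clean way is to split each integral over $\{|v-v_*|\le |v|/2\}$, where the Gaussian weight is $O(e^{-c|v|^2})$ and absorbs everything, and its complement, where $|v-v_*|$ is comparable to $\la v\ra$ uniformly on the bulk of $\mu$.

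\emph{The vectors $\bar b$ and $\bar c$.} For $\bar b$ I would use $b_i = \partial_{z_j}a_{ij}$ and the cancellation $a_{ij}(z)z_j = 0$. Writing $(\partial_{z_j}a_{ij})(v-v_*) = -\partial_{v_{*j}}\big[a_{ij}(v-v_*)\big]$ and integrating by parts with $\partial_{v_{*j}}\mu = -v_{*j}\mu$ gives $\bar b_i(v) = -\int a_{ij}(v-v_*)\,v_{*j}\,\mu(v_*)\,\dv$; then $a_{ij}(v-v_*)(v-v_*)_j = 0$ lets one replace $v_{*j}$ by $v_j$, so $\bar b_i(v) = -\bar a_{ij}(v)v_j = -\ell_1(v)v_i$, i.e. $\bar b = -\ell_1(v)v$. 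For $\bar c$, when $-3 < \gamma \le 1$ we have $c(z) = -2(\gamma+3)|z|^\gamma$, so $\bar c(v) = -2(\gamma+3)\int|v-v_*|^\gamma\mu(v_*)\,\dv \sim -2(\gamma+3)\la v\ra^\gamma$ by the same dominated-convergence argument as for $\ell_1$; and when $\gamma = -3$, $c = -8\pi\delta_0$ gives $\bar c = -8\pi\mu$ exactly, which is \eqref{eq:barc=-3}.
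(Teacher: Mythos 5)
The paper does not prove Lemma~\ref{lem:elementary-abc*bar}; it is recalled from \cite{MR1463805}, \cite{GuoLandau1} and \cite{MR3625186}, so there is no in-paper argument to compare against. Your proof is correct and self-contained, and it follows what is essentially the standard route in those references: the rotational covariance $\bar a(Rv)=R\,\bar a(v)\,R^\top$ forces the eigenstructure, $\ell_1$ and $\bar a_{ii}$ are written as explicit Gaussian integrals, $\ell_2$ is recovered from $\ell_2=\tfrac12(\bar a_{ii}-\ell_1)$, and $\bar b=-\bar a(v)v=-\ell_1(v)v$ is obtained from the integration by parts together with $a_{ij}(z)z_j=0$. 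One point worth tightening: the Peetre inequality you quote controls $\langle v-v_*\rangle^\gamma$, not $|v-v_*|^\gamma$, so for $\gamma<0$ it does not by itself dominate the singularity at $v_*=v$; the split over $\{|v-v_*|\le |v|/2\}$ and its complement that you mention at the end is not just ``the clean way'' but is actually needed, and you should state explicitly that on the first piece the Gaussian weight $\mu(v_*)\lesssim e^{-|v|^2/8}$ absorbs everything (using also $|(v_*)_\perp|^2\le|v-v_*|^2$ for the $\ell_1$ integral), while on the second piece $|v-v_*|^\gamma/\langle v\rangle^\gamma$ is bounded by $2^{|\gamma|}$ for $\gamma\le0$, resp.\ by $C\langle v_*\rangle^\gamma$ for $\gamma>0$, uniformly for $|v|\ge1$. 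With that made precise, the dominated-convergence step and hence all the stated asymptotics, including the $\gamma>-3$ versus $\gamma=-3$ dichotomy for $\bar c$ matching \eqref{eq:barc=-3}, are rigorous.
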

Introducing the symmetric matrix
$$
\mathbf{B}(v) := \sqrt{\ell_1(v)} \frac{v}{|v|} \otimes  \frac{v}{|v|} + \sqrt{\ell_2(v)} \left( I - \frac{v}{|v|} \otimes  \frac{v}{|v|} \right), 
$$
we see from the above discussion that 
\beqn\label{eq:Bnabla=tildenabla}
|\mathbf{B} \nabla_v f |^2 = \ell_1(v) |P_v\nabla_v f|^2 + \ell_2(v) |(I-P_v)\nabla_v f|^2 \simeq |\langle v \rangle^{\gamma/2} \widetilde \nabla_v f|^2. 
\eeqn

We reformulate \cite[Lemma 3.4]{CTW} %(see also \cite[Lemma~2.4]{MR4438904}) 
and part of \cite[Lemmas 2.4 and 4.1]{MR4438904}.

\begin{lem}\label{lem:elementary-abc*g}  
For any $g \in L^\infty(\omega_0)$, there hold
\bear
\label{aijg}
&&|(a_{ij}*g)| + |(a_{\ell j}*g)v_\ell | + |(a_{\ell,k}*g)v_\ell v_k| \lesssim \la v \ra^{\gamma+2} \| g \|_{L^\infty_{\omega_0}}, 
\\
\label{eq:b*g}
&&
%|(b_\ell*g)v_\ell | + |b_i*g| \langle v \rangle \lesssim \la v \ra^{\gamma+2}  \| g \|_{L^\infty_{\omega_0}}, %\Red \quad |(c*g)| \lesssim \la v \ra^{\gamma}\| g \|_{L^\infty_{\omega_0}}, 
 |b_i*g| \lesssim \la v \ra^{\gamma+1}  \| g \|_{L^\infty_{\omega_0}}, %\Red \quad |(c*g)| \lesssim \la v \ra^{\gamma}\| g \|_{L^\infty_{\omega_0}}, 
\\
\label{eq:c*g}
&& |c*g| \lesssim \la v \ra^{\gamma}\| g \|_{L^\infty_{\omega_0}}, 
 \eear
for any $v \in \R^3$ and $i,j = 1,2,3$.
%, as well as 
%\be\label{eq:c*g}
%\ba
% |(c*g)| &\lesssim \la v \ra^{\gamma}\| g \|_{L^\infty_{\omega_0}}, 
%% |(c*g)| &\lesssim \la v \ra^{\gamma}\| g \|_{L^\infty_{\omega_0}} \ \hbox{ when } \ \gamma \in (-3,1], \\
%%|(c*g)| &\lesssim \omega_0^{-1} \| g \|_{L^\infty_{\omega_0}} \ \hbox{ when } \ \gamma = 3, 
%\ea
%\ee
%for any $v \in \R^3$. 
Considering additionally some vector fields $F$ and $H$,  there holds
\beqn\label{aijFiHj}
|(a_{ij}*g) F_i H_j| \lesssim   \| g \|_{L^\infty_{\omega_0}} | \mathbf{B}(v) F | | \mathbf{B}(v) H|.
\eeqn
\end{lem}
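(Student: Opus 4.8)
The statement to prove is Lemma~\ref{lem:elementary-abc*g}, which collects pointwise bounds on the mollified Landau coefficients $a_{ij}*g$, $b_i*g$, $c*g$ when $g$ lies in $L^\infty(\omega_0)$ with $\omega_0 = \langle v\rangle^{k_0}$, $k_0 > 8+\gamma$.

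\medskip

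\textbf{Overall approach.} The plan is to reduce everything to elementary convolution estimates of the form $\langle\cdot\rangle^\alpha * \langle\cdot\rangle^{-\beta}$. The key structural facts are: $|a_{ij}(z)| \lesssim |z|^{\gamma+2}$, $|b_i(z)| \lesssim |z|^{\gamma+1}$, and $|c(z)| \lesssim |z|^{\gamma}$ (with the extra Dirac mass when $\gamma=-3$, which for $c*\mu$-type terms only improves things since $\delta_0 * g = g$). Writing $|(\Phi * g)(v)| \le \|g\|_{L^\infty_{\omega_0}} \int_{\R^3} |\Phi(v-v_*)| \langle v_*\rangle^{-k_0}\,\d v_*$, the whole lemma follows from the standard convolution inequality: for $0 \le \alpha < 3$ and $\beta > 3$ with $\alpha < \beta - 3$... more carefully, one uses that $\int_{\R^3} |v-v_*|^{\alpha}\langle v_*\rangle^{-\beta}\,\d v_* \lesssim \langle v\rangle^{\alpha}$ whenever $\alpha \ge 0$, $\beta > \alpha + 3$, and $\int_{\R^3}|v-v_*|^{\alpha}\langle v_*\rangle^{-\beta}\,\d v_* \lesssim \langle v\rangle^{\max(\alpha, \alpha_+) }$-type bounds when $\alpha \in (-3,0)$. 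Here $\alpha \in \{\gamma+2,\gamma+1,\gamma\} \supset$ possibly negative values (e.g. $\gamma = -3$ gives $\alpha = -1, -2, -3$), so one splits the integral into the near region $\{|v-v_*| \le \langle v\rangle/2\}$, where $\langle v_*\rangle \simeq \langle v\rangle$ and the local singularity $|v-v_*|^{\alpha}$ is integrable precisely because $\alpha > -3$, and the far region $\{|v-v_*| > \langle v\rangle/2\}$, where $|v-v_*|^{\alpha} \lesssim \langle v\rangle^{\alpha}$ if $\alpha \le 0$ or $\lesssim \langle v_*\rangle^{\alpha}\langle v\rangle^{\alpha}$-type bookkeeping if $\alpha > 0$, and in all cases the decay $k_0 > 8+\gamma \ge \alpha + 3$ makes $\int \langle v_*\rangle^{\alpha - k_0}\,\d v_* < \infty$. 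This yields \eqref{aijg} (with the $v_\ell$, $v_\ell v_k$ versions following since $|a_{\ell j}(z) v_\ell| \lesssim |z|^{\gamma+2}|v|$ but one uses $a_{\ell j}(z) z_j = 0$, hence $a_{\ell j}(z) v_\ell = a_{\ell j}(z)(v-v_*)_\ell$ up to the $v_*$ shift... actually $(a_{\ell j}*g)(v) v_\ell$: write $v_\ell = (v - v_*)_\ell + (v_*)_\ell$ and use $a_{\ell j}(z) z_\ell = 0$ to kill the first piece, leaving $\int a_{\ell j}(v-v_*)(v_*)_\ell g(v_*)\,\d v_*$, now with an extra $\langle v_*\rangle$ absorbed by the weight — this is the standard trick and it is why one only gets $\langle v\rangle^{\gamma+2}$ and not $\langle v\rangle^{\gamma+3}$), and likewise \eqref{eq:b*g}, \eqref{eq:c*g}.

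\medskip

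\textbf{The anisotropic bound \eqref{aijFiHj}.} For the last estimate, the point is that the crude bound $|(a_{ij}*g)F_iH_j| \lesssim \langle v\rangle^{\gamma+2}|F||H|\|g\|_{L^\infty_{\omega_0}}$ is not sharp: one must see the eigenvalue splitting of the mollified matrix. I would argue that $a_{ij}*g$, while not literally $\bar a_{ij}$, inherits its anisotropic structure up to the constant $\|g\|_{L^\infty_{\omega_0}}$. Concretely, $a_{ij}(z)\xi_i\xi_j = |z|^{\gamma+2}(|\xi|^2 - (\hat z\cdot\xi)^2) = |z|^{\gamma+2}|\Pi_{z^\perp}\xi|^2 \ge 0$, so $a_{ij}*g$ is (for $g \ge 0$; in general one bounds $|g| \le \|g\|_{L^\infty_{\omega_0}}\langle\cdot\rangle^{-k_0}$) controlled by $a_{ij} * (\|g\|_{L^\infty_{\omega_0}}\langle\cdot\rangle^{-k_0})$ in the quadratic-form sense, and the latter has exactly the same eigenstructure as $\bar a$ (simple eigenvalue $\sim\langle v\rangle^\gamma$ along $v$, double eigenvalue $\sim\langle v\rangle^{\gamma+2}$ on $v^\perp$), by Lemma~\ref{lem:elementary-abc*bar} applied with the weight $\langle\cdot\rangle^{-k_0}$ in place of $\mu$ — this requires checking that the proof of Lemma~\ref{lem:elementary-abc*bar} only used decay and isotropy of $\mu$, which it did. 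Then $|(a_{ij}*g)F_iH_j| \le (a_{ij}*|g|\,F_iF_j)^{1/2}(a_{ij}*|g|\,H_iH_j)^{1/2}$ by Cauchy-Schwarz for the nonnegative form, and each factor is $\lesssim \|g\|_{L^\infty_{\omega_0}}(\ell_1|P_vF|^2 + \ell_2|(I-P_v)F|^2) = \|g\|_{L^\infty_{\omega_0}}|\mathbf{B}(v)F|^2$ by \eqref{eq:Bnabla=tildenabla}. Taking square roots gives \eqref{aijFiHj}.

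\medskip

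\textbf{Main obstacle.} The routine convolution estimates are bookkeeping; the genuinely delicate point is the anisotropic bound \eqref{aijFiHj}, specifically justifying that the mollified matrix $a_{ij}*g$ has the two-scale eigenvalue behaviour with constant proportional to $\|g\|_{L^\infty_{\omega_0}}$ rather than the isotropic $\langle v\rangle^{\gamma+2}$. The subtlety is that $g$ is not a fixed profile like $\mu$ but a general $L^\infty_{\omega_0}$ function, so one cannot literally cite Lemma~\ref{lem:elementary-abc*bar}; instead one dominates $|g|$ by a multiple of $\langle\cdot\rangle^{-k_0}$ pointwise and exploits the monotonicity of $\Phi \mapsto a_{ij}*\Phi$ on nonnegative $\Phi$ at the level of quadratic forms, then runs the eigenvalue computation for the radial weight $\langle\cdot\rangle^{-k_0}$ — which is where the hypothesis $k_0 > 8+\gamma$ (guaranteeing $\gamma+2 - k_0 < -6 < -3$, so all the moment integrals converge and the $\ell_1,\ell_2$ asymptotics go through) is used in an essential way. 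Once that domination is set up the rest is the Cauchy-Schwarz argument above.
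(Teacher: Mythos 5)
Your proposal is correct, and the part concerning \eqref{aijFiHj} takes a genuinely different route from the paper. For \eqref{aijg}--\eqref{eq:c*g}, the paper defers to the references (\cite{CTW}, \cite{MR4438904} for $\gamma \ge -2$; \cite{MR3625186} for $\gamma\le -2$) and only writes out the near/far splitting for \eqref{eq:c*g}; your sketch of the domination $|g|\le \|g\|_{L^\infty_{\omega_0}}\la\cdot\ra^{-k_0}$ together with the near/far convolution estimates, and especially the use of the cancellation $a_{\ell j}(z)z_\ell=0$ to absorb the extra factors of $v_\ell$, $v_\ell v_k$ into the decaying weight (rather than picking up a lost power $\la v\ra$), is exactly what makes those references go through, so this is fine. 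For the anisotropic bound \eqref{aijFiHj}, the paper derives it directly \emph{from} \eqref{aijg} by decomposing $F = P_vF + (I-P_v)F$ and using the scalar bounds $|(a_{ij}*g)v_iv_j|\lesssim\la v\ra^{\gamma+2}\|g\|$, $|(a_{ij}*g)v_i|\lesssim\la v\ra^{\gamma+2}\|g\|$, following \cite[Lemma~4.1]{MR4438904}. You instead combine (i) the Cauchy--Schwarz inequality $|(a_{ij}*g)F_iH_j|\le((a_{ij}*|g|)F_iF_j)^{1/2}((a_{ij}*|g|)H_iH_j)^{1/2}$ valid because $a(z)$ is positive semidefinite, (ii) the pointwise domination $a_{ij}*|g|\preceq \|g\|_{L^\infty_{\omega_0}}\,a_{ij}*\la\cdot\ra^{-k_0}$ as quadratic forms, and (iii) a re-derivation of the eigenvalue asymptotics $\ell_1^\Phi\simeq\la v\ra^\gamma$, $\ell_2^\Phi\simeq\la v\ra^{\gamma+2}$ for the fixed radial reference matrix $a*\la\cdot\ra^{-k_0}$ (as you note, the rotational symmetry argument and moment computations underlying Lemma~\ref{lem:elementary-abc*bar} carry over verbatim; the second moment condition $k_0>5$ is what matters, comfortably implied by $k_0>8+\gamma$). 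Both routes are sound. The paper's is more economical because it reuses the already established scalar bounds and only requires an algebraic projection identity; yours is more self-contained, at the cost of redoing the eigenvalue computation of Lemma~\ref{lem:elementary-abc*bar} for a new radial weight, which is not literally a citation of that lemma (it concerns $\mu$ specifically) but is an honest parallel argument, and your proposal is explicit about that.
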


\begin{proof}[Proof of Lemma~\ref{lem:elementary-abc*g}]
Thanks to \cite[Lemma 3.4]{CTW} and \cite[Lemmas 2.4]{MR4438904}, when $\gamma \in [-2,1]$ we have
$$
|(a_{ij}*g)| + |(a_{ij}*g)v_i| + |(a_{ij}*g)v_iv_j| \lesssim \la v \ra^{\gamma+2} \| g \|_{L^2_v(\la v \ra^{\gamma+11/2+0})}
$$
as well as
$$
|(b_i*g)v_i| + |b_i*g| \langle v \rangle \lesssim \la v \ra^{\gamma+2} \| g \|_{L^2_v(\la v \ra^{\gamma+11/2+0})}, %\quad \Red  |(c*g)| \lesssim \la v \ra^{\gamma} \| g \|_{L^2_v(\la v \ra^{\gamma+11/2+0})} ,
$$
and we conclude to \eqref{aijg} and \eqref{eq:b*g} thanks to the embedding $L^\infty_{\omega_0} \subset L^2_v(\la v \ra^{\gamma+11/2+0})$. In the case $\gamma \in [-3,-2]$, estimates \eqref{aijg} and \eqref{eq:b*g} are proven in \cite[Lemma~4.2]{MR3625186}.

The proof of \eqref{eq:c*g} when $\gamma = -3$ is straightforward from the very definition of $\omega_0$ in \eqref{eq:defomega0}. We next assume  $\gamma \in (-3,0)$. When $|v| \ge 1$, we proceed similarly as in the proof of \cite[Lemma~2.1(e)]{MR3625186} by introducing the splitting 
$$
|c * g| \lesssim \| g \|_{L^\infty_{\omega_0}} \left\{ \int_{|v-v_*| \le |v|/2} \frac{|v-v_*|^\gamma }{ \langle v_* \rangle^{k_0}} \, \d v_* + \int_{|v-v_*| > |v|/2} \frac{|v-v_*|^\gamma }{ \langle v_* \rangle^{k_0}}  \, \d v_* \right\}.
$$
For  the first term, we have $|v_*| > |v|/2$ on the domain of integration, so that 
\bean
 \int_{|v-v_*| \le |v|/2} \frac{|v-v_*|^\gamma }{ \langle v_* \rangle^{k_0}} \, \d v_*
&\le&
\frac{1}{\langle v/2 \rangle^{k_0}}  \int_{|v-v_*| \le |v|/2} |v-v_*|^\gamma \, \d v_*
\\
&\lesssim&
\frac{ |v|^{3+\gamma}  }{ \langle v  \rangle^{k_0}}  \lesssim \langle v \rangle^\gamma, 
\eean
because $k_0 > 3$. For  the second term, we have 
\bean
 \int_{|v-v_*| > |v|/2} \frac{|v-v_*|^\gamma }{ \langle v_* \rangle^{k_0}} \, \d v_*
&\le&
|v/2|^\gamma
 \int_{\R^3} \frac{\d v_* }{ \langle v_* \rangle^{k_0}} \lesssim \langle v \rangle^\gamma. 
 \eean
For $|v| \le 1$, we just write 
$$
|c * g| \lesssim \| g \|_{L^\infty_{\omega_0}}  \int_{\R^3} \frac{|v_*|^\gamma }{ \langle v-v_* \rangle^{k_0}} \, \d v_*
 \lesssim \| g \|_{L^\infty_{\omega_0}}  \int_{\R^3} \frac{|v_*|^\gamma }{ \langle  v_* \rangle^{k_0}} \, \d v_*
 \lesssim \| g \|_{L^\infty_{\omega_0}}.
$$
We conclude the proof of \eqref{eq:c*g} in the case  $\gamma \in (-3,0)$ by gathering these estimates. When $\gamma \in [0,1]$, we write 
$$
|c * g| \lesssim \| g \|_{L^\infty_{\omega_0}}   \int_{\R^3}  \frac{ |v|^\gamma + |v_*|^\gamma  }{ \langle v_* \rangle^{k_0}} \, \d v_*, 
$$
and we immediately deduce \eqref{eq:c*g} by observing that $\gamma-k_0 < -3$. The proof of \eqref{aijFiHj} follows from \eqref{aijg} exactly as in the proof of \cite[Lemma~4.1]{MR4438904}.
\end{proof}

We define 
$$
\AA_0 f : = Q(f,\mu) = (a_{ij} * f) \partial^2_{v_iv_j} \mu - (c*f)\mu.
$$
We recall the results of  \cite[Lemma~2.12]{CTW} and  \cite[Lemma~2.5]{MR3625186}.

\begin{lem}\label{lem:borneA0}
For any admissible weight function $\omega$ and any exponent $p \in [1,\infty]$, there holds
$$
  \AA_0 : L^p(\omega) \to L^p(\mu^{-\vartheta}), \quad \forall \, \vartheta \in (0,1), 
$$
In particular, we have 
\beqn\label{eq:AA0L2L2}
 | \langle \AA_0 f, f \rangle_{L^2_\omega} | \lesssim \| \langle v \rangle^{(\gamma-1)/2} f \|^2_{L^2_{\omega}} .
\eeqn
\end{lem}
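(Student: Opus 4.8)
\textbf{Proof plan for Lemma~\ref{lem:borneA0}.}

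The plan is to estimate $\AA_0 f = (a_{ij}*f)\,\partial^2_{v_iv_j}\mu - (c*f)\mu$ pointwise in $v$ using the convolution bounds already available, and then to transfer those bounds into the weighted $L^p$ statement. First I would apply Lemma~\ref{lem:elementary-abc*g} to get $|(a_{ij}*f)| \lesssim \la v\ra^{\gamma+2}\|f\|_{L^\infty_{\omega_0}}$ and $|c*f| \lesssim \la v\ra^{\gamma}\|f\|_{L^\infty_{\omega_0}}$; however, since the target here is an $L^p(\omega)$-to-$L^p(\mu^{-\vartheta})$ bound rather than an $L^\infty$ bound, I would instead use the integral form directly: write $(a_{ij}*f)(v) = \int a_{ij}(v-v_*) f(v_*)\,\d v_*$ and bound $|a_{ij}(v-v_*)| \lesssim |v-v_*|^{\gamma+2} \lesssim \la v\ra^{\gamma+2}\la v_*\ra^{\gamma+2}$ (splitting on $|v-v_*|\le 1$ vs.\ $>1$ when $\gamma+2$ may be negative, i.e.\ $\gamma<-2$), and similarly $|c(v-v_*)| \lesssim \la v\ra^{\gamma}\la v_*\ra^{|\gamma|}$ away from $\gamma=-3$, with the $\gamma=-3$ case handled by $\bar c * (f/\mu)$-type manipulation or simply by noting $c*f$ then involves $\delta_0$ so $(c*f)\mu = -8\pi f\mu$ which is even better behaved. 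Since $\partial^2_{v_iv_j}\mu$ and $\mu$ both decay like $\mu$ times a polynomial, the prefactors $\la v\ra^{\gamma+2}\mu(v)$ and $\la v\ra^{\gamma}\mu(v)$ are each bounded by $C_\vartheta\,\mu^{\vartheta}(v)$ for any $\vartheta\in(0,1)$, which produces the gain from $\mu$ to $\mu^{-\vartheta}$ in the target space, i.e.\ a factor $\mu^{1-\vartheta}$ to spare.

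The second step is the $L^p$ bookkeeping. After the pointwise split, $|\AA_0 f(v)| \lesssim \mu^{\vartheta}(v)\big[\la v\ra^{\gamma+2}(|\,\cdot\,|^{\gamma+2} \!*|f|)(v) + \la v\ra^{\gamma}(|\,\cdot\,|^{\gamma'} \!*|f|)(v)\big]$ with the polynomial weights absorbed into a rapidly decaying Gaussian. To get $\|\mu^{\vartheta}\AA_0 f\|_{L^p} \lesssim \|\omega f\|_{L^p}$, I would write the weighted convolution kernel applied to $\omega f$: $(|v-v_*|^{\gamma+2}\!*|f|)(v) = \int |v-v_*|^{\gamma+2}\,\omega^{-1}(v_*)\,(\omega|f|)(v_*)\,\d v_*$, and then use Young's convolution inequality in the form $\|\mu^\vartheta\,\Phi * (\omega f)\|_{L^p} \le \|\mu^\vartheta\|_{?}\,\|\Phi\|_{L^1_{\mathrm{loc}}}\cdots$ — more carefully, since the kernel $|z|^{\gamma+2}$ is only locally integrable (for $\gamma+2>-3$, always true) I would split it as $|z|^{\gamma+2}\mathbf 1_{|z|\le 1} + |z|^{\gamma+2}\mathbf 1_{|z|>1}$, bound the first piece by Young with an $L^1$ kernel and the remaining Gaussian/polynomial factors, and the second piece by observing $|z|^{\gamma+2}\mathbf 1_{|z|>1} \le \la v\ra^{\gamma+2}\la v_*\ra^{\gamma+2}$, again absorbed into the Gaussian; the key point is that $\mu^\vartheta(v)$ times any polynomial in $v$ is in $L^p_v \cap L^\infty_v$, so all the $v$-polynomial overhead is harmless. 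The weight $\omega^{-1}(v_*)$ multiplying $(\omega|f|)(v_*)$ in the integrand is itself bounded (at worst by a constant for polynomial $\omega$, or a Gaussian for exponential $\omega$), so it only helps.

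Finally, \eqref{eq:AA0L2L2} follows by taking $p=2$ and testing against $f$: $|\langle \AA_0 f, f\rangle_{L^2_\omega}| = |\langle \omega \AA_0 f, \omega f\rangle_{L^2}| \le \|\omega\AA_0 f\|_{L^2_{?}}\,\|\,\cdot\,\|$ — here one must be a bit sharper than the crude $L^p$-to-$L^p$ statement, since the claimed right-hand side is $\|\la v\ra^{(\gamma-1)/2}f\|^2_{L^2_\omega}$, with a genuine weight loss of only $\la v\ra^{\gamma-1}$ rather than a Gaussian. For this I would redo the pairing directly: $\langle \AA_0 f, f\rangle_{L^2_\omega} = \int (a_{ij}*f)\partial^2_{ij}\mu\,\omega^2 f - \int(c*f)\mu\,\omega^2 f$, bound $|(a_{ij}*f)(v)| \lesssim \la v\ra^{\gamma+2}\|f\|_{L^\infty_{\omega_0}}$ via Lemma~\ref{lem:elementary-abc*g} — wait, this reintroduces an $L^\infty$ norm, which is not what \eqref{eq:AA0L2L2} wants; instead I would keep everything bilinear: estimate $|(a_{ij}*f)(v)|$ and $|(c*f)(v)|$ by convolution against $|f|$, use Cauchy–Schwarz in $v_*$ pairing $\la v_*\ra^{(\gamma-1)/2}\omega(v_*)|f(v_*)|$ against an $L^2_{v_*}$ kernel, and note $\mu(v)\omega^2(v)\la v\ra^{\gamma+2}$ (resp.\ $\mu(v)\omega^2(v)\la v\ra^\gamma$) is pointwise $\lesssim \la v\ra^{\gamma-1}\omega^2(v)\,\mu^{1/2}(v)$ times the constant swallowing everything, so that the $v$-integral closes against $\|\la v\ra^{(\gamma-1)/2} f\|_{L^2_\omega}$. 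The only genuinely delicate point — the main obstacle — is the local singularity of the kernel $|z|^\gamma$ (for $c$) when $\gamma\in(-3,0)$ near $z=0$: one must split the convolution domain into $|v-v_*|\le |v|/2$ and its complement exactly as in the proof of Lemma~\ref{lem:elementary-abc*g} (and in \cite[Lemma~2.1(e)]{MR3625186}), use $k_0>3$ and $\gamma>-3$ to see the near-diagonal piece is integrable, and track that the resulting $v$-weight is no worse than $\la v\ra^\gamma$; everything else is routine weighted Young/Cauchy–Schwarz bookkeeping.
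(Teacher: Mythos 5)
The paper itself does not prove Lemma~\ref{lem:borneA0}; it merely recalls it from \cite[Lemma~2.12]{CTW} and \cite[Lemma~2.5]{MR3625186}, so there is no in-paper proof to compare against. Your reconstruction is nonetheless essentially correct, and rests on the right mechanism: since $\partial^2_{v_iv_j}\mu$ and $\mu$ each carry a spare Gaussian factor, any polynomially growing pointwise bound on the convolutions $a_{ij}*f$ and $c*f$ is absorbed wholesale, and the only real work is to control those convolutions near and far from the diagonal, handling the local singularity of $|z|^{\gamma+2}$ (for $\gamma<-2$) and $|z|^\gamma$ (for $\gamma\in(-3,0)$) by the near/far split you describe, exactly as in the proof of Lemma~\ref{lem:elementary-abc*g}.

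Two small points. First, the tail bound $|z|^{\gamma+2}\mathbf 1_{|z|>1}\le\la v\ra^{\gamma+2}\la v_*\ra^{\gamma+2}$ you invoke is false when $\gamma+2<0$ (the left side is of order one near $|z|=1$, the right side decays); but this is harmless — the crude bound $|z|^{\gamma+2}\mathbf 1_{|z|>1}\le 1$ suffices since the Gaussian absorbs any resulting polynomial in $v$. Second, your instinct about \eqref{eq:AA0L2L2} is right that pairing the crude $L^2(\omega)\to L^2(\mu^{-\vartheta})$ bound with Cauchy--Schwarz only yields $\|\omega f\|_{L^2}^2$, which is weaker than $\|\la v\ra^{(\gamma-1)/2}\omega f\|_{L^2}^2$ for $\gamma<1$. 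Your Schur-type bilinear argument is one way to repair this; a lighter alternative is to observe that the proof of the first part works for any weight $\la v\ra^{k'}$ with $k'$ merely large enough that $\la v\ra^{-k'}\in L^{p'}$ (a far weaker requirement than admissibility, and one satisfied by $\omega_1:=\la v\ra^{(\gamma-1)/2}\omega$ whenever $\omega$ is admissible, using $k_0>8+\gamma$), apply the mapping property with $\omega_1$, note $\mu^\vartheta\omega^2\lesssim\la v\ra^{(\gamma-1)/2}\omega$ for $\vartheta$ close to $1$, and then Cauchy--Schwarz closes the estimate. Either way the lemma goes through.
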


We state now some variants of well-known estimates on the Landau operator.

\begin{prop}\label{prop:EstimOpLandau3}
For any admissible weight function $\omega$ as defined in \eqref{eq:omega}, there holds
\begin{equation}\label{eq:Qgff_omega}
 \la Q(g,f) , f \ra_{L^2_v (\omega)}  \lesssim \| g \|_{L^\infty_{\omega_0}}   \| f \|_{H^{1,*}_v (\omega)}^2,
\end{equation}
and
\begin{equation}\label{eq:Qgfh_omega}
\begin{aligned}
 \left| \langle   Q(g,f) , h \ra_{L^2_v (\omega)} \right|  &\lesssim \| g \|_{L^\infty_{\omega_0}} \|  \langle v \rangle^{\gamma/2} f \|_{L^2_v(\omega)}   \Bigl( \| \nabla^2_v(h \omega) \|_{L^2_v (\la v \ra^{\gamma/2+2})} \\
&\quad
+ \| \nabla_v(h \omega) \|_{L^2_v (\la v \ra^{\gamma/2+1})} 
+ \|\omega h \|_{L^2_v (\la v \ra^{\gamma/2+s})} \Bigr).
\end{aligned}
\end{equation} 
In particular we have
\begin{equation}\label{eq:piQgf_omega}
\left| \langle \pi Q(g,f) , f \ra_{L^2_v (\omega)} \right|   \lesssim \| g \|_{L^\infty_{\omega_0}}   \| \langle v \rangle^{\gamma/2}f  \|_{L^2_v (\omega)}^2
\end{equation} 
and
\begin{equation}\label{eq:Qperpgf_omega}
\left| \langle Q^\perp(g,f) , f \ra_{L^2_v (\omega)} \right|   \lesssim \| g \|_{L^\infty_{\omega_0}}   \| f \|_{H^{1,*}_v (\omega)}^2.
\end{equation} 

\end{prop}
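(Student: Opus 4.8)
The plan is to derive the three weighted estimates from the divergence-form identity \eqref{eq:oplandau1} for $Q(g,f)$, moving the weight $\omega$ onto $f$ and carefully distributing the derivatives. First I would write, for a test function $h$,
\be
\la Q(g,f), h\ra_{L^2_v(\omega)} = -\int_{\R^3} \bigl[ (a_{ij}*g)\partial_{v_j} f - (b_i*g) f \bigr]\,\partial_{v_i}(h\omega^2)\,\dv,
\ee
then substitute $f = (f\omega)\omega^{-1}$ and $h\omega^2 = (h\omega)\omega$, so that every occurrence of $f$ and $h$ is hit by the weight and the remaining factors involve only $\omega^{-1}\nabla_v\omega = O(\la v\ra^{s-1})$ (recall $s=0$ in the polynomial case, and $\nabla_v\omega/\omega = O(\la v\ra^{s-1})$ in the exponential case). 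The point of introducing the \emph{anisotropic} gradient $\widetilde\nabla_v$ and the matrix $\mathbf B(v)$ from Lemma~\ref{lem:elementary-abc*bar} is exactly so that \eqref{aijFiHj} converts the leading term $(a_{ij}*g)\,F_iH_j$ with $F = \nabla_v(f\omega)$-type vector and $H = \nabla_v(h\omega)$-type vector into $\|g\|_{L^\infty_{\omega_0}}\,|\mathbf B(v)F|\,|\mathbf B(v)H|$, and by \eqref{eq:Bnabla=tildenabla} we have $|\mathbf B(v)\nabla_v\phi|\simeq |\la v\ra^{\gamma/2}\widetilde\nabla_v\phi|$. The lower-order pieces (those carrying a factor $\omega^{-1}\nabla_v\omega$ or coming from the $b_i*g$ term, or the cross terms where only one derivative lands on $f\omega$) are controlled using \eqref{aijg} and \eqref{eq:b*g}, which give $|(a_{ij}*g)|\lesssim\la v\ra^{\gamma+2}\|g\|_{L^\infty_{\omega_0}}$ and $|b_i*g|\lesssim\la v\ra^{\gamma+1}\|g\|_{L^\infty_{\omega_0}}$; matching powers of $\la v\ra$ one checks these all fit into the weight $\la v\ra^{\gamma/2+s}f$ paired against $\la v\ra^{\gamma/2}\widetilde\nabla$ or $\la v\ra^{\gamma/2+s}$ of the other factor.

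For \eqref{eq:Qgff_omega} I would take $h=f$, so both vector factors are built from $f\omega$; the leading term becomes $\lesssim\|g\|_{L^\infty_{\omega_0}}\|\la v\ra^{\gamma/2}\widetilde\nabla_v(f\omega)\|_{L^2_v}^2$, and the lower-order terms are $\lesssim\|g\|_{L^\infty_{\omega_0}}\|\la v\ra^{\gamma/2+s/2}f\|_{L^2_v(\omega)}\bigl(\|\la v\ra^{\gamma/2}\widetilde\nabla_v(f\omega)\|_{L^2_v}+\|\la v\ra^{\gamma/2+s/2}f\|_{L^2_v(\omega)}\bigr)$, where I should be mildly careful to split the weight power $\gamma+s$ evenly via Cauchy--Schwarz so as to land exactly on the two summands of $\|f\|_{H^{1,*}_v(\omega)}^2$. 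Summing gives \eqref{eq:Qgff_omega}. For \eqref{eq:Qgfh_omega} I keep $h$ general and I do \emph{not} integrate by parts back onto $f$ in the leading term; instead I bound $|\mathbf B(v)\nabla_v(h\omega)|\lesssim \la v\ra^{\gamma/2+1}|\nabla_v^2(h\omega)|+\dots$ — more precisely, after one integration by parts moving the $\partial_{v_i}$ off $f$, the worst term is $(a_{ij}*g)\partial_{v_j}(f\omega)\cdot\partial_{v_i}(h\omega)$-type and also $\partial_{v_i}(a_{ij}*g)\cdot(\ldots)$ and terms with $\partial_{v_i}\partial_{v_j}$ on $h\omega$; using \eqref{aijg} to bound $|a_{ij}*g|\lesssim\la v\ra^{\gamma+2}$ and pulling out $\|\la v\ra^{\gamma/2}f\|_{L^2_v(\omega)}$ by Cauchy--Schwarz yields the three terms $\|\nabla_v^2(h\omega)\|_{L^2_v(\la v\ra^{\gamma/2+2})}$, $\|\nabla_v(h\omega)\|_{L^2_v(\la v\ra^{\gamma/2+1})}$, $\|\omega h\|_{L^2_v(\la v\ra^{\gamma/2+s})}$ on the right, the last coming from the $c*g$ / weight-derivative contributions via \eqref{eq:c*g}.

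Finally, \eqref{eq:piQgf_omega} and \eqref{eq:Qperpgf_omega} follow from \eqref{eq:Qgfh_omega} and \eqref{eq:Qgff_omega} respectively: for the projected part, $\pi Q(g,f)$ is a finite linear combination of $\mu$, $v_i\mu$, $|v|^2\mu$ with coefficients $\int Q(g,f)\psi\,\dv$ for $\psi\in\{1,v_i,|v|^2\}$, and applying \eqref{eq:Qgfh_omega} with $h=\psi$ (after reabsorbing $\omega^{-2}$, noting $\psi$ and its derivatives grow polynomially and $\mu$ decays faster than any $\omega^{-1}$) gives $|\int Q(g,f)\psi\,\dv|\lesssim\|g\|_{L^\infty_{\omega_0}}\|\la v\ra^{\gamma/2}f\|_{L^2_v(\omega)}$, whence $|\la\pi Q(g,f),f\ra_{L^2_v(\omega)}|\lesssim\|g\|_{L^\infty_{\omega_0}}\|\la v\ra^{\gamma/2}f\|_{L^2_v(\omega)}\cdot\|\la v\ra^{\gamma/2}f\|_{L^2_v(\omega)}$ since $\la\mu^{(\cdot)},f\ra_{L^2_v(\omega)}$ is again controlled by $\|\la v\ra^{\gamma/2}f\|_{L^2_v(\omega)}$; and $Q^\perp=(I-\pi)Q$ so $\la Q^\perp(g,f),f\ra=\la Q(g,f),f\ra-\la\pi Q(g,f),f\ra$ with both pieces already estimated, and $\|\la v\ra^{\gamma/2}f\|_{L^2_v(\omega)}^2\le\|f\|_{H^{1,*}_v(\omega)}^2$. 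The main obstacle I anticipate is purely bookkeeping: tracking the exact powers of $\la v\ra$ through the weight-derivative factors and the splitting of $\gamma+s$, and verifying in the borderline very-soft and Coulomb cases (where $c*g$ includes the $\delta_0$ for $\gamma=-3$, handled by \eqref{eq:c*g} directly) that nothing overshoots the $H^{1,*}_v(\omega)$ norm; the structural input — identity \eqref{eq:oplandau1} plus Lemmas~\ref{lem:elementary-abc*bar} and \ref{lem:elementary-abc*g} — does all the real work.
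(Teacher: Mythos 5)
Your proposal takes essentially the same route as the paper: integration by parts starting from \eqref{eq:oplandau1} with the weight substitutions $f\mapsto(f\omega)\omega^{-1}$ and $h\omega^2\mapsto(h\omega)\omega$, the pointwise bound \eqref{aijFiHj} combined with \eqref{eq:Bnabla=tildenabla} for the leading quadratic term, Lemma~\ref{lem:elementary-abc*g} for the lower-order pieces, and the moment trick of testing \eqref{eq:Qgfh_omega} against $\xi\omega^{-2}$ to get \eqref{eq:piQgf_omega} and hence \eqref{eq:Qperpgf_omega}. The one cosmetic difference is that for \eqref{eq:Qgfh_omega} the paper starts from the double-divergence form \eqref{eq:oplandau3} and moves both derivatives onto $h\omega$ in two clean integrations by parts, which tidies up your description: after only the one integration by parts you display, a term $(a_{ij}*g)\partial_{v_j}f\,\partial_{v_i}(h\omega^2)$ still carries a derivative on $f$, so a second pass is in fact needed before one can "pull out $\|\la v\ra^{\gamma/2}f\|_{L^2_v(\omega)}$" and see the $\nabla^2_v(h\omega)$ terms you list.
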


\begin{proof}[Proof of Proposition~\ref{prop:EstimOpLandau3}]
%We almost repeat the proof of \cite[Proposition~4.2, eq.~(4.14)]{MR4438904}. 
Using the shorthands
$$
\widetilde a_{ij} := a_{ij} * g, \quad 
\widetilde b_i  :=  b_i* g, \quad 
\widetilde c  :=  c* g, \quad
\partial_{v_i} \omega = v_i \wp \omega, \quad \wp  :=  k \langle v \rangle^{s-2}, 
$$
with the same conventions for $k$ and $s$ as in \eqref{eq:omega}, we split the proof into three steps.

\medskip\noindent
\textit{Step 1.} We first write, using the formulation \eqref{eq:oplandau1} for $Q(g,f)$ and one integration by parts,
\bean
\la Q(g,f) , f \ra_{L^2_\omega} 
&=& \int_{\R^3}\partial_{v_i} \left\{  \widetilde a_{ij} \partial_{v_j} f - \widetilde b_i  f \right\} f \, \omega^2 \, \d v
\\
&=&  - \int_{\R^3} \widetilde a_{ij} \left\{ \partial_{v_j} (f\omega) - f \omega v_j \wp \right\}  \left\{  \partial_{v_i} (f \omega)  +  (f \omega) v_i \wp  \right\}  \, \d v  \\
&& + \int_{\R^3}   \widetilde b_i  f \omega   \left\{  \partial_{v_i} (f \omega)  +  (f \omega) v_i \wp  \right\}  \, \d v,
\eean
from which we get, performing another integration by parts in the first term of the second integral,
\bean
 \la Q(g,f) , f\ra_{L^2_\omega} 
   &=& 
   - \int_{\R^3} \widetilde a_{ij}   \left\{ \partial_{v_j} (f\omega) - f \omega v_j \wp \right\} \left\{ \partial_{v_i} (f\omega) + f \omega v_i \wp \right\}     \, \d v
\\
 && - \frac12  \int_{\R^3}   \widetilde c  (f \omega)^2    \, \d v
   +  \int_{\R^3}   \widetilde b_i    (f \omega)^2 v_i \wp    \, \d v.
\eean
Using \eqref{aijFiHj}, \eqref{eq:Bnabla=tildenabla}, $|\mathbf{B}(v) v| = |v| \sqrt{\ell_1(v)}$ and Lemmas~\ref{lem:elementary-abc*bar} and~\ref{lem:elementary-abc*g}, we have 
\bean
&&\left|  \widetilde a_{ij}  \left\{ \partial_{v_j} (f\omega) - f \omega v_j\wp  \right\}    \left\{  \partial_{v_i} (f \omega)  +  f \omega v_i \wp  \right\} \right|
\\
&&\quad \lesssim \| g \|_{L^\infty_{\omega_0}} \left(|\mathbf{B} \nabla(f\omega)|^2 + |\mathbf{B} v|^2 |f \wp \omega|^2 \right) \\
&&  \quad \lesssim \| g \|_{L^\infty_{\omega_0}}   \left( \langle v \rangle^\gamma |\widetilde \nabla(f\omega)|^2 + \langle v \rangle^{\gamma + 2s-2} |f\omega|^2 \right) .
 \eean
We deduce \eqref{eq:Qgff_omega} thanks to Lemma~\ref{lem:elementary-abc*g}.
  
\medskip\noindent
\textit{Step 2.} We now use the formulation \eqref{eq:oplandau3} for $Q(g,f)$ to write
\bean
\la Q(g,f) , h \ra_{L^2_\omega} 
&=& \int_{\R^3} \left\{ \partial_{v_i,v_j} (\widetilde a_{ij}  f) - 2 \partial_{v_i} (\widetilde b_i  f) \right\}  h \, \omega^2 \, \d v
\\
&=& \int_{\R^3}   \widetilde a_{ij}  f \omega  \left\{ \partial_{v_i,v_j} ( h  \omega)  + 2\partial_{v_j} ( h  \omega) v_i \wp  + h  \partial_{v_i}(v_j \wp \omega )  \right\} \, \d v
\\
&&+2 \int_{\R^3}  \widetilde b_i  f \omega \left\{ \partial_{v_i} ( h  \omega)  +  h \omega v_i \wp  \right\} \, \d v .
\eean
We observe that
\bean
&&\left|  \widetilde a_{ij}  \left\{ \partial_{v_i,v_j} ( h  \omega)  + 2\partial_{v_j} ( h  \omega) v_i \wp  + h  \partial_{v_i}(v_j \wp \omega ) \right\}     \right|
\\
&&\quad \lesssim \| g \|_{L^\infty_{\omega_0}} \la v \ra^{\gamma+2} \left(  |\nabla_v^2 (h \omega)| + \la v \ra^{s-2} |\nabla_v (h \omega)|  + \la v \ra^{s-2} |h \omega| \right)  ,
\eean
and we conclude to \eqref{eq:Qgfh_omega} by using Lemma~\ref{lem:elementary-abc*g}, writing $\omega = (\omega \la v \ra^{\gamma/2}) \la v \ra^{-\gamma/2}$ and applying the Cauchy-Schwarz inequality.

\medskip\noindent
\textit{Step 3.}
Observe now that  from \eqref{eq:Qgfh_omega}, for any polynomial function $\xi=\xi(v)$ such that $\xi \omega^{-1} \in L^2_v(\la v \ra^{\gamma/2+s})$, $\nabla_v(\xi \omega^{-1}) \in L^2_v(\la v \ra^{\gamma/2+1})$ and $\nabla_v^2(\xi \omega^{-1}) \in L^2_v(\la v \ra^{\gamma/2+2})$, we have
\begin{equation}\label{eq:Qgf-moments}
\left| \int_{\R^3} \xi(v) Q(g,f)(v) \, \d v \right|
= \la Q(g,f) ,  \xi \omega^{-2} \ra_{L^2_\omega} 
\lesssim \| g \|_{L^\infty_{\omega_0}} \| \la v \ra^{\gamma/2} f \|_{L^2_v(\omega) } .
\end{equation}
We finally write
$$
\la \pi Q(g,f) , f \ra_{L^2_v(\omega)} \lesssim \| \la v \ra^{-\gamma/2} \pi Q(g,f) \|_{L^2_v(\omega)} \| \la v \ra^{\gamma/2} f \|_{L^2_v(\omega)}
$$
and observe that from the very definition of $\pi$ in \eqref{def:pi}
$$
\| \la v \ra^{-\gamma/2} \pi Q(g,f) \|_{L^2_v(\omega)} \lesssim \sum_{i=0}^4 \left| \int_{\R^3} \xi_i(v) Q(g,f)(v) \, \d v \right|, 
$$
with $\xi_0 := 1$, $\xi_i := v_i$, $i=1,2,3$, $\xi_4 := |v|^2$, which implies~\eqref{eq:piQgf_omega}. Recalling the definition of $Q^\perp = (I-\pi) Q$, we thus deduce \eqref{eq:Qperpgf_omega} from the estimates~\eqref{eq:Qgff_omega} and \eqref{eq:piQgf_omega}.
\end{proof}

%%%%%%%%%%%%%%%%%%%%%%%%%%%%%%%%%%%%%%%%% 
\subsection{Estimates for second order linear operators}

Consider the parabolic operator $\mathbf L$ acting only on the velocity variable $v \in \R^3$ defined by
\begin{equation}\label{eq:OperatorbfL}
\mathbf{L} g = \sigma_{ij} \partial_{v_i, v_j}g + \nu_i \partial_{v_i} g + \eta g, 
\end{equation}
where $\sigma_{ij}=\sigma_{ij}(v)$ is a symmetric matrix, $\nu_i =\nu_i(v)$ a vector field and $\eta=\eta(v)$ a scalar function, and we use the convention of summation over repeated indices. We observe that the dual operator of $\mathbf{L}$ is
\begin{equation}
\mathbf{L}^* h = \sigma_{ij} \partial_{v_i, v_j} h + \left(2 \partial_{v_j} \sigma_{ij} -\nu_i \right)\partial_{v_i} h + \left( \partial_{v_i, v_j} \sigma_{ij} - \partial_{v_i} \nu_i + \eta \right) h.
\end{equation}

We present a variant of \cite[Lemma~3.8]{MR3779780}, \cite[Lemma~3.8]{MR3488535}, \cite[Lemma~2.1]{MR4265692}, see also \cite[Lemma 7.7]{sanchez:hal-04093201}.

\begin{lem}\label{lem:dissipativity_Lp}
For any $p\in[1,+\infty)$ and any weight function $\omega=\omega(v)$, there holds 
$$
\begin{aligned}
\int_{\R^3} (\mathbf{L} g) |g|^{p-2} g \omega^p \, \d v 
%&= - \frac{4(p-1)}{p^2} \int \sigma_{ij} \partial_{v_i} (g|g|^{p/2-1}) \partial_{v_j} (g|g|^{p/2-1}) \omega^p \, \d v + \int \varpi_{\omega,p}^0 |g|^p \omega^p \, \d v \\
&= - \frac{4(p-1)}{p^2} \int_{\R^3} \sigma_{ij} \partial_{v_i} G \partial_{v_j} G \, \d v + \int_{\R^3} \varpi_{\omega,p}^\mathbf{L} \, |g|^p \omega^p \, \d v,
\end{aligned}
$$
with $G := \omega^{p/2} g|g|^{p/2-1}$ and 
\begin{equation}\label{eq:varpiL_omega_p}
\begin{aligned}
\varpi_{\omega,p}^\mathbf{L} (v) 
&= 2\left( 1- \frac{1}{p} \right) \sigma_{ij} \frac{\partial_{v_i} \omega}{\omega} \frac{\partial_{v_j} \omega}{\omega} + \left( \frac{2}{p}-1 \right)\sigma_{ij} \frac{\partial_{v_i, v_j} \omega}{\omega} + \frac{2}{p} \partial_{v_j} \sigma_{ij} \frac{\partial_{v_i} \omega}{\omega} \\
&\quad
- \nu_i \frac{\partial_{v_i} \omega}{\omega} + \frac{1}{p} \partial_{v_i, v_j} \sigma_{ij} - \frac{1}{p} \partial_{v_i} \nu_i + \eta.
\end{aligned}
\end{equation}
\end{lem}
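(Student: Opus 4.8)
The statement is a purely computational identity, obtained by multiplying $\mathbf{L}g$ by $|g|^{p-2}g\,\omega^p$, integrating over $\R^3$, and performing a sequence of (boundary-term-free) integrations by parts. The plan is to treat separately the three contributions coming from the three terms of $\mathbf{L}g$ in \eqref{eq:OperatorbfL}. Throughout one uses the two elementary identities $\partial_{v_i}(\omega^p)=p\,\omega^p\,\partial_{v_i}\omega/\omega$ and $(\partial_{v_i}g)\,|g|^{p-2}g=\tfrac1p\,\partial_{v_i}(|g|^p)$, together with the symmetry of $\sigma$.

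The zeroth-order term is immediate: $\eta g\cdot|g|^{p-2}g\,\omega^p=\eta\,|g|^p\omega^p$, which gives the $+\eta$ summand of $\varpi^{\mathbf{L}}_{\omega,p}$. For the first-order term, one writes $\int_{\R^3}\nu_i(\partial_{v_i}g)|g|^{p-2}g\,\omega^p\,\d v=\tfrac1p\int_{\R^3}\nu_i\,\omega^p\,\partial_{v_i}(|g|^p)\,\d v=-\tfrac1p\int_{\R^3}\partial_{v_i}(\nu_i\omega^p)\,|g|^p\,\d v$, and expanding the derivative yields the $-\tfrac1p\,\partial_{v_i}\nu_i$ and $-\nu_i\,\partial_{v_i}\omega/\omega$ summands of $\varpi^{\mathbf{L}}_{\omega,p}$.

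The core of the proof is the second-order term $T_2:=\int_{\R^3}\sigma_{ij}(\partial_{v_i,v_j}g)|g|^{p-2}g\,\omega^p\,\d v$. The plan is to use the Leibniz identity
\[
(\partial_{v_i,v_j}g)\,|g|^{p-2}g=\partial_{v_i}\big[(\partial_{v_j}g)|g|^{p-2}g\big]-(p-1)\,|g|^{p-2}(\partial_{v_i}g)(\partial_{v_j}g),
\]
and to handle the two resulting pieces. The first piece is integrated by parts against $\sigma_{ij}\omega^p$ and then, after inserting $(\partial_{v_j}g)|g|^{p-2}g=\tfrac1p\partial_{v_j}(|g|^p)$, once more against $|g|^p$; this produces the $\tfrac1p\,\partial_{v_i,v_j}\sigma_{ij}$ summand and parts of the $\partial_{v_j}\sigma_{ij}\,\partial_{v_i}\omega/\omega$, $\sigma_{ij}\,\partial_{v_i,v_j}\omega/\omega$ and $\sigma_{ij}\,\partial_{v_i}\omega\,\partial_{v_j}\omega/\omega^2$ summands. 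For the second piece one computes $\partial_{v_i}G=\tfrac p2\,\omega^{p/2}|g|^{p/2-1}\big(\partial_{v_i}g+g\,\partial_{v_i}\omega/\omega\big)$, whence, using the symmetry of $\sigma$,
\[
\sigma_{ij}\,|g|^{p-2}(\partial_{v_i}g)(\partial_{v_j}g)\,\omega^p=\frac{4}{p^2}\,\sigma_{ij}\,\partial_{v_i}G\,\partial_{v_j}G-\sigma_{ij}\,\omega^p|g|^{p-2}\Big(2g\,\frac{\partial_{v_i}\omega}{\omega}\,\partial_{v_j}g+g^2\,\frac{\partial_{v_i}\omega\,\partial_{v_j}\omega}{\omega^2}\Big),
\]
and the remaining lower-order terms are again reduced to integrals against $|g|^p$ by the same two elementary identities and one further integration by parts. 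Collecting all contributions, the gradient part is exactly $-\tfrac{4(p-1)}{p^2}\int_{\R^3}\sigma_{ij}\,\partial_{v_i}G\,\partial_{v_j}G\,\d v$, and all remaining coefficients of $|g|^p\omega^p$ add up to $\varpi^{\mathbf{L}}_{\omega,p}$ as written in \eqref{eq:varpiL_omega_p}.

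The only genuine difficulty is bookkeeping: organizing the numerous integrations by parts so that boundary terms at infinity are legitimately discarded (harmless here, since the lemma will be applied with rapidly decaying data and weights — alternatively one first establishes the formal identity and then regularizes and truncates), and verifying that the several terms of the form $\sigma_{ij}\,\partial_{v_i}\omega\,\partial_{v_j}\omega/\omega^2$, $\sigma_{ij}\,\partial_{v_i,v_j}\omega/\omega$ and $\partial_{v_j}\sigma_{ij}\,\partial_{v_i}\omega/\omega$ recombine with precisely the stated rational coefficients in $p$. The case $p=1$ is consistent, since then $p-1=0$ kills the gradient term and the computation degenerates accordingly.
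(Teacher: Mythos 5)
Your proof is correct and follows essentially the same route as the paper: a sequence of integrations by parts combined with the identity $\partial_{v_i}(|g|^p)=p\,|g|^{p-2}g\,\partial_{v_i}g$ and the symmetry of $\sigma$, with the Dirichlet form recovered as $\sigma_{ij}\partial_{v_i}G\,\partial_{v_j}G$. The only cosmetic difference is that the paper introduces the intermediate variable $h=\omega g$ and converts to $G$ only at the very last step via $4\,\partial_{v_i}(h|h|^{p/2-1})\partial_{v_j}(h|h|^{p/2-1})=p^2\,|h|^{p-2}\partial_{v_i}h\,\partial_{v_j}h$, whereas you work with $g$ and $G$ directly; the bookkeeping and the coefficients come out the same either way.
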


\begin{rem} 
We also define $\varpi_{\omega,\infty}^\mathbf{L}$ by the above formula \eqref{eq:varpiL_omega_p} with the convention $1/\infty=0$.
\end{rem}

\begin{proof}[Proof of Lemma~\ref{lem:dissipativity_Lp}]
Setting $\Phi'(s)=|s|^{p-2} s$, we compute
$$
\begin{aligned}
\int_{\R^3} (\mathbf{L} g) \Phi'(g) \omega^p \, \d v 
&= \int_{\R^3} \sigma_{ij} \partial_{v_i, v_j}g \Phi'(g) \omega^p \, \d v  + \int_{\R^3} \nu_i \partial_{v_i} g \Phi'(g) \omega^p \, \d v 
+ \int_{\R^3} \eta g \Phi'(g) \omega^p \, \d v  \\
&=: T_1 + T_2 + T_3,
\end{aligned}
$$
and we denote $h = \omega g$ in the sequel. For the term $T_2$, we write $ \partial_{v_i} (h \omega^{-1}) = \omega^{-1} \partial_{v_i} h -  \omega^{-2} h \partial_{v_i} \omega$, and thus
$$
\begin{aligned}
T_2 
&= 
\int_{\R^3} (\nu_i \partial_{v_i} h) \, \Phi'(h) \, \d v   - \int_{\R^3} (\nu_i \partial_{v_i} \omega) \,  \omega^{-1}\, h \Phi'(h) \, \d v  \\
%&= 
%\frac1p \int_{\R^3} \nu_i \partial_{v_i} (|h|^p)  - \int \nu_i \partial_{v_i} \omega \,  \omega^{-1}\, |h|^p \, \d v \\
&=
  -\frac1p \int_{\R^3} (\partial_{v_i} \nu_i)  \, |h|^p \, \d v 
- \int \left( \nu_i \frac{\partial_{v_i} \omega}{\omega} \right)   |h|^p \, \d v ,
\end{aligned}
$$
thanks to an integration by parts in last line.

For the term $T_1$, we use integration by parts to obtain
$$
\begin{aligned}
T_1 & = 
\int_{\R^3} \sigma_{ij}\partial_{v_i, v_j}(h \omega^{-1}) \Phi'(h) \omega \, \d v   \\
&= 
- \int_{\R^3} \partial_{v_i} (h m^{-1}) (\partial_{v_j} \sigma_{ij}) \Phi'(h) \, \omega \, \d v 
- \int_{\R^3} \partial_{v_i} (h \omega^{-1}) \sigma_{ij} \, \partial_{v_j} (\Phi'(h) \omega ) \, \d v \\
&=: T_{11} + T_{12}.
\end{aligned}
$$
Observing that
$$
\begin{aligned}
&\partial_{v_j} (h \omega^{-1})\partial_{v_i} \left(  \Phi'(h) \omega  \right) 
= (p-1)\partial_{v_i} h \partial_{v_j} h \, |h|^{p-2}
+ \frac{1}{p} \partial_{v_i} m  \partial_{v_j} (|h|^p) \, \omega^{ -1} \\
&\qquad\quad
- \frac{p-1}{p} \partial_{v_i} (|h|^p) \partial_{v_j} \omega \, \omega^{-1}
-   \partial_{v_i} \omega \partial_{v_j} \omega \, \omega^{-2} \, |h|^p
\end{aligned}
$$
and using the symmetry of $\sigma_{ij}$, it follows
$$
\begin{aligned}
T_{12} &= -(p-1)\int_{\R^3} \sigma_{ij} \partial_{v_i} h \partial_{v_j} h  \, |h|^{p-2} \, \d v \\
&\quad - \left[   \frac{2}{p}  -1  \right] \int_{\R^3} \sigma_{ij} \partial_{v_i} \omega \partial_{v_j}(|h|^p) \, \omega^{-1}  \, \d v 
  +  \int_{\R^3} \sigma_{ij} \partial_{v_i} \omega \partial_{v_j} \omega \, \omega^{ -2}\, |h|^p \, \d v .
\end{aligned}
$$
Integrating by parts the second term above gives
$$
\begin{aligned}
T_{12} &= -(p-1)\int_{\R^3} (\sigma_{ij} \partial_{v_i} h \partial_{v_j} h)  \, |h|^{p-2} \, \d v 
+\kappa_1(p) \int_{\R^3} \left(\partial_{v_j} \sigma_{ij} \frac{\partial_{v_i} \omega}{\omega}\right)   \, |h|^p \, \d v \\
&\quad +\kappa_1(p) \int_{\R^3} \left( \sigma_{ij} \frac{\partial_{v_i, v_j} \omega}{\omega}\right)  \,  |h|^p \, \d v 
+ \kappa_2(p) \int_{\R^3} \left(\sigma_{ij} \frac{\partial_{v_i} \omega }{\omega} \frac{\partial_{v_j} \omega}{\omega} \right) \,   |h|^p\, \d v ,
\end{aligned}
$$
with
$\kappa_1(p) := \tfrac2p - 1$ and $\kappa_2(p) := 2 \left( 1 - \tfrac1p \right)$. 
On the other hand, for $T_{11}$ we obtain, thanks to an integration by parts,
$$
\begin{aligned}
T_{11} &= -   \int_{\R^3} \partial_{v_i} (h \omega^{-1}) (\partial_{v_j} \sigma_{ij}) \Phi'(h) \, \omega  \, \d v \\
&= -\int_{\R^3} \partial_{v_i} h \Phi'(h) (\partial_{v_j} \sigma_{ij})\, \d v   - \int_{\R^3} \partial_{v_i} (\omega^{-1}) (\partial_{v_j} \sigma_{ij}) h \Phi'(h) \omega \, \d v  \\
&= \frac1p \int_{\R^3} (\partial_{v_i, v_j} \sigma_{ij})    \, |h|^p \, \d v 
+ \int_{\R^3} \left(\partial_{v_j} \sigma_{ij} \frac{\partial_{v_i} \omega}{\omega} \right)  \, |h|^p \, \d v .
\end{aligned}
$$
Gathering previous estimates gives
$$
\begin{aligned}
\int_{\R^3} (\LL g) \Phi'(g) \omega^p \, \d v 
= -(p-1) \int_{\R^3} (\sigma_{ij} \partial_{v_i} h \partial_{v_j} h ) \, |h|^{p-2} \, \d v 
+ \int_{\R^3} \varpi^\mathbf{L}_{\omega,p} \, \omega^p \, |g|^p \, \d v ,
\end{aligned}
$$
where
$$
\begin{aligned}
\varpi^\mathbf{L}_{\omega,p} (v)
&:= \kappa_2(p) \left(\sigma_{ij} \frac{\partial_{v_i} \omega }{\omega} \frac{\partial_{v_j} \omega}{\omega} \right)\\
&\quad
+ \kappa_1(p) \left( \sigma_{ij} \frac{\partial_{v_i, v_j} \omega}{\omega}\right) 
+ (1+ \kappa_1(p))\left(\partial_{v_j} \sigma_{ij} \frac{\partial_{v_i} \omega}{\omega}\right)
- \left( \nu_i \frac{\partial_{v_i} \omega}{\omega} \right) \\
&\quad 
+ \frac1p (\partial_{v_i, v_j} \sigma_{ij}) - \frac1p (\partial_{v_i} \nu_i) + \eta,
\end{aligned}
$$
from which identity \eqref{eq:varpiL_omega_p} follows by observing that $ 4 \partial_{v_i} (h |h|^{p/2-1}) \partial_{v_j} (h |h|^{p/2-1}) = p^2  (\partial_{v_i} h \partial_{v_j} h ) |h|^{p-2} $. 
\end{proof}

\begin{rem}\label{rem:varpi_bar_omegaTER}
For latter references, we observe that 
$$
\varpi^\mathbf{L}_{\omega,p} = \varpi^{\mathbf{L}^*}_{m,q}
$$
when $\nu_i = 0$ in the definition of $\mathbf{L}$,  $1/q+1/p=1$ and $m = \omega^{-1}$. 
\end{rem}

%%%%%%%%%%%%%%%%%%%%%%%%%%%%%%%%%%%%%%%%%%%
\subsection{Trace results for Kolmogorov type equations in a $L^2$ framework}
\label{subsec:trace}

We consider a general Kolmogorov type equation
\beqn\label{eq:KolmogorovTrace}
\partial_t g + v \cdot \nabla_x g =  \mathbf{L}_0 g +  G \quad\text{in } (0,T) \times \OO, 
\eeqn
for $T >0$, where
\begin{equation}\label{eq:def:L0}
\mathbf{L}_0 g := \partial_{v_i} ( \sigma_{ij} \partial_{v_j}g) + \nu_i \partial_{v_i} g,  
\end{equation}
for a positive symmetric matrix $\sigma = \sigma(t,x,v)$, a  vector field $\nu=\nu(t,x,{v})$, a source term $G = G(t,x,{v})$ and we assume
\beqn\label{eq:Kolmogorov-hyp}
\sigma_{ij} \in L^\infty_{tx}L^\infty_{{\rm loc},v},
\quad  \nu_i    \in L^\infty_{tx}L^\infty_{{\rm loc},v}.
\eeqn

We adapt some trace results for solutions to  the Vlasov-Fokker-Planck equation developed in  \cite[Section~4.1]{MR2721875}, see also \cite[Theorem~11.1]{sanchez:hal-04093201}, and which are mainly a consequence of the two following facts:
\begin{itemize}
\item  If $g \in L^2_{tx}H^1_{v}$ is a weak solution to the Kolmogorov equation \eqref{eq:KolmogorovTrace},  then it is a renormalized solution; 
 
\item If $g \in L^\infty_{txv}$ with $\nabla_v g \in L^2_{txv}$ is a weak solution to the Kolmogorov equation \eqref{eq:KolmogorovTrace}, then it admits a trace $\gamma g \in L^\infty$  in a renormalized sense.
\end{itemize}

We introduce some notations. We denote 
\beqn\label{eq:defdxi1&2}
 \d\xi^1 :=  |n_x \cdot v| \, \dv \, \d\sigma_{\!x} \ \hbox{ and } \
 \d\xi^2 := (n_x \cdot   v)^2 \, \dv \, \d\sigma_{\!x} 
\eeqn
the measures on the boundary set $\Sigma$. 
We denote by $\mathfrak B_1$  the class of renormalizing functions $\beta \in \Wloc^{2,\infty}(\R)$ such that $\beta',\beta'' \in L^\infty(\R)$;
and by $\mathfrak B_2$  the class of renormalizing functions $\beta \in \Wloc^{2,\infty}(\R)$ such that $\beta'' \in L^\infty(\R)$.
We  define the operators
\bear\label{def:bfM0}
 \mathbf{M}_0 g &:=&  \partial_t g + v \cdot \nabla_x g- \mathbf{L}_0 g,
\\ \label{def:bfM0*}
\mathbf{M}_0^* \varphi &:=& - \partial_t \varphi - v \cdot \nabla_x \varphi - \mathbf{L}^*_0 \varphi, 
\eear
where
$$
\mathbf{L}^*_0 \varphi:=  \partial_{v_j} (\sigma_{ij} \partial_{v_i} \varphi) -  \partial_{v_i} ( \nu_i  \varphi).
$$
is the formal adjoint of $\mathbf{L}_0$.
For a $\sigma$-finite and $\sigma$-compact Borel measure space $E = (E,\EEE,d\mu)$, we  write $g \in L(E)$ if $g : E \to \R$ is a Borel function 
%$\beta (g) \in \Lloc^1(E)$ for any $\beta \in W^{2,\infty}(\R)$  
and $g \in C([0,T];L(E))$ if $\beta (g) \in C([0,T];\Lloc^1(E))$ for any $\beta \in W^{2,\infty}(\R)$. We recall that for $T >0$ we denote $\UU = (0,T) \times \OO$, $\Gamma = (0,T) \times \Sigma$ and $\Gamma_{\pm} = (0,T) \times \Sigma_{\pm}$.

\begin{theo}\label{theo-Kolmogorov-trace}
Let $T >0$.
We consider $g \in  L^2((0,T) \times \Omega;\Hloc^1(\R^d))  $, $G \in L^{2}_{tx}H^{-1}_{{\rm loc},v}+ \Lloc^1(\bar\UU)$, $\sigma_{ij}$, $\nu_i$  satisfying \eqref{eq:Kolmogorov-hyp} and we assume that
$g$ is a solution to the  Kolmogorov equation \eqref{eq:KolmogorovTrace} in the distributional sense.  

\smallskip
(1)  Then there exists  $\gamma g \in L(\Gamma)$  and $t \mapsto   g_t \in C([0,T];L(\OO))$ such that $g(t,\cdot) = g_t$ a.e.\ on $(0,T)$ and the following  Green  renormalized formula
\bear\label{eq:FPK-traceL2}
&& \int_{\UU} \left( \beta(g) \, \mathbf{M}^*_0 \varphi + \beta''(g) \, \sigma_{ij} \partial_{v_i} g \partial_{v_j} g     \varphi \right)  \dv \, \dx \, \dt
 \\ \nonumber
&&\qquad
+  \int_{\Gamma} \beta(\gamma \, g) \, \varphi \,  (n_x \cdot {v}) \, \dv \, \d\sigma_{\! x} \, \dt 
+ \left[ \int_\OO \beta(g_t)  \varphi(t,\cdot) \, \dx \, \dv \right]_0^T =  \langle G  , \beta'(g)   \varphi \rangle
\eear
holds for any renormalizing function $\beta \in \mathfrak{B}_1$ and any test function  $\varphi \in \DD(\bar \UU)$.
%,  as well as  for any  renormalized function  $\beta \in \BB_2$ and any test functions $\varphi \in \DD_0(\bar \OO)$.   
It is worth emphasizing that  $\beta'(g) \varphi \in L^2_{tx}H^1_v \cap L^\infty_{txv}$ with compact support in $\bar\UU$ so that the duality product 
 $\langle G  , \beta'(g)   \varphi \rangle$ is well defined.  We will often write indifferently $g(t,\cdot) = g_t$. 

\smallskip
(2)  If furthermore $G \in L^{2}_{tx}H^{-1}_{{\rm loc},v}$, then $\gamma g \in \Lloc^2(\Gamma, \d\xi^2 \dt)$ and $g \in C([0,T];\Lloc^2(\OO))$. 

\smallskip
(3)  Alternatively to point (2), if furthermore $g_0 \in \Lloc^2(\bar\OO)$, $\gamma_- g  \in \Lloc^2( \Gamma; \d\xi \dt)$ and $G \in L^{2}_{tx}H^{-1}_{{\rm loc},v}$,  
%then $t \mapsto \gamma_t g \in C([0,T];\Lloc^2(\bar\OO))$,  
then $\gamma_+ g  \in \Lloc^2( \Gamma; \d\xi \dt)$, $ g \in C([0,T];\Lloc^2(\bar\OO))$ and \eqref{eq:FPK-traceL2}
holds  for any renormalizing function $\beta \in {\mathfrak B}_2$.  

\smallskip
(4) Alternatively to points (2) and (3), if furthermore $g \in \Lloc^\infty(\bar\UU)$ then $\gamma g \in \Lloc^\infty(\Gamma)$ and \eqref{eq:FPK-traceL2}
holds  for any renormalizing function $\beta \in \mathfrak{B}_2$.

\end{theo}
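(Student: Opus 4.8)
The plan is to follow the classical strategy for trace theory of transport equations with a degenerate (only-velocity) diffusion, adapting the Vlasov–Fokker–Planck arguments of DiPerna–Lions type and the specific treatment in \cite{MR2721875}. The backbone is a two-step renormalization argument: first one shows that a weak $L^2_{tx}H^1_{\mathrm{loc},v}$ solution is automatically a renormalized solution (chain rule is legitimate because $\nabla_v g \in L^2_{\mathrm{loc}}$ and the only troublesome transport part $\partial_t + v\cdot\nabla_x$ has smooth, divergence-free-in-$(x,v)$ coefficients, so no commutator defect appears), and second one uses the renormalized formulation to extract a trace. The natural order is: \textbf{(Step 1)} establish interior renormalization and $C([0,T];L(\OO))$ time-continuity away from the boundary using a mollification in $(x,v)$ and the standard DiPerna–Lions commutator lemma; \textbf{(Step 2)} construct the trace $\gamma g \in L(\Gamma)$ by propagating information along characteristics of the free transport field $v\cdot\nabla_x$, which are straight lines hitting $\partial\Omega$ transversally off a negligible set (here one uses that $\Omega$ is smooth and $\delta \in W^{2,\infty}$, so the set of grazing trajectories has zero $\d\xi^1\dt$-measure); \textbf{(Step 3)} integrate the renormalized equation against $\varphi\in\DD(\bar\UU)$ over $\UU$, integrate by parts in $(t,x)$ to produce the boundary term $\int_\Gamma \beta(\gamma g)\varphi (n_x\cdot v)$ and the time-slice bracket, and in $v$ to move the $\mathbf L_0$ derivatives onto $\varphi$, yielding \eqref{eq:FPK-traceL2}; \textbf{(Step 4)} bootstrap regularity of the trace under the successively stronger hypotheses in (2), (3), (4) by choosing $\beta(r)=r^2$ (or a truncated/regularized version in $\mathfrak B_2$) and using the Green formula itself as an a priori estimate: the left side controls $\int_\Gamma |\gamma g|^2\d\xi^2$ or $\int_\Gamma |\gamma_+ g|^2\d\xi^1$ plus $\sup_t \int |g_t|^2$ in terms of the data, which is precisely the content of the $L^2$-trace and time-continuity claims.

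For part (1), the key technical point is that $\beta\in\mathfrak B_1$ (bounded $\beta',\beta''$) guarantees $\beta(g)\in L^2_{tx}H^1_{\mathrm{loc},v}\cap L^\infty$ with $\beta''(g)\sigma_{ij}\partial_{v_i}g\partial_{v_j}g\in L^1_{\mathrm{loc}}$, and that $\beta'(g)\varphi$ is an admissible test function for the distributional form of $G$ since it lies in $L^2_{tx}H^1_v\cap L^\infty_{txv}$ with compact support — this is flagged in the statement and must be checked to make the duality $\langle G,\beta'(g)\varphi\rangle$ meaningful. For part (2), one upgrades integrability: when $G\in L^2_{tx}H^{-1}_{\mathrm{loc},v}$ one can take $\beta(r)=r^2$ after a localization and a limiting argument (approximating $r\mapsto r^2$ from within $\mathfrak B_1$ and passing to the limit using that the dissipation term $\beta''(g)\sigma\nabla_v g\cdot\nabla_v g\ge 0$ has a sign, hence Fatou applies), giving $\gamma g\in L^2_{\mathrm{loc}}(\Gamma,\d\xi^2\dt)$ and continuity in $L^2_{\mathrm{loc}}(\OO)$. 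Part (3) is the variant adapted to our boundary-value setting: knowing the incoming trace $\gamma_- g$ in the stronger $\d\xi^1$-weight plus $g_0\in L^2_{\mathrm{loc}}(\bar\OO)$, the Green formula with $\beta(r)=r^2$ lets one solve for $\int_{\Gamma}|\gamma_+ g|^2(n_x\cdot v)_+$ and conclude it is finite, which then makes the $\mathfrak B_2$-class of renormalizers admissible (one no longer needs $\beta'$ bounded because the boundary integrand is already integrable). Part (4) is the $L^\infty$ version: if $g\in L^\infty_{\mathrm{loc}}(\bar\UU)$ then $\beta(g)$ makes sense for any $\beta\in\mathfrak B_2$ directly and the trace inherits the $L^\infty$ bound.

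The main obstacle I anticipate is \textbf{Step 2}, the construction and identification of the trace on $\Gamma$ in the right functional class. Unlike the purely elliptic/parabolic case, the diffusion is degenerate (acts only in $v$), so the trace on $\Sigma$ must be produced by the transport part alone; one has to show that along almost every characteristic line $\{x+sv\}$ the function $s\mapsto g$ has a well-defined one-sided limit as the line exits $\Omega$, and that these limits assemble into a measurable function on $\Gamma$ with the claimed integrability against $\d\xi^1\dt$ or $\d\xi^2\dt$. This requires the coarea/change-of-variables computation identifying $\dv\,\d\sigma_x\,\dt$ restricted to $\Sigma_\pm$ with the pushforward of Lebesgue measure under the flow, control of the grazing set $\{(x,v)\in\Sigma : n_x\cdot v=0\}$, and — crucially — a uniform estimate near the boundary so that the limit is not just a.e.-pointwise but holds in $L^2_{\mathrm{loc}}(\Gamma)$; this is exactly where the renormalized Green identity is used in a bootstrap loop with itself. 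I would handle this by localizing near a boundary chart, flattening the boundary using $\delta$, and reducing to the half-space model $\Omega=\{x_3>0\}$ where the characteristics are explicit, following closely \cite[Section~4.1]{MR2721875} and \cite[Theorem~11.1]{sanchez:hal-04093201}; the remaining parts are then comparatively routine integration-by-parts and approximation arguments.
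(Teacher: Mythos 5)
Your overall architecture (mollification plus DiPerna--Lions commutators $\to$ renormalization $\to$ trace $\to$ Green identity $\to$ bootstrap for (2)--(4)) matches the paper's proof, and your citations are the right ones, but there are two places where the proposal is imprecise in a way that hides the actual work.

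First, the commutator claim. You write that the chain rule is legitimate ``because $\nabla_v g\in L^2_{\mathrm{loc}}$ and the only troublesome transport part $\partial_t+v\cdot\nabla_x$ has smooth, divergence-free coefficients, so no commutator defect appears.'' But after mollifying in $(x,v)$, the nontrivial commutators are \emph{not} the transport one (which is indeed tame because $v\mapsto v$ is smooth): they are $[\nu_i,\rho_\eps *]\partial_{v_i}g$ and $[\sigma_{ij},\rho_\eps *]\partial_{v_j}g$, where the coefficients $\nu_i,\sigma_{ij}$ are only $L^\infty_{tx}L^\infty_{\mathrm{loc},v}$. These are precisely the terms that require \cite[Lemma~II.1]{MR1022305}; the paper records this by writing the mollified source as $G_\eps=G_{0\eps}+\partial_{v_i}G_{i\eps}$ with $G_{0\eps}$ containing $[v\cdot\nabla_x,\rho_\eps*]g-[\nu_i,\rho_\eps*]\partial_{v_i}g$ and $G_{i\eps}$ containing $-[\sigma_{ij},\rho_\eps*]\partial_{v_j}g$. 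Your phrasing suppresses exactly the commutators that carry the content of the step.

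Second, and more substantively, the order of operations in your Step 2 is inverted relative to what is actually workable. You propose to ``construct the trace $\gamma g\in L(\Gamma)$ by propagating information along characteristics,'' i.e., to build a trace of the unbounded function $g$ directly, and then in Step 3 integrate the renormalized equation to get the Green identity. But for an $L^2_{tx}H^1_{\mathrm{loc},v}$ solution there is no reason such a direct trace exists or has any integrability on $\Gamma$; and because of the velocity diffusion, the characteristics of $v\cdot\nabla_x$ are not characteristics of the equation. The paper's route reverses this: having established in Step~1 that $h:=\beta(g)$ is a \emph{bounded} $L^2_{tx}H^1_{\mathrm{loc},v}$ solution of a VFP equation with right-hand side in $L^1_{\mathrm{loc}}+L^2H^{-1}_{\mathrm{loc}}$, one applies the already-known trace theorem \cite[Theorem~4.2]{MR2721875} to $h$, obtains $\gamma h\in L^\infty(\Gamma)$ and $h_t\in L^\infty(\OO)$, and only then defines $\gamma g:=\beta^{-1}(\gamma h)$ for an increasing $\beta$. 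Consistency over all $\beta\in W^{2,\infty}$ (which is the meaning of $\gamma g\in L(\Gamma)$) then comes out of the Green formula itself. So the renormalization is not just a tool to write the PDE for $\beta(g)$; it is what makes the trace construction possible in the first place. Framing Step~2 as a characteristics/coarea argument for $g$ itself is the wrong handle, and if you tried to carry it out you would be pushed back to exactly this reduction. Parts (2)--(4) in your Step~4 are in the right spirit: the paper also simply refers to the Green formula as an a priori estimate and to \cite[Section~4]{MR2721875} for the extension of the renormalizer class from $\mathfrak B_1$ to $\mathfrak B_2$.
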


\begin{proof}[Proof of Theorem~\ref{theo-Kolmogorov-trace}] 
On the one hand, using standard regularization by convolution technique, for a  sequence of mollifiers $(\rho_\eps)$ in $\DD(\R^{2d})$, the function $g_\eps := \rho_\eps *_{x,v} g$ satisfies  
 $$
% \partial_t g_\eps  +   {v} \cdot \nabla_x  g_\eps -  
% \partial_{v_i} ( \sigma_{ij} \partial_{v_j} g_\eps) -  \partial_{v_i}(\nu_i   g_\eps)  = G_\eps
 \partial_t g_\eps  +   {v} \cdot \nabla_x  g_\eps -  
 \partial_{v_i} ( \sigma_{ij} \partial_{v_j} g_\eps) - \nu_i   \partial_{v_i} g_\eps   = G_\eps
$$
in the sense of $\DD'((0,T) \times \OO)$, with $G_\eps \to G$ in $L^{2}_{{\rm loc},tx}H^{-1}_{{\rm loc},v} + \Lloc^1(\bar\UU)$. More precisely, writing the source term as $G := G_0 + \partial_{v_i} G_i$,  with $G_0 \in \Lloc^1(\bar\UU)$ and $G_i \in \Lloc^2(\bar\UU)$ for any $i=1,2,3$, we have 
$G_\eps = G_{0\eps} + \partial_{v_i} G_{i\eps}$ with 
\beqn\label{eq:defG0eps}
%G_{0\eps} :=  G_0 * \rho_\eps + [v \cdot \nabla_x , *\rho_\eps] g     \to G_0 \ \hbox{ in }   \Lloc^2
G_{0\eps} :=  G_0 * \rho_\eps + [v \cdot \nabla_x , \rho_\eps *] g - [\nu_i, \rho_\eps *] \partial_{v_i} g   \to G_0 \ \hbox{ in }   \Lloc^1
\eeqn
and
$$
%G_{i\eps} :=  G_i * \rho_\eps   - [\sigma_{ij},*\rho_\eps] \partial_{v_j} g - [\nu_i,*\rho_\eps]   g \to G_i   \ \hbox{ in }  \Lloc^2, 
G_{i\eps} :=  G_i * \rho_\eps   - [\sigma_{ij},\rho_\eps *] \partial_{v_j} g \to G_i   \ \hbox{ in }  \Lloc^2, 
$$
where we use  the usual commutator notation $[A,B] := AB-BA$ and we use \cite[Lemma~II.1]{MR1022305} in order to justify that the second term converges to $0$ in \eqref{eq:defG0eps}. 
Because $g_\eps \in \Wloc^{1,1}(\bar\UU)$, the chain rules applies and gives 
\bean
&& \partial_t \beta(g_\eps)  +   {v} \cdot \nabla_x  \beta(g_\eps)  -  
 \partial_{v_i} ( \sigma_{ij} \partial_{v_j} \beta(g_\eps)) - \nu_i   \partial_{v_i} \beta(g_\eps) - \beta''(g_\eps) \sigma_{ij} \partial_{v_j}  g_\eps  \partial_{v_i} g_\eps
\\
 &&\quad =  G_{0\eps}  \beta'(g_\eps)  + \partial_{v_i} (G_{i\eps} \beta'(g_\eps)) -  \beta''(g_\eps) G_{i\eps} \partial_{v_i}  g_\eps 
\eean
in the sense of $\DD'((0,T) \times \OO)$ for any $\beta \in C^2 \cap W^{2,\infty}$. Because  now $\beta(g_\eps) \to \beta(g)$ in $\Lloc^2(\UU)$, 
$\beta'(g_\eps) \to \beta'(g)$ in $L^{2}_{{\rm loc},tx}H^{1}_{{\rm loc},v}$ and $(\beta''(g_\eps))$ is bounded in $L^\infty(\UU)$, $\beta''(g_\eps) \to \beta''(g)$ in $\Lloc^1(\UU)$, 
we may pass to the limit $\eps \to 0$ and we obtain 
\bean
&& \partial_t \beta(g)  +   {v} \cdot \nabla_x  \beta(g)  -  
 \partial_{v_i} ( \sigma_{ij} \partial_{v_j} \beta(g)) - \nu_i   \partial_{v_i} \beta(g) - \beta''(g) \sigma_{ij} \partial_{v_j}  g  \partial_{v_i} g
\\
 &&\quad =  G_{0}  \beta'(g)  + \partial_{v_i} (G_{i} \beta'(g)) -  \beta''(g) G_{i} \partial_{v_i}  g 
 \eean
in the sense of $\DD'((0,T) \times \OO)$ for any $\beta \in C^2 \cap W^{2,\infty}$, and next for any $\beta \in W^{2,\infty}$. Using that $h := \beta(g) \in L^\infty(\UU) \cap  L^{2}_{{\rm loc},tx}H^{1}_{{\rm loc},v}$ and the right-hand side belongs to $\Lloc^1(\UU) + L^{2}_{{\rm loc},tx}H^{-1}_{{\rm loc},v}$, we may straightforwardly adapt the proof of \cite[Theorem~4.2]{MR2721875} and we get that there exists $\gamma h \in L^\infty(\Gamma)$ and for any $t \in [0,T]$ there exists $h_t \in L^\infty(\OO)$ such that $t \mapsto  h_t \in C([0,T];\Lloc^1(\bar\OO))$. 
Choosing $\beta$ increasing and defining $\gamma g := \beta^{-1} (\gamma h)$, $ g_t := \beta^{-1} ( h_t)$,  we obtain that the Green formula \eqref{eq:FPK-traceL2} holds true for any $\beta \in W^{2,\infty}$. 
The additional regularity and integrability properties on $ g_t$ and $\gamma g$ follow from this  Green formula as in  \cite[Section~4]{MR2721875}. We may thus extends the set of renormalizing functions $\beta \in \mathfrak{B}_i$
with $i=1$ or $i=2$, depending on the regularity assumptions. 
\end{proof}

We will also use the following stability result in the spirit of  \cite[Theorem 5.2]{MR2721875} and  the following duality result in the spirit of \cite[Proposition~3]{MR1765137}.

\begin{prop}\label{prop:Kolmogorov-stability}   
Let us consider four  sequences $(g^k)$,  $(\sigma^k)$, $(\nu^k)$ and  $(G^k)$  and four functions  $g$, $\sigma$, $\nu$, $G$ which all  satisfy the requirements of Theorem~\ref{theo-Kolmogorov-trace}. If  $g^k  \wto g$  weakly  in $ L^2((0,T) \times \Omega; \Hloc^1(\R^d))$,  $\sigma^k \wto \sigma$ weakly  in $\Lloc^2(\bar\OO)$, $\nu^k \wto \nu$  weakly  in $\Lloc^2(\bar\OO)$ and  $G^k \to G$ weakly in $L^{2}_{{\rm loc},x}H^{-1}_{{\rm loc},v}$, then  $g$ satisfies \eqref{eq:KolmogorovTrace} so that it admits a family of trace $ \gamma g \in \Lloc^2 (\Gamma; \d\xi^2)$,  $g_t \in \Lloc^2 (\OO)$, for any $t \in [0,T]$, and (up to the extraction of a subsequence) $\gamma g^k \to  \gamma g$ a.e.\ and weakly in $\Lloc^2 (\Gamma; \d\xi^2)$, $g_t^k \to  g_t$ a.e.\ and weakly in $\Lloc^2 (\bar\OO)$, for any $t \in [0,T]$.  
\end{prop}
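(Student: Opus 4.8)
The plan is to prove Proposition~\ref{prop:Kolmogorov-stability} by combining the renormalized Green formula of Theorem~\ref{theo-Kolmogorov-trace} with lower-semicontinuity arguments, exactly in the spirit of the stability theorem for Vlasov--Fokker--Planck equations in \cite[Theorem~5.2]{MR2721875}. First I would observe that the weak convergences $g^k \wto g$ in $L^2_{tx}H^1_{{\rm loc},v}$, $\sigma^k \wto \sigma$ and $\nu^k \wto \nu$ in $\Lloc^2(\bar\OO)$, together with $G^k \to G$ weakly in $L^2_{{\rm loc},x}H^{-1}_{{\rm loc},v}$, allow one to pass to the limit in the distributional formulation of \eqref{eq:KolmogorovTrace}: each term is a product of a weakly convergent sequence with a fixed test function, except for the diffusion term $\sigma^k_{ij}\partial_{v_j}g^k$, which is a product of two weakly convergent factors. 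To handle it one pairs against a smooth test function and integrates by parts, $\int \sigma^k_{ij}\partial_{v_j}g^k\,\partial_{v_i}\varphi = -\int \sigma^k_{ij} g^k \partial_{v_i,v_j}\varphi - \int (\partial_{v_j}\sigma^k_{ij}) g^k \partial_{v_i}\varphi$ — but since only weak $L^2$ bounds on $\sigma^k$ are assumed, a cleaner route is to keep $\sigma^k$ in $L^\infty_{tx}L^\infty_{{\rm loc},v}$ (as in \eqref{eq:Kolmogorov-hyp}, which is part of ``the requirements of Theorem~\ref{theo-Kolmogorov-trace}'') and use that bounded-in-$L^\infty$ plus a.e.\ convergence upgrades the product $\sigma^k_{ij}\partial_{v_j}g^k \wto \sigma_{ij}\partial_{v_j}g$ weakly in $\Lloc^2$. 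Hence $g$ is a distributional solution of \eqref{eq:KolmogorovTrace} with limit coefficients, and Theorem~\ref{theo-Kolmogorov-trace}(2) furnishes the traces $\gamma g \in \Lloc^2(\Gamma;\d\xi^2)$ and $g_t \in \Lloc^2(\OO)$.

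Next I would establish the convergence of the traces. The key tool is the renormalized Green identity \eqref{eq:FPK-traceL2} applied to $g^k$ with $\beta(s) = s^2$ (legitimate on $\Lloc^\infty\cap L^2_{tx}H^1_v$ renormalized solutions, or with a truncation $\beta_R$ and a limit $R\to\infty$): this expresses $\int_\Gamma (\gamma g^k)^2 \varphi\,(n_x\cdot v)\,\d\sigma_x\,\d v\,\dt + [\int_\OO (g_t^k)^2\varphi]_0^T$ in terms of the interior quantities $\int_{\UU}((g^k)^2\mathbf M_0^*\varphi + 2\sigma^k_{ij}\partial_{v_i}g^k\partial_{v_j}g^k\varphi) - \langle G^k, 2 g^k\varphi\rangle$. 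From this one reads off a uniform bound on $\int_\Gamma (\gamma g^k)^2\,\d\xi^2$ on compact subsets (testing with $\varphi\ge \mathbf 1$ on the relevant compact and using the coercivity $\sigma^k\xi\cdot\xi\ge 0$), so $\gamma g^k$ is bounded in $\Lloc^2(\Gamma;\d\xi^2)$ and, up to a subsequence, converges weakly to some limit $\ell$; similarly $g_t^k \wto \ell_t$ weakly in $\Lloc^2(\bar\OO)$ for each $t$ (using that $t\mapsto g_t^k$ is continuous with values in $\Lloc^1$ and the uniform $L^2$ bound). To identify $\ell = \gamma g$ and $\ell_t = g_t$, I pass to the limit in the \emph{linear} Green formula \eqref{eq:FPK-traceL2} with $\beta(s)=s$: every term is now linear in $(g^k, \gamma g^k, g_t^k, \sigma^k, \nu^k, G^k)$ except $\beta''\equiv 0$ kills the quadratic term, so the weak convergences suffice and the limit identity is precisely \eqref{eq:FPK-traceL2} for $g$ with traces $\ell,\ell_t$; by uniqueness of the trace (again Theorem~\ref{theo-Kolmogorov-trace}), $\ell=\gamma g$ and $\ell_t=g_t$. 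Finally the a.e.\ convergence is obtained by a standard argument: passing to the limit in the quadratic Green identity and using lower semicontinuity of $g\mapsto \int_\Gamma (\gamma g)^2\d\xi^2$ and of the Dirichlet term against the (nonnegative) limit of the dissipation gives $\limsup_k \int_\Gamma(\gamma g^k)^2\varphi\,|n_x\cdot v| \le \int_\Gamma(\gamma g)^2\varphi\,|n_x\cdot v|$, while weak convergence gives the reverse inequality for the $\liminf$; hence $\gamma g^k \to \gamma g$ strongly in $\Lloc^2(\Gamma;\d\xi^2)$, and strong $L^2$ convergence yields a.e.\ convergence along a further subsequence. The same scheme applied at fixed time slices gives $g_t^k\to g_t$ strongly in $\Lloc^2(\bar\OO)$ and a.e.

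The main obstacle I anticipate is the passage to the limit in the quadratic term $\int_{\UU}\beta''(g^k)\sigma^k_{ij}\partial_{v_i}g^k\partial_{v_j}g^k\varphi$ and the accompanying boundary term when only weak convergence of $\partial_v g^k$ is available. One cannot simply pass to the limit in this term; instead one must exploit that $\sigma^k_{ij}\partial_{v_i}g^k\partial_{v_j}g^k \ge 0$ and argue by lower semicontinuity, and for that one needs to control the cross term $\int \sigma^k_{ij}\partial_{v_i}g\,\partial_{v_j}(g^k-g)$ — this requires either strong $L^2_{txv}$ convergence of $\sigma^k$ (which is \emph{not} assumed; only $\Lloc^2$ weak plus $L^\infty$ bounds) or an averaging/compactness argument providing strong convergence of $g^k$ in $\Lloc^2(\UU)$ itself (velocity averaging lemmas, or the compactness already granted elsewhere in the paper). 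I would therefore use velocity-averaging together with the uniform $L^2_{tx}H^1_v$ bound and the equation to upgrade $g^k\to g$ to strong $\Lloc^2(\UU)$ convergence, after which $\beta''(g^k)\to\beta''(g)$ a.e.\ boundedly and one can localize; combined with lower semicontinuity of the nonnegative quadratic form this closes the argument. Everything else is a routine bookkeeping of weak limits in the linear Green formula.
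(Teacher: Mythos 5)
Your proposal follows essentially the same route as the paper's own proof. Both arguments combine three ingredients: (i) hypoelliptic compactness to upgrade $g^k \wto g$ to strong $L^2_{\rm loc}(\UU)$ convergence --- you invoke velocity averaging, the paper invokes Bouchut's $H^{1/3}_{t,x,v}$ regularity \cite[Theorem~1.3]{MR1949176} applied to $\partial_t g^k + v\cdot\nabla_x g^k = G_0^k + \Div_v G_1^k$; these are the same device in two guises --- (ii) the renormalized Green formula \eqref{eq:FPK-traceL2} to pass to the limit and identify the weak limit of $\gamma g^k$ with $\gamma g$; and (iii) the ``$\beta$ and $\beta^2$'' trick --- applying the same argument to a bounded $C^2$ renormalizing $\beta$ and to $\beta_2 := \beta^2$ gives $\beta(\gamma g^k)\wto\beta(\gamma g)$ and $\beta(\gamma g^k)^2\wto\beta(\gamma g)^2$ weakly, hence strongly, and choosing $\beta$ one-to-one yields a.e.\ convergence.

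There are two minor points worth flagging. First, your identification step using the \emph{linear} $\beta(s)=s$ is a genuine simplification that the paper does not exploit: with $\beta''\equiv 0$, the dissipation term drops entirely, so the identification $\ell=\gamma g$, $\ell_t=g_t$ follows purely from weak limits of linear expressions. The paper instead writes the Green formula for a general $\beta\in\mathfrak{B}_1\cap C^2$ already at the identification stage. Second, both your argument and the paper's must still face the quadratic term $\int_\UU\beta''(g^k)\sigma^k_{ij}\partial_{v_i}g^k\partial_{v_j}g^k\,\varphi$ (and its analogue in $\widetilde G^k$) at the strong-convergence step: the paper simply asserts ``we may pass to the limit,'' which is not immediate when $\nabla_v g^k$, $\sigma^k$ and $G^k$ converge only weakly. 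You correctly flag this as the central obstacle and propose lower semicontinuity of the nonnegative quadratic form together with weak lower semicontinuity of the boundary norm to sandwich $\lim_k\int_\Gamma(\gamma g^k)^2\varphi\,\d\xi^2 = \int_\Gamma(\gamma g)^2\varphi\,\d\xi^2$. This is the right mechanism, though (as you also note) lower semicontinuity of $\int\sigma^k_{ij}\partial_{v_i}g^k\partial_{v_j}g^k\varphi$ is delicate when $\sigma^k$ is itself only weakly convergent; in the paper's actual uses (mollified sequences in Proposition~\ref{prop:Kolmogorov-duality}, fixed coefficients in Theorem~\ref{theo-Kolmogorov-WellP}) this issue disappears. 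So your proposal is both correct in spirit and, if anything, more carefully argued at the one step where the paper is terse.
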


\begin{proof}[Proof of Proposition~\ref{prop:Kolmogorov-stability}]
We observe that 
$$
\partial_t g^k + v \cdot \nabla_x g^k  = G_{0}^k + \Div_v G_1^k
$$
with $(g^k)$, $(\nabla_v g^k)$, $(G_0^k)$  and $(G_1^k)$ bounded in $L^{2}_{{\rm loc}}([0,T] \times \bar\OO)$  and we may use the $H^{1/3}_{t,x,v}(\R^{2d+1})$ regularity result \cite[Theorem~1.3]{MR1949176} on any truncated version of $(g^k)$
in order to conclude that $(g^k)$ belongs to a compact set of $L^{2}_{{\rm loc}}([0,T] \times \bar\OO)$. 
For $\beta \in \mathfrak{B}_1 \cap C^2$ and $\varphi \in \DD((0,T) \times \bar \OO)$, we write the renormalized Green formula 
\bean
\int_\UU ( \beta(g^k) \mathbf{M}_0^* \varphi - \beta''(g^k) \sigma_{ij} \partial_{v_j}  g^k  \partial_{v_i} g^k \varphi) + \int_{\Gamma} \beta(\gamma g^k) \varphi n_x \cdot v %\boldsymbol{\Gamma}
 =  \int_\UU \widetilde G^k \varphi
 \eean
 with $\widetilde G^k := G_{0}^k  \beta'(g^k)  + \partial_{v_i} (G_{i}^k  \beta'(g^k)) -  \beta''(g^k) G_i^k \partial_{v_i}  g^k$. 
 Observing that, up to the extraction of a subsequence, $\beta(g^k) \to \beta(g)$ a.e.\ and $\beta(\gamma g^k) \wto \bar \beta$ weakly in $ \Lloc^2 (\Gamma; \d\xi^2)$, we may pass to the limit in the above equation and we get 
\bean
\int_\UU ( \beta(g) \mathbf{M}_0^* \varphi - \beta''(g) \sigma_{ij} \partial_{v_j}  g  \partial_{v_i} g \varphi) + \int_\Gamma \bar\beta  \varphi (n_x \cdot v)
 =  \int_\UU \widetilde G \varphi
 \eean
 with $\widetilde G  := G_{0}   \beta'(g )  + \partial_{v_i} (G_{i}   \beta'(g )) -  \beta''(g ) G_i  \partial_{v_i}  g $. Thanks to Theorem~\ref{theo-Kolmogorov-trace},
  we thus have $\bar\beta = \beta(\gamma g)$ a.e.\ on $\Gamma$. Defining $\beta_2(s) := \beta(s)^2$ for $\beta \in W^{2,\infty} \cap C^2$  and observing that $\beta_2 \in W^{2,\infty} \cap C^2$, the above argument for both $\beta$ and $\beta_2$ implies $\beta(\gamma g^k) \wto \beta(\gamma g)$  and $\beta(\gamma g^k)^2 \wto \beta(\gamma g)^2$  both weakly in $ \Lloc^2 (\Gamma; \d\xi^2)$. 
  We classically deduce $\beta(\gamma g^k) \to \beta(\gamma g)$  strongly  in $ \Lloc^2 (\Gamma; \d\xi^2)$, and thus, up to the extraction of a subsequence, $\gamma g^k \to  \gamma g$ a.e.\ by choosing $\beta$ one-to-one.
The proof of the result concerning the trace functions $g^k_t$ and $g_t$ on the sections $\{t \} \times \OO$ can be handled in a similar way and it is thus skipped.   
\end{proof}

\begin{prop}\label{prop:Kolmogorov-duality}
Let $T >0$.
Consider two solutions $f,h \in L^2((0,T) \times \Omega; \Hloc^1(\R^d))$
to the primal and the dual Kolmogorov equations 
\bean
\mathbf{M}_0 f =  F, \quad  \mathbf{M}^*_0 h  =  H, 
\eean
%\bean
%&&\mathbf{M}_0 f =  F\partial_t f +  v \cdot \nabla_x f  -   \mathbf{L}_0  f = F,  %\quad   \mathbf{L} f := \partial_{v_i} (\sigma_{ij} \partial_{v_j}f) + \partial_{v_i} ( \nu_i f) 
%\\  
%&&\mathbf{M}^* h :=   \partial_th  - v \cdot \nabla_x h  -  \mathbf{L}_0^*h  =  H, \quad   \mathbf{L}_0^*h :=   \partial_{v_j} (\sigma_{ij} \partial_{v_i}h) -  \nu_i \partial_{v_i} h, %+\eta h, 
%\eean
with $\mathbf{M}_0$ and $\mathbf{M}_0^*$ defined in \eqref{def:bfM0} and \eqref{def:bfM0*},  $F,H \in \Lloc^2(\bar \UU)$ and $\sigma_{ij}$, $\nu_i$  satisfying \eqref{eq:Kolmogorov-hyp}. %, $\sigma_{ji} = \sigma_{ij}$. % and $\eta \in  \Lloc^\infty(\bar \UU)$.
For  any renormalizing functions $\alpha, \beta \in W^{2,\infty}(\R)$ and any test function  $\varphi \in \DD(\bar \UU)$,
 there holds 
\bear\label{eq:cor:Kolmogorov-duality3}
&& \int_{\UU}   \alpha(f) \beta(h)  \,  \mathbf{M}_0^*  \varphi +  \int_{\Gamma} \alpha(\gamma \, f) \beta(\gamma \, h) \, \varphi \,   (n_x \cdot v)  
% \\ \nonumber
%&&\qquad
+ \left[ \int_\OO \alpha(f_t) \beta(h_t)  \varphi(t,\cdot)  \right]_0^T =  \int_\UU G    \varphi  , 
\eear
where $G \in \Lloc^1(\bar \UU)$ is defined by 
$$
G := \alpha'(f) F \beta (h) + \alpha(f) \beta'(h) H - \alpha''(f) \sigma_{ij} \partial_{v_i} f \partial_{v_j} f \beta(h) - 
\alpha(f) \beta''(h) \sigma_{ij} \partial_{v_i} h \partial_{v_j} h. 
$$
\end{prop}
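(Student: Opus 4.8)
\textbf{Plan of proof for Proposition~\ref{prop:Kolmogorov-duality}.}

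The strategy is to reduce the bilinear identity \eqref{eq:cor:Kolmogorov-duality3} to the renormalized Green formula \eqref{eq:FPK-traceL2} already available from Theorem~\ref{theo-Kolmogorov-trace}, applied to the product $w := \alpha(f)\beta(h)$. First I would check that $w$ itself is an admissible object for the trace theory: since $f,h \in L^2_{tx}H^1_{\mathrm{loc},v}$ solve Kolmogorov equations with $L^2_{\mathrm{loc}}$ right-hand sides, Theorem~\ref{theo-Kolmogorov-trace}(3) (or (2)) gives $f,h \in C([0,T];L^2_{\mathrm{loc}}(\OO))$ and traces $\gamma f, \gamma h \in L^2_{\mathrm{loc}}(\Gamma;\d\xi^2)$; because $\alpha,\beta \in W^{2,\infty}$, the compositions $\alpha(f),\beta(h)$ lie in $L^\infty_{txv}$ with $\nabla_v$ in $L^2_{\mathrm{loc}}$, hence so does $w$, and $w$ inherits well-defined traces $w_t = \alpha(f_t)\beta(h_t)$ and $\gamma w = \alpha(\gamma f)\beta(\gamma h)$ from those of $f$ and $h$ (using continuity of $\alpha,\beta$ and the a.e.\ identification of traces). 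This is the point where one must be slightly careful, and I expect it to be the \emph{main obstacle}: justifying that the trace of a nonlinear product equals the product of the traces, and that $\nabla_v w = \alpha'(f)\beta(h)\nabla_v f + \alpha(f)\beta'(h)\nabla_v h$ holds in $L^2_{\mathrm{loc}}$, which requires the chain-rule/renormalization step exactly as in the proof of Theorem~\ref{theo-Kolmogorov-trace} (regularize $f,h$ by convolution, apply the classical chain rule to the smooth approximations, and pass to the limit using the commutator lemma \cite[Lemma~II.1]{MR1022305} and the fact that $(\alpha''(f_\eps))$, $(\beta''(h_\eps))$ stay bounded in $L^\infty$).

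Granting this, the second step is to compute $\mathbf{M}_0 w$ in the distributional sense. Writing $\mathbf{M}_0 = \partial_t + v\cdot\nabla_x - \mathbf{L}_0$ with $\mathbf{L}_0 g = \partial_{v_i}(\sigma_{ij}\partial_{v_j}g) + \nu_i\partial_{v_i}g$, the transport part is linear and gives $(\partial_t + v\cdot\nabla_x)w = \alpha'(f)\beta(h)(\partial_t+v\cdot\nabla_x)f + \alpha(f)\beta'(h)(\partial_t+v\cdot\nabla_x)h$ by the renormalization chain rule for the free transport operator. For the second-order part, the Leibniz rule for $\mathbf{L}_0$ applied to a product $w=uv$ of two functions in $L^\infty\cap L^2_{\mathrm{loc}}H^1_v$ yields
\[
\mathbf{L}_0(uv) = v\,\mathbf{L}_0 u + u\,\mathbf{L}_0 v + 2\sigma_{ij}\partial_{v_i}u\,\partial_{v_j}v,
\]
and combining with the chain rule $\mathbf{L}_0(\alpha(f)) = \alpha'(f)\mathbf{L}_0 f + \alpha''(f)\sigma_{ij}\partial_{v_i}f\partial_{v_j}f$ (and likewise for $\beta(h)$) one obtains, after using $\mathbf{M}_0 f = F$, $\mathbf{M}_0^* h$... wait — more directly, one finds
\[
\mathbf{M}_0 w = \alpha'(f)F\,\beta(h) + \alpha(f)\beta'(h)H
 - \alpha''(f)\sigma_{ij}\partial_{v_i}f\partial_{v_j}f\,\beta(h)
 - \alpha(f)\beta''(h)\sigma_{ij}\partial_{v_i}h\partial_{v_j}h
 - 2\sigma_{ij}\partial_{v_i}(\alpha(f))\partial_{v_j}(\beta(h)),
\]
where I have used $\mathbf{M}_0^* h = H$ rewritten as $\mathbf{M}_0 h = H - (\text{terms from the difference }\mathbf{L}_0-\mathbf{L}_0^*)$; in fact the dual equation for $h$ contributes the term $+\alpha(f)\beta'(h)H$ together with a compensation of the lower-order $\nu$-terms and of the cross term. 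The cleanest route is: since $h$ solves the \emph{dual} equation $\mathbf{M}_0^* h = H$, apply the Green formula of Theorem~\ref{theo-Kolmogorov-trace} to $f$ with test object built from $\beta(h)\varphi$ and symmetrically to $h$ with $\alpha(f)\varphi$, and add — the second-order cross terms $2\sigma_{ij}\partial_{v_i}f\,\partial_{v_j}h$ appearing in each cancel against each other, which is precisely why the adjoint $\mathbf{M}_0^*$ (rather than $\mathbf{M}_0$) enters the hypothesis on $h$.

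Thus the final step is simply bookkeeping: write \eqref{eq:FPK-traceL2} for $f$ against the test function $\beta(h)\varphi$ (legitimate since $\beta(h)\varphi \in L^2_{tx}H^1_v\cap L^\infty$ with compact support, after the usual density argument to relax $\varphi\in\DD(\bar\UU)$ to this class), write the analogous adjoint Green formula for $h$ against $\alpha(f)\varphi$, sum the two identities, observe that the symmetric-derivative cross terms cancel and the boundary and time-slice terms combine into $\int_\Gamma \alpha(\gamma f)\beta(\gamma h)\varphi(n_x\cdot v)$ and $[\int_\OO \alpha(f_t)\beta(h_t)\varphi(t,\cdot)]_0^T$ respectively, and collect the remaining bulk terms into the stated $G$. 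The local integrability $G\in L^1_{\mathrm{loc}}(\bar\UU)$ follows from $\alpha(f),\beta(h)\in L^\infty$, $F,H\in L^2_{\mathrm{loc}}$, $\nabla_v f,\nabla_v h\in L^2_{\mathrm{loc}}$ and $\sigma\in L^\infty_{tx}L^\infty_{\mathrm{loc},v}$, by Cauchy--Schwarz. This completes the proof.
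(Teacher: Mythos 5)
Your first plan — regularize $f$ and $h$, establish the chain/product rule for $w:=\alpha(f)\beta(h)$, identify the traces $\gamma w = \alpha(\gamma f)\beta(\gamma h)$, $w_t = \alpha(f_t)\beta(h_t)$, and then apply the unrenormalized Green formula \eqref{eq:FPK-traceL2} to $w$ — is exactly what the paper does, and you correctly flag the product rule and trace identification as the crux. But you then abandon this route and present, as the actual proof, a ``sum of two Green formulas'' variant: \eqref{eq:FPK-traceL2} applied to $f$ against the non-smooth test function $\beta(h)\varphi$, an analogous adjoint formula applied to $h$ against $\alpha(f)\varphi$, and addition. This is a genuine gap. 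The Green formula \eqref{eq:FPK-traceL2} is proved only for $\varphi\in\DD(\bar\UU)$ and cannot be relaxed to $\varphi\in L^2_{tx}H^1_v\cap L^\infty$ ``by the usual density argument'': the term $\mathbf{M}_0^*\varphi$ contains $\partial_t\varphi$ and $v\cdot\nabla_x\varphi$ separately, and for $\varphi=\beta(h)\psi$ those are not individually locally integrable — only the transport combination $(\partial_t+v\cdot\nabla_x)\beta(h)$ is controlled, and that control comes precisely from the chain rule for $\beta(h)$ that you were trying to sidestep. Making $\mathbf{M}_0^*(\beta(h)\psi)$ into a locally integrable object so that $\int_\UU\alpha(f)\,\mathbf{M}_0^*(\beta(h)\psi)$ is meaningful is of the same difficulty as proving that $\alpha(f)\beta(h)$ is itself a renormalized Kolmogorov solution — i.e.\ your abandoned first plan. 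In addition, the cancellation of the cross terms $2\sigma_{ij}\partial_{v_i}f\,\partial_{v_j}h$ is asserted and not shown; and the ``analogous adjoint Green formula for $h$'' is not in the paper and would have to be derived, with the added complication that the chain rule for $\mathbf{L}_0^*$ carries an extra lower-order $\nu$-contribution absent from that for $\mathbf{L}_0$, so the bookkeeping is more delicate than you suggest.

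The paper's proof is your first route carried to completion, and its key tool, which you never invoke, is the stability result Proposition~\ref{prop:Kolmogorov-stability}. It is used twice: once to pass from $\gamma f_\eps\to\gamma f$, $\gamma h_\eps\to\gamma h$ a.e.\ and $f_{\eps t}\to f_t$, $h_{\eps t}\to h_t$ a.e.\ (with $f_\eps=f*\rho_\eps$, $h_\eps=h*\rho_\eps$), and again — after forming $g_\eps:=\alpha(f_\eps)\beta(h_\eps)$ and writing its Kolmogorov equation by the smooth Leibniz rule — to identify $\gamma g = \alpha(\gamma f)\beta(\gamma h)$ and $g_t=\alpha(f_t)\beta(h_t)$ in the limit. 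Without Proposition~\ref{prop:Kolmogorov-stability}, the ``a.e.\ identification of traces'' you invoke for the nonlinear product of two solutions is unjustified. The conclusion is then simply the unrenormalized Green formula applied to $g$. To fix the proposal, stay with your initial plan and supply the stability argument; the double-Green-formula route is not a shortcut.
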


\begin{proof}[Proof of Proposition~\ref{prop:Kolmogorov-duality}]
With the notations of Theorem~\ref{theo-Kolmogorov-trace}, the functions $f_\eps := f*_{x,v} \rho_\eps$ and $h_\eps := h*_{x,v} \rho_\eps$ satisfy
\bean
&&\partial_t f_\eps = - v \cdot \nabla_x f_\eps  +   \mathbf{L}_0  f_\eps  + F_\eps
\\
&&- \partial_th_\eps =   v \cdot \nabla_x h_\eps +   \mathbf{L}_0^* h_\eps  + H_\eps,
\eean
with $f_\eps \to f$,  $h_\eps \to h$ in $L^2((0,T) \times \Omega; \Hloc^{1}(\R^d))$
and $F_\eps \to F$,  $H_\eps \to H$ in $\Lloc^2(\bar\UU)$. From Proposition~\ref{prop:Kolmogorov-stability}-(2), we get $\gamma f_\eps \to \gamma f$, $\gamma h_\eps \to \gamma h$ a.e.\ on $\Gamma$ and  $f_{\eps t} \to f_t $, $h_{\eps t} \to h_t $ a.e.\ on $\OO$ for any $t \in [0,T]$. 

\smallskip
For $\alpha,\beta \in W^{3,\infty}(\R)$, we thus deduce that $\alpha(\gamma f_\eps)\beta(\gamma h_\eps) \to \alpha(\gamma f) \beta(\gamma h)$ in $\Lloc^1(\Gamma, d\xi^1dt)$ and  
$\alpha(f_{\eps t})\beta( h_{\eps t}) \to \alpha(f_t) \beta(h_t)$ in $\Lloc^1(\bar\OO)$ for any $t \in [0,T]$. 

\smallskip
On the other hand, we set $g_\eps  := \alpha (f_\eps) \beta(h_\eps)$ which satisfies 
$$
\partial_t g_\eps  + v \cdot \nabla_x g_\eps =  \mathbf{L}_0 g_\eps    + G_\eps, 
$$
with $G_\eps$ defined similarly as for $G$. Because $g_\eps \to g := \alpha (f) \beta(h)$ in $L^2((0,T) \times \Omega;H^1_{v})$, $G_\eps \to G$ in $\Lloc^1(\bar\UU)$,
we may use  Proposition~\ref{prop:Kolmogorov-stability}-(2), and we deduce that $\gamma g_\eps \to \gamma g$. Because $\gamma g_\eps = \alpha (\gamma f_\eps) \beta(\gamma h_\eps)$ and using the previous convergence, we deduce that $\gamma g = \alpha(\gamma f) \beta(\gamma  h)$.  We similarly prove $g_t = \alpha(f_t) \beta(h_t)$ for any $t \in [0,T]$. The identity \eqref{eq:cor:Kolmogorov-duality3} is thus noting but the non-renormalized Green formula
\eqref{eq:FPK-traceL2} applied to $g$.  
\end{proof}

%%%%%%%%%%%%%%%%%%%%%%%%%%%%%%%%%%%%%%%%%%%
\subsection{Well-posedness for Kolmogorov type equations}
\label{subsec:Well-posedness}

We consider the Kolmogorov type equation, for $T >0$, 
\beqn\label{eq:KolmogorovWP}
\partial_t f + v \cdot \nabla_x f =  \mathbf{L} f + \KKK[f] \quad \hbox{in} \ (0,T) \times \OO,  
\eeqn
with a general parabolic operator in the velocity variable 
\begin{equation}\label{eq:OperatorKolmorovL}
\mathbf{L} f = \sigma_{ij} \partial_{v_i, v_j}f + \nu_i \partial_{v_i} f + \eta f , 
\end{equation}
and an abstract (integral in the velocity variable) operator $\KKK$, 
which is complemented with the Maxwell reflection boundary condition \eqref{eq:reflect_F} and an initial datum $f(0) = f_0$ in $\OO$. 
We make the same assumptions \eqref{eq:Kolmogorov-hyp}  on the coefficients $\sigma$, $\nu$ and we also assume 
\beqn
\eta \in L^\infty_{t,x}L^\infty_{{\rm loc},v}
\quad\hbox{and}\quad 
\sigma_{ij} \zeta_i \zeta_j \ge \sigma_0 |\zeta|^2, \ \forall \, \zeta \in \R^3, 
\eeqn
for some $\sigma_0 > 0$.
We next assume that the problem behave adequately in a weighted $L^2$ framework. More precisely,   for some possible perturbation $\widetilde\omega = \theta \omega$  
of a weight function $\omega : \R^3 \to (0,\infty)$,  we assume 
$$
0 < \theta_0 \le \theta \le \theta_1 < \infty, \quad |\nabla_x \theta| + |\nabla_v \theta| \lesssim \theta  \langle v \rangle^{-1}, 
$$
 the function $ \varpi_{\tilde\omega,2}^\mathbf{L}$ defined by \eqref{eq:varpiL_omega_p} satisfies 
\beqn\label{eq:varpiL_omega_2bdd}
- \lambda_1  \varsigma \le \varpi_{\tilde\omega,2}^\mathbf{L}  \le   \lambda_0 -  \varsigma, 
\eeqn
for a function $ \varsigma : \R^3 \to \R_+$ and some constants $\lambda_i >  0$. 
We also assume that the nonlocal operator $\KKK$ is bounded in $L^2_\omega(\R^3)$ and more precisely satisfies 
\beqn\label{eq:KolmogorovWP-hypKKK}
 \sup_{(0,T) \times \Omega} \| \KKK \|_{\BBB(L^2_v(\omega))}  = C_{\KKK,2} < \infty, 
\eeqn
and the reflection operator $\RRR$ satisfies 
\beqn\label{eq:KolmogorovWP-hyp2}
\RRR : L^2(\Sigma_+;  \d\xi^1_{\tilde\omega} ) \to L^2(\Sigma_-;  \d\xi^1_{\tilde\omega}), \quad   \| \RRR \|_{L^2(\Sigma;  \d\xi^1_{{\tilde\omega}} )} \le 1, 
\eeqn
 where we denote here and below
 \beqn\label{eq:defdxi1&2omega}
 \d\xi^1_{\varrho} := \varrho |n_x \cdot v| \, \d v \, \d\sigma_{\!x} \ \hbox{ and } \
 \d\xi^2_\omega := \omega^2 \la v \ra^{-2} (n_x \cdot v)^2 \, \d v \, \d \sigma_{\!x}, 
\eeqn 
with $\varrho :=  \omega$ or $\varrho := \tilde\omega$.
For further references, we define 
\begin{equation}\label{eq:def:H1dagger}
 \| g \|^2_{H^{1,\dagger}_{\tilde\omega}(\UU)}  :=  \int_{\UU} \left\{  \sigma_{ij} \partial_{v_i} (\widetilde\omega g) \partial_{v_j} (\widetilde\omega g)  +  \varsigma  \widetilde\omega^2 g^2 \right\} .\end{equation}
We next assume that the problem behave nicely in a $L^1$ framework, namely 
\beqn\label{eq:KolmogorovWP-hypKL1}
 \sup_{(0,T) \times \Omega} \| \KKK \|_{\BBB(L^1_v(\R^3))} = C_{\KKK,1} , 
 \qquad 
 \varpi^\mathbf{L}_{1,1} \le \lambda_2, 
\eeqn
for some constants $C_{\KKK,1},  \lambda_2 \in [0,\infty)$, 
and  we recall that from the very definition \eqref{eq:reflection} (see also \eqref{eq:invariantsBoundary1}), we have 
\beqn\label{eq:KolmogorovWP-hypRL1}
\RRR : L^1(\Sigma_+; \d\xi^1) \to L^1(\Sigma_-; \d\xi^1), \quad   \| \RRR \|_{L^1(\Sigma;  \d \xi_1 )} \le 1.
\eeqn
We finally make a compatibility hypothesis on the two weighted $L^2$ and $L^1$ frameworks by assuming
\beqn\label{eq:sigma_nu_eta_KKK_omega}
\langle v \rangle  \omega^{-1} \in L^2(\R^2),  
\quad
(|\sigma_{ij}| + |\partial_{v_j} \sigma_{ij}| +  |\partial^2_{v_iv_j} \sigma_{ij}|  + |\nu_i| + |  \partial_{v_i} \nu_i| + |\eta|) \omega^{-1} \in  L^2(\UU). 
\eeqn
For further reference, we define the Hilbert space $\HHH$ associated to 
the Hilbert norm $\| \cdot \|_\HHH$ defined by 
$$
\| f \|^2_\HHH := \| f \|^2_{L^2_\omega} +  \| f \|^2_{H^{1,\dagger}_\omega}
$$
with $\| \cdot \|_{H^{1,\dagger}_\omega}$ being defined in \eqref{eq:def:H1dagger}.

\begin{theo}\label{theo-Kolmogorov-WellP}
Let $T >0$.
Under the above conditions, for any $f_0 \in L^2_\omega(\OO)$, there exists a  unique  weak solution $f \in C([0,T];L^2_\omega) \cap  \HHH$ %L^2((0,T) \times \Omega;\Hloc^1(\R^d))$ 
to the Kolmogorov equation \eqref{eq:KolmogorovWP} complemented with the Maxwell reflection boundary condition \eqref{eq:reflect_F} and associated to the initial datum $f_0$. 
More precisely, the function $f$ satisfies  equation \eqref{eq:KolmogorovWP} in the sense of distributions in $\DD'(\UU)$ with trace functions, defined thanks to Theorem~\ref{theo-Kolmogorov-trace}, satisfying $\gamma f \in L^2(\Gamma,d\xi^2_\omega)$ as well as  the Maxwell reflection boundary condition \eqref{eq:reflect_F} pointwisely and $f(t,\cdot) \in L^2_\omega$, $\forall \, t \in [0,T]$, as well as the  initial condition $f(0,\cdot) = f_0$ pointwisely. 
\end{theo}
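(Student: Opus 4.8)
The plan is to construct the solution as a limit of solutions to regularized problems for which existence is classical, to control those approximants through a weighted $L^2$ energy identity whose boundary contribution is nonnegative thanks to the contraction hypothesis \eqref{eq:KolmogorovWP-hyp2}, and finally to give meaning to the traces and to the boundary and initial conditions through the trace theory of Section~\ref{subsec:trace}.

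\emph{Step 1: the basic a priori estimate.} For a sufficiently regular solution $f$ of \eqref{eq:KolmogorovWP} I would test the equation against $\widetilde\omega^2 f$. Using Lemma~\ref{lem:dissipativity_Lp} with $p=2$ and weight $\widetilde\omega$ for the term coming from $\mathbf L$, the upper bound in \eqref{eq:varpiL_omega_2bdd}, the equivalence $\theta_0\omega\le\widetilde\omega\le\theta_1\omega$ together with the boundedness \eqref{eq:KolmogorovWP-hypKKK} of $\KKK$, and the estimate $|\nabla_x\theta|\lesssim\theta\la v\ra^{-1}$ to absorb the transport commutator, one is led to
\[
\tfrac12\tfrac{\d}{\dt}\|f(t)\|_{L^2_{\widetilde\omega}(\OO)}^2 + \|f(t)\|^2_{H^{1,\dagger}_{\widetilde\omega}} + \tfrac12\int_{\Sigma}(\gamma f)^2\,(n_x\cdot v)\,\widetilde\omega^2\,\dv\,\d\sigma_{\!x} \;\le\; C\,\|f(t)\|_{L^2_{\widetilde\omega}(\OO)}^2 ,
\]
with $C=C(\lambda_0,C_{\KKK,2},\theta_0,\theta_1)$ and $\|f(t)\|^2_{H^{1,\dagger}_{\widetilde\omega}}$ the spatial slice of \eqref{eq:def:H1dagger}. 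The boundary term equals $\tfrac12\|\gamma_+f\|^2_{L^2(\Sigma_+;\d\xi^1_{\widetilde\omega})}-\tfrac12\|\RRR\gamma_+f\|^2_{L^2(\Sigma_-;\d\xi^1_{\widetilde\omega})}$ and is therefore nonnegative by \eqref{eq:KolmogorovWP-hyp2} — this is the Darroz\`es--Guiraud convexity mechanism. A Gr\"onwall argument then gives the uniform bound $f\in L^\infty_tL^2_\omega\cap\HHH$ and $\gamma f\in L^2(\Gamma,\d\xi^1_\omega)$ in terms of $\|f_0\|_{L^2_\omega}$; the analogous computation with $p=1$, exploiting $\frac{4(p-1)}{p^2}=0$ and \eqref{eq:KolmogorovWP-hypKL1}–\eqref{eq:KolmogorovWP-hypRL1}, yields an $L^\infty_tL^1(\OO)$ control which, combined with \eqref{eq:sigma_nu_eta_KKK_omega}, makes every term of the weak formulation meaningful and the trace theory of Theorem~\ref{theo-Kolmogorov-trace} applicable.

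\emph{Step 2: existence by approximation.} I would first solve a regularized problem for which the construction is standard, for instance by truncating the velocity variable to a ball $B_R$ with an absorbing condition on $\{|v|=R\}$ — so that $\sigma,\nu,\eta$ become bounded and the relevant spaces compactly embed — and by damping the reflection, $\RRR\rightsquigarrow(1-\eps)\RRR$, so that the boundary operator becomes a strict contraction of norm $\le 1-\eps$. For the damped, truncated problem, existence follows either from an implicit Euler scheme in time, each step being a stationary transport–diffusion problem made coercive by the $1/\Delta t$ term and by the boundary loss, or from an iteration on the incoming trace: solve at each stage the inflow (Dirichlet–on–$\Sigma_-$) problem with datum $(1-\eps)\RRR\gamma_+f^n$, the strict contractivity and the Step~1 estimates making $\{\gamma_+f^n\}$ Cauchy in $L^2(\Sigma_+;\d\xi^1_{\widetilde\omega})$. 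The Step~1 bounds are insensitive to $R$ and $\eps$, so letting $R\to\infty$ and $\eps\to0$ one passes to the limit: weak convergence in $\HHH$, and almost everywhere convergence of the interior values and of the traces via the stability Proposition~\ref{prop:Kolmogorov-stability}, hence $\gamma_-f=\RRR\gamma_+f$ pointwise in the limit. Theorem~\ref{theo-Kolmogorov-trace}, applied with $\mathbf L_0$ the principal part and the lower–order and integral terms placed into the source, then delivers $\gamma f\in L^2(\Gamma,\d\xi^2_\omega)$, $f\in C([0,T];L^2_\omega)$ and the initial condition $f(0)=f_0$.

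\emph{Step 3: uniqueness and continuity.} By linearity, the difference of two solutions with the same datum solves \eqref{eq:KolmogorovWP} with $f_0=0$; here the energy \emph{identity} (equality in the display of Step~1) must be justified rigorously through the renormalized Green formula \eqref{eq:FPK-traceL2}, or through the duality identity of Proposition~\ref{prop:Kolmogorov-duality}, rather than by formal integration by parts. Together with the sign of the boundary term it gives $\|f(t)\|_{L^2_{\widetilde\omega}}^2\le C\int_0^t\|f(s)\|_{L^2_{\widetilde\omega}}^2\,\ds$, whence $f\equiv0$ by Gr\"onwall. The same identity shows that $t\mapsto\|f(t)\|_{L^2_\omega}$ is continuous, upgrading the weak continuity coming from Theorem~\ref{theo-Kolmogorov-trace} to $f\in C([0,T];L^2_\omega)$.

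\emph{Main obstacle.} The delicate point throughout is the boundary. The specular operator $\SSS$ is nonlocal in $v$ and degenerates on the grazing set $\{n_x\cdot v=0\}$, so neither the inflow solvability used in the approximation scheme nor the almost everywhere convergence of the traces is entirely routine, and one must verify that the renormalized trace framework of Theorem~\ref{theo-Kolmogorov-trace} genuinely applies to the weighted unknown $\widetilde\omega f$ under only the integrability \eqref{eq:sigma_nu_eta_KKK_omega}. Ensuring \emph{simultaneously} that the boundary flux carries the correct sign and that the limiting traces satisfy the Maxwell condition \eqref{eq:reflect_F} pointwise — not merely in a weak or renormalized sense — is where the bulk of the work lies.
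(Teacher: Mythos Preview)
Your existence scheme is essentially the paper's: solve an inflow problem, close the boundary by a damped reflection $\alpha\RRR$ with $\alpha<1$ via a fixed-point on the incoming trace, then send $\alpha\to1$ using the stability Proposition~\ref{prop:Kolmogorov-stability}. The paper obtains the inflow solution through Lions' variational lemma rather than your implicit Euler/iteration, and it does not truncate in velocity, but these are cosmetic differences.

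The gap is in uniqueness. You propose to reproduce the $L^2_{\widetilde\omega}$ energy identity for the difference of two solutions and conclude by Gr\"onwall. The trouble is the boundary term: the display of your Step~1 contains $\int_\Sigma (\gamma f)^2\widetilde\omega^2\,(n_x\cdot v)$, which lives in $L^1$ only if $\gamma f\in L^2(\Gamma;\d\xi^1_{\widetilde\omega})$. The weak-solution class in the statement only provides $\gamma f\in L^2(\Gamma;\d\xi^2_\omega)$, and $\d\xi^2$ carries an \emph{extra} factor $|n_x\cdot v|$ that vanishes at grazing, so $L^2(\d\xi^2_\omega)\not\subset L^2(\d\xi^1_{\widetilde\omega})$. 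Consequently the renormalized Green formula \eqref{eq:FPK-traceL2} cannot be applied with $\beta(s)=s^2$ (that requires $\beta\in\mathfrak B_2$, hence the hypotheses of part~(3) of Theorem~\ref{theo-Kolmogorov-trace}, i.e.\ $\gamma_-f\in L^2_{\rm loc}(\d\xi^1)$), and approximating $s^2$ by $\beta_n\in\mathfrak B_1$ does not help because the corresponding boundary integrals need not converge, nor be one-sided, without precisely the $L^2(\d\xi^1)$ control you lack. Note also that in the existence step the $(1-\alpha_k)\|\gamma f_k\|^2_{L^2(\d\xi^1_{\widetilde\omega})}$ term in the energy bound degenerates as $\alpha_k\to1$, so this control is genuinely lost in the limit.

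The paper circumvents this by running uniqueness in $L^1$ instead: one uses \eqref{eq:FPK-traceL2} with $\beta\in\mathfrak B_1$, $\beta(s)\nearrow|s|$, and a cutoff $\chi_R$ in $v$; the boundary term is handled via the $L^1$ contractivity \eqref{eq:KolmogorovWP-hypRL1} of $\RRR$, the passage to the limit being licit because $L^2(\Gamma;\d\xi^2_\omega)\subset L^1(\Gamma;\d\xi^1)$ thanks to the compatibility \eqref{eq:sigma_nu_eta_KKK_omega}, and one closes by \eqref{eq:KolmogorovWP-hypKL1}. You already invoke these $L^1$ hypotheses in your Step~1, but only to make traces meaningful; for uniqueness they are what actually makes the argument go through.
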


The proof follows similar lines as in \cite{MR0872231}  (see also \cite{MR2072842}, \cite[Sec. 8 \& Sec. 11]{sanchez:hal-04093201} and \cite{CGMM**}) and it is thus only sketched. 

\begin{proof}[Proof of Theorem~\ref{theo-Kolmogorov-WellP}]
We split the proof into four steps. 

\medskip\noindent
\textit{Step 1.} Given $\mathfrak f \in L^2(\Gamma_-; \d\xi^1_\omega)$, we solve the inflow problem
\begin{equation}\label{eq:linear_gGinflow}
\left\{
\begin{aligned}
& \partial_t f + v \cdot \nabla_x f =\mathbf{L} f  \quad &\text{in}&\quad \UU \\
& \gamma_{-} f  = \mathfrak f \quad &\text{on}&\quad \Gamma_{-} \\
& f_{| t=0} = f_0  \quad &\text{in}&\quad   \OO,
\end{aligned}
\right.
\end{equation}
thanks to Lions' variant of the Lax-Milgram theorem \cite[Chap~III,  \textsection 1]{MR0153974}. More precisely,  we define  the bilinear form
$\EEE : \HHH \times C_c^1(\UU \cup \Gamma_-) \to \R$,  by 
\bean
\EEE(f,\varphi) 
&=& \int_\UU f (\lambda+\partial_t + v \cdot \nabla_x -\mathbf{L})^*  ( \varphi \widetilde\omega^2)
\\
&:=& \int_\UU (\lambda f - \mathbf{L} f) \varphi \widetilde\omega^2   -   \int_\UU f ( \partial_t \varphi + v \cdot \nabla_x  \varphi ) \widetilde\omega^2 . 
\eean
We observe that this one is coercive, namely thanks to Lemma~\ref{lem:dissipativity_Lp} and \eqref{eq:varpiL_omega_2bdd} there holds
\bean
\EEE(\varphi,\varphi) 
&=& \int_\UU (\lambda \varphi - \mathbf{L} \varphi) \varphi \widetilde\omega^2  +  \frac12 \int_\OO \varphi(0,\cdot)^2 \widetilde\omega^2  + \frac12 \int_{\Gamma_-} (\gamma_- \varphi)^2 \, \d\xi^1_{\tilde\omega}
\\
&\ge& (\lambda-\lambda_0) \| \varphi \|^2_{L^2_{\tilde\omega}} +  \| g \|^2_{H^{1,\dagger}_{\tilde\omega}} 
% \| \varphi \|^2_{\dot H^1_{\sqrt{\sigma}\omega}} %+ \lambda_1 \| \varphi \|^2_{L^2_{\sqrt{ \varsigma}\omega}}   
+ \tfrac12 \| \varphi(0) \|^2_{L^2_{\tilde\omega}} + \tfrac12 \| \varphi \|^2_{L^2(\Gamma_-; \d\xi^1_{\tilde\omega})}, 
\eean
for any $\varphi \in C_c^1(\UU \cup \Gamma_-)$. Taking $\lambda>\lambda_0$, %with $\kappa := \min (\lambda-C,1)>0$, for $\lambda > 0$ large enough. 
the above mentioned Lions' theorem implies the existence of a function $f_\lambda \in \HHH$
which satisfies the variational equation 
$$
\EEE(f_\lambda,\varphi) = \int_{\Gamma_-} \mathfrak{f} e^{-\lambda t} \varphi \widetilde\omega^2 \, \d\xi^1  + \int_\OO f_0 \varphi(0,\cdot) \widetilde\omega^2  , \quad \forall \, \varphi \in C_c^1(\UU \cup \Gamma_-). 
$$
Defining $f := f_\lambda e^{\lambda t}$ and using Theorem~\ref{theo-Kolmogorov-trace}, we deduce that $f \in \HHH \cap C([0,T];L^2_\omega(\OO))$ is a renormalized solution to the inflow problem \eqref{eq:linear_gGinflow} and that $\gamma f \in L^2(\Gamma; \d\xi^1_{\tilde\omega})$. From the renormalization formulation, we have the uniqueness of such a solution (see also Step~4 below). 
Directly from \eqref{eq:KolmogorovWP-hypKKK}, %\eqref{eq:KolmogorovWP-hyp1}, 
we also deduce the energy estimate 
\bean
&&\| f_t \|^2_{L^2_{\tilde\omega}} + \int_0^t \left( \| \gamma f_s \|^2_{L^2(\Gamma_-; \d\xi^1_{\tilde\omega})} +  2  \| f \|^2_{H^{1,\dagger}_{\tilde\omega}} \right) \, e^{\lambda_0(t-s)} \, \d s 
\\
&&\qquad\le
 \| f_0 \|^2_{L^2_{\tilde\omega}} e^{\lambda_0 t} + \int_0^t  \| \mathfrak{f}_s \|^2_{L^2(\Gamma_-; \d\xi^1_{\tilde\omega})}   \, e^{\lambda_0(t-s)} \, \d s. 
\eean

\medskip\noindent
\textit{Step 2.} For any $\alpha \in (0,1)$ and $ h \in \HHH \cap C([0,T];L^2_\omega(\OO))$ such that $\gamma h \in L^2(\Gamma; \d\xi^1_{\omega})$, % {\Red Est-ce bien def??}, 
we then consider the modified Maxwell reflection boundary condition problems
$$%\begin{equation}\label{eq:linear_gGk}
\left\{
\begin{aligned}
& \partial_t f + v \cdot \nabla_x f = \mathbf{L}f \quad &\text{in}&\quad \UU \\
& \gamma_{-} f   =  \alpha \RRR \gamma_{+}  h  \quad &\text{on}&\quad \Gamma_{-} \\
& f(t=0, \cdot) = f_0  \quad &\text{in}&\quad   \OO, 
\end{aligned}
\right.
$$%\end{equation}
for which a solution $f \in \HHH \cap C([0,T];L^2_\omega(\OO))$ such that $\gamma f \in L^2(\Gamma; \d\xi^1_{\omega})$ is given by the first step. Thanks to the energy estimate stated in the first step, we immediately see that 
the mapping $h \mapsto f$ is $\alpha^{1/2}$-Lipschitz for the norm defined by 
$$
%\| f \|^2 := 
\sup_{t \in [0,T]} \left\{ \| f_t \|^2_{L^2_{\tilde\omega}}  e^{- \lambda_0 t }+ \int_0^t  \| \gamma f_s \|^2_{L^2(\Gamma_-; \d\xi^1_{\tilde\omega})}  \, e^{- \lambda_0s} \, \d s \right\}.
$$
From the Banach fixed point theorem, we deduce the existence of a unique fixed point to this mapping. 

\medskip\noindent
\textit{Step 3.} For a sequence $\alpha_k \in (0,1)$, $\alpha_k \nearrow 1$, we next consider the sequence $(f_k)$ obtained in Step~2 as the solution to the modified Maxwell reflection boundary condition problem
\begin{equation}\label{eq:linear_gak}
\left\{
\begin{aligned}
& \partial_t f_k + v \cdot \nabla_x f_k = \mathbf{L}f_k \quad &\text{in}&\quad (0,T) \times \OO \\
& \gamma_{-} f_k   =  \alpha_k \RRR \gamma_{+} f_k  \quad &\text{on}&\quad (0,T) \times \Sigma_{-} \\
& f_k(t=0, \cdot) = f_0  \quad &\text{in}&\quad   \OO, 
\end{aligned}
\right.
\end{equation}
which,  from the energy estimate stated at the end of Step~1, satisfies 
\bear\label{eq:linear_gak_bdd}
&&\quad \| f_{kt} \|^2_{L^2_{\tilde\omega}} + \int_0^t \left\{ (1-\alpha_k) \| \gamma f_{ks} \|^2_{L^2(\Gamma_-; \d\xi^1_{\tilde\omega})}
+  2
 \| f_{ks} \|^2_{H^{1,\dagger}_{\tilde\omega}} 
\right\} \, e^{\lambda_0(t-s)} \, \d s 
\le
 \| f_0 \|^2_{L^2_{\tilde\omega}} e^{\lambda_0 t}  , 
\eear
for any $t \in (0,T)$ and any $k \ge 1$. Choosing $\beta(s) := s^2$ and $\varphi := (n_x \cdot v) \la v \ra^{-2} \omega^2(v)$ in the Green formula \eqref{eq:FPK-traceL2}, we additionally have 
$$
\int_{\Gamma} (\gamma f_k)^2 \, \d\xi^2_\omega \, \d t %\hat \omega^2 (n_x \cdot v)^2 dvd\sigma_{\!x}dt 
\lesssim   \| f_0 \|^2_{L^2_\omega} e^{\lambda_0 T}. %\Red \| g_k \|_{\HHH}^2.
$$
%for any $0 < \tau < T/2$, $\Gamma_\tau := (\tau,T-\tau) \times \Sigma$. 
From the above estimate we deduce that, up to the extraction of a subsequence, there exist $f \in \HHH \cap L^\infty(0,T;L^2_\omega(\OO))$ and $\mathfrak{f}_\pm \in L^2(\Gamma_{\pm}; \d\xi^2_{\omega} \d t)$ such that 
$$
f_k \wto f \hbox{ weakly in } \ \HHH \cap L^\infty(0,T;L^2_\omega(\OO)), 
\quad
\gamma_\pm f_k \wto \mathfrak{f}_\pm \hbox{ weakly in } \ L^2(\Gamma; \d\xi^2_{\omega} \d t). 
$$
 From the condition \eqref{eq:sigma_nu_eta_KKK_omega}, %$\langle v \rangle  \omega^{-1} \in L^2(\R^3)$}, 
 we have $L^2(\Gamma; \d\xi^2_{\omega}) \subset L^1(\Gamma; \d\xi^1)$. 
Together with the assumption~\eqref{eq:KolmogorovWP-hypRL1}, 
we deduce that $\RRR(\gamma f_{k+})  \wto \RRR(\mathfrak{f}_+)$   weakly in $L^1(\Gamma_-; \d\xi^1)$. 
On the other hand, from Proposition~\ref{prop:Kolmogorov-stability}, we have $\gamma f_k \wto \gamma f$ weakly in $\Lloc^2 (\Gamma; \d\xi^2_\omega)$. %so that ${\mathfrak g} = \gamma g$. 
Using both convergences in the boundary condition $\gamma_- f_k = \RRR(\gamma_+ f_k)$, we obtain $\gamma_- f = \RRR(\gamma_+ f)$.
We may thus pass to the limit in equation \eqref{eq:linear_gak} and we obtain that $f \in C([0,T]; L^2_\omega) \cap \HHH$ is a renormalized solution to the Kolmogorov equation \eqref{eq:KolmogorovWP} complemented with the Maxwell reflection boundary condition \eqref{eq:reflect_F} and associated to the initial datum $f_0$.

\medskip\noindent
\textit{Step 4.} We consider now two solutions $f_1$ and $f_2 \in  C([0,T];L^2_\omega) \cap \HHH$ to  the Kolmogorov equation  \eqref{eq:KolmogorovWP}--\eqref{eq:reflect_F} and associated to the same initial datum $f_0$, so that the function $f := f_2 - f_1 \in  C([0,T]; L^2_\omega) \cap \HHH$  is a solution to the Kolmogorov equation  \eqref{eq:KolmogorovWP}-\eqref{eq:reflect_F} associated to the initial datum $f(0) = 0$.
Choosing $\varphi := \chi_R$,  with $\chi_R(v) := \chi(v/R)$, $\mathbf{1}_{B_1} \le \chi \in \DD(\R^3)$,  and $\beta \in C^2(\R)$, $\beta''  $ with compact support,  in \eqref{eq:FPK-traceL2},
%nd repeating the proof of  Lemma~\ref{lem:dissipativity_Lp}, 
we have 
\bean
&&  \int_\OO \beta(f_T)  \chi_R   +  \int_{\Gamma} \beta(\gamma f) \,  \chi_R \,  (n_x \cdot v)  +  \int_{\UU}  \beta''(f) \, \sigma_{ij} \partial_{v_i} f \partial_{v_j} f   
\\
 &&\qquad =  \int_{\UU}  \left\{  \beta(f) \left(   \partial^2_{ij} (\sigma_{ij}   \chi_R)  - \partial_{v_i} (\nu_i \chi_R) \right)  + (\eta f + \KKK[f]) \beta'(f) \chi_R \right\} . 
\eean
%
%\bean
%&& \int_{\UU} \left( \beta(g) \, \mathbf{L}^* \omega_1  + \beta''(g) \, \sigma_{ij} \partial_{v_i} g \partial_{v_j} g     \omega_1 \right) \, d{v} dx dt
% \\ \nonumber
%&&\qquad
%+  \int_{\Gamma} \beta(\gamma \, g) \,   n_x \cdot {v} \,  d{v} d\sigma_{\! x}dt 
%+ \int_\OO \beta(g_T)  \omega_1 dxdv   = 0. 
%\eean
We assume $0 \le \beta (s) \le |s|$, $|\beta'(s)| \le 1$, % $0 \le s \beta'(s) \le |s|$ 
and $\beta'' \ge 0$ so that we may get rid of the last term at the left-hand side of the above identity, and we use the bound \eqref{eq:sigma_nu_eta_KKK_omega} in order pass to the limit $R \to \infty$. 
We obtain  
\bean
&& \int_\OO \beta(f_T)  + \int_{\Gamma} \beta(\gamma f) \,       (n_x \cdot {v})  
%\\
%&&\quad 
\le 
%\int_{\Gamma_+} |\RRR (\gamma_+ g)| \,    |n_x \cdot {v}| 
%+ \lambda_0  \int_{\UU}  |g| . 
%\\&&
\int_{\UU}  \left\{\beta(f) \left(   \partial^2_{ij} \sigma_{ij}     - \partial_{v_i} \nu_i  \right)  + \eta f \beta'(f) + |\KKK[f]| \right\}  .
\eean
Passing to the limit $\beta(s) \nearrow |s|$ such that $0 \le s \beta'(s) \nearrow |s|$, we deduce 
\bean
&& \int_\OO |f_T|  + \int_{\Gamma} |\gamma f| \, (n_x \cdot v) 
%\\
%&&\quad 
\le 
%\int_{\Gamma_+} |\RRR (\gamma_+ g)| \,    |n_x \cdot {v}| \,  d{v} d\sigma_{\! x}dt 
%+ \lambda_0  \int_{\UU}  |g| \,     d{v} dx dt. 
%\\&&
\int_{\UU}    |f|  \left(  \partial^2_{ij} \sigma_{ij}     - \partial_{v_i} \nu_i   + \eta   + C_{\KKK,1}   \right)  ,
\eean
where we have used $L^2(\Gamma ; \d \xi^2_\omega \d t) \subset L^1(\Gamma ; \d \xi^1 \d t)$ in order to justify the convergence of the integral on the boundary.
Using finally  \eqref{eq:KolmogorovWP-hypKL1} and  \eqref{eq:KolmogorovWP-hypRL1}, we deduce 
$$
\int_\OO |f_T|  
\le (C_{\KKK,1}+ \lambda_2)  \int_0^T \!\! \int_\OO   |f| , 
$$
and we conclude to $f = 0$ thanks to Gr\"onwall's lemma. 
\end{proof}

\subsection{Decay estimates in a weakly dissipative framework}
\label{subsec:weakDissipFramework}

In this section, we formulate some elementary decay estimates which are essentially picked up from \cite[Lemma~3.1]{MR3625186} and which will be useful for handling the weakly dissipative framework corresponding to the case $s + \gamma < 0$, which always holds when $\gamma \in [-3,-2)$.  We also refer to \cite{Caflisch1,Caflisch2,MR1751701,RockWang,GuoLandau1,MR2460938,MR2499863,GS1} for previous works dealing with such a situation and to the recent papers \cite{MR4265692,MR3625186,MR4534707,sanchez:hal-04093201} for more discussion and more  references. 
We start with a variant of the Gr\"onwall lemma. 

\begin{lem}\label{lem:Gronwall} Let us consider three continuous functions $u$, $v$ and $w : \R_+ \to \R_+$  satisfying  $u \le v \le w$ and the three following properties 
\bean
&&v_t ' +  \sigma  u_t   \le 0,  
\quad w_{t}   \le C  w(0), 
\quad \eps_R v_t \le u_t + \vartheta_R w_t,   
\eean
for some constants $C,\sigma > 0$ and for any $t > 0$, where $R \mapsto \eps_R,\vartheta_R$ are  two positive functions  such that $\eps_R\to0$ and $\vartheta_R/\eps_R \to 0$ as $R \to \infty$. Then 
$$
v_t \le \Gamma_t w_0, \quad \forall \, t \ge 0, % \quad \Gamma_t :=   \inf_{R>0} \Gamma_t(R), \quad \Gamma_t(R) :=   e^{- \sigma \eps_R t} +\frac{\vartheta_R}{\eps_R} C .
%\left( e^{- \sigma \eps_R t} +\frac{\vartheta_R}{\eps_R} C \right).
$$
with 
\beqn\label{eq:lemGronwall}
\Gamma_t :=   \inf_{R>0} \Gamma_t(R), \quad \Gamma_t(R) :=   e^{- \sigma \eps_R t} +\frac{\vartheta_R}{\eps_R} C .
 \eeqn
\end{lem}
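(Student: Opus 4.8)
## Proof plan for the Grönwall-type Lemma

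The plan is to exploit the differential inequality together with the lower bound on $u_t$ coming from the third hypothesis, and then optimize over the truncation parameter $R$ at the very end. First I would fix $R>0$ and use the inequality $\eps_R v_t \le u_t + \vartheta_R w_t$ to get $u_t \ge \eps_R v_t - \vartheta_R w_t$. Plugging this into $v_t' + \sigma u_t \le 0$ yields
\[
v_t' + \sigma \eps_R v_t \le \sigma \vartheta_R w_t \le \sigma \vartheta_R C w_0,
\]
where in the last step I used $w_t \le C w(0) = C w_0$. This is now a scalar linear differential inequality with constant forcing.

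Next I would integrate this inequality. Multiplying by the integrating factor $e^{\sigma \eps_R t}$ gives $\frac{\d}{\dt}(e^{\sigma \eps_R t} v_t) \le \sigma \vartheta_R C w_0 \, e^{\sigma \eps_R t}$, and integrating from $0$ to $t$ produces
\[
e^{\sigma \eps_R t} v_t \le v_0 + \sigma \vartheta_R C w_0 \int_0^t e^{\sigma \eps_R \tau} \, \d\tau = v_0 + \vartheta_R C w_0 \left( e^{\sigma \eps_R t} - 1 \right)/\eps_R \cdot \eps_R.
\]
More cleanly, $\int_0^t e^{\sigma \eps_R \tau}\d\tau = (e^{\sigma\eps_R t}-1)/(\sigma\eps_R)$, so after multiplying through by $e^{-\sigma \eps_R t}$ and using $v_0 \le w_0$ together with $1 - e^{-\sigma\eps_R t}\le 1$, I obtain
\[
v_t \le e^{-\sigma \eps_R t} w_0 + \frac{\vartheta_R}{\eps_R} C w_0 = \Gamma_t(R)\, w_0.
\]

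Finally, since this holds for every $R>0$ and the left-hand side $v_t$ does not depend on $R$, I take the infimum over $R>0$ on the right-hand side, which gives $v_t \le \Gamma_t w_0$ with $\Gamma_t = \inf_{R>0}\Gamma_t(R)$ as in \eqref{eq:lemGronwall}. The hypotheses $\eps_R \to 0$ and $\vartheta_R/\eps_R \to 0$ are what make this infimum meaningful (they guarantee $\Gamma_t \to 0$ as $t\to\infty$, by first sending $R\to\infty$ slowly and then $t\to\infty$), but strictly speaking they are not needed for the stated inequality itself. The only mild technical point is justifying the integration of the differential inequality when $v$ is merely continuous rather than $C^1$: this is handled either by noting $v_t' $ is interpreted in the sense that $t \mapsto v_t$ is locally absolutely continuous (which is the implicit reading of such lemmas), or by a standard comparison argument with solutions of the corresponding ODE. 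I do not expect any real obstacle here; the lemma is elementary once the right substitution is made.
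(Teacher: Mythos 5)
Your proof is correct and follows exactly the same route as the paper's: combine the three hypotheses into $v_t' + \sigma\eps_R v_t \le \sigma\vartheta_R C w_0$, apply classical Gr\"onwall (via the integrating factor) to obtain $v_t \le e^{-\sigma\eps_R t} v_0 + \tfrac{\vartheta_R}{\eps_R} C w_0 (1-e^{-\sigma\eps_R t})$, then use $v_0\le w_0$ and take the infimum over $R$. No discrepancies; your closing remark about the role of $\eps_R\to 0$, $\vartheta_R/\eps_R\to 0$ and the regularity of $v$ is a sensible clarification consistent with the paper's implicit conventions.
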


\begin{proof}[Proof of Lemma~\ref{lem:Gronwall}] The three pieces of information together imply
$$
v'_t + \sigma \eps_R v_t \le \sigma  \vartheta_R w_t \le  \vartheta_R \sigma Cw_0
$$
for any $R>0$.
Using the classical Gr\"onwall's lemma, we deduce 
$$
v_t \le e^{- \sigma \eps_R t} v_0 +    \frac{\vartheta_R}{\eps_R} C w_0 (1 - e^{- \sigma \eps_R t} ), 
$$
from which we immediately conclude. 
\end{proof}

We now apply the previous decay estimate in a concrete situation we will encounter several times in the sequel.

\begin{prop}\label{prop:GalWeakDissipEstim} Let us assume that $j \in C(\R_+;L^p_\varrho(\OO))$, $p \in [1,\infty)$,   satisfies 
$$
\frac{\d}{\dt} \| j_t \|^p_{L^p_{\varrho_2}} + \sigma \| j_t \|^p_{L^p_{\varrho_1}} \le 0, 
\quad 
 \| j_t \|^p_{L^p_{\varrho}} \le C  \| j_0 \|^p_{L^p_{\varrho}},  
$$
for some admissible or inverse of admissible weight functions $\varrho,\varrho_2: \R^3 \to (0,\infty)$ such that $1 \le  \varrho_2/\varrho \nearrow  \infty$ as $|v| \to \infty$ and $\varrho_1 := \varrho_2 \langle v \rangle^{(s_2+\gamma)/p}$, $s_2+\gamma < 0$. Here $s_2 \in [0,2]$ is the parameter associated to $\varrho_2$ as defined in \eqref{eq:omega}. Then 
$$
  \| j_t \|_{L^p_{\varrho_2}} \le \Theta_{\varrho,\varrho_2} (t) \| j_0 \|_{L^p_{\varrho}}, \quad \forall \, t \ge 0, 
$$
for some decay function $\Theta_{\varrho,\varrho_2}$ that we will make precise in some particular cases.

\smallskip
(1) If $\varrho := e^{\kappa |v|^2}$, $\kappa \in (1/4,1/2)$, and $\varrho_2 := e^{\tfrac14 |v|^2}$, then 
$$
 \Theta_{\varrho,\varrho_2} (t) = (1+C) e^{- \lambda t^{ 2/ {|\gamma|}}}, \quad \lambda := \sigma^{2/{|\gamma|}} (p(\kappa-\tfrac14))^{ {|2+\gamma|}/{|\gamma|}}p^{-1}. 
$$

\smallskip
(2) If $\varrho := e^{-\kappa |v|^s}$ is the inverse of an admissible weight with $s \in (0,2]$ and $\varrho_2 := e^{-\kappa_2 |v|^s}$ with $\kappa_2 \in (\kappa,\infty)$ if $s \in (0,2)$ and
$\kappa_2 \in (\kappa,1/2)$ if $s=2$, then 
$$
\Theta_{\varrho,\varrho_2}(t) = e^{- \lambda t^{s/|\gamma|}}, \quad \lambda > 0.
$$

\smallskip
(3) If $\varrho := \langle v \rangle^{-k}$ and $\varrho_2 := \langle v \rangle^{-k_2}$, $k_2 > k > k_0$, then 
$$
\Theta_{\varrho,\varrho_2}(t) = C \left[ \frac{\log \la t \ra}{\la t \ra} \right]^{\frac{k_2-k}{|\gamma|}}. 
$$

\end{prop}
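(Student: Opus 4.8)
The plan is to deduce everything from Lemma~\ref{lem:Gronwall}, applied with $u_t := \|j_t\|^p_{L^p_{\varrho_1}}$, $v_t := \|j_t\|^p_{L^p_{\varrho_2}}$ and $w_t := \|j_t\|^p_{L^p_\varrho}$, and then to make the decay function \eqref{eq:lemGronwall} explicit in each of the three regimes. First I would record the pointwise weight ordering $\varrho_1 \le \varrho_2 \le \varrho$: the inequality $\varrho_1 \le \varrho_2$ holds since $(s_2+\gamma)/p < 0$, while $\varrho_2 \le \varrho$ holds in each of the three cases below (there $\varrho_2/\varrho$ is a radially decreasing function tending to $0$). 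This gives $u_t \le v_t \le w_t$; the two estimates assumed on $j$ are exactly the first two hypotheses $v_t' + \sigma u_t \le 0$ and $w_t \le C w_0$ of Lemma~\ref{lem:Gronwall}, and continuity of $u,v,w$ follows from $j \in C(\R_+;L^p_\varrho)$ by dominated convergence.

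The one substantial point is to produce the third hypothesis of Lemma~\ref{lem:Gronwall}. For fixed $R>0$ I would split the velocity integral defining $v_t$ over $\{|v|\le R\}$ and $\{|v|>R\}$. On the first set I use the weakly dissipative identity $\varrho_2^p = \varrho_1^p \langle v\rangle^{|s_2+\gamma|}$ (this is where $s_2+\gamma<0$ enters) together with $\langle v\rangle \le \langle R\rangle$, bounding that part of $v_t$ by $\langle R\rangle^{|s_2+\gamma|} u_t$. On the second set I write $\varrho_2^p = \varrho^p (\varrho_2/\varrho)^p$ and use $\varrho_2/\varrho \to 0$ at infinity, bounding that part by $\Lambda_R w_t$ with $\Lambda_R := \sup_{|v|\ge R}(\varrho_2/\varrho)^p \to 0$. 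Dividing by $\langle R\rangle^{|s_2+\gamma|}$ gives $\eps_R v_t \le u_t + \vartheta_R w_t$ with $\eps_R := \langle R\rangle^{-|s_2+\gamma|} \to 0$ and $\vartheta_R := \Lambda_R \eps_R$, so $\vartheta_R/\eps_R = \Lambda_R \to 0$. Lemma~\ref{lem:Gronwall} then yields $v_t \le \Gamma_t w_0$ with $\Gamma_t = \inf_{R>0}\big( e^{-\sigma\eps_R t} + C\Lambda_R\big)$, and taking $p$-th roots gives the statement with $\Theta_{\varrho,\varrho_2} := \Gamma^{1/p}$.

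It then remains to optimise $\Gamma_t$ by choosing $R=R(t)$ balancing its two terms, using $|s_2+\gamma| = |\gamma| - s_2$ throughout (valid since $s_2+\gamma<0$). In cases (1) ($s_2 = 2$, $\Lambda_R = e^{-p(\kappa-\tfrac14)R^2}$) and (2) ($s_2 = s$, $\Lambda_R$ exponentially small in $R^{s}$), setting $\sigma t \langle R\rangle^{-|s_2+\gamma|} \simeq (\mathrm{const})\,R^{s_2}$ forces $\langle R\rangle^{s_2+|s_2+\gamma|} = \langle R\rangle^{|\gamma|} \simeq \sigma t$, whence $\sigma\eps_R t \simeq \lambda\, t^{s_2/|\gamma|}$ and $\Gamma_t \simeq 2 e^{-\lambda t^{s_2/|\gamma|}}$; after the $p$-th root this is the claimed exponential rate, and carefully tracking the constants in (1) reproduces the stated value of $\lambda$. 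In case (3) ($s_2 = 0$, $\Lambda_R = \langle R\rangle^{-p(k_2-k)}$ only polynomially small), the optimal choice is $\langle R\rangle^{|\gamma|} \simeq \sigma t/\log\langle t\rangle$ rather than $\simeq \sigma t$, which is exactly what produces the logarithmic factor, and one gets $\Gamma_t \simeq C(\log\langle t\rangle/\langle t\rangle)^{p(k_2-k)/|\gamma|}$, hence the announced $\Theta_{\varrho,\varrho_2}$.

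I expect the only genuine difficulty to be this last case-by-case optimisation — in particular matching the exact constant $\lambda$ in (1), and checking that in (3) the logarithmic choice of $R$ really makes the two terms of $\Gamma_t(R)$ of the same, announced order. Everything else is soft: the sole structural input is the weakly dissipative relation $s_2+\gamma<0$, used through $\varrho_2^p = \varrho_1^p\langle v\rangle^{|s_2+\gamma|}$, together with the decay of $\varrho_2/\varrho$ at infinity.
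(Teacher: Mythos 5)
Your proposal is correct and follows essentially the same path as the paper's proof: you reduce to Lemma~\ref{lem:Gronwall} with $u,v,w$ the $L^p_{\varrho_1},L^p_{\varrho_2},L^p_\varrho$ norms to the $p$-th power, derive the interpolation inequality $\eps_R v_t \le u_t + \vartheta_R w_t$ from the same choice $\eps_R = \langle R\rangle^{s_2+\gamma}$, $\vartheta_R/\eps_R = (\varrho_2(R)/\varrho(R))^p$ (your ``split the integral at $|v|=R$'' is the integrated form of the paper's pointwise inequality $\eps_R\varrho_2^p \le \varrho_1^p + \vartheta_R\varrho^p$, and uses the same monotonicity of $\varrho_2/\varrho_1$ and $\varrho/\varrho_2$), and then optimize $R(t)$ case-by-case with the same choices --- $\langle R\rangle^{|\gamma|}\simeq \sigma t$ in cases (1)--(2) and $\langle R\rangle^{|\gamma|}\simeq \sigma t/\log\langle t\rangle$ in case (3) --- followed by a $p$-th root. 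You also correctly note, contrary to the monotonicity stated in the hypotheses, that in all three cases $\varrho_2/\varrho$ is decreasing to $0$ (not increasing to $\infty$), which is indeed what both the conclusion $L^p_\varrho\hookrightarrow L^p_{\varrho_2}$ and the requirement $\vartheta_R/\eps_R\to0$ demand.
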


\begin{proof}[Proof of Proposition~\ref{prop:GalWeakDissipEstim}] Because $\varrho_2/\varrho_1$ and $\varrho/\varrho_2$ are increasing,  we have 
$$
\eps_R \varrho_2^p \le  \varrho_1^p +  \vartheta_R \varrho^p, 
$$
with $\eps_R :=  \varrho_1(R)^p/\varrho_2(R)^p =  \langle R \rangle^{s_2+\gamma}$ and $ \vartheta_R/\eps_R := \varrho_2(R)^p/\varrho(R)^p$, so that the 
three conditions in Lemma~\ref{lem:Gronwall} are satisfied by $u:= \| j \|^p_{L^p_{\varrho_1}}$, $v :=  \| j \|^p_{L^p_{\varrho_2}}$ and 
$w :=  \| j \|^p_{L^p_{\varrho}}$.  Using  the definition \eqref{eq:lemGronwall} of $\Gamma_t(R)$, we have 
$$
\Gamma_t \le \Gamma_t(R) = e^{- \sigma \langle R \rangle^{s_2+\gamma} t} +\frac{\vartheta_R}{\eps_R} C, \quad \forall \, R > 0, 
$$
and we make an appropriate choice of $R = R(t)$ depending on the case we face to.

\smallskip
\noindent
{\it Case (1).} We   take $R := (\alpha t)^{1/|\gamma|}$, $\alpha := \tfrac\sigma{p(\kappa-1/4)}$, in the definition \eqref{eq:lemGronwall} of $\Gamma_t(R)$, 
so that 
$$
\Gamma_t \le e^{- \sigma t   (\alpha t) ^{  {(2+\gamma)/ |\gamma|}}  } +e^{- p (\kappa-\tfrac14)) (\alpha t)^{{2 / |\gamma|}}} C .
$$

\smallskip
\noindent
{\it Case (2).}  We make the same choice as in Case 1, and we conclude similarly. 
 
\smallskip
\noindent
{\it Case (3).}  We take $\la R \ra = [ \lambda^{-1} (\log \la t \ra)^{-1} \la t \ra    ]^{1 / |\gamma|}$, with $\lambda>0$ to be chosen later, in the definition \eqref{eq:lemGronwall} of $\Gamma_t(R)$. We then get 
\bean
\Gamma_t 
&\le& \Gamma_t(R) \le  e^{- \sigma   R  ^{\gamma} t} + \la R \ra^{-p(k_2-k)} C
\\
&\le&  e^{- \sigma \lambda \log \la t \ra} + C \lambda^{p \frac{(k_2-k)}{|\gamma|}} \left[\frac{\log \la t \ra}{\la t \ra} \right]^{p \frac{(k_2-k)}{|\gamma|}},
\eean 
from which we conclude by taking $\lambda = \frac{p(k_2-k)}{\sigma |\gamma|}$.  
\end{proof}

%%%%%%%%%%%%%%%%%%%%%%%%%%%%%%%%%%%
\section{A first glance at  $S_{\LL_g}$}\label{sec-SLLg-first}

In this section, we consider the equation associated to the linear operator $\LL_g$, we establish some micro and macroscopic dissipativity estimates in a $L^2$ framework and next deduce the well-posedness of the associated linear equation. 

\subsection{Microscopic dissipativity estimates}

Let us introduce the main microscopic dissipativity part of the operator $\LL_g$ defined by 
$$
\BB_0 f := Q(\mu,f) = \bar a_{ij} \partial^2_{v_i v_j}f  - \bar c f, 
$$
where we recall the shorthand \eqref{eq:barabc}.

\begin{lem}\label{lem:Psi_m}	
For any exponent $p \in [1,\infty]$ and any admissible weight function $\omega$, the function $\varpi_{\omega,p}^{\BB_0}$ defined in \eqref{eq:varpiL_omega_p} satisfies 
\be\label{eq:lem:Psi_m-1}
\limsup_{|v| \to \infty}  \left[  \langle v \rangle^{-\gamma - s}  \varpi_{\omega,p}^{\BB_0} (v)  \right] \le \kappa_{\omega,p},
\ee
with $\kappa_{\omega,p} < 0$, and more precisely
\be\label{eq:lem:Psi_m-2}
\kappa_{\omega,p} :=
\begin{cases}
-2k + 2 \left( 1 -  \frac{1}{p} \right) (\gamma+3)  &  \text{if  }  s=0, \\ - 2 k      &    \text{if  }   s \in (0,2), \\ 
  - 2 k  (1-k)    &   \text{if  }  s=2.  
\end{cases}
\ee

\end{lem}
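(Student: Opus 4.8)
The plan is to apply Lemma~\ref{lem:dissipativity_Lp} to the operator $\BB_0$ and then read off the leading order as $|v|\to\infty$ from Lemma~\ref{lem:elementary-abc*bar}. First I would observe that $\BB_0 f=\bar a_{ij}\partial^2_{v_iv_j}f-\bar c f$ has the form \eqref{eq:OperatorbfL} with $\sigma_{ij}=\bar a_{ij}$, $\nu_i=0$ and $\eta=-\bar c$, and that $\partial_{v_j}\bar a_{ij}=\bar b_i$, $\partial^2_{v_iv_j}\bar a_{ij}=\bar c$ (the latter understood in the distributional sense, which is harmless since $\bar c$ is a smooth function, equal to $-8\pi\mu$ in the Coulomb case $\gamma=-3$ by \eqref{eq:barc=-3}). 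Inserting this into \eqref{eq:varpiL_omega_p}, with the convention $1/\infty=0$ when $p=\infty$, gives
$$
\varpi^{\BB_0}_{\omega,p}=2\Bigl(1-\tfrac1p\Bigr)\bar a_{ij}\tfrac{\partial_{v_i}\omega}{\omega}\tfrac{\partial_{v_j}\omega}{\omega}+\Bigl(\tfrac2p-1\Bigr)\bar a_{ij}\tfrac{\partial^2_{v_iv_j}\omega}{\omega}+\tfrac2p\,\bar b_i\tfrac{\partial_{v_i}\omega}{\omega}-\Bigl(1-\tfrac1p\Bigr)\bar c .
$$

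Next I would compute the logarithmic derivatives of an admissible weight. Writing $\wp:=k\langle v\rangle^{s-2}$ as in the proof of Proposition~\ref{prop:EstimOpLandau3} (with the conventions $s=0$, $\omega=\langle v\rangle^k$ in the polynomial case and $k=\kappa s$ in the exponential case), one has $\partial_{v_i}\omega=\wp\,v_i\,\omega$, hence $\tfrac{\partial_{v_i}\omega}{\omega}=\wp v_i$ and
$$
\tfrac{\partial^2_{v_iv_j}\omega}{\omega}=\wp\,\delta_{ij}+\wp^2 v_iv_j+k(s-2)\langle v\rangle^{s-4}v_iv_j .
$$
Substituting these into the previous identity and using $|v|^2\langle v\rangle^{-2}\to1$ together with the asymptotics of Lemma~\ref{lem:elementary-abc*bar} --- namely $\bar a_{ij}v_iv_j=\ell_1(v)|v|^2\sim2\langle v\rangle^{\gamma+2}$, $\bar a_{ii}\sim2\langle v\rangle^{\gamma+2}$, $\bar b_iv_i=-\ell_1(v)|v|^2\sim-2\langle v\rangle^{\gamma+2}$ by \eqref{eq:barb}, and $\bar c\sim-2(\gamma+3)\langle v\rangle^\gamma$ by \eqref{eq:barc>-3} --- one sees that $\varpi^{\BB_0}_{\omega,p}$ is, up to strictly lower-order terms (such as the $\langle v\rangle^{\gamma+s-2}$ contribution from the $k(s-2)$ piece), a combination of the three scales $\langle v\rangle^{\gamma+2s-2}$, $\langle v\rangle^{\gamma+s}$ and $\langle v\rangle^\gamma$. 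After multiplying by $\langle v\rangle^{-\gamma-s}$ these become $\langle v\rangle^{s-2}$, $1$ and $\langle v\rangle^{-s}$, so only the $O(1)$ contributions survive in the $\limsup$.

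Finally I would split into the three cases and collect coefficients. For $s=0$ only the $\langle v\rangle^\gamma$ scale survives, the $O(1)$ contributions coming from the $\bar a_{ii}\wp$ piece of the second summand, from the third summand and from the last one, which gives $2(\tfrac2p-1)k-\tfrac4p k+2(1-\tfrac1p)(\gamma+3)=-2k+2(1-\tfrac1p)(\gamma+3)$. For $s\in(0,2)$ only the $\langle v\rangle^{\gamma+s}$ scale survives, yielding $2(\tfrac2p-1)k-\tfrac4p k=-2k$. For $s=2$ one has $\wp=k$ constant, the three scales collapse to $\langle v\rangle^{\gamma+2}$, and summing all $O(1)$ contributions gives a $k^2$-coefficient $4(1-\tfrac1p)+2(\tfrac2p-1)=2$ and a $k$-coefficient $2(\tfrac2p-1)-\tfrac4p=-2$, i.e.\ $2k^2-2k=-2k(1-k)$. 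Negativity of $\kappa_{\omega,p}$ then follows at once: for $s=0$ from $0\le\gamma+3$, $1-\tfrac1p\le1$ and $k>k_0>8+\gamma>\gamma+3$ by \eqref{eq:def-k0}; for $s\in(0,2)$ from $k=\kappa s>0$; and for $s=2$ from $k=2\kappa\in(0,1)$.

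I expect the only delicate point to be the bookkeeping of which terms are genuinely subleading --- in particular that for $s=2$ the ``cross'' contribution $\bar a_{ij}\wp^2 v_iv_j$ inside the second summand is \emph{not} negligible, whereas it does vanish in the limit when $s<2$ --- together with handling the Coulomb case $\gamma=-3$ correctly, where $\bar c=-8\pi\mu$ is a genuine (exponentially decaying) function rather than a distribution, so that its contribution is consistently $2(1-\tfrac1p)(\gamma+3)\langle v\rangle^\gamma$ with the prefactor $\gamma+3=0$.
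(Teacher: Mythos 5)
Your proof is correct and follows essentially the same route as the paper: specialize Lemma~\ref{lem:dissipativity_Lp} to $\BB_0$ with $\sigma=\bar a$, $\nu=0$, $\eta=-\bar c$, insert the logarithmic derivatives of $\omega$ via $\wp=k\langle v\rangle^{s-2}$, and read off the leading asymptotics from Lemma~\ref{lem:elementary-abc*bar}, distinguishing $s=0$, $s\in(0,2)$, $s=2$. The only cosmetic slip is the phrase ``the three scales collapse to $\langle v\rangle^{\gamma+2}$'' in the $s=2$ case (only two of them do; the $\bar c$ contribution stays at the subleading scale $\langle v\rangle^\gamma$), but your coefficient bookkeeping correctly omits it, so the conclusion is unaffected.
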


\begin{proof}[Proof of Lemma~\ref{lem:Psi_m}] 
From the very definition \eqref{eq:varpiL_omega_p} of $\varpi_{\omega,p}^{\BB_0}$, % in Lemma~\ref{lem:dissipativity_Lp}, % and with  the notations of the proof of Proposition~\ref{prop:EstimOpLandau3},,  
we have 
\begin{equation}\label{eq:varpiB0_omega_p}
\begin{aligned}
\varpi_{\omega,p}^{\BB_0} (v) 
&= 2\left( 1- \frac{1}{p} \right) \bar a_{ij}  \frac{\partial_{v_i} \omega}{\omega} \frac{\partial_{v_j} \omega}{\omega} + \left( \frac{2}{p}-1 \right) \bar a_{ij} \frac{\partial_{v_i, v_j} \omega}{\omega} + \frac{2}{p} \bar b_i \frac{\partial_{v_i} \omega}{\omega}  + \left(\frac{1}{p}   - 1 \right)  \bar c.
\end{aligned}
\end{equation}
Similarly as in the proof of Proposition~\ref{prop:EstimOpLandau3}, we observe that 
\begin{equation}\label{eq:dim_dijmBIS}
\frac{\partial_{v_i} \omega}{\omega} =   v_i \wp, \quad
\frac{\partial_{v_i, v_j} \omega}{\omega} =  \wp\delta_{ij} +  v_i v_j \wp \left(\wp +  \frac{ s -2}{ \langle v \rangle^2} \right), \quad  \wp  :=  k \langle v \rangle^{s-2}, 
\end{equation}
which implies, with the help of Lemma~\ref{lem:elementary-abc*bar}, 
\bean
\varpi_{\omega,p}^{\BB_0} (v) 
&=& 
\bar a_{ij} v_iv_j \wp \left[   \wp  + \left( \frac{2}{p}-1 \right)    \frac{ s -2}{ \langle v \rangle^2}  \right]
 + \left( \frac{2}{p}-1 \right)\bar a_{ii} \wp+ \frac{2}{p} \bar b_i v_i \wp  + \left( \frac{1}{p} - 1 \right) \bar c
\\
&\underset{|v| \to \infty}{\sim}& 
2k \langle v \rangle^{\gamma+s}  \left[   \wp  + \left( \frac{2}{p}-1 \right)    \frac{ s -2}{ \langle v \rangle^2}  - 1 \right]   + 2 \left(1- \frac{1}{p}\right)  (\gamma+3) \la v \ra^{\gamma} .
\eean
By particularizing the different possible values of the parameters $\gamma$ and $s$, we immediately conclude to \eqref{eq:lem:Psi_m-1}--\eqref{eq:lem:Psi_m-2}. 
\end{proof}

For a given function $g=g(t,x,v)$, we  now introduce $\CC^+_g$ the local collision part   
\beqn\label{eq:defC+gf}
\CC^+_g f : = Q(\mu,f) +   Q(g,f) 
\eeqn
of the  linearized operator $\LL_g$. For an admissible weight function $\omega$, we define the modified weight function
$$
\widetilde \omega = \widetilde \omega(x,v) := \theta \omega,
$$
for a nonnegative function $\theta = \theta(x,v)$ such that 
\beqn\label{eq:bound_theta}
\left| \frac{\nabla_v \theta}{\theta} \right| \lesssim \la v \ra^{-1- \alpha}, \quad
\left| \frac{\nabla_v^2 \theta}{\theta} \right| \lesssim \la v \ra^{-2- \alpha},
\eeqn
for any $(x,v) \in \OO$ and  some $\alpha>0$. 
 
\begin{lem}\label{lem:varpiC+g_tilde_omega_p}
For any exponent $p \in [1,\infty]$ and any admissible weight function $\omega$, there exists $C_{\XX_0,1} > 0$ and a positive  function $\psi$ on $\R^3$ satisfying $\psi(v) \to 0$ as $|v| \to \infty$
  % for any $\kappa < \kappa_{\omega,p}$ there exist $M,R > 0$ 
  such that 
the function $\varpi_{\tilde\omega,2}^{\CC^+_g}$ defined in \eqref{eq:varpiL_omega_p} satisfies 
\be\label{eq:lem:varpiC+g_tilde_omega_p}
 \la v \ra^{-\gamma-s} \varpi_{\tilde\omega,p}^{\CC^+_g} (v)
\le 
 \kappa_{\omega,p}  +  C_{\XX_0,1} \| g \|_{\XX_0}+ \psi, \quad \forall (x,v) \in \OO.
\ee
\end{lem}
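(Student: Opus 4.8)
The plan is to write $\CC^+_g = \BB_0 + \mathcal{D}_g$, where $\mathcal{D}_g f := (a_{ij}*g)\,\partial^2_{v_iv_j}f - (c*g)\,f$, so that (using the formulation \eqref{eq:oplandau2} together with $\partial_{v_j}(a_{ij}*g)=b_i*g$ and $\partial^2_{v_iv_j}(a_{ij}*g)=c*g$) the operator $\CC^+_g f = Q(\mu+g,f)$ has, in the form \eqref{eq:OperatorbfL}, the coefficients $\sigma_{ij}=\bar a_{ij}+(a_{ij}*g)$, $\nu\equiv 0$, $\eta=-\bar c-(c*g)$. Since the right-hand side of \eqref{eq:varpiL_omega_p} is linear in the triple $(\sigma_{ij},\nu_i,\eta)$, this yields the exact identity $\varpi^{\CC^+_g}_{\tilde\omega,p}=\varpi^{\BB_0}_{\tilde\omega,p}+\varpi^{\mathcal D_g}_{\tilde\omega,p}$, and the three terms in the target bound \eqref{eq:lem:varpiC+g_tilde_omega_p} will come, in order, from Lemma~\ref{lem:Psi_m} applied with the \emph{unperturbed} weight $\omega$, from the error of replacing $\omega$ by $\tilde\omega=\theta\omega$, and from $\varpi^{\mathcal D_g}_{\tilde\omega,p}$.

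For the $\BB_0$ part, I would use $\tfrac{\nabla_v\tilde\omega}{\tilde\omega}=\tfrac{\nabla_v\theta}{\theta}+\tfrac{\nabla_v\omega}{\omega}$ and the analogous Hessian identity to expand $\varpi^{\BB_0}_{\tilde\omega,p}=\varpi^{\BB_0}_{\omega,p}+\mathcal E_\theta$, where $\mathcal E_\theta$ collects every term carrying at least one factor $\tfrac{\nabla_v\theta}{\theta}$ or $\tfrac{\nabla^2_v\theta}{\theta}$. Using $|\bar a_{ij}|\lesssim\la v\ra^{\gamma+2}$, $|\bar a_{ij}v_j|\lesssim\la v\ra^{\gamma+1}$ (since $\bar a v=\ell_1 v$), $|\bar b_i|\lesssim\la v\ra^{\gamma+1}$ (Lemma~\ref{lem:elementary-abc*bar}), together with $\tfrac{\nabla_v\omega}{\omega}=\wp\,v$, $\wp=k\la v\ra^{s-2}$, and the decay \eqref{eq:bound_theta}, every term of $\mathcal E_\theta$ is $o(\la v\ra^{\gamma+s})$ as $|v|\to\infty$, hence $\la v\ra^{-\gamma-s}\mathcal E_\theta(v)\to0$. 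Combined with Lemma~\ref{lem:Psi_m}, i.e.\ $\limsup_{|v|\to\infty}\la v\ra^{-\gamma-s}\varpi^{\BB_0}_{\omega,p}(v)\le\kappa_{\omega,p}$, this gives $\la v\ra^{-\gamma-s}\varpi^{\BB_0}_{\tilde\omega,p}(v)\le\kappa_{\omega,p}+\psi_1(v)$ with $\psi_1:=\bigl(\la v\ra^{-\gamma-s}\varpi^{\BB_0}_{\tilde\omega,p}-\kappa_{\omega,p}\bigr)_+\to0$ at infinity.

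For the $\mathcal D_g$ part, I would plug $\sigma=a*g$, $\nu=0$, $\eta=-c*g$ into \eqref{eq:varpiL_omega_p} and bound the four resulting groups of terms by Lemma~\ref{lem:elementary-abc*g}. The delicate one is the quadratic term $(a_{ij}*g)\tfrac{\partial_{v_i}\tilde\omega}{\tilde\omega}\tfrac{\partial_{v_j}\tilde\omega}{\tilde\omega}$ together with the rank-one piece $v_iv_j\wp(\wp+\tfrac{s-2}{\la v\ra^2})$ of $(a_{ij}*g)\tfrac{\partial_{v_iv_j}\tilde\omega}{\tilde\omega}$: the crude bound $|a_{ij}*g|\lesssim\la v\ra^{\gamma+2}\|g\|_{L^\infty_{\omega_0}}$ with $|\tfrac{\nabla_v\omega}{\omega}|\lesssim\la v\ra^{s-1}$ would only give $\la v\ra^{\gamma+2s}\|g\|_{L^\infty_{\omega_0}}$, which still diverges like $\la v\ra^{s}$ after multiplication by $\la v\ra^{-\gamma-s}$. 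The point is that $\tfrac{\nabla_v\omega}{\omega}=\wp\,v$ is \emph{parallel to $v$}, so these terms in fact involve $(a_{\ell k}*g)v_\ell v_k$, for which \eqref{aijg} gives the improved $\lesssim\la v\ra^{\gamma+2}\|g\|_{L^\infty_{\omega_0}}$ (the anisotropic degeneracy of the diffusion matrix along $v$, which is precisely what makes Lemma~\ref{lem:Psi_m} work); hence the quadratic term is $\lesssim\la v\ra^{\gamma+2s-2}\|g\|_{L^\infty_{\omega_0}}$, i.e.\ $\lesssim\la v\ra^{s-2}\|g\|_{L^\infty_{\omega_0}}\le\|g\|_{L^\infty_{\omega_0}}$ after the weight factor since $s\le2$. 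The mixed $\theta$--$\omega$ terms use $|(a_{\ell j}*g)v_\ell|\lesssim\la v\ra^{\gamma+2}\|g\|_{L^\infty_{\omega_0}}$ and \eqref{eq:bound_theta}; the first-order term uses \eqref{eq:b*g}; the zero-order term uses \eqref{eq:c*g}. In every case the power of $\la v\ra$ left after multiplying by $\la v\ra^{-\gamma-s}$ is nonpositive, uniformly in $x$, so $|\la v\ra^{-\gamma-s}\varpi^{\mathcal D_g}_{\tilde\omega,p}(v)|\le C_{\XX_0,1}\|g\|_{\XX_0}$, recalling that $\|g(t,x,\cdot)\|_{L^\infty_{\omega_0}}\le\|g\|_{\XX_0}$ for a.e.\ $(t,x)$.

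Adding the three estimates gives \eqref{eq:lem:varpiC+g_tilde_omega_p} with $\psi:=\psi_1$ (enlarged by a term $\la v\ra^{-1}$ if one insists on strict positivity). The main obstacle is the quadratic term just described: one must not discard the direction of $\nabla_v\omega/\omega$, so that the weight growth it carries is compensated by the degeneracy of $\bar a$ and of $a*g$ in the $v$-direction; everything else is a bookkeeping of weight exponents using Lemmas~\ref{lem:elementary-abc*bar} and~\ref{lem:elementary-abc*g}.
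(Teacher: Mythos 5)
Your proof is correct and follows the same route as the paper's: decompose $\varpi^{\CC^+_g}_{\tilde\omega,p}=\varpi^{\BB_0}_{\tilde\omega,p}+\varpi^{Q(g,\cdot)}_{\tilde\omega,p}$ (the paper's Steps 1--4), expand with respect to $\tilde\omega=\theta\omega$ using \eqref{eq:bound_theta}, invoke Lemma~\ref{lem:Psi_m} for the $\BB_0$ part, and bound the $g$-dependent part by $\|g\|_{\XX_0}\la v\ra^{\gamma+s}$ via the anisotropic estimates \eqref{aijg}--\eqref{eq:c*g}. The only difference is cosmetic: you spell out explicitly why $\nabla_v\omega/\omega\parallel v$ is the crucial gain, whereas the paper carries the same cancellation through in the displayed expansion of $\varpi^{Q(g,\cdot)}_{\omega,p}$.
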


\begin{proof}[Proof of Lemma~\ref{lem:varpiC+g_tilde_omega_p}]
We split the proof into four steps. 

\smallskip\noindent
{\sl Step 1.} From the definition of $\varpi_{\cdot ,2}^{\BB_0}$ in \eqref{eq:varpiB0_omega_p} and observing that 
$$
\frac{\partial_{v_i} \widetilde \omega }{ \widetilde \omega} = \frac{\partial_{v_i}   \omega }{ \omega} + \frac{\partial_{v_i}\theta }{ \theta},
\quad
\frac{\partial_{v_iv_j} \widetilde \omega }{ \widetilde \omega} =   \frac{\partial_{v_iv_j}  \omega }{ \omega} +  2  \frac{\partial_{v_i}   \omega }{ \omega}  \frac{\partial_{v_j}\theta }{ \theta} 
+  \frac{\partial_{v_iv_j}\theta }{ \theta},
$$
we have 
$$
\begin{aligned}
\varpi_{\tilde \omega ,p}^{\BB_0}
 = \varpi_{\omega ,p}^{\BB_0}
+ 2  \bar a_{ij} \frac{\partial_{v_i} \omega}{\omega} \frac{\partial_{v_j} \theta}{\theta} 
+   2\left( 1- \frac{1}{p} \right) \bar a_{ij} \frac{\partial_{v_i} \theta}{\theta} \frac{\partial_{v_j} \theta}{\theta} 
+ \left( \frac{2}{p}-1 \right) \bar a_{ij} \frac{\partial_{v_i, v_j} \theta}{\theta}
+  \frac2p\bar b_i \frac{\partial_{v_i} \theta}{\theta} . 
\end{aligned}
$$
%\begin{aligned}
%\varpi_{\tilde \omega ,2}^{\BB_0}
%&= \varpi_{\omega ,2}^{\BB_0}
%+ 2p \bar a_{ij} \frac{\partial_{v_i} m}{m} \frac{\partial_{v_j} \theta}{\theta} 
%+ (p-1) \bar a_{ij} \frac{\partial_{v_i} \theta}{\theta} \frac{\partial_{v_j} \theta}{\theta} 
%+ \bar a_{ij} \frac{\partial_{v_i,v_j} \theta}{\theta}.
%\end{aligned}
%$$
Thanks to Lemma~\ref{lem:elementary-abc*bar},  \eqref{eq:dim_dijmBIS} and   \eqref{eq:bound_theta}, we get
$$
\left| \bar a_{ij} \frac{\partial_{v_i} \omega}{\omega} \frac{\partial_{v_j} \theta}{\theta} \right|
\lesssim k \la v \ra^{\gamma + s -2 - \alpha},
\quad
\left| \bar a_{ij} \frac{\partial_{v_i} \theta}{\theta} \frac{\partial_{v_j} \theta}{\theta} \right|
\lesssim   \la v \ra^{\gamma - 2 \alpha},
$$
and  
$$
\left| \bar a_{ij}   \frac{\partial_{v_iv_j} \theta}{\theta} \right|
\lesssim   \la v \ra^{\gamma  - \alpha},
\quad
\left| \bar b_{i} \frac{\partial_{v_i} \theta}{\theta}\right|
\lesssim \la v \ra^{\gamma-\alpha}.
$$
The identity and these estimates together imply
$$
\langle v \rangle^{-\gamma-s} \varpi_{\tilde \omega ,2}^{\BB_0}
\le \langle v \rangle^{-\gamma-s} \varpi_{\omega ,2}^{\BB_0} + C \la v \ra^{ -\alpha}.
$$

\smallskip\noindent
{\sl Step 2.} We now consider $\varpi^{Q(g,\cdot)}_{\omega,p}$ for an exponent $p \in [1,\infty]$. 
Arguing in the same way  as in the proof of Lemma~\ref{lem:Psi_m}, we have 
\bean
\varpi^{Q(g,\cdot)}_{\omega,p} (v) 
&=& 
(a_{ij} * g) v_iv_j \wp \left[   \wp  + \left( \frac{2}{p}-1 \right)    \frac{ s -2}{ \langle v \rangle^2}  \right]
\\
&& + \left( \frac{2}{p}-1 \right) (a_{ii} * g)\wp+ \frac{2}{p}  (b_i *g) v_i \wp  + \left( \frac{1}{p} - 1 \right) (c*g).
\eean
Thanks to Lemma~\ref{lem:elementary-abc*g} and the very definition of $\wp$, we deduce 
$$
|\varpi^{Q(g,\cdot)}_{\omega,p} | \lesssim  k^2 \la v \ra^{\gamma+s}  \| g \|_{L^\infty_{\omega_0}}. 
$$

\smallskip\noindent
{\sl Step 3.} We finally consider  $\varpi^{Q(g,\cdot)}_{\tilde\omega,p}$. Similarly as in Step 1, we compute
$$
\begin{aligned}
\varpi_{\tilde \omega ,p}^{Q(g,\cdot)}
&= \varpi_{\omega ,p}^{Q(g,\cdot)}
+ 2 ( a_{ij} * g) \frac{\partial_{v_i} \omega}{\omega} \frac{\partial_{v_j} \theta}{\theta} 
+   2\left( 1- \frac{1}{p} \right)( a_{ij} * g) \frac{\partial_{v_i} \theta}{\theta} \frac{\partial_{v_j} \theta}{\theta} 
\\
&+ \left( \frac{2}{p}-1 \right) ( a_{ij} * g) \frac{\partial_{v_i, v_j} \theta}{\theta}
+  \frac2p ( b_i  * g) \frac{\partial_{v_i} \theta}{\theta} . 
\end{aligned}
$$
Using the same estimates as in  Step 1 and the conclusion of Step 2, we find 
$$
|\varpi^{Q(g,\cdot)}_{\tilde\omega,p} | \lesssim  C_{\XX_0} \la v \ra^{\gamma+s}  \| g \|_{L^\infty_{\omega_0}}, 
$$
for some constant $C_{\XX_0}$.

\smallskip\noindent
{\sl Step 4.} Using that 
$$
\varpi^{\CC^+_g}_{\tilde\omega,p} = \varpi^{\BB_0}_{\tilde\omega,p} + \varpi^{Q(g,\cdot)}_{\tilde\omega,p}
$$
and  the estimates established in Step 1, in Step 3 and in Lemma~\ref{lem:Psi_m}, we obtain
\be\label{eq:varpi_omegatilde_CC+}
\limsup_{|v| \to \infty}  \left[   \la v \ra^{-\gamma-s} \varpi_{\tilde\omega,p}^{\CC^+_g} (v)    \right] \le \kappa_{\omega,p} + C_{\XX_0} \|g \|_{\XX_0}, 
\ee
from which we immediately conclude. 
\end{proof}

\subsection{Dissipativity estimate in $L^2$}

In this section, we establish some (possibly weak) dissipativity property for the solutions to the linear equation \eqref{eq:linear_g}.

\begin{prop}\label{prop:LLg_L2}
Consider an admissible weight function $\omega$. There exist constants $\eps_1, \sigma, R_1, M_1 > 0$ (only depending on $\omega$) and a modified  weight function $\widetilde \omega : \OO \to (0,\infty)$ with equivalent velocity growth as $\omega$ such that if  $\| g \|_{\XX_0} \le \eps_1$, then for any solution $f$ to the linear equation \eqref{eq:linear_g} associated to the linear operator $\LL_g$ and the reflection boundary condition \eqref{eq:reflect_F}, there holds
\beqn\label{eq:SLg-L2H1*BIS}
 \frac12 \frac{\d}{\d t} \|  f \|_{L^2_{x,v} (\tilde\omega)}^2 + \sigma \|  f \|_{L^2_x H^{1,*}_v (\tilde \omega)}^2  \le M_1  \|  f \|_{L^2_{x,v}(\Omega \times B_{R_1})}^2  . 
\eeqn

\end{prop}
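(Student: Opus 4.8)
The plan is to run a weighted $L^2$ energy estimate, the weight being a velocity-only modification $\widetilde\omega=\theta\,\omega$ of the given admissible weight, with $\theta=\theta(v)$ radial and satisfying \eqref{eq:bound_theta} so that Lemma~\ref{lem:varpiC+g_tilde_omega_p} is available. Writing
\[
\LL_g f=\bigl(-v\cdot\nabla_x f+\CC^+_g f\bigr)+\AA_0 f-\pi Q(g,f),
\qquad \CC^+_g f=Q(\mu,f)+Q(g,f),\ \ \AA_0 f=Q(f,\mu),
\]
one first multiplies \eqref{eq:linear_g} by $f\widetilde\omega^2$ and integrates over $\OO$. Since $f$ is a (renormalized) solution in the sense of Theorem~\ref{theo-Kolmogorov-trace}, this is justified by inserting a cutoff $\chi_R\widetilde\omega^2$ in the Green formula \eqref{eq:FPK-traceL2} and letting $R\to\infty$ (using $\langle v\rangle\,\omega^{-1}\in L^2$), and yields
\[
\tfrac12\tfrac{\d}{\dt}\|f\|_{L^2_{xv}(\widetilde\omega)}^2
=-\tfrac12\int_\Sigma(\gamma f)^2\,(n_x\cdot v)\,\widetilde\omega^2
+\int_\OO(\CC^+_g f)\,f\,\widetilde\omega^2
+\int_\OO(\AA_0 f)\,f\,\widetilde\omega^2
-\int_\OO(\pi Q(g,f))\,f\,\widetilde\omega^2 .
\]

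\textbf{Boundary term.} Splitting $\Sigma=\Sigma_+\cup\Sigma_-$, inserting the Maxwell condition $\gamma_- f=\RRR\gamma_+ f$ and using \eqref{eq:defdxi1&2omega}, the flux equals $\tfrac12\|\gamma_- f\|_{L^2(\Sigma_-;\,\d\xi^1_{\widetilde\omega})}^2-\tfrac12\|\gamma_+ f\|_{L^2(\Sigma_+;\,\d\xi^1_{\widetilde\omega})}^2\le0$, provided $\RRR$ is a contraction on $L^2(\d\xi^1_{\widetilde\omega})$ as in \eqref{eq:KolmogorovWP-hyp2}: the specular part $\SSS$ is an isometry because $\widetilde\omega$ is radial in $v$ and $v\mapsto\VV_x v$ preserves both $|v|$ and $|n_x\cdot v|$, while the diffusive part $\DDD$ is a contraction through a Cauchy--Schwarz (Darroz\`es--Guiraud) bound on $\widetilde g(x)^2$ once $\widetilde\omega$ is normalised correctly. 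I expect this to be the main obstacle, since it is here that the generality of the Maxwell condition and the sharp constant ``$\le1$'' (not merely ``$\lesssim1$'') force the precise choice of $\widetilde\omega$.

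\textbf{Collision term.} Apply Lemma~\ref{lem:dissipativity_Lp} with $p=2$ to $\mathbf L=\CC^+_g$, whose coefficients are $\sigma_{ij}=\bar a_{ij}+a_{ij}*g$, $\nu_i=0$, $\eta=-(\bar c+c*g)$; with $G=\widetilde\omega f$ this gives
\[
\int_\OO(\CC^+_g f)\,f\,\widetilde\omega^2
=-\int_\OO\sigma_{ij}\,\partial_{v_i}(\widetilde\omega f)\,\partial_{v_j}(\widetilde\omega f)
+\int_\OO\varpi^{\CC^+_g}_{\widetilde\omega,2}\,(\widetilde\omega f)^2 .
\]
For $\|g\|_{\XX_0}\le\eps_1$ small, \eqref{aijFiHj}--\eqref{eq:Bnabla=tildenabla} (applied to $\bar a$ and to $a*g$ as a small perturbation) yield $\sigma_{ij}\partial_{v_i}(\widetilde\omega f)\partial_{v_j}(\widetilde\omega f)\ge c\,|\langle v\rangle^{\gamma/2}\widetilde\nabla_v(\widetilde\omega f)|^2$, and Lemma~\ref{lem:varpiC+g_tilde_omega_p} with $\kappa_{\omega,2}<0$ together with the smallness of $\|g\|_{\XX_0}$ gives $\varpi^{\CC^+_g}_{\widetilde\omega,2}(v)\le-c\,\langle v\rangle^{\gamma+s}$ for $|v|\ge R_1$ with $R_1$ large; since $\varpi^{\CC^+_g}_{\widetilde\omega,2}$ is bounded on $B_{R_1}$, this term is $\le-c'\|f\|^2_{L^2_xH^{1,*}_v(\widetilde\omega)}+C\|f\|^2_{L^2(\Omega\times B_{R_1})}$.

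\textbf{Lower order terms and conclusion.} By Lemma~\ref{lem:borneA0} (for the weight $\widetilde\omega$, which has velocity growth equivalent to $\omega$), $|\langle\AA_0 f,f\rangle_{L^2_{\widetilde\omega}}|\lesssim\|\langle v\rangle^{(\gamma-1)/2}f\|^2_{L^2_{\widetilde\omega}}$, and by \eqref{eq:piQgf_omega}, $|\langle\pi Q(g,f),f\rangle_{L^2_{\widetilde\omega}}|\lesssim\|g\|_{\XX_0}\|\langle v\rangle^{\gamma/2}f\|^2_{L^2_{\widetilde\omega}}$. Since $(\gamma-1)/2<(\gamma+s)/2$ and $\gamma/2\le(\gamma+s)/2$, splitting over $B_{R_1}$ bounds both by $\tfrac{c'}{4}\|\langle v\rangle^{(\gamma+s)/2}f\|^2_{L^2_{\widetilde\omega}}+C\|f\|^2_{L^2(\Omega\times B_{R_1})}$: for $\AA_0$ because $\langle v\rangle^{(\gamma-1)/2}\le\langle R_1\rangle^{-(1+s)/2}\langle v\rangle^{(\gamma+s)/2}$ on $B_{R_1}^c$, and for $\pi Q$ because $\|g\|_{\XX_0}\le\eps_1$ is small (this also covers the borderline case $s=0$). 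Finally $\|f\|_{L^2_xH^{1,*}_v(\widetilde\omega)}$ and $\|f\|_{L^2_xH^{1,*}_v(\omega)}$ are comparable up to a term $\lesssim\|\langle v\rangle^{-1}f\|^2_{L^2_{\widetilde\omega}}$, again absorbable, because $\widetilde\nabla_v(\widetilde\omega f)-\theta\,\widetilde\nabla_v(\omega f)$ is controlled by $|\nabla_v\theta|\lesssim\theta\langle v\rangle^{-1-\alpha}$. Collecting everything and choosing first $\eps_1$ small, then $R_1=R_1(\omega)$ large and adjusting $\sigma,M_1>0$ gives \eqref{eq:SLg-L2H1*BIS}.
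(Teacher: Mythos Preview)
Your collision and lower-order estimates are essentially the same as the paper's, but the boundary step has a genuine gap that your honest remark (``I expect this to be the main obstacle'') already flags without resolving.

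With a purely radial weight $\widetilde\omega=\theta(|v|)\,\omega(|v|)$, the specular part $\SSS$ is indeed an isometry on $L^2(\d\xi^1_{\widetilde\omega})$, but the diffusive part $\DDD$ is \emph{never} a contraction unless $\widetilde\omega^2$ is proportional to $\MMM^{-1}$. Concretely, writing $K_1(\widetilde\omega)=\int\MMM^2\widetilde\omega^2(n_x\cdot v)_+\,\d v$ and $K_2(\widetilde\omega)=\int\widetilde\omega^{-2}(n_x\cdot v)_+\,\d v$, the Cauchy--Schwarz bound on $\widetilde{\gamma_+f}$ gives only
\[
\int_{\Sigma_-}(\DDD\gamma_+f)^2\widetilde\omega^2|n_x\cdot v|\le K_1(\widetilde\omega)K_2(\widetilde\omega)\int_{\Sigma_+}(\gamma_+f)^2\widetilde\omega^2|n_x\cdot v|,
\]
and by Cauchy--Schwarz again $K_1K_2\ge(\int\MMM(n_x\cdot v)_+\,\d v)^2=1$, with equality iff $\MMM\widetilde\omega^2$ is constant. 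Since $\widetilde\omega$ must have growth equivalent to $\omega\ne\MMM^{-1/2}$, one gets $K_1K_2>1$ and the flux $\int_\Sigma(\gamma f)^2(n_x\cdot v)\widetilde\omega^2$ has the wrong sign when $\iota\not\equiv0$.

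The paper's fix is to make $\widetilde\omega$ depend on $x$ as well: first set $\omega_A^2=\chi_A\MMM^{-1}+(1-\chi_A)\omega^2$ (so that $K_1(\omega_A),K_2(\omega_A)\to1$ as $A\to\infty$, while the large-$|v|$ behaviour is unchanged), and then multiply by $1+\tfrac12(n_x\cdot v)\langle v\rangle^{\gamma-3}$. The added $(n_x\cdot v)$ factor produces on the boundary an extra \emph{quadratic} term $\tfrac12\int_\Sigma(\gamma f)^2\omega_A^2(n_x\cdot v)^2\langle v\rangle^{\gamma-3}$ with a fixed positive lower bound $\tfrac12K_0(\omega_A)^{-1}\int_{\partial\Omega}(\widetilde{\gamma_+f})^2$, and this is what absorbs the defect $K_1(\omega_A)-K_2(\omega_A)^{-1}$ for $A$ large. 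The price is a new interior term $\int_\OO f^2\,v\cdot\nabla_x(\widetilde\omega^2)$, but this is $O(\langle v\rangle^{\gamma-1})$ and harmless. Without this $x$-dependent multiplier trick (or an equivalent device), the boundary term cannot be discarded for a general Maxwell reflection.
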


It is worth emphasizing that depending of the value of $\gamma$ and the choice of the weight function $\omega$ this differential inequality provides the dissipativity property (exponential decay) of the norm (when 
$H^{1,*}_v (\omega) \subset L^2_v ( \omega)$) or not.

\begin{proof}[Proof of Proposition~\ref{prop:LLg_L2}]
We split the proof into six steps. 

\medskip\noindent
\textit{Step 1.}
We define the modified weight function $\omega_A = \omega_A(v)$ by
\begin{equation}\label{eq:def:omega_A}
\omega_A^2 = \chi_A \MMM^{-1} + (1-\chi_A) \omega^2
\end{equation}
where $\chi_A (v) = \chi (\tfrac{|v|}{A})$, $A \ge 1$ will be chosen later (large enough), and $\chi \in C^2(\R_+)$ with $\mathbf 1_{[0,1]} \le \chi \le \mathbf 1_{[0,2]}$.
We then define a second modified weight function $\widetilde{\omega} = \widetilde{\omega}(x,v)$ by
\begin{equation}\label{eq:def:tilde_omega}
  \widetilde \omega^2 = \left\{ 1 + \frac12 (n_x \cdot   v)   \langle v \rangle^{\gamma-3}\right\} \omega_A^2,  
\end{equation}
and we observe that
$$
1 \le \omega_A \le  c_A \omega \quad\text{and}\quad
\frac12 \omega_A^2 \le \widetilde \omega^2 \le \frac32 \omega_A^2,
$$
for some constant $c_A \ge 1$.
We finally remark that we can write
$$
\widetilde \omega = \theta \omega
$$
with
$$
\theta^2 = \left[1 + \frac12 (n_x \cdot   v)   \langle v \rangle^{\gamma-3} \right] \left[ 1 + \chi_A (\MMM^{-1} \omega^{-2}-1)\right]
$$
that satisfies, for any $A>0$,
$$
\left| \frac{\partial_{v_i} \theta}{\theta} \right| \lesssim \la v \ra^{-2}, \quad
\left| \frac{\partial_{v_i, v_j} \theta}{\theta} \right| \lesssim \la v \ra^{-3}.
$$
Given a solution $f$ to the linear equation  \eqref{eq:linear_g}, we  write
\begin{equation}\label{eq:ddt_f_L2}
\begin{aligned}
\frac12 \frac{\d}{\d t} \| f \|_{L^2_x L^2_v( \tilde \omega)}^2
&= \int_\OO  (\CC^+_g f) f \widetilde \omega^2
+ \int_\OO  (\CC^-_g f)  f \widetilde \omega^2 
+ \int_\OO f^2 v \cdot \nabla_x (\widetilde \omega ^2) \\
&\quad
- \int_{\Sigma} (\gamma f)^2 \widetilde \omega^2 (n_x \cdot v), 
\end{aligned}
\end{equation}
where $\CC^-_g$ stands for the nonlocal collision part    
\beqn\label{eq:defC-gf}
\CC^-_g f := Q(f,\mu) - \pi Q(g,f).
\eeqn
of the  linearized operator $\LL_g$, and $\CC^+_g$ is the local collision part defined in \eqref{eq:defC+gf}.

\smallskip\noindent
\textit{Step 2.} For the  first term at the right-hand side of~\eqref{eq:ddt_f_L2}, we may use Lemma~\ref{lem:dissipativity_Lp}
$$
\int_\OO (\CC^+_g f) f \widetilde \omega^2 
 = -   \int \widetilde a_{ij}   \partial_{v_i} ( \widetilde \omega f) \partial_{v_j}  ( \widetilde \omega f)  + \int_\OO \varpi_{\tilde \omega,2}^{\CC^+_g} f^2  \widetilde \omega^2,
 $$
where $\widetilde a_{ij} := a_{ij} * (g + \mu)$ and $\varpi_{\tilde \omega,2}^{\CC^+_g}$ satisfies \eqref{eq:lem:varpiC+g_tilde_omega_p} in Lemma~\ref{lem:varpiC+g_tilde_omega_p} thanks to the above estimates on $\theta$. 
We observe that  
\bean
\widetilde a_{ij}   \partial_{v_i} ( \widetilde \omega f) \partial_{v_j} ( \widetilde \omega f)
&=& \bar a_{ij}   \partial_{v_i} ( \widetilde \omega f) \partial_{v_j} ( \widetilde \omega f) + (a_{ij} * g)  \partial_{v_i} ( \widetilde \omega f) \partial_{v_j} ( \widetilde \omega f)
\\
&\ge& \left(\bar C - C_{\XX_0,2} \| g \|_{\XX_0} \right) \langle v \rangle^\gamma |\widetilde \nabla_v ( \widetilde \omega f) |^2, 
\eean
for a constant $\bar C > 0$ given by   Lemma~\ref{lem:elementary-abc*bar} and \eqref{eq:Bnabla=tildenabla},  and for a constant $C_{\XX_0,2} $ given by   \eqref{aijFiHj} and \eqref{eq:Bnabla=tildenabla}. For the second term at the right-hand side of~\eqref{eq:ddt_f_L2}, we may use  \eqref{eq:AA0L2L2} and \eqref{eq:piQgf_omega} in order to get 
$$
\int_\OO (\CC^-_g f) f \widetilde \omega^2 
 \le  \int_\OO ( C_{\AA_0} \langle v \rangle^{\gamma-1} + C_{\XX_0,3} \| g \|_{\XX_0} \langle v \rangle^{\gamma})  f^2  \widetilde \omega^2,
 $$
for constants $C_{\AA_0} , C_{\XX_0,3} >0$ given by Lemma~\ref{lem:borneA0} and Lemma~\ref{prop:EstimOpLandau3} respectively.

\medskip\noindent
\textit{Step 3.}
For the third term at the right-hand side of~\eqref{eq:ddt_f_L2} we observe that $\nabla_x (\widetilde \omega^2) = \frac12 D_x n_x  v \, \langle v \rangle^{\gamma-3}  \omega_A^2$ and  therefore
$$
\begin{aligned}
\int_\OO f^2 v \cdot \nabla_x (\widetilde \omega ^2) 
&= \frac12  \int_\OO f^2 \left( v \cdot D_x n_x  v \right)  \langle v \rangle^{\gamma-3} \omega_A^2   \\
&\le C_\TT \int_\OO f^2 \widetilde \omega^2 \langle v \rangle^{\gamma-1},
\end{aligned}
$$
for some constant $C_\TT >0$.

\medskip\noindent
\textit{Step 4.}
The boundary term in~\eqref{eq:ddt_f_L2} can be decomposed as
$$
\begin{aligned}
\int_{\Sigma} (\gamma f)^2 \widetilde \omega^2 (n_x \cdot v)
&= \int_{\Sigma}  (\gamma f)^2 \omega_A^2 (n_x \cdot v) 
+\frac12  \int_{\Sigma} (\gamma f)^2 \omega_A^2 (n_x \cdot  v)^2  \langle v \rangle^{\gamma-3}.
\end{aligned}
$$
On the one hand we have
$$
\int_{\Sigma} (\gamma f)^2 \omega_A^2 (n_x \cdot v)
= \int_{\Sigma_+} (\gamma_+ f)^2 {\omega_A}^2 |n_x \cdot v| - \int_{\Sigma_-} (\gamma_- f)^2 {\omega_A}^2 |n_x \cdot v|.
$$
Using the boundary condition in \eqref{eq:linear_g} together with the fact that $s \mapsto s^2$ is convex, we get
$$
\begin{aligned}
\int_{\Sigma_-} (\gamma_- f)^2 {\omega_A}^2 |n_x \cdot v|
&= \int_{\Sigma_-} \left\{ (1-\iota)\SSS \gamma_+ f + \iota \DDD \gamma_+ f \right\}^2 {\omega_A}^2 |n_x \cdot v| \\
&\le \int_{\Sigma_-}  (1-\iota)(\SSS \gamma_+ f)^2 {\omega_A}^2 |n_x \cdot v|
+ \int_{\Sigma_-} \iota  (\widetilde{\gamma_+ f})^2 \MMM^2 {\omega_A}^2 |n_x \cdot v|.
\end{aligned}
$$
Making the change of variables $v \mapsto \VV_x v$ in the integral over $\Sigma_-$ and observing that $|v| = |\VV_x|$ and $n_x \cdot v = - n_x \cdot \VV_x$, we have
$$
\begin{aligned}
\int_{\Sigma_-} (\gamma_- f)^2 {\omega_A}^2 |n_x \cdot v|
&\le \int_{\Sigma_+}  (1-\iota)(\gamma_+ f)^2 {\omega_A}^2 |n_x \cdot v|
+ \int_{\Sigma_+} \iota  (\widetilde{\gamma_+ f})^2 \MMM^2 {\omega_A}^2 |n_x \cdot v| .
\end{aligned}
$$
Altogether, we have established
$$
\int_{\Sigma} (\gamma f)^2   \omega_A^2 (n_x \cdot v )
\ge \int_{\Sigma_+} \iota(\gamma_+ f)^2 {\omega_A}^2 |n_x \cdot v| -\int_{\Sigma_+} \iota  (\widetilde{\gamma_+ f})^2 \MMM^2 {\omega_A}^2 |n_x \cdot v|.
$$
By the Cauchy-Schwarz inequality, there holds
$$
(\widetilde{\gamma_+ f})^2(x) \le K_2(\omega_A) \int_{\Sigma^x_+} (\gamma_+f)^2 \omega_A^2 (n_x \cdot v)_+
$$
with
$$
K_2(\omega_A) = \int_{\R^3}  \omega_A^{-2}  (n_x \cdot v)_+ \, \d v < \infty.
$$
Denoting
$$
K_1(\omega_A) = \int_{\R^3} \MMM^2 \omega_A^2  (n_x \cdot v)_+ \, \d v < \infty,
$$
we thus deduce
$$
\int_{\Sigma} (\gamma f)^2   \omega_A^2 (n_x \cdot v )
\ge  \int_{\partial \Omega} \iota \left[K_2(\omega_A)^{-1} - K_1(\omega_A)\right] (\widetilde{\gamma_+ f})^2.
$$
On the other hand, thanks to the Cauchy-Schwarz inequality, we have
$$
\int_{\Sigma} (\gamma f)^2 \omega_A^2 (n_x \cdot v)^2  \langle v \rangle^{\gamma-3}
\ge K_0(\omega_A)^{-1} \int_{\partial \Omega} (\widetilde{\gamma_+ f})^2,
$$
where we denote
$$
K_0(\omega_A) = \int_{\R^3}   \langle v \rangle^{3-\gamma} \omega_A^{-2} \, \d v < \infty.
$$
For the boundary term in~\eqref{eq:ddt_f_L2},
we finally obtain the following bound
\begin{equation}\label{eq:boundary_estimate_L2}
-\int_{\Sigma} (\gamma f)^2  \varphi^2 \widetilde{\omega}^2 (n_x \cdot v)
\le \int_{\partial \Omega} \iota \left[K_1(\omega_A) - K_2(\omega_A)^{-1} - \frac12 K_0(\omega_A)^{-1}  \right]  (\widetilde{\gamma_+ f})^2 .
\end{equation}
Observing that $\omega_A \to \MMM^{-1/2}$ when $A \to \infty$, we deduce that $K_0(\omega_A) \to K_0(\MMM^{-1/2}) >0$, $K_1(\omega_A) \to K_1(\MMM^{-1/2}) = 1$ and $K_2(\omega_A) \to K_2(\MMM^{-1/2}) = 1$ thanks to the normalization condition on $\MMM$. We therefore may choose $A>0$ large enough such that
\begin{equation}\label{eq:choice_A}
K_1(\omega_A) - K_2(\omega_A)^{-1} - \frac12 K_0(\omega_A)^{-1} \le 0.
\end{equation}

\smallskip\noindent
\textit{Step 5.} Coming back to \eqref{eq:ddt_f_L2}, throwing away the last term thanks to Step 4 and gathering the estimates of Step~2 and Step~3, we obtain 
$$
\begin{aligned}
\frac12 \frac{\d}{\d t} \| f \|_{L^2_{x,v}( \tilde \omega)}^2
&\leq -  (\bar C - C_{\XX_0,2} \| g \|_{\XX_0}) \| \langle v \rangle^{\gamma/2} \widetilde \nabla_v ( \widetilde \omega f) \|_{L^2_{x,v}}^2    
+ \int_\OO \widetilde \varpi_g \, f^2 \widetilde \omega^2 
\end{aligned}
$$
with 
\begin{equation}\label{eq:tilde_varpi_g_estimate}
 \la v \ra^{-\gamma-s} \widetilde \varpi_g
:= \kappa_{\omega,2}  +  C_{\XX_0,1} \| g \|_{\XX_0} + \psi + C_{\XX_0,3} \| g \|_{\XX_0} \langle v \rangle^{-s}  + (C_{\AA_0} + C_\TT)  \langle v \rangle^{-1-s} ,
\end{equation}
where we recall that $\kappa_{\omega,2} < 0$ is defined in \eqref{eq:lem:Psi_m-2} at Lemma~\ref{lem:Psi_m}.
Defining 
\begin{equation}\label{eq:eps1}
\eps_1 := \frac12 \min \left( \frac{\bar C}{  C_{\XX_0,2}} , \frac{|\kappa_{\omega,2}|}{C_{\XX_0,1} +   C_{\XX_0,3}} \right) >0,
\end{equation}
we deduce that 
\bean
 \la v \ra^{-\gamma-s} \widetilde \varpi_g 
&\le& \tfrac12  \kappa_{\omega,2}  + \psi + (C_{\AA_0} + C_\TT)  \langle v \rangle^{-1-s} 
\\
&\le& \tfrac13  \kappa_{\omega,2}  +  \la v \ra^{-\gamma-s} M_1 \mathbf{1}_{\Omega \times B_{R_1}} ,
\eean 
for some constants $M_1,R_1 > 0$. We conclude by choosing $\sigma := \min (\tfrac13  |\kappa_{\omega,2}|, \tfrac12 \bar C )$. 
\end{proof}

\subsection{The semigroup $S_{\LL_g}$}

We prove the well-posedness of the linear equation \eqref{eq:linear_g} associated to the operator $\LL_g$ in a weighted $L^2$ framework and the fact that we may associate to it a non-autonomous semigroup (or {\it evolution system} \cite[Chapter 5]{MR0710486}). For further reference, we introduce the set $\CCC_\iota$ associated to the conservation laws (C1) and (C2) and defined by 
\bean
&&\CCC_\iota := \{ f \in L^1_{xv}(\langle v \rangle^2);  \  \lla f \rra = 0  \}     \hbox{ if } \iota\not\equiv 0,
\\
&&\CCC_\iota := \{ f \in L^1_{xv}(\langle v \rangle^2);  \  \lla f \rra =  \lla f |v|^2 \rra =  \lla f R \cdot v \rra =  0, \, \forall \, R \in \RR_\Omega\}   \hbox{ if } \iota\equiv 0,
\eean
 and then define $\Pi^\perp = I-\Pi$, where $\Pi$ is the projector associated to the conservation laws set $\CCC_\iota$. More precisely, for $\iota \not\equiv 0$, $\Pi$ is the orthogonal projector on $\mu$ in $L^2_{xv}(\mu^{-1/2})$ and for $\iota \equiv 0$, $\Pi$ is the orthogonal projector  in $L^2_{xv}(\mu^{-1/2})$ on the subspace generated by 
$$
\{ \mu;  R(x) \cdot v \mu, \ R \in \RR_\Omega;  |v|^2 \mu \}. 
$$

\begin{theo}\label{theo-SG-LLg}
Consider an admissible weight function $\omega$ and a function $g \in \XX_0$ such that $\| g \|_{\XX_0} \le \eps_1$. 
We denote by $\widetilde{\omega} $ the modified weight function introduced in Proposition~\ref{prop:LLg_L2}.
%defined by \eqref{eq:def:tilde_omega} for a convenient choice of $A > 0$ such that the conclusion of Proposition~\ref{ } holds. 

For any  $t_0 \ge 0$ and $f_{t_0} \in L^2_\omega(\OO)$, there exists a  unique weak solution $f \in C([t_0,T];L^2_\omega) \cap  L^2((t_0,T) \times \Omega; H^{1,*}_{v}(\widetilde\omega))$, $\forall \, T > t_0$,  to the linear equation \eqref{eq:linear_g} associated to the initial datum $f_{t_0}$. This one satisfies the dissipativity estimate \eqref{eq:SLg-L2H1*} and it satisfies $f_t \in \CCC_\iota$ for any $t \ge 0$ if $f_0 \in \CCC_\iota$. 
 As in Theorem~\ref{theo-Kolmogorov-WellP}, the evolution PDE equation in \eqref{eq:linear_g} is satisfied in the distributional sense  and the trace and initial conditions  in \eqref{eq:linear_g} are satisfied pointwisely by the trace functions $\gamma f$ and $f(0,\cdot)$ provided by the trace Theorem~\ref{theo-Kolmogorov-trace}.

As a consequence,  the mapping $(t_0,t) \mapsto S_{\LL_g}(t,t_0) f_{t_0} := f_t$ defines a non-autonomous semigroup on $L^2(\omega)$  such that \eqref{eq:SLg-L2H1*} holds true. 
 The conservation laws may be   expressed by the fact that $S_{\LL_g}$ defines a semigroup on  $L^2(\omega) \cap \CCC_\iota$ or equivalently that the identity $\Pi^\perp S_{\LL_g} =  S_{\LL_g}  \Pi^\perp$ holds.  
\end{theo}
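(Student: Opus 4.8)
The plan is to obtain Theorem~\ref{theo-SG-LLg} by combining the abstract well-posedness result Theorem~\ref{theo-Kolmogorov-WellP}, applied to $\LL_g$, with the differential inequality of Proposition~\ref{prop:LLg_L2}. First I would put $\LL_g$ in the Kolmogorov form \eqref{eq:KolmogorovWP}: using the non-divergence formulation \eqref{eq:oplandau2}, the local part of $\LL_g$ reads $\CC^+_g f = \widetilde a_{ij}\partial^2_{v_iv_j}f - \widetilde c\,f$ with $\widetilde a_{ij} = a_{ij}*(\mu+g)$ and $\widetilde c = c*(\mu+g)$, so that $\LL_g f = - v\cdot\nabla_x f + \mathbf{L}f + \KKK[f]$ with $\mathbf{L}f := \widetilde a_{ij}\partial^2_{v_iv_j}f - \widetilde c\,f$ (i.e.\ $\sigma_{ij}=\widetilde a_{ij}$, $\nu_i=0$, $\eta=-\widetilde c$) and $\KKK[f] := \CC^-_g f = Q(f,\mu) - \pi Q(g,f)$. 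The evolution equation in \eqref{eq:linear_g} is then exactly \eqref{eq:KolmogorovWP} for this choice, complemented with the Maxwell condition \eqref{eq:reflect_F}.

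The core of the argument is to check that all hypotheses of Theorem~\ref{theo-Kolmogorov-WellP} hold here. The local conditions \eqref{eq:Kolmogorov-hyp}, $\eta\in L^\infty_{t,x}L^\infty_{\mathrm{loc},v}$ and the local uniform ellipticity of $\widetilde a$ follow from Lemmas~\ref{lem:elementary-abc*bar}--\ref{lem:elementary-abc*g} and $g\in\XX_0$, the anisotropic decay of $\widetilde a$ at large velocities being precisely the one built into the norm $H^{1,*}_v$ through \eqref{eq:Bnabla=tildenabla}. For the weighted $L^2$ framework I would take $\widetilde\omega=\theta\omega$ to be the modified weight constructed in Proposition~\ref{prop:LLg_L2} (built from the interpolation $\omega_A^2=\chi_A\MMM^{-1}+(1-\chi_A)\omega^2$ of \eqref{eq:def:omega_A} and the boundary-layer correction $\tfrac12(n_x\cdot v)\la v\ra^{\gamma-3}$), which satisfies $0<\theta_0\le\theta\le\theta_1$ and \eqref{eq:bound_theta}; then \eqref{eq:varpiL_omega_2bdd} for $\varpi^{\CC^+_g}_{\widetilde\omega,2}$ is exactly Lemma~\ref{lem:varpiC+g_tilde_omega_p}, which, since $\kappa_{\omega,2}<0$ and $\|g\|_{\XX_0}\le\eps_1$ is small, yields $-\lambda_1\varsigma\le\varpi^{\CC^+_g}_{\widetilde\omega,2}\le\lambda_0-\varsigma$ with $\varsigma\simeq\la v\ra^{\gamma+s}\ge0$. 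The bound \eqref{eq:KolmogorovWP-hypKKK} on $\KKK$ follows from Lemma~\ref{lem:borneA0} for the $Q(\cdot,\mu)$ term and from the moment estimates underlying \eqref{eq:piQgf_omega} for the finite-rank operator $\pi Q(g,\cdot)$, whose range consists of rapidly decreasing functions; similarly one checks \eqref{eq:KolmogorovWP-hypKL1}, $\varpi^{\mathbf L}_{1,1}\le\lambda_2$ and the compatibility conditions \eqref{eq:sigma_nu_eta_KKK_omega} using $k>k_0$ and $\la v\ra\omega^{-1}\in L^2$. Finally \eqref{eq:KolmogorovWP-hyp2} --- that $\RRR$ is a contraction on $L^2(\Sigma;\d\xi^1_{\widetilde\omega})$ --- is precisely the boundary estimate \eqref{eq:boundary_estimate_L2}--\eqref{eq:choice_A} established in Step~4 of the proof of Proposition~\ref{prop:LLg_L2}, and \eqref{eq:KolmogorovWP-hypRL1} is the no-mass-flux identity \eqref{eq:invariantsBoundary1}. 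Theorem~\ref{theo-Kolmogorov-WellP} then provides, for each $t_0\ge0$ and $f_{t_0}\in L^2_\omega(\OO)$, a unique $f\in C([t_0,T];L^2_\omega)\cap\HHH$ solving \eqref{eq:linear_g}, with the PDE satisfied in $\DD'$, $\gamma f\in L^2(\Gamma,\d\xi^2_\omega)$ and the reflection and initial conditions holding pointwise; moreover $\HHH$ coincides (up to equivalence of norms, since $\widetilde\omega\simeq\omega$) with $L^2((t_0,T)\times\Omega;H^{1,*}_v(\widetilde\omega))$ by \eqref{aijFiHj}--\eqref{eq:Bnabla=tildenabla} and Lemma~\ref{lem:elementary-abc*bar}.

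The estimate \eqref{eq:SLg-L2H1*} then results from applying Proposition~\ref{prop:LLg_L2} to this solution (rigorously, to the approximating solutions of Theorem~\ref{theo-Kolmogorov-WellP} and passing to the limit): integrating \eqref{eq:SLg-L2H1*BIS} in time, bounding $\|f\|^2_{L^2(\Omega\times B_{R_1})}\lesssim\|f\|^2_{L^2_{\widetilde\omega}}$ and using Gr\"onwall gives $\|f_t\|_{L^2_{\widetilde\omega}}\le e^{M_1(t-t_0)}\|f_{t_0}\|_{L^2_{\widetilde\omega}}$ and then $\int_{t_0}^T\|f\|^2_{L^2_xH^{1,*}_v(\widetilde\omega)}\,\dt\le C(T)\|f_{t_0}\|^2_{L^2_{\widetilde\omega}}$, which is \eqref{eq:SLg-L2H1*} because $\widetilde\omega\simeq\omega$. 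Uniqueness yields the composition identity $S_{\LL_g}(t_2,t_1)S_{\LL_g}(t_1,t_0)=S_{\LL_g}(t_2,t_0)$, so $(t_0,t)\mapsto S_{\LL_g}(t,t_0)$ is a non-autonomous semigroup on $L^2(\omega)$. For the conservation laws I would show $\tfrac{\d}{\dt}\lla\psi f_t\rra=\lla\psi\,\LL_g f_t\rra=0$ for $\psi=1$ always, and for $\psi\in\{|v|^2\}\cup\{R(x)\cdot v:R\in\RR_\Omega\}$ when $\iota\equiv0$: in $\LL_g f=-v\cdot\nabla_x f+\CC f+Q^\perp(g,f)$, the transport term contributes a pure boundary integral that vanishes by \eqref{eq:invariantsBoundary1}, by \eqref{eq:invariantsBoundary2}, respectively by \eqref{eq:invariantsBoundary3} together with $R(x)\cdot n_x=0$ and $v\cdot\nabla_x(R(x)\cdot v)=0$ (as $R(x)=Ax$ with $A$ skew-symmetric); the term $\lla\psi\,\CC f\rra$ vanishes since $1,v_i,|v|^2$ are collisional invariants of $\CC$, cf.\ \eqref{eq:local-conservations}; and $\lla\psi\,Q^\perp(g,f)\rra=0$ because $\pi$ defined in \eqref{def:pi} is the $L^2_v(\mu^{-1})$-orthogonal projection onto $\operatorname{Ker}\CC=\operatorname{span}\{1,v_i,|v|^2\}\mu$, so that $\int_{\R^3}\psi\,(I-\pi)Q(g,f)\,\dv=0$ for each such $\psi$. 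Hence $S_{\LL_g}$ leaves $L^2(\omega)\cap\CCC_\iota$ invariant, equivalently $\Pi^\perp S_{\LL_g}=S_{\LL_g}\Pi^\perp$.

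The step I expect to demand the most care is the weighted-$L^2$ contractivity of the reflection operator, condition \eqref{eq:KolmogorovWP-hyp2}: this is not a mere renormalization and genuinely relies on the precise design of $\widetilde\omega$ in Proposition~\ref{prop:LLg_L2} --- choosing $A$ large so that the Cauchy--Schwarz constants $K_0,K_1,K_2$ of $\omega_A$ are close to their Maxwellian values and the boundary-layer term $\tfrac12(n_x\cdot v)\la v\ra^{\gamma-3}$ absorbs the diffusive contribution --- so that the boundary flux is non-positive for the full Maxwell condition \eqref{eq:reflection} with an arbitrary position-dependent accommodation coefficient $\iota$. A secondary technical point is the rigorous justification of the a priori estimates on the approximating solutions, and checking that the degenerate Landau diffusion is compatible with the trace theory of Theorem~\ref{theo-Kolmogorov-trace}.
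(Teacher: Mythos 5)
Your proposal follows the same route as the paper: reduce to $t_0=0$, cast $\LL_g$ in the Kolmogorov form $-v\cdot\nabla_x+\mathbf{L}+\KKK$ with $\mathbf{L}f=\widetilde a_{ij}\partial^2_{v_iv_j}f-\widetilde c f$ and $\KKK[f]=Q(f,\mu)-\pi Q(g,f)$, apply Theorem~\ref{theo-Kolmogorov-WellP} in $L^2_{\widetilde\omega}$ with the modified weight of Proposition~\ref{prop:LLg_L2}, deduce \eqref{eq:SLg-L2H1*} from \eqref{eq:SLg-L2H1*BIS} and Gr\"onwall, and obtain the conservation laws from the invariants of $\CC$ together with the orthogonality of $\pi$ and the boundary identities \eqref{eq:invariantsBoundary1}--\eqref{eq:invariantsBoundary3}. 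Your verification of the hypotheses of Theorem~\ref{theo-Kolmogorov-WellP} (in particular the weighted contractivity \eqref{eq:KolmogorovWP-hyp2} of $\RRR$ via Step~4 of Proposition~\ref{prop:LLg_L2}) is more explicit than what the paper spells out, but there is no genuine difference of method.
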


\begin{proof}[Proof of Theorem~\ref{theo-SG-LLg}] 
Because the condition on $g$ still holds after time translation, we may reduces the discussion to the case $t_0 = 0$. 
Thanks to the dissipativity estimate established in Proposition~\ref{prop:LLg_L2}, the well-posedness is a direct application of Theorem~\ref{theo-Kolmogorov-WellP} to the operators
$$
\mathbf{L} f := Q(\mu+g,f) = \widetilde a_{ij} \partial_{v_i , v_j} f - \widetilde c f,
$$
where $\widetilde a_{ij} = a_{ij} * (\mu+g)$, $\widetilde c = c* (\mu + g)$ and 
$$
\KKK[f] := Q(f,\mu) - \pi Q(g,f),
$$
in the space $L^2_{\tilde\omega}(\OO)$ which provides a unique solution $f  \in C([0,T],L^2(\omega)) \cap L^2((0,T) \times \Omega; H^{1,*}_{\tilde\omega})$, for all $ T > 0$,  to the linear equation \eqref{eq:linear_g} 
associated to any given initial datum $f(0) = f_0  \in L^2(\omega)$. From the well-posedness of this linear problem, we may associate a  semigroup $S_{\LL_g}$ by setting $t \mapsto S_{\LL_g}(t,0) f_0 := f_t$ for any $t \ge 0$. The estimate \eqref{eq:SLg-L2H1*} is a consequence of \eqref{eq:SLg-L2H1*BIS} and Gr\"onwall's lemma. The conservation laws $f_t \in \CCC_\iota$ follows from the discussion in Section~\ref{subsec:conservationsLaw}.
\end{proof}

%%%%%%%%%%%%%%%%%%%%%%%%%%%%%%%%%%%
\section{Decay estimates for $S_{\BB_g}$}\label{sec-SBBg-decay}

\subsection{Dissipativity estimate on $S_{\BB^*_g}$}
For a given function $g=g(t,x,v)$ such that $g \in \XX_0$,  we recall the splitting $\LL_g = \BB_g + \AA_g$ in \eqref{def:BBg}--\eqref{def:AAg}, namely 
\bean
\AA_g f &:=&  \CC^-_g f +  M \chi_R, % \quad \CC^-_g f := Q(f,\mu) - \pi Q(g,f) 
\\
\BB_g f &:=&  - v \cdot \nabla_x f + \CC^+_g f  - M \chi_R, % \quad \CC^+_g f  := Q(\mu,f) + Q(g,f)
\eean
with $M,R >0$ to be chosen later, and where we recall that $\CC^\pm_g$ have been defined in \eqref{eq:defC+gf} and \eqref{eq:defC-gf}. %respectively. 
We are interested in the decay property of the semigroup $S_{\BB_g}$ associated to  the primal problem
\begin{equation}\label{eq:BBg}
\left\{
\bal
\partial_t f &= \BB_g f    \quad&\text{in} \quad (0,T) \times \OO ,\\
\gamma_- f &=  \RRR \gamma_+ f   \quad &\text{on} \quad (0,T) \times \Sigma_- , \\ 
 f(0) &= f_0 \quad &\text{in} \quad \OO,  
\eal
\right.
\end{equation}
for any given initial datum $f_0$ and any $T > 0$. Most of the job will be done on  the dual semigroup $S_{\BB^*_g}$ associated to the  backward dual problem
\begin{equation}\label{eq:dual_BBg}
\left\{
\bal
- \partial_t h &=  \BB_g^* h   \quad&\text{in} \quad (0,T) \times \OO ,\\
\gamma_+ h &=  \RRR^* \gamma_- h   \quad &\text{on} \quad (0,T) \times \Sigma_+ , \\ 
 h(T) &= h_T \quad &\text{in} \quad \OO , 
\eal
\right.
\end{equation}
for any final datum $h_T$. Here the dual operator $\BB^*_g$ is defined by 
$$
\begin{aligned}
\BB_g^* h =  v \cdot \nabla_x h 
+ (\CC^+_g)^* h 
- M \chi_R h
\end{aligned}
$$
with
$$
(\CC^+_g)^* h = (a_{ij}*[\mu+g]) \partial_{v_i, v_j} h + 2 (b_{i}*[\mu+g]) \partial_{v_i} h ,
$$
and the dual reflection operator $\RRR^*$ is defined by 
\begin{equation}\label{eq:reflection*}
\RRR^*  = (1-\iota)  \SSS   + \iota \DDD^*  
\end{equation}
where  $\DDD^*$ is defined on $\Sigma_-$ by
$$ 
\DDD^* h (x) = \widetilde{\MMM h \,\,} \!(x) 
:= \int_{\Sigma^-_x} h(x,w) \MMM(w) (n_x \cdot w)_{-} \, dw.
%:= \int_{\R^d} h(x,w) \MMM(w) (n_x \cdot w)_{-} \, dw.
$$ 
 
\begin{prop}\label{prop:BBg_L2}
Consider an admissible weight function $\omega$. There exist constants $\eps_2, \sigma, R_2, M_2 > 0$ (only depending on $\omega$) and a modified  weight function $\widetilde \omega : \OO \to (0,\infty)$ with equivalent velocity growth as $\omega$ such that if  $\| g \|_{\XX_0} \le \eps_2$, then  any solution $f$ to the linear equation \eqref{eq:BBg} associated to $\BB_g$ satisfies, for any $M \ge M_2$ and $R \ge R_2$,
\beqn\label{eq:SBg-L2H1}
 \frac12 \frac{\d}{\d t} \|  f \|_{L^2_{x,v} (\tilde\omega)}^2 + \sigma \|  f \|_{L^2_x H^{1,*}_v (\tilde \omega)}^2  \le 0.
 \eeqn
\end{prop}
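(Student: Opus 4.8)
The plan is to follow the proof of Proposition~\ref{prop:LLg_L2} almost verbatim, the only structural change being that the localization remainder $M_1\|f\|_{L^2_{x,v}(\Omega\times B_{R_1})}^2$ appearing in \eqref{eq:SLg-L2H1*BIS} will here be produced, with the favourable sign, by the extra term $-M\chi_Rf$ that is present in $\BB_g$ but absent from $\LL_g$. First I would keep \emph{exactly} the same modified weight $\widetilde\omega=\theta\omega$ as in Steps~1 and~4 of that proof, built from $\omega_A$ (see \eqref{eq:def:omega_A}--\eqref{eq:def:tilde_omega}) with the boundary twist $\tfrac12(n_x\cdot v)\la v\ra^{\gamma-3}$ and the large parameter $A$ calibrated by \eqref{eq:choice_A}; in particular $\widetilde\omega$ has velocity growth equivalent to $\omega$ and its factor $\theta$ satisfies the bounds required by Lemma~\ref{lem:varpiC+g_tilde_omega_p}. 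Exactly as in \eqref{eq:ddt_f_L2}, a solution $f$ of \eqref{eq:BBg} satisfies
\begin{equation*}
\frac12\frac{\d}{\d t}\|f\|_{L^2_{x,v}(\widetilde\omega)}^2
=\int_\OO(\CC^+_gf)\,f\,\widetilde\omega^2
-M\int_\OO\chi_R\,f^2\,\widetilde\omega^2
+\int_\OO f^2\,v\cdot\nabla_x(\widetilde\omega^2)
-\int_\Sigma(\gamma f)^2\,\widetilde\omega^2\,(n_x\cdot v),
\end{equation*}
and here the contribution $\int_\OO(\CC^-_gf)f\widetilde\omega^2$ present in \eqref{eq:ddt_f_L2} is simply gone, while the new dissipative term $-M\int_\OO\chi_Rf^2\widetilde\omega^2$ has appeared.

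Next I would estimate the three other terms exactly as in Steps~2--4 of the proof of Proposition~\ref{prop:LLg_L2}. By Lemma~\ref{lem:dissipativity_Lp},
\begin{equation*}
\int_\OO(\CC^+_gf)\,f\,\widetilde\omega^2
=-\int_\OO\widetilde a_{ij}\,\partial_{v_i}(\widetilde\omega f)\,\partial_{v_j}(\widetilde\omega f)
+\int_\OO\varpi^{\CC^+_g}_{\widetilde\omega,2}\,f^2\,\widetilde\omega^2,\qquad \widetilde a_{ij}:=a_{ij}*(\mu+g),
\end{equation*}
where, by Lemma~\ref{lem:elementary-abc*bar}, \eqref{eq:Bnabla=tildenabla} and \eqref{aijFiHj}, the quadratic form is bounded below by $(\bar C-C_{\XX_0,2}\|g\|_{\XX_0})\,\|\la v\ra^{\gamma/2}\widetilde\nabla_v(\widetilde\omega f)\|_{L^2_{x,v}}^2$, and, by Lemma~\ref{lem:varpiC+g_tilde_omega_p}, the zeroth-order coefficient obeys $\la v\ra^{-\gamma-s}\varpi^{\CC^+_g}_{\widetilde\omega,2}\le\kappa_{\omega,2}+C_{\XX_0,1}\|g\|_{\XX_0}+\psi$ with $\kappa_{\omega,2}<0$ and $\psi(v)\to0$ as $|v|\to\infty$. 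The transport term is $\le C_\TT\int_\OO f^2\widetilde\omega^2\la v\ra^{\gamma-1}$ as in Step~3, and the boundary term is nonpositive by the Darroz\`es-Guiraud convexity argument of Step~4 (use $\gamma_-f=\RRR\gamma_+f$, convexity of $s\mapsto s^2$ and the substitution $v\mapsto\VV_xv$ in the specular part, then \eqref{eq:choice_A}). Gathering these,
\begin{equation*}
\frac12\frac{\d}{\d t}\|f\|_{L^2_{x,v}(\widetilde\omega)}^2
\le-(\bar C-C_{\XX_0,2}\|g\|_{\XX_0})\,\|\la v\ra^{\gamma/2}\widetilde\nabla_v(\widetilde\omega f)\|_{L^2_{x,v}}^2
+\int_\OO\widehat\varpi_g\,f^2\,\widetilde\omega^2,
\end{equation*}
with $\la v\ra^{-\gamma-s}\widehat\varpi_g:=\kappa_{\omega,2}+C_{\XX_0,1}\|g\|_{\XX_0}+\psi+C_\TT\la v\ra^{-1-s}-M\chi_R\la v\ra^{-\gamma-s}$; note that, compared with \eqref{eq:tilde_varpi_g_estimate}, the terms carrying $C_{\AA_0}$ and $C_{\XX_0,3}$ have disappeared, precisely because $\CC^-_g$ has been removed.

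It then remains to close the estimate. Set $\eps_2:=\tfrac12\min\bigl(\bar C/C_{\XX_0,2},\,|\kappa_{\omega,2}|/C_{\XX_0,1}\bigr)$, so that $\|g\|_{\XX_0}\le\eps_2$ gives $\bar C-C_{\XX_0,2}\|g\|_{\XX_0}\ge\tfrac12\bar C$ and $\la v\ra^{-\gamma-s}\widehat\varpi_g\le\tfrac12\kappa_{\omega,2}+\psi+C_\TT\la v\ra^{-1-s}-M\chi_R\la v\ra^{-\gamma-s}$. Since $\psi(v)+C_\TT\la v\ra^{-1-s}\to0$ as $|v|\to\infty$, choose $R_2>0$ with $\psi+C_\TT\la v\ra^{-1-s}\le\tfrac16|\kappa_{\omega,2}|$ on $\{|v|\ge R_2\}$; there $\la v\ra^{-\gamma-s}\widehat\varpi_g\le\tfrac13\kappa_{\omega,2}$ for any $M,R\ge0$ (as $-M\chi_R\la v\ra^{-\gamma-s}\le0$). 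On the compact ball $B_{R_2}$, where $\chi_R\equiv1$ as soon as $R\ge R_2$, both $\psi+C_\TT\la v\ra^{-1-s}$ and $\la v\ra^{-\gamma-s}$ are bounded, the latter from below by a constant $c_0>0$; hence taking $M_2:=c_0^{-1}\bigl(\sup_{B_{R_2}}(\psi+C_\TT\la v\ra^{-1-s})+|\kappa_{\omega,2}|\bigr)$ yields $\la v\ra^{-\gamma-s}\widehat\varpi_g\le\tfrac13\kappa_{\omega,2}$ on $B_{R_2}$ too, for every $M\ge M_2$ and $R\ge R_2$. Thus $\widehat\varpi_g\le\tfrac13\kappa_{\omega,2}\la v\ra^{\gamma+s}<0$ on $\OO$; recalling $\|f\|_{H^{1,*}_v(\widetilde\omega)}^2=\|\la v\ra^{\gamma/2+s/2}f\|_{L^2_v(\widetilde\omega)}^2+\|\la v\ra^{\gamma/2}\widetilde\nabla_v(f\widetilde\omega)\|_{L^2_v}^2$, the choice $\sigma:=\min\bigl(\tfrac13|\kappa_{\omega,2}|,\tfrac12\bar C\bigr)$ gives \eqref{eq:SBg-L2H1}. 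As usual, all integrations by parts and the Green formula are justified by the trace theory of Section~\ref{subsec:trace} (Theorem~\ref{theo-Kolmogorov-trace}).

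I expect the only genuinely delicate ingredient to be, as in Proposition~\ref{prop:LLg_L2}, the boundary term — but it is entirely inherited from there via the twisted weight $\widetilde\omega$ and the calibration \eqref{eq:choice_A} of $A$. The new point, namely using $-M\chi_R$ to dominate the nonnegative lower-order contributions on $B_{R_2}$ where the strictly negative constant $\kappa_{\omega,2}$ does not by itself suffice, is routine because those contributions vanish at infinity. (The dual estimate for $S_{\BB^*_g}$ suggested by the title of this subsection would follow from the same computation applied to $\BB^*_g$ and $\RRR^*$, using Remark~\ref{rem:varpi_bar_omegaTER} with $m=\widetilde\omega^{-1}$ to control $\varpi^{(\CC^+_g)^*}_{m,2}$.)
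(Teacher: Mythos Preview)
Your proof is correct and follows essentially the same approach as the paper's own proof: you reuse the modified weight $\widetilde\omega$ and the boundary analysis from Proposition~\ref{prop:LLg_L2}, drop the $\CC^-_g$ contribution (hence the constants $C_{\AA_0}$, $C_{\XX_0,3}$), and absorb the compactly supported remainder with the $-M\chi_R$ term. The only cosmetic difference is that you fold $-M\chi_R$ into your $\widehat\varpi_g$ while the paper keeps $\widetilde\varpi_{\BB_g}-M\chi_R$ separate; the choice of $\eps_2$, $R_2$, $M_2$, $\sigma$ is identical.
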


\begin{proof}[Proof of Proposition~\ref{prop:BBg_L2}]
Defining $\widetilde \omega$ as in the Step~1 of the proof of Proposition~\ref{prop:LLg_L2}, any solution $f$ to \eqref{eq:BBg} satisfies
$$
\begin{aligned}
\frac12 \frac{\d}{\d t} \| f \|_{L^2_x L^2_v( \tilde \omega)}^2
&= \int_\OO  \left[ (\CC^+_g f) - M \chi_R  f \right] f \widetilde \omega^2
+ \int_\OO f^2 v \cdot \nabla_x (\widetilde \omega ^2) 
- \int_{\Sigma} (\gamma f)^2 \widetilde \omega^2 (n_x \cdot v). 
\end{aligned}
$$
Arguing exactly as in the proof of Proposition~\ref{prop:LLg_L2}, we obtain
$$
\begin{aligned}
\frac12 \frac{\d}{\d t} \| f \|_{L^2_{x,v}( \tilde \omega)}^2
&\leq -  (\bar C - C_{\XX_0,2} \| g \|_{\XX_0}) \| \langle v \rangle^{\gamma/2} \widetilde \nabla_v ( \widetilde \omega f) \|_{L^2_{x,v}}^2    
+ \int_\OO \left( \widetilde \varpi_{\BB_g} - M \chi_R \right) f^2 \widetilde \omega^2 
\end{aligned}
$$
with now
\begin{equation}\label{eq:tilde_varpi_g_estimate_bis}
 \la v \ra^{-\gamma-s} \widetilde \varpi_{\BB_g}
:= \kappa_{\omega,p}  +  C_{\XX_0,1} \| g \|_{\XX_0} + \psi   + C_\TT  \langle v \rangle^{-1-s} , 
\end{equation}
where $\kappa_{\omega,2} < 0$ is defined in \eqref{eq:lem:Psi_m-2},  $C_{\XX_0,1}$ is defined in Lemma~\ref{lem:varpiC+g_tilde_omega_p} %at Lemma~\ref{lem:Psi_m} 
and $C_\TT >0$ is the constant appearing in the proof of Proposition~\ref{prop:LLg_L2}.
We then define
\begin{equation}\label{eq:eps2}
\eps_2 := \frac12 \min \left( \frac{\bar C}{  C_{\XX_0,2}} , \frac{|\kappa_{\omega,p}|}{C_{\XX_0,1}} \right) >0,
\end{equation}
and we deduce that 
\bean
 \la v \ra^{-\gamma-s} \widetilde \varpi_{\BB_g}
&\le& \tfrac12  \kappa_{\omega,p}  + \psi +  C_\TT  \langle v \rangle^{-1-s} 
\\
&\le& \tfrac13  \kappa_{\omega,p}  +  \la v \ra^{-\gamma-s} M_2 \mathbf{1}_{\Omega \times B_{R_2}} ,
\eean 
for some constants $M_2,R_2 > 0$. We conclude by observing that $\widetilde \varpi_{\BB_g} - M \chi_R \le \la v \ra^{\gamma+s} \kappa_{\omega,p}/3 $  and choosing $\sigma := \min (\tfrac13  |\kappa_{\omega,p}|, \tfrac12 \bar C )$. 
\end{proof}
  
\begin{prop}\label{prop:BB*g_Lq}
Let us consider an admissible weight function $\omega$ and an exponent $q \in \{1\} \cup [2,\infty)$. There exist constants $\eps_3,M_3,R_3, \sigma >0$ (only depending on $\omega$ and $q$) and a modified weight function $\widetilde m : \OO \to (0,\infty)$ with equivalent velocity decay as $m := \omega^{-1}$ such that if  $\| g \|_{\XX_0} \le \eps_2$, then any solution $h$ to the dual backward linear problem   \eqref{eq:dual_BBg} associated to $\BB^*_g$ satisfies, for any $M \ge M_3$ and $R \ge R_3$,
\beqn\label{eq:prop:BB*g_Lq}
- \frac1q \frac{\d}{\d t}  \|  h\|_{L^q(\tilde m)}^q + \sigma \| \langle v \rangle^{\frac{(\gamma+s)}{q} } h \|_{L^q(\tilde m) }^q \le 0  . 
\eeqn

\end{prop}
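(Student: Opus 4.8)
The plan is to mimic the structure of the proof of Proposition~\ref{prop:BBg_L2} but now working on the dual backward problem \eqref{eq:dual_BBg}, the weight being the inverse-type weight $\widetilde m$ built from $m = \omega^{-1}$. First I would construct $\widetilde m$ exactly as in Step~1 of the proof of Proposition~\ref{prop:LLg_L2}: set $m_A^2 := \chi_A \MMM + (1-\chi_A) m^2$ (note the role of $\MMM$ and $\MMM^{-1}$ is now swapped because we pass to the dual weight) and then $\widetilde m^2 := \{ 1 - \tfrac12 (n_x \cdot v) \langle v \rangle^{\gamma-3}\} m_A^2$, so that $\widetilde m = \theta m$ with $\theta$ satisfying the bounds \eqref{eq:bound_theta}. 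The sign in front of the $(n_x\cdot v)$-term is chosen so that the transport term again produces a favorable dissipation near the boundary for the \emph{backward} equation. Then, testing $-\partial_t h = \BB_g^* h$ against $|h|^{q-2} h \, \widetilde m^q$ and using Lemma~\ref{lem:dissipativity_Lp} (with $\mathbf{L} = (\CC^+_g)^*$, which has $\nu_i = 2 b_i*(\mu+g)$ and $\eta = 0$) we get, with $H := \widetilde m^{q/2} h |h|^{q/2-1}$,
\be
-\frac1q \frac{\d}{\dt} \| h \|^q_{L^q(\tilde m)}
= \frac{4(q-1)}{q^2} \int_\OO \widetilde a_{ij} \partial_{v_i} H \partial_{v_j} H
- \int_\OO \varpi^{(\CC^+_g)^*}_{\tilde m,q} |h|^q \widetilde m^q
+ M \int_\OO \chi_R |h|^q \widetilde m^q
+ \mathcal{T} + \mathcal{B},
\ee
where $\mathcal{T}$ is the transport term $\tfrac1q \int_\OO |h|^q v\cdot\nabla_x(\widetilde m^q)$ and $\mathcal{B}$ is the boundary contribution $\tfrac1q \int_\Sigma |\gamma h|^q \widetilde m^q (n_x\cdot v)$.

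The second step is to control the zeroth-order multiplier $\varpi^{(\CC^+_g)^*}_{\tilde m,q}$. Here I would invoke Remark~\ref{rem:varpi_bar_omegaTER}: since $\CC^+_g = Q(\mu+g,\cdot)$ has the divergence form of $\mathbf{L}$ with $\nu_i = 0$ (formulation \eqref{eq:oplandau2} before reorganizing), we have $\varpi^{(\CC^+_g)^*}_{\tilde m,q} = \varpi^{\CC^+_g}_{\tilde\omega,p}$ with $1/p+1/q=1$ and $\tilde\omega = \tilde m^{-1}$, which is an admissible weight with the same velocity growth as $\omega$. Then Lemma~\ref{lem:varpiC+g_tilde_omega_p} gives $\langle v \rangle^{-\gamma-s} \varpi^{(\CC^+_g)^*}_{\tilde m,q} \le \kappa_{\omega,p} + C_{\XX_0,1}\|g\|_{\XX_0} + \psi$ with $\kappa_{\omega,p}<0$ and $\psi \to 0$ at infinity. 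For the gradient term, the ellipticity of $\widetilde a_{ij} = a_{ij}*(\mu+g)$ follows from Lemma~\ref{lem:elementary-abc*bar} and \eqref{aijFiHj}: $\widetilde a_{ij}\partial_{v_i}H\partial_{v_j}H \ge (\bar C - C_{\XX_0,2}\|g\|_{\XX_0})\langle v\rangle^\gamma|\widetilde\nabla_v H|^2 \ge 0$ once $\|g\|_{\XX_0}$ is small. For the transport term, $\nabla_x(\widetilde m^q) = -\tfrac{q}{2} D_x n_x\, v\, \langle v\rangle^{\gamma-3} m_A^q$, so $|\mathcal{T}| \lesssim \int_\OO |h|^q \widetilde m^q \langle v\rangle^{\gamma-1}$, a lower-order term absorbed into $\psi$ exactly as in Step~3 of Proposition~\ref{prop:LLg_L2}.

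The third step, and the main technical obstacle, is the boundary term $\mathcal{B}$. Unlike the primal case, we now have the \emph{dual} reflection operator $\RRR^* = (1-\iota)\SSS + \iota\DDD^*$ and the boundary condition $\gamma_+ h = \RRR^*\gamma_- h$ on $\Sigma_+$, so the roles of incoming and outgoing parts are reversed. Splitting $\mathcal{B} = \tfrac1q\int_{\Sigma_+}|\gamma_+ h|^q m_A^q|n_x\cdot v| - \tfrac1q\int_{\Sigma_-}|\gamma_- h|^q m_A^q|n_x\cdot v| + (\text{the }\langle v\rangle^{\gamma-3}\text{ piece})$, I would use convexity of $s\mapsto |s|^q$ on $\RRR^*\gamma_- h = (1-\iota)\SSS\gamma_- h + \iota\DDD^*\gamma_- h$, the change of variables $v\mapsto\VV_x v$ (which preserves $|v|$ and flips $n_x\cdot v$) for the specular part, and a Cauchy--Schwarz (or Jensen for $q\ne 2$) estimate for the diffusive part $\DDD^* h(x) = \widetilde{\MMM h}(x)$, together with the normalization $\widetilde\MMM = 1$, to reduce to showing that the relevant combination of constants $K_j(m_A)$ is nonpositive for $A$ large — using $m_A \to \MMM^{1/2}$ as $A\to\infty$ and $K_1(\MMM^{1/2}) = K_2(\MMM^{1/2}) = 1$ by the normalization of $\MMM$. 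The case $q=1$ is easier since there is no convexity loss and the boundary term is controlled directly by \eqref{eq:invariantsBoundary1} (i.e. \eqref{eq:KolmogorovWP-hypRL1} for $\RRR^*$). Once $\mathcal{B}\le 0$, combining everything and choosing $\eps_3 := \tfrac12\min(\bar C/C_{\XX_0,2}, |\kappa_{\omega,p}|/C_{\XX_0,1})$, then absorbing $\psi + (\text{lower order})$ into $\tfrac13\kappa_{\omega,p}\langle v\rangle^{\gamma+s}$ outside a large ball $B_{R_3}$, and finally choosing $M_3$ so that $M\chi_{R_3}$ dominates on that ball, yields $-\tfrac1q\tfrac{\d}{\dt}\|h\|^q_{L^q(\tilde m)} + \sigma\|\langle v\rangle^{(\gamma+s)/q}h\|^q_{L^q(\tilde m)} \le 0$ with $\sigma := \tfrac13|\kappa_{\omega,p}|$, which is the claimed estimate \eqref{eq:prop:BB*g_Lq}.
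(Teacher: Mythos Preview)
Your overall strategy matches the paper's proof, but there are two concrete gaps in the boundary step that would make the argument fail for $q\neq 2$.

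\textbf{(1) The weight $m_A$ must be $q$-dependent.} You define $m_A^2 := \chi_A \MMM + (1-\chi_A)m^2$ and then claim $m_A \to \MMM^{1/2}$, $K_1(\MMM^{1/2}) = K_2(\MMM^{1/2}) = 1$. But the boundary constants $K_1,K_2$ involve $m_A^q$ and $m_A^{-q/(q-1)}$; with your definition, $m_A^q \to \MMM^{q/2}$ as $A\to\infty$, and $\int_{\R^3} \MMM^{q/2}(n_x\cdot v)_- \,\d v \neq 1$ unless $q=2$. The paper instead sets $m_A^q := \chi_A \MMM + (1-\chi_A)m^q$ and $\widetilde m^q := m_A^q\{1 - \tfrac12(n_x\cdot v)\langle v\rangle^{\gamma-3}\}$, so that $m_A \to \MMM^{1/q}$ and then $K_1(\MMM^{1/q}) = K_2(\MMM^{1/q}) = 1$ by the normalization $\widetilde\MMM = 1$. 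Without this $q$-dependent construction the absorption \eqref{eq:choice_A_bis} cannot be achieved for $q>2$.

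\textbf{(2) The case $q=1$ is not handled by \eqref{eq:invariantsBoundary1}.} You say the $q=1$ boundary term is ``controlled directly by \eqref{eq:KolmogorovWP-hypRL1} for $\RRR^*$'', but $\RRR^*$ does \emph{not} satisfy an $L^1(\d\xi^1)$ contraction: $\DDD^*\gamma_- h$ is a constant in $v$ on $\Sigma_+^x$, so $\int_{\Sigma_+^x}|\DDD^*\gamma_- h|(n_x\cdot v)_+\,\d v$ is infinite. The decaying weight $m_A$ is essential here too. The paper treats $q=1$ separately: it uses $m_A \ge \MMM$ to bound $\int_\Sigma |\gamma h| m_A (n_x\cdot v) \le [K_1(m_A)-1]\int_{\partial\Omega}\iota|\widetilde{\gamma_- h\,\MMM}|$ directly (no H\"older needed, set $K_2\equiv 1$), and then the $\langle v\rangle^{\gamma-3}$ piece contributes the positive $K_0(m_A)$ term that closes the estimate exactly as for $q\ge 2$.

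With these two corrections, the rest of your argument (Remark~\ref{rem:varpi_bar_omegaTER} for $\varpi^{(\CC^+_g)^*}_{\tilde m,q} = \varpi^{\CC^+_g}_{\omega,p}$, the transport bound, the choice of $\eps_3,M_3,R_3$) is exactly the paper's proof.
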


\begin{proof}[Proof of Proposition~\ref{prop:BB*g_Lq}]
Arguing in a similar way as during the proof of Proposition~\ref{prop:LLg_L2}, we split the proof into five steps. 

\medskip\noindent
\textit{Step 1.}
We first define the weight function $m_A = m_A(v)$ by
\begin{equation}\label{eq:def:m_A}
m_A^q = \chi_A\MMM + (1-\chi_A) m^q
\end{equation}
where $\chi_A (v) = \chi (\tfrac{|v|}{A})$, $A \ge 1$ will be chosen later (large enough), and $\chi \in C^2(\R_+)$ with $\mathbf 1_{[0,1]} \le \chi \le \mathbf 1_{[0,2]}$. We then define the modified weight $\widetilde{m} = \widetilde{m}(x,v)$ by
\begin{equation}\label{eq:def:tilde_m}
\widetilde m^q = m_A^q \left\{ 1- \frac{1}{2} (n_x \cdot v) \la v \ra^{\gamma-3} \right\} , %
 \end{equation}
and we observe that
$$
c_A^{-1}\MMM \le m_A^q \le c_A m^q \quad\text{and}\quad
\frac12 m_A^q \le \widetilde m^q \le  \frac32  m_A^q,
$$
for some constant $c_A >0$. We remark that we can write
$$
\widetilde m^q = \theta^q m^q
$$
with
$$
\theta^q = \left[ 1 - \frac{1}{2} (n_x \cdot  v) \la v \ra^{\gamma-3} \right] \left[ 1 + \chi_A (\MMM m^{-q}-1) \right],
$$
which satisfies, for any $A>0$,
$$
\left| \frac{\partial_{v_i} \theta}{\theta} \right| \lesssim \la v \ra^{-2}, \quad
\left| \frac{\partial_{v_i, v_j} \theta}{\theta} \right| \lesssim \la v \ra^{-3}.
$$

We may then write
\begin{equation}\label{eq:ddt_h_Lq}
\begin{aligned}
-\frac{1}{q} \frac{\d}{\d t} \| h \|_{L^q_{x,v}( \widetilde m)}^q
&= \int_\OO   [(\CC^+_g)^*h  - M\chi_R h]  h |h|^{q-2} 
  \widetilde m^q  \\
&\quad
+ \frac1q \int_\OO |h|^q \, v \cdot \nabla_x (\widetilde m^q) 
+ \int_{\Sigma} |\gamma h|^q \widetilde m^q (n_x \cdot v),
\end{aligned}
\end{equation}
and we estimate each term separately.

\medskip\noindent
\textit{Step 2.} For the first term  at the right-hand side of~\eqref{eq:ddt_h_Lq}, Lemma~\ref{lem:dissipativity_Lp} implies 
$$
\begin{aligned}
\int_{\OO}   \left[(\CC^+_g)^*h  - M\chi_R h \right]  h |h|^{q-2}  \widetilde m^q
&= -\frac{4(q-1)}{q^2} \int_{\OO} \widetilde a_{ij} \partial_{v_i} H \partial_{v_j} H \\
&\quad
+\int_{\OO} \left\{ \varpi^{(\CC^+_g)^*}_{\tilde m,q} - M \chi_R \right\} |h|^q \widetilde m^q,
\end{aligned}
$$
with $H := \widetilde m^{q/2} h|h|^{q/2-1}$ and $\widetilde a_{ij} := a_{ij} * (\mu+g)$. 
As in Step~2 of the proof of Proposition~\ref{prop:LLg_L2}, we have 
$$
\widetilde a_{ij} \partial_{v_i} H \partial_{v_j} H 
\ge (\bar C - C_{\XX_0,2} \| g \|_{\XX_0}) \la v \ra^\gamma |\widetilde \nabla_v H|^2,
$$
for positive constants $\bar C, C_{\XX_0,2}>0$. Thanks to the estimates on $\theta$ above, we can argue as in Step~1 of the proof of Lemma~\ref{lem:varpiC+g_tilde_omega_p} to deduce
$$
\la v \ra^{-\gamma+s} \varpi^{(\CC^+_g)^*}_{\tilde m,q} \le \la v \ra^{-\gamma+s}\varpi^{(\CC^+_g)^*}_{ m,q} + C \la v \ra^{-1},
$$
which together with Remark~\ref{rem:varpi_bar_omegaTER} imply that $\varpi^{(\CC^+_g)^*}_{\tilde m,q}$ also satisfies the estimate~\eqref{eq:lem:varpiC+g_tilde_omega_p} in Lemma~\ref{lem:varpiC+g_tilde_omega_p}, namely
$$
\la v \ra^{-\gamma-s} \varpi^{(\CC^+_g)^*}_{\tilde m,q} 
\le \kappa_{\omega, p} + C_{\XX_0,1} \| g \|_{\XX_0} + \psi,
$$
where $p$ is the conjugate exponent of $q$, that is $1/p+1/q=1$, and $\kappa_{\omega, p}$ is defined in \eqref{eq:lem:Psi_m-2}.

\medskip\noindent
\textit{Step 3.}
For the second term at the right-hand side of~\eqref{eq:ddt_h_Lq}, we observe that $\nabla_x (\widetilde m^q) = -\frac12 D_x n_x v \, \la v \ra^{\gamma-3} m_A^q $ and therefore
$$
\begin{aligned}
\int_\OO |h|^q v \cdot \nabla_x (\widetilde m^q) 
&= -\frac12 \int_\OO |h|^q (v \cdot D_x n_x  v) \la v \ra^{\gamma-3} \widetilde m^{q}  \\
&\le C_\TT \int_\OO |h|^q \widetilde m^{q-1}  \la v \ra^{\gamma-1}.
\end{aligned}
$$

\smallskip\noindent
\textit{Step 4.}
The boundary term in~\eqref{eq:ddt_h_Lq} can be decomposed as
\begin{equation}\label{eq:h_Lq_boundary}
\begin{aligned}
\int_{\Sigma} |\gamma h|^q \widetilde m^q (n_x \cdot v)
&= \int_{\Sigma}  |\gamma h|^q m_A^q  (n_x \cdot v )
-\frac12  \int_{\Sigma} |\gamma h|^q m_A^q (n_x \cdot v)^2 \la v \ra^{\gamma-3}.
\end{aligned}
\end{equation}

On the one hand, for the first term in \eqref{eq:h_Lq_boundary} we have
$$
\int_{\Sigma} |\gamma h|^q m_A^q  (n_x \cdot v) 
= \int_{\Sigma_+} |\gamma_+ h|^q m_A^q |n_x \cdot v| 
- \int_{\Sigma_-} |\gamma_- h|^q m_A^q |n_x \cdot v|.
$$
Using the boundary condition in \eqref{eq:dual_BBg} together with the fact that $s \mapsto |s|^q$ is convex, we get
$$
\begin{aligned}
\int_{\Sigma_+} |\gamma_+ h|^q m_A^q |n_x \cdot v|
&= \int_{\Sigma_+} \left| (1-\iota)\SSS \gamma_- h + \iota \DDD^* \gamma_+h \right|^q m_A^q |n_x \cdot v| \\
&\le \int_{\Sigma_+}  (1-\iota) |\SSS \gamma_- h|^q m_A^q |n_x \cdot v|
+ \int_{\Sigma_+} \iota  |\widetilde{\gamma_- h \MMM} |^q m_A^q |n_x \cdot v|.
\end{aligned}
$$
Making the change of variables $v \mapsto \VV_x v$ in the integral over $\Sigma_+$ yields
$$
\begin{aligned}
\int_{\Sigma_+} |\gamma_+ h|^q m_A^q |n_x \cdot v|
&\le \int_{\Sigma_-}  (1-\iota)|\gamma_- h|^q  m_A^q |n_x \cdot v|
+ \int_{\Sigma_-} \iota  |\widetilde{\gamma_- h \MMM}|^q m_A^q |n_x \cdot v|,
\end{aligned}
$$
and thus
$$
\int_{\Sigma} |\gamma h|^q m_A^q  (n_x \cdot v )
\le \int_{\Sigma_-} \iota  |\widetilde{\gamma_- h \MMM}|^q m_A^q |n_x \cdot v| - \int_{\Sigma_-}  \iota|\gamma_- h|^q  m_A^q |n_x \cdot v|.
$$
When $q=1$ we use that $m_A \ge \MMM$ to obtain
$$
\int_{\Sigma} |\gamma h| m_A (n_x \cdot v) 
\le \left[ K_1(m_A) - 1 \right] \int_{\partial\Omega} \iota  |\widetilde{\gamma_- h \MMM}|,
$$
where we denote
$$
K_1(m_A) = \int_{\R^3} m_A (n_x \cdot v)_- \, \d v < \infty.
$$
Otherwise when $q \ge 2$, by H\"older's inequality, we have
$$
|\widetilde{\gamma_- h \MMM}|^q (x) \le K_2(m_A) \int_{\Sigma_-^x} |\gamma_- h|^q m_A^q |n_x \cdot v|,
$$
with
$$
K_2(m_A) = \left( \int_{\R^3} \MMM^{\frac{q}{q-1}} m_A^{-\frac{q}{q-1}} (n_x \cdot v)_- \, \d v \right)^{(q-1)} < \infty.
$$
Denoting
$$
K_1(m_A) = \int_{\R^3} m_A^q (n_x \cdot v)_- \, \d v < \infty,
$$
we deduce
$$
\int_{\Sigma} |\gamma h|^q m_A^q (n_x \cdot v) 
\le \left[ K_1(m_A) - K_2(m_A)^{-1} \right] \int_{\partial\Omega} \iota  |\widetilde{\gamma_- h \MMM}|^q.
$$

On the other hand, for the second term in \eqref{eq:h_Lq_boundary}, using the boundary condition, we get
$$
\begin{aligned}
&\int_{\Sigma} |\gamma h|^q m_A^q (n_x \cdot v)^2 \la v \ra^{\gamma-3} \\
&\qquad
= \int_{\Sigma_-} |\gamma_- h|^q  m_A^q  (n_x \cdot  v)^2 \la v \ra^{\gamma-3}
+ \int_{\Sigma_+} |(1-\iota) \SSS \gamma_- h + \iota \DDD^* \gamma_- h|^q  m_A^q (n_x \cdot v)^2 \la v \ra^{\gamma-3} \\
&\qquad
= \int_{\Sigma_-} |\gamma_- h|^q m_A^q (n_x \cdot v)^2 \la v \ra^{\gamma-3} 
+ \int_{\Sigma_-} |(1-\iota) \gamma_- h + \iota \DDD^* \gamma_- h|^q m_A^q (n_x \cdot v)^2 \la v \ra^{\gamma-3} .
\end{aligned}
$$
If $q=1$ we write
$$
\begin{aligned}
\int_{\Sigma} |\gamma h| m_A (n_x \cdot v)^2 \la v \ra^{\gamma-3}
&\ge \int_{\Sigma_-} \iota | \widetilde{\gamma_- h \MMM} | m_A (n_x \cdot v)^2 \la v \ra^{\gamma-3} \\
& \ge K_0(m_A)  \int_{\partial\Omega} \iota  |\widetilde{\gamma_- h \MMM}|,
\end{aligned}
$$
with
$$
K_0(m_A) =  \int_{\R^3} m_A \la v \ra^{\gamma-3}  (n_x \cdot v)^2_-  \, \d v <\infty .
$$
In the case when $q \ge 2$, we use H\"older's inequality to write
$$
\begin{aligned}
\int_{\Sigma} |\gamma h|^q m_A^q (n_x \cdot v)^2 \la v \ra^{\gamma-3}
& \ge K_0(m_A)^{-1}  \int_{\partial\Omega} \iota  |\widetilde{\gamma_- h \MMM}|^q,
\end{aligned}
$$
where 
$$
K_0(m_A) = \left( \int_{\R^3} \MMM^{\frac{q}{q-1}} m_A^{-\frac{q}{q-1}} \la v \ra^{\frac{q-\gamma+3}{q-1}} (n_x \cdot  v)_-^{\frac{q-2}{q-1}} \, \d v \right)^{q-1} < \infty.
$$

With the convention $K_2(m_A) = 1$ when $q=1$, the boundary term~\eqref{eq:h_Lq_boundary} may finally be bounded in the following way
\begin{equation}\label{eq:boundary_estimate_Lq}
\int_{\Sigma} |\gamma h|^q  \widetilde{m}^q  (n_x \cdot v)
\le  \int_{\partial\Omega} \iota \left[K_1(m_A) - K_2(m_A)^{-1} - \frac12 K_0(m_A)^{-1}  \right] |\widetilde{\gamma_- h \MMM}|^q.
\end{equation}
Observing that $m_A \to \MMM^{\frac{1}{q}}$ when $A \to \infty$, we deduce that $K_0(m_A) \to K_0 (\MMM^{\frac{1}{q}}) >0$, $K_1(m_A) \to K_1(\MMM^{\frac{1}{q}}) = 1$ as well as $K_2(m_A) \to K_2(\MMM^{\frac{1}{q}}) = 1$ thanks to the normalization condition on $\MMM$. We may therefore choose $A>0$, large enough, such that
\begin{equation}\label{eq:choice_A_bis}
K_1(m_A) - K_2(m_A)^{-1} - \frac12 K_0(m_A)^{-1}  \le 0.
\end{equation}

\smallskip\noindent
\textit{Step 5.}
Coming back to \eqref{eq:ddt_h_Lq}, throwing away the last term thanks to Step~4 and gathering the estimates of Step~2 and Step~3, we obtain 
$$
\begin{aligned}
- \frac1q \frac{\d}{\d t} \| h \|_{L^q_{x,v} (\widetilde m)}^q + \frac{4(q-1)}{q^2} (\bar C - C_{\XX_0,2} \| g \|_{\XX_0}) \int_{\OO} \la v \ra^\gamma |\widetilde \nabla_v H|^2
&\leq  \int_\OO \left\{ \widetilde \varpi_{\BB^*_g} -M\chi_R \right\} |h|^q \widetilde m^q,
\end{aligned}
$$ 
with
$$
 \la v \ra^{-\gamma-s} \widetilde \varpi_{\BB^*_g}
:= \kappa_{\omega,p} + C_{\XX_0,1} \| g \|_{\XXX_0} + \psi +  C_\TT  \langle v \rangle^{-1-s}  .
$$
Arguing exactly as in Step~5 of the proof of Proposition~\ref{prop:LLg_L2}, we deduce that there are $\eps_3,M_3,R_3>0$ such that for all $\| g \|_{\XX_0} \le \eps_3$, any $M \ge M_3$ and $R \ge R_3$, there holds $(\bar C - C_{\XX_0,2} \| g \|_{\XX_0} )\ge \bar C/2$ and also $  \widetilde \varpi_g - M \chi_R  \le  \la v \ra^{\gamma+s} \kappa_{\omega,p} / 3$.
This concludes the proof with $\sigma >0$ as in  Step~5 of the proof of Proposition~\ref{prop:LLg_L2}.
\end{proof}

\subsection{Decay estimate for $S_{\BB_g}$}
 
We start with a first well-posedness result for the linear problem \eqref{eq:BBg} associated to $\BB_g$ which extends and improves the similar result Theorem~\ref{theo-SG-LLg} for 
the linear problem \eqref{eq:linear_g} associated to $\LL_g$.

\begin{prop}\label{prop-SG-BBg-L2Lp}
Consider an admissible weight function $\omega$ and a function $g \in \XX_0$ such that $\| g \|_{\XX_0} \le \eps_2$, where $\eps_2>0$ is given by Proposition~\ref{prop:BBg_L2}. 
There exists a non-autonomous semigroup $ S_{\BB_g}$ on $L^2(\omega)$ such that for any  $t_0 \ge 0$ and $f_{t_0} \in L^2_\omega(\OO)$, 
the function $f_t := S_{\BB_g}(t,t_0) f_{t_0}$  is the  unique solution in  $C([t_0,T];L^2_\omega) \cap  L^2((t_0,T) \times \Omega; H^{1,*}_{v}(\widetilde\omega))$, $\forall \, T > t_0$, to the equation \eqref{eq:BBg} associated  to the linear operator $\BB_g$ and to  the initial datum $f_{t_0}$.  
Furthermore, if $f_{t_0} \in L^p(\mu^{1/p-1})$ with $p \in [2,4]$, then $f_t \in L^p(\mu^{1-1/p})$ for any $t \ge t_0$.  
\end{prop}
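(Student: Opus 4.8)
The plan for \textbf{Part 1} is to reduce the well-posedness of \eqref{eq:BBg} to Theorem~\ref{theo-Kolmogorov-WellP}, in exactly the way Theorem~\ref{theo-SG-LLg} reduces \eqref{eq:linear_g} for $\LL_g$. After a time translation by $t_0$ (harmless because the bound $\|g\|_{\XX_0}\le\eps_2$ and all the coefficient estimates are translation invariant), I would write \eqref{eq:BBg} as the Kolmogorov equation \eqref{eq:KolmogorovWP} with
\[
\mathbf{L} f = Q(\mu+g,f) - M\chi_R f = \widetilde a_{ij}\partial_{v_i,v_j}f - (\widetilde c + M\chi_R)f, \qquad \KKK[f] = 0,
\]
where $\widetilde a_{ij}=a_{ij}*(\mu+g)$, $\widetilde c = c*(\mu+g)$, so that $\sigma_{ij}=\widetilde a_{ij}$, $\nu_i=0$, $\eta = -(\widetilde c + M\chi_R)$. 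The local boundedness and ellipticity of the coefficients come from Lemmas~\ref{lem:elementary-abc*bar}--\ref{lem:elementary-abc*g} (the degeneracy of $\widetilde a$ at infinity being precisely what the anisotropic dissipation norm $H^{1,\dagger}_{\widetilde\omega}$ of Theorem~\ref{theo-Kolmogorov-WellP} is built to accommodate, cf.~\eqref{eq:Bnabla=tildenabla}); the weighted-$L^2$ dissipativity \eqref{eq:varpiL_omega_2bdd} and the reflection contraction $\|\RRR\|_{L^2(\Sigma;\d\xi^1_{\widetilde\omega})}\le 1$ of \eqref{eq:KolmogorovWP-hyp2} are exactly the content of the proof of Proposition~\ref{prop:BBg_L2} (with the modified weight $\widetilde\omega$ and the boundary estimate of its Step~4); the $L^1$ hypotheses \eqref{eq:KolmogorovWP-hypKL1}--\eqref{eq:KolmogorovWP-hypRL1} are trivial since $\KKK=0$ and $\RRR$ is an $L^1$ contraction by \eqref{eq:invariantsBoundary1}; and the compatibility condition \eqref{eq:sigma_nu_eta_KKK_omega} follows from the polynomial growth estimates of Lemma~\ref{lem:elementary-abc*g} together with the admissibility of $\omega$. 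The only difference with the $\LL_g$ case is the extra damping $-M\chi_R$, which only makes $\eta$ more negative and so cannot spoil any hypothesis. Theorem~\ref{theo-Kolmogorov-WellP} then provides, for every $f_{t_0}\in L^2_\omega(\OO)=L^2_{\widetilde\omega}(\OO)$, a unique $f\in C([t_0,T];L^2_\omega)\cap L^2((t_0,T)\times\Omega;H^{1,*}_v(\widetilde\omega))$ for all $T>t_0$; defining $S_{\BB_g}(t,t_0)f_{t_0}:=f_t$, the relations $S_{\BB_g}(t,s)S_{\BB_g}(s,t_0)=S_{\BB_g}(t,t_0)$ and $S_{\BB_g}(t_0,t_0)=\mathrm{Id}$ follow from uniqueness.

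For \textbf{Part 2} with $p=2$ I would just observe that $\mu^{1/p-1}=\mu^{-1/2}$ is equivalent to the admissible weight $\exp(\langle v\rangle^2/4)$ (whose parameter $\kappa=1/4$ lies in $(0,1/2)$), so Part~1 applied with this weight already gives $f_t\in L^2(\mu^{-1/2})\subset L^2(\mu^{1/2})$. For $p\in(2,4]$ the plan is to run a weighted $L^p$ energy estimate patterned on Proposition~\ref{prop:BBg_L2}. One introduces a weight $\widetilde\omega_p$ that coincides with (a constant multiple of) $\mu^{1/p-1}$ for large velocities and is modified to $\MMM^{1/p-1}$ near the origin, together with the usual $x$-dependent correction $\bigl\{1+\tfrac12(n_x\cdot v)\langle v\rangle^{\gamma-3}\bigr\}$; Lemma~\ref{lem:dissipativity_Lp} with exponent $p$ and this weight, combined with integration by parts in $x$, yields
\[
\frac1p\frac{\d}{\d t}\|f\|_{L^p(\widetilde\omega_p)}^p \le -c\,\bigl\| \langle v\rangle^{\gamma/2}\widetilde\nabla_v\bigl(\widetilde\omega_p\, f|f|^{p/2-1}\bigr)\bigr\|_{L^2}^2 + \int_\OO\bigl(\varpi_{\widetilde\omega_p,p}^{\CC^+_g}-M\chi_R + C_\TT\langle v\rangle^{\gamma-1}\bigr)|f|^p\widetilde\omega_p^p + \mathcal I_\Sigma,
\]
where $\varpi_{\widetilde\omega_p,p}^{\CC^+_g}$ is controlled as in Lemma~\ref{lem:varpiC+g_tilde_omega_p} (using $\kappa_{\omega,p}<0$ and smallness of $\|g\|_{\XX_0}$, with $M,R$ large), the $x$-derivative term is of lower order $\langle v\rangle^{\gamma-1}$ exactly as in Step~3 of Proposition~\ref{prop:BBg_L2}, and the boundary term $\mathcal I_\Sigma\le 0$ by the argument of Step~4 (see below). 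Grönwall's lemma then gives $\|f_t\|_{L^p(\mu^{1/p-1})}\lesssim e^{C(t-t_0)}\|f_{t_0}\|_{L^p(\mu^{1/p-1})}$, so $f_t\in L^p(\mu^{1/p-1})\subset L^p(\mu^{1-1/p})$, which is the claim.

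The two steps I expect to be the actual work are (i) the rigorous justification of the $L^p$ energy identity, which must be carried out within the renormalized framework of Subsection~\ref{subsec:trace} by truncating the renormalizing function $\beta(s)\approx|s|^p$ (legitimate because $\beta''\ge0$, so the quadratic-in-gradient renormalization terms carry a favorable sign, while the restriction $p\le4$ keeps $\beta''(s)\lesssim\langle s\rangle^{2}$, the range where the remaining renormalization terms are controlled by the already available $L^2$ and $H^{1,*}$ bounds); and (ii) the boundary term $\mathcal I_\Sigma$, where one must check that the diffusive part $\DDD$ of $\RRR$ is, after the modification of $\widetilde\omega_p$ near the origin, a contraction in the $L^p(\d\xi^1_{\widetilde\omega_p})$ norm. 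This reduces to the finiteness of $\int_{\Sigma^x_+}\widetilde\omega_p^{-p'}(n_x\cdot w)_+\,\d w$ (with $1/p+1/p'=1$) and to the convergence of the associated Cauchy--Schwarz/H\"older constants to the limit value $1$ as the modification parameter tends to infinity — a consequence of the normalization $\widetilde\MMM=1$ — together with the strictly negative contribution of the $(n_x\cdot v)^2\langle v\rangle^{\gamma-3}$ term, exactly as in Step~4 of Proposition~\ref{prop:BBg_L2}.

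The structural point, and the reason the statement is phrased as it is, is that $\widetilde\omega_p^{-p'}\simeq\mu$ at infinity, which is integrable against $(n_x\cdot w)_+\,\d w$ \emph{precisely because} $\mu^{1/p-1}$ is a growing weight; the very same computation with the decaying weight $\mu^{1-1/p}$ would give $\widetilde\omega_p^{-p'}\simeq\mu^{-1}$, which is not integrable. Hence the energy estimate is genuinely performed in $L^p(\mu^{1/p-1})$ and the conclusion is then only recorded with the weaker weight $\mu^{1-1/p}$, the form that is actually used in the sequel.
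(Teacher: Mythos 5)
Your Part~1 matches the paper's proof: both reduce to Theorem~\ref{theo-Kolmogorov-WellP} using the dissipativity from Proposition~\ref{prop:BBg_L2}, and the observation that the extra $-M\chi_R$ damping only helps is correct.

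For Part~2 you take a genuinely different route. The paper does \emph{not} run a weighted $L^p$ energy estimate on $f$ directly. It sets $\phi:=f/\MMM$, fixes a convex $\beta$ linear at infinity, and computes $\frac{\d}{\dt}\int_\OO\beta(\phi)\MMM$; the boundary term is then $\int_\Sigma(-v\cdot n_x)\beta(\gamma f/\MMM)\MMM$, which is $\le 0$ by a pure Darroz\`es--Guiraud argument (Jensen against the probability measure $\MMM(v\cdot n_x)_+\,\d v$), with no $K_0,K_1,K_2$ bookkeeping and no $(n_x\cdot v)^2\langle v\rangle^{\gamma-3}$ correction to the weight. Your approach, which runs the $L^p$ analogue of Proposition~\ref{prop:BBg_L2} via Lemma~\ref{lem:dissipativity_Lp} and re-checks the H\"older constants at the boundary, would also close, but it is more work and buys nothing here; and the paper's normalization $\phi=f/\MMM$ has a second advantage you don't replicate: with a truncated $\beta_A$ that is linear at infinity one has $0\le\beta_A(\phi)\MMM,\ \phi\beta_A'(\phi)\MMM\lesssim|f|$, which makes every term in the identity manifestly well defined, whereas $\beta_A(f)\widetilde\omega_p^{\,p}\sim\beta_A(f)\,\mu^{1-p}$ is not integrable without further argument (it grows in $v$), so your rigor step~(i) is trickier than you suggest. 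Two small slips: the ``modification to $\MMM^{1/p-1}$ near the origin'' of a weight that already is $\mu^{1/p-1}$ for large $v$ is vacuous (the two differ only by a multiplicative constant), so there is no modification parameter to send to infinity — in your setting the constants are exactly $K_1=K_2^{-1}=1$ and the negativity comes from the correction term; and the weaker weight $\mu^{1-1/p}$ recorded in the statement is \emph{not} the one used downstream (Proposition~\ref{prop:BBg_Lp} invokes $L^4(\mu^{-3/4})$, i.e.\ the stronger $\mu^{1/p-1}$), so the framing of your last paragraph is not quite right, even though the observation about integrability of $\omega^{-p'}(n_x\cdot w)_+$ explaining why the growing weight is the natural one is correct.
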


\begin{proof}[Proof of Proposition~\ref{prop-SG-BBg-L2Lp}] %We fix $\omega_1 \ge \omega$ an admissible weight function such that $L^2_\omega \subset L^1_{\omega_1}$.

Repeating the proof of Theorem~\ref{theo-SG-LLg} and using the dissipative estimate for $\BB_g$ given by Proposition~\ref{prop:BBg_L2}, we obtain the existence and uniqueness of a solution in the $L^2(\omega)$ framework and then the existence of the associated semigroup $S_{\BB_g}$.

\smallskip
For dealing with the result in the $L^p(\mu^{1/p-1})$ framework, we use a very classical approximation argument. 
We assume that $f_0 \in L^p(\mu^{1/p-1}) \cap L^2(\omega)$ with $p \in { [2,4]}$, for some weight function $\omega$ such that $L^2(\omega) \subset L^2(\mu^{1/p-1})$, 
and we consider the associated solution $f \in  C([0,T],L^2(\omega))$ provided by the existence result in the $L^2(\omega)$ framework. For the sake of simplicity we only consider the case $p=4$ since 
it will be enough for our purpose and that anyway the case $p \in (2,4)$ can be easily deduced from that one. 
We fix  a function $\beta : \R \to \R_+$ convex and increasing linearly at the infinity. 
Setting $\phi := f/\MMM$ and using here and below the shorthands 
\bean
&&\widetilde a_{ij} :=  a_{ij} *(\mu + g), \quad 
\widetilde b_i  :=  b_i *(\mu + g), \quad 
\widetilde c  :=  c*  (\mu + g), 
\eean
we recall that  $f$ satisfies the PDE equation in \eqref{eq:BBg}  where $\BB_g$ is given by  
$$
\BB_g f = - v \cdot \nabla_x f + \widetilde a_{ij} \partial^2_{v_iv_j} f - \widetilde c f - M\chi_R f.
$$
We first observe that 
\bean
\int_{\R^3} (\widetilde a_{ij} \partial^2_{v_iv_j} f ) \beta'  (\phi)  
= -  \int_{\R^3}  \widetilde b_i   \beta'(\phi) \partial_{v_i} f 
-  \int_{\R^3}   \widetilde a_{ij} \partial_{v_i} f  \partial_{v_j} \beta'(\phi) =: T_1 + T_2,
 \eean
where we have performed one integration by part. For the first term, we have 
\bean
T_1 
&=&- \int_{\R^3}   \beta'(\phi) \phi   \widetilde b_i  \partial_{v_i}  \MMM   - \int_{\R^3} \beta'(\phi)  \partial_{v_i} \phi  \widetilde b_i   \MMM 
%\\
%&=&   \int \beta(\phi)   \partial_{v_i} (\MMM  \widetilde b_i ) - \int   \partial_{v_i} \MMM  \widetilde b_i  \phi  \beta'(\phi) 
\\
&=&   - \int_{\R^3}   \beta'(\phi)  \phi    \widetilde b_i  \partial_{v_i} \MMM  + \int_{\R^3} \beta(\phi)  [ \widetilde b_i \partial_{v_i} \MMM  + \widetilde c \MMM] , 
\eean
where we have used one integration by part again in the last line. In order to deal with the second term, we define $\psi := f \MMM^{1/p-1} = \phi \MMM^{1/p}$, and we directly compute
\bean
T_2
&=&  -   \int_{\R^3}  \beta''(\phi) \widetilde a_{ij}  \partial_{v_i} (\psi \MMM^{1-1/p}) \partial_{v_j} (\psi  \MMM^{-1/p})  
%\\
%&=&  -   \int  \beta''(\phi) \widetilde a_{ij}  \partial_{v_i} \psi  \partial_{v_j} \psi  \MMM^{1-2/p}   -   \int  \beta''(\phi) \widetilde a_{ij} ( \partial_{v_i} \psi )\MMM^{1-1/p}  \psi  \partial_{v_j} (\MMM^{-1/p})  
%\\
%&& -   \int  \beta''(\phi) \widetilde a_{ij}  \psi (\partial_{v_i} \MMM^{1-1/p}) ( \partial_{v_j} \psi ) \MMM^{-1/p} 
%  -   \int  \beta''(\phi) \widetilde a_{ij}   \psi^2   \partial_{v_i} (\MMM^{1-1/p}) \partial_{v_j} (   \MMM^{-1/p})  
%\\
%&=&  -   \int  \beta''(\phi) \widetilde a_{ij}  \partial_{v_i} \psi  \partial_{v_j} \psi  \MMM^{1-2/p}   
%  -   \int  \beta''(\phi) \widetilde a_{ij}  \partial_{v_i}( \phi \MMM^{1/p}) \MMM   \phi   \partial_{v_j} (\MMM^{-1/p})  
%\\
%&& -   \int  \beta''(\phi) \widetilde a_{ij}  \phi  (\partial_{v_i} \MMM^{1-1/p}) ( \partial_{v_j} (\phi \MMM^{1/p})) 
%  -   \int  \beta''(\phi) \widetilde a_{ij}   \phi^2 \MMM^{2/p}  \partial_{v_i} (\MMM^{1-1/p}) \partial_{v_j} (   \MMM^{-1/p})  
%\\
%&=&  -   \int  \beta''(\phi) \widetilde a_{ij}  \partial_{v_i} \psi  \partial_{v_j} \psi  \MMM^{1-2/p}   
%  -   \int  \beta''(\phi) \widetilde a_{ij} \phi  \partial_{v_i} \phi   [ \MMM^{1+1/p}   \partial_{v_j} \MMM^{-1/p}  +  ( \partial_{v_j}   \MMM^{1-1/p} ) \MMM^{1/p} ]
%\\
%&& -   \int  \beta''(\phi) \phi^2 \widetilde a_{ij} [ \MMM  \partial_{v_i}\MMM^{1/p}   \partial_{v_j} \MMM^{-1/p}
% +
% \partial_{v_i} \MMM^{1-1/p}  \partial_{v_j}  \MMM^{1/p} 
%  + \MMM^{2/p}  \partial_{v_i} \MMM^{1-1/p} \partial_{v_j}  \MMM^{-1/p} ] 
\\
&=&  -   \int_{\R^3}  \beta''(\phi) \widetilde a_{ij}  \partial_{v_i} \psi  \partial_{v_j} \psi  \MMM^{1-2/p}   
  -  \left(1 - \frac2p \right)  \int_{\R^3}  \widetilde a_{ij} \partial_{v_i} ( \phi \beta'(\phi) -  \beta(\phi))  \partial_{v_j} \MMM 
\\
&& + \frac1{p^2}   \int_{\R^3}  \beta''(\phi) \phi^2 \widetilde a_{ij}   \MMM^{-1}   \partial_{v_i}\MMM    \partial_{v_j} \MMM. 
\eean
Performing one integration by parts for dealing with the second term, we get 
\bean
T_2
&=&  -   \int_{\R^3}  \beta''(\phi) \widetilde a_{ij}  \partial_{v_i} \psi  \partial_{v_j} \psi  \MMM^{1-2/p}   
 + \frac1{p^2}   \int_{\R^3}  \beta''(\phi) \phi^2 \widetilde a_{ij}   \MMM^{-1}   \partial_{v_i}\MMM    \partial_{v_j} \MMM 
\\
&&+  \left(1 - \frac2p \right)  \int_{\R^3}  ( \phi \beta'(\phi) -  \beta(\phi)) (  \widetilde b_{i} \partial_{v_i}   \MMM +  \widetilde a_{ij}    \partial^2_{v_iv_j}\MMM ).
\eean
All together, we deduce that at least formally   
\bean
&&\frac{\d}{\d t} \int_{\OO} \beta (\phi) \MMM 
= -   \int_{\OO}  \beta''(\phi) \widetilde a_{ij}  \partial_{v_i} \psi  \partial_{v_j} \psi  \MMM^{1-2/p}   - \int_{\Sigma} \beta (\gamma \phi) \MMM (n_x \cdot v)
\\
&&\qquad+    \int_{\OO}    \phi \beta'(\phi)   (\widetilde \varpi  - M\chi_R) \MMM
%(  - \frac2p    \widetilde b_{i} \partial_{v_i}   \MMM +  \left(1 - \frac2p \right) \widetilde a_{ij}    \partial^2_{v_iv_j}\MMM  - ( \widetilde c  + M\chi_R) \MMM)
-   \int_{\OO}   \beta(\phi)  \widetilde\varpi   \MMM 
 + \frac1{p^2}   \int_{\OO}  \beta''(\phi) \phi^2 \widetilde a_{ij} v_iv_j \MMM , 
%%-   \int   \beta(\phi) (   - \frac2p   \widetilde b_{i} \partial_{v_i}   \MMM +  \left(1 - \frac2p \right) \widetilde a_{ij}    \partial^2_{v_iv_j}\MMM -  \widetilde c   \MMM ).
%\\
%&&
% %- \int    \phi  \beta'(\phi)   \widetilde b_i  \partial_{v_i} \MMM  
%% + \int \beta(\phi)  [ \widetilde b_i \partial_{v_i} \MMM  + \widetilde c \MMM]
\eean
with 
\bean
\widetilde \varpi 
&:=& - \frac2p    \widetilde b_{i} \frac{\partial_{v_i}   \MMM }{ \MMM}+  \left(1 - \frac2p \right) \widetilde a_{ij}    \frac{\partial^2_{v_iv_j}\MMM  }{ \MMM} - \widetilde c
\\
&=&   \frac2p    \widetilde b_{i}  v_i +  \left(1 - \frac2p \right) (\widetilde a_{ij}  v_i v_j - \widetilde a_{ii} ) - \widetilde c
\\
&=&   \langle v \rangle^{\gamma+2} \left(- \frac{4}{p} + \OO (\| g \|_{\XX_0}) \right) + \OO(  \langle v \rangle^{\gamma}) ,  
\eean
and 
\bean
\widetilde a_{ij} v_iv_j
&=&  2 \langle v \rangle^{\gamma+2} \left(1 + \OO (\| g \|_{\XX_0}) \right) + \OO(  \langle v \rangle^{\gamma}) ,  
\eean
where in the two last lines we have used Lemma~\ref{lem:elementary-abc*bar} and Lemma~\ref{lem:elementary-abc*g}.

Observing that $0 \le \beta(\phi) \MMM, \phi \beta'(\phi) \MMM \lesssim |f|$ so that each of the above integral term is well defined, that identity may be established rigorously from the Green formula \eqref{eq:FPK-traceL2} and a Stone-Weierstrass type argument. The boundary term is   nonpositive thanks to a Darroz\`es-Guiraud type inequality \cite{DarrozesG1966}. More precisely, we write
\bean
&&\int_\Sigma (- v \cdot n_x)  \beta( \frac{\gamma f}{\MMM}) \MMM
%= \int_{\Sigma_-} (v \cdot n_x)_-  \beta(\frac{\RRR \gamma_+ f }{ m}) \varphi 
%- \int_{\Sigma_+}(v \cdot n_x)_+ \beta (\frac{ \gamma_+ f }{ m}) \varphi 
\\
&&\quad \le \int_{\Sigma_-} (v \cdot n_x)_-   \left\{ \iota \beta( \frac{\DDD \gamma_+ f }{ \MMM})+ (1-\iota)  \beta(\frac{ \SSS \gamma_+ f }{ \MMM} ) \right\} \MMM
-  \int_{\Sigma_+} (v \cdot n_x)_+ \beta (\frac{ \gamma_+ f }{ \MMM}) \MMM
\\
&&\quad
= \int_{\Sigma_+} (v \cdot n_x)_+   \iota \left\{ \beta(    {\widetilde{\gamma_+ f}}   ) -   \beta (\frac{ \gamma_+ f }{ \MMM})   \right\} \MMM \le 0, 
\eean
where we have used the convexity of $\beta$ in the second line, the change of variable $v \mapsto R_x v$ in the next equality
and   the very definition of $ \widetilde{\gamma_+ f}$ as well as the Jensen inequality for the probability measure $\MMM (v \cdot n_x)_+ dv$ in order to get the last inequality. 
It is worth emphasizing again that because $\gamma f \in L^2(\Gamma; \d\xi^2_\omega \dt) \subset L^1(\Gamma; \d\xi^1 \dt)$ the above computation is licit.

We then take $p=4$ and $\beta = \beta_A$ the even function such that $\beta''_A(s) := 12 s^{p-2} \mathbf{1}_{s \le A}$ on $\R_+$, and next the primitives which vanish in the origin and which are thus defined by 
$\beta'(s) = 4 s^3 \mathbf{1}_{s \le A} + 4 A^3  \mathbf{1}_{s > A}$ and $\beta(s) = s^4 \mathbf{1}_{s \le A} + (4 A^3s - 3 A^4)  \mathbf{1}_{s > A}$. In particular, we verify that 
$0 \le 3 s^4 \mathbf{1}_{s \le A} + 3 A^4 \mathbf{1}_{s > A} = s \beta'(s) - \beta(s) \le 3 \beta(s)$ and  $\beta''(s) s^2 \le 12 \beta(s)$. 
We set 
$$
Z := \frac1{p^2}   \beta''(\phi) \phi^2 \widetilde a_{ij} v_iv_j  +  (\phi \beta'(\phi)  - \beta(\phi))  \widetilde \varpi. 
$$
For $|\phi| \le A$, we have 
\bean
Z &=& \left(  \frac34 \widetilde a_{ij} v_iv_j + 3  \widetilde \varpi  \right)  \phi^4
%\\
%&\le& \frac34 2 \langle v \rangle^{\gamma+2} \left(1 +  C \| g \|_{\XX_0} \right) +   \langle v \rangle^{\gamma+2} \left(- \frac{4}{p} + \eps + C \| g \|_{\XX_0}) \right) +C_\eps )
\\
&\le& \left[\langle v \rangle^{\gamma+2} \left(- \frac32 + \eps  +  C \| g \|_{\XX_0} \right)   +C_\eps \mathbf{1}_{B_R}   \right] \phi^4 \le C_\eps \beta(\phi)\mathbf{1}_{B_R},
\eean
for $\eps > 0$ and $\eps_0 > 0$ small enough. Similarly, for $|\phi| > A$, we have 
\bean
Z &=&(\phi \beta'(\phi)  - \beta(\phi))  \widetilde \varpi 
%\\
%&\le& \frac34 2 \langle v \rangle^{\gamma+2} \left(1 +  C \| g \|_{\XX_0} \right) +   \langle v \rangle^{\gamma+2} \left(- \frac{4}{p} + \eps + C \| g \|_{\XX_0}) \right) +C_\eps )
\\
&\le&  (\phi \beta'(\phi)  - \beta(\phi)) \left[\langle v \rangle^{\gamma+2} \left(- 1 + \eps  +  C \| g \|_{\XX_0} \right)   +C_\eps \mathbf{1}_{B_R} \right] 
\\
&\le&  3   \beta(\phi) C_\eps \mathbf{1}_{B_R} , 
\eean
for $\eps > 0$ and $\eps_0 > 0$ small enough.

\smallskip
Coming back to the above differential equation, we may through away the two first term at the RHS   and we may use the last bounds in order to get
\bean
 \frac{\d}{\dt} \int_\OO \beta_A(\phi) \MMM
\le   \int_\OO  \beta_A(\phi) \MMM (  3   C_\eps \mathbf{1}_{B_R} - M \chi_R )  \le 0.  
\eean
Using Gr\"onwall's lemma and next passing to the limit $A \to \infty$, we deduce that $\| f_t \|_{L^p(\mu^{1/p-1})} \le \|  f_0 \|_{L^p(\mu^{1/p-1})}$ for any $t \ge 0$. We extends the same result for any $f_0 \in L^p(\mu^{1/p-1})$ by a density argument. 
\end{proof}

We establish the counterpart of the previous result for the dual problem \eqref{eq:dual_BBg}.

\begin{prop}\label{prop-SG-BB*g-L2Lp}
Consider an admissible weight function $\omega$ and a function $g \in \XX_0$ such that $\| g \|_{\XX_0} \le \eps_3$, where $\eps_3>0$ is given in Proposition~\ref{prop:BB*g_Lq}. 
For any $h_T \in L^2(\omega^{-1})$, there exists a unique solution to  the dual problem \eqref{eq:dual_BBg} in an appropriate space that we make explicit during the proof.
Furthermore, if $h_{T} \in L^p(\omega^{-1/q})$ with $q \in { [2,4]}$, then $h_t \in L^q(\omega^{-1/q})$ for any $t \in [0,T)$. 
\end{prop}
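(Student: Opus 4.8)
The plan is to prove Proposition~\ref{prop-SG-BB*g-L2Lp} in direct parallel with the proof of Proposition~\ref{prop-SG-BBg-L2Lp}, exploiting the fact that $\BB^*_g$ is, up to the sign of the transport term and the replacement of $\RRR$ by $\RRR^*$, the same type of operator as $\BB_g$. First I would set up the backward Cauchy problem \eqref{eq:dual_BBg} by the time-reversal substitution $\tilde h(t,x,v):=h(T-t,x,v)$, which turns it into a forward Kolmogorov-type equation
\[
\partial_t \tilde h + v\cdot\nabla_x \tilde h = (\CC^+_g)^*\tilde h - M\chi_R \tilde h \quad \text{in } (0,T)\times\OO,
\]
with the boundary condition $\gamma_+ \tilde h = \RRR^*\gamma_-\tilde h$ and initial datum $\tilde h(0)=h_T$. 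This is exactly of the form covered by Theorem~\ref{theo-Kolmogorov-WellP} applied with $\mathbf{L} f := (\CC^+_g)^* f - M\chi_R f = \widetilde a_{ij}\partial^2_{v_iv_j} f + 2\widetilde b_i\partial_{v_i} f - M\chi_R f$ (here $\widetilde a_{ij}=a_{ij}*(\mu+g)$, etc.), $\KKK\equiv 0$, the weight $m=\omega^{-1}$ and its modification $\widetilde m$ from Proposition~\ref{prop:BB*g_Lq}, and the reflection operator $\RRR^*$; the roles of $\Sigma_+$ and $\Sigma_-$ are interchanged, but this is a purely cosmetic change in the trace theory of Section~\ref{subsec:trace}. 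The hypotheses \eqref{eq:varpiL_omega_2bdd}, \eqref{eq:KolmogorovWP-hypKKK}, \eqref{eq:KolmogorovWP-hyp2}, \eqref{eq:KolmogorovWP-hypKL1}, \eqref{eq:KolmogorovWP-hypRL1} and \eqref{eq:sigma_nu_eta_KKK_omega} are verified: the dissipation sign condition on $\varpi^{(\CC^+_g)^*}_{\tilde m,2}$ is precisely the content of Proposition~\ref{prop:BB*g_Lq} with $q=2$ (via Remark~\ref{rem:varpi_bar_omegaTER} and Lemma~\ref{lem:varpiC+g_tilde_omega_p}); the $L^2$-contraction of $\RRR^*$ on the boundary is the adjoint of the computation in Step~4 of the proof of Proposition~\ref{prop:BB*g_Lq} (it is where the choice \eqref{eq:choice_A_bis} of $A$ enters), and the $L^1$-contraction of $\RRR^*$ follows from the Darroz\`es--Guiraud/Jensen argument already used in the proof of Proposition~\ref{prop-SG-BBg-L2Lp}. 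This yields existence and uniqueness of a solution $\tilde h\in C([0,T];L^2_{\omega^{-1}})\cap\HHH$, hence a solution $h\in C([0,T];L^2_{\omega^{-1}})$ of \eqref{eq:dual_BBg} with trace functions provided by Theorem~\ref{theo-Kolmogorov-trace}; this is the ``appropriate space'' referred to in the statement.

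**The $L^q$ propagation.** For the second assertion I would mimic exactly the approximation argument in the proof of Proposition~\ref{prop-SG-BBg-L2Lp}, but now adapted to the backward problem and the operator $(\CC^+_g)^*$ rather than $\CC^+_g$. Assuming first $h_T\in L^q(\omega^{-1/q})\cap L^2(\omega^{-1})$ for a weight $\omega$ large enough that $L^2(\omega^{-1})\subset L^2(\mu^{1/q-1})$, set $\phi:=h/\MMM$ and $\psi:=h\,\MMM^{1/q-1}$, fix a convex increasing-linearly-at-infinity $\beta:\R\to\R_+$, and differentiate $\int_\OO \beta(\phi)\MMM$ along the flow. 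Using the formulation $(\CC^+_g)^* h = \widetilde a_{ij}\partial^2_{v_iv_j}h + 2\widetilde b_i\partial_{v_i}h$, the same two integrations by parts produce a good (nonpositive) diffusion term $-\int\beta''(\phi)\widetilde a_{ij}\partial_{v_i}\psi\,\partial_{v_j}\psi\,\MMM^{1-2/q}$, a boundary term $-\int_\Sigma\beta(\gamma\phi)\MMM(n_x\cdot v)$ which is $\le 0$ by the Darroz\`es--Guiraud inequality for $\RRR^*$ (noting that $\DDD^*$ acts by averaging against the probability measure $\MMM(n_x\cdot w)_-\,dw$ on $\Sigma^x_-$, so Jensen applies just as before), and a zero-order remainder governed by an effective potential $\widetilde\varpi$. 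Here I expect a slightly different bookkeeping: because of the factor $2$ in front of $\widetilde b_i\partial_{v_i}h$ (the adjoint drift), the leading coefficient of $\widetilde\varpi$ will be computed from $\tfrac{2}{q}\cdot 2\,\widetilde b_i v_i + (1-\tfrac2q)(\widetilde a_{ij}v_iv_j - \widetilde a_{ii}) - \widetilde c$ (using $\partial_{v_i}\MMM/\MMM = -v_i$), which via Lemma~\ref{lem:elementary-abc*bar} and Lemma~\ref{lem:elementary-abc*g} is $\langle v\rangle^{\gamma+2}(-\tfrac{4}{q}+2+\OO(\|g\|_{\XX_0}))+\OO(\langle v\rangle^\gamma)$; combined with $\tfrac1{q^2}\beta''(\phi)\phi^2\,\widetilde a_{ij}v_iv_j$ and the elementary bounds $\phi\beta'(\phi)-\beta(\phi)\le 3\beta(\phi)$, $\beta''(\phi)\phi^2\le 12\beta(\phi)$ obtained by choosing $\beta=\beta_A$ as in Proposition~\ref{prop-SG-BBg-L2Lp}, the quantity $Z$ is negative for $|v|$ large once $q\le 4$ and $\|g\|_{\XX_0}$, $\eps$ are small (exactly the range $q\in[2,4]$ in the statement), and is $\lesssim_\eps\beta_A(\phi)\mathbf{1}_{B_R}$ otherwise. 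Absorbing this into the term $-M\chi_R\int\beta_A(\phi)\MMM$ for $M,R$ large and using Gr\"onwall gives $\int_\OO\beta_A(\phi_t)\MMM \le \int_\OO\beta_A(\phi_T)\MMM$; letting $A\to\infty$ yields $\|h_t\|_{L^q(\omega^{-1/q})}\le\|h_T\|_{L^q(\omega^{-1/q})}$ for all $t\in[0,T)$, and a density argument removes the extra $L^2$ assumption on $h_T$.

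**Main obstacle.** The routine parts are the Lax--Milgram/fixed-point well-posedness (copied from Theorem~\ref{theo-Kolmogorov-WellP}, whose hypotheses are already checked in Proposition~\ref{prop:BB*g_Lq}) and the $\beta$-truncation machinery (copied from Proposition~\ref{prop-SG-BBg-L2Lp}). The genuinely delicate point is making the formal differentiation of $\int_\OO\beta(\phi)\MMM$ and the boundary identity rigorous for the \emph{dual} problem: one must check that $\gamma h\in L^2(\Gamma;\d\xi^2_{\omega^{-1}}\dt)\subset L^1(\Gamma;\d\xi^1\dt)$ so that the Darroz\`es--Guiraud/Jensen manipulation involving $\DDD^*$ and $\SSS$ on the boundary is licit, and that the sign of the boundary flux comes out with the correct orientation now that the roles of incoming and outgoing velocities are swapped; this is exactly where the adjoint structure $\RRR^*=(1-\iota)\SSS+\iota\DDD^*$ and the duality relation \eqref{eq:cor:Kolmogorov-duality3} of Proposition~\ref{prop:Kolmogorov-duality} (or equivalently the Green formula \eqref{eq:FPK-traceL2} applied directly to the backward equation) must be invoked carefully. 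Beyond checking that one constant shifts from $-4/p$ to $2-4/q$ in the effective potential, nothing new is needed.
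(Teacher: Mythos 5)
The first part of your argument (time reversal, Theorem~\ref{theo-Kolmogorov-WellP} with the weight $\widetilde m$ from Proposition~\ref{prop:BB*g_Lq}, and uniqueness) matches the paper's Step~1 and is fine.

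The $L^q$ propagation step, however, contains a genuine error: you have transplanted the \emph{weight structure} of Proposition~\ref{prop-SG-BBg-L2Lp} without noticing that the dual problem lives in a space of opposite type. Setting $\phi:=h/\MMM$ and $\psi:=h\,\MMM^{1/q-1}$ and controlling $\int_\OO\beta_A(\phi)\,\MMM$, with $\beta_A\nearrow|\cdot|^q$, produces the quantity $\int_\OO |h|^q\,\MMM^{1-q}$. Since $\MMM^{1-q}\sim e^{\frac{q-1}{2}|v|^2}$ for $q\ge2$, this is the norm of $L^q(\MMM^{(1-q)/q})$ --- a \emph{small} space of faster-than-Gaussian decaying functions --- which is neither the space $L^q(\omega^{-1/q})$ in the statement (a \emph{large} space of possibly growing functions, since $\omega^{-1/q}$ decays) nor in any useful relation with it. The primal weight $\mu^{1/p-1}$ in Proposition~\ref{prop-SG-BBg-L2Lp} grows precisely because the primal solution $f$ decays, so the trick there is $f\mapsto f/\MMM$; the dual solution $h$ can grow like $\omega$, so this rescaling is the wrong move. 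Your auxiliary claim $L^2(\omega^{-1})\subset L^2(\mu^{1/q-1})$ is in fact backwards ($\omega^{-1}\to0$ while $\mu^{1/q-1}\to\infty$), and the density argument at the end would not recover the stated bound on $\|h_t\|_{L^q(\omega^{-1/q})}$ from the bound you obtain.

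The paper's proof of the $L^q$ propagation is simpler and genuinely different. It does not introduce $\phi=h/\MMM$: it differentiates $\int_\OO\beta(h)\,\widetilde m$ directly (with $\widetilde m$ the modification of $m=\omega^{-1}$), so that $\beta\nearrow|\cdot|^q$ gives exactly $\int_\OO |h|^q\widetilde m\sim\|h\|_{L^q(\omega^{-1/q})}^q$. The boundary term is nonpositive by the same Darroz\`es--Guiraud/Jensen argument you describe, but now with the boundary weight $\widetilde m$ rather than $\MMM$, using the choice of $A$ from \eqref{eq:choice_A_bis}; the diffusion term $-\int\beta''(h)\widetilde a_{ij}\partial_{v_i}h\,\partial_{v_j}h\,\widetilde m\le0$ is thrown away; and the crucial zero-order bound comes from $\bar a_{ij}\partial^2_{v_iv_j}m\sim\kappa_{\omega,1}\langle v\rangle^{\gamma+s}m$ with $\kappa_{\omega,1}<0$ (the $p=1$ case of Lemma~\ref{lem:Psi_m}), which gives $\widetilde a_{ij}\partial^2_{v_iv_j}\widetilde m-\widetilde c\,\widetilde m\lesssim\widetilde m$. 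This yields $-\frac{\d}{\dt}\int_\OO\beta(h)\,\widetilde m\lesssim\int_\OO\beta(h)\,\widetilde m$ with a constant independent of $\beta$, hence $\int_\OO\beta(h(0))\,\widetilde m\le C(T)\int_\OO\beta(h_T)\,\widetilde m$ by Gr\"onwall, and the desired conclusion by taking $\beta\nearrow|\cdot|^q$. Your effective-potential bookkeeping (the $2-\tfrac4q$ coefficient, the role of the extra factor $2$ in $2\widetilde b_i\partial_{v_i}$) therefore never enters, and the restriction $q\in[2,4]$ is not used in this step at all.
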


\begin{proof}[Proof of Proposition~\ref{prop-SG-BB*g-L2Lp}] The proof is very similar to the proof of Proposition~\ref{prop-SG-BBg-L2Lp} and we thus just sketch it. 

\smallskip\noindent
 {\sl Step 1.} We define $m := \omega^{-1}$ and next $\widetilde m$ by \eqref{eq:def:m_A}--\eqref{eq:def:tilde_m}.  Because of the estimate established in Proposition~\ref{prop:BB*g_Lq} in the case $q=2$, we may use Theorem~\ref{theo-Kolmogorov-WellP} exactly as in the proof of Theorem~\ref{theo-SG-LLg} and we get that for any $h_T \in L^2(\widetilde m)$ there exists  $h \in C([0,T];L^2(\tilde m)) \cap L^2((0,T) \times \Omega; H^{1,*}_{v,\tilde m})$
unique solution to  the dual problem \eqref{eq:dual_BBg}.

\smallskip\noindent
 {\sl Step 2.} We proceed similarly and using the same notations as during the proof of   Proposition~\ref{prop:BB*g_Lq} and Proposition~\ref{prop-SG-BBg-L2Lp}. 
 Let us thus consider $h_T \in L^q \subset L^2(m)$ and the associated solution $h$ exhibited in the Step~1.
 We  fix  a function $\beta : \R \to \R_+$ convex, increasing linearly at the infinity and such that $s \beta'(s) \ge 0$. We compute 
\bean
-\frac{\d}{\dt} \int_\OO \beta(h) \widetilde m
&=&   \int_\Sigma (v \cdot n_x)  \beta(\gamma h) \widetilde m
  - \int_{\OO}   \beta'' (h)  \widetilde a_{ij} \partial_{v_i} h  \partial_{v_j} h \widetilde m
\\
&\quad&
+ \int_\OO \beta(h) (\widetilde a_{ij}  \partial^2_{v_iv_j}\widetilde m - \widetilde c \widetilde m - v \cdot \nabla_x \widetilde m) -  \int_\OO h \beta'(h) M \chi_R \widetilde m. 
\eean
Using that $\beta$ is convex and arguing as in Step~4 in the proof of  Proposition~\ref{prop:BB*g_Lq}, we have 
\bean
\int_\Sigma (v \cdot n_x)  \beta(\gamma h) \widetilde m 
\le  \int_{\partial\Omega}  \iota \left[ K_1(m_A) - 1- K_0(m_A) \right]  \beta(    {\widetilde{\MMM \gamma_- h }}  )
\le 0, 
\eean
for $A$ large enough. On the other hand, with the same notations as in Lemma~\ref{lem:Psi_m} and during its proof, we have 
\bean
\bar a_{ij} \partial^2_{v_iv_j} m
%&=& \bar a_{ij} \partial_{v_i} (- v_j \wp  m) 
%\\
&=& \bar a_{ij} \left[- \delta_{ij} \wp  + v_i v_j \wp^2 \left( 1 - \frac{s-2 }{ \langle v \rangle^2} \right) \right] m 
\\
&\underset{|v|\to \infty}{\sim}& \kappa_{\omega,1} \langle v \rangle^{\gamma+s} m, 
\eean
and because $\kappa_{\omega,1} < 0$, we may argue similarly as during the proof of Lemma~\ref{lem:varpiC+g_tilde_omega_p} and establish that 
\bean
\widetilde a_{ij}  \partial^2_{v_iv_j}\widetilde m - \widetilde c \widetilde m
\lesssim  \widetilde m, 
\eean
for $\| g \|_{\XX_0} \le \eps_3$, $\eps_3 > 0$ small enough. Coming back to the above differential equation, throwing away the two first and the last (all negative) terms and using the last estimate, we immediately obtain 
\bean
-\frac{\d}{\dt} \int_\OO \beta(h) \widetilde m
\lesssim  \int_\OO \beta(h)  \widetilde m, 
\eean
so that 
\bean
 \int_\OO \beta(h(0,\cdot)) \widetilde m \le C(T)  \int_\OO \beta(h_T)  \widetilde m, 
\eean
for a constant $C(T)$ independent of $\beta$. We conclude  in the same way as during the proof of  Proposition~\ref{prop-SG-BBg-L2Lp} by choosing the same appropriate sequence $\beta_R (s) \nearrow |s|^q$. 
\end{proof}

We are now in position for establishing the decay result for the semigroup $S_{\BB_g}$.  Recalling the definition of the decay function $\Theta_{\omega,\omega_\star}$ in Proposition~\ref{prop:GalWeakDissipEstim}, we shall hereafter abuse notation and write $\Theta_{\omega,\omega} (t) = e^{-\lambda t}$ for some $\lambda>0$ when $\omega$ is an admissible weight function verifying $\gamma+s \ge 0$.
 
\begin{prop}\label{prop:BBg_Lp}
Consider an admissible weight function $\omega$ and a function $g \in \XX_0$ such that $\| g \|_{\XX_0} \le \min(\eps_2,\eps_3)$, where $\eps_2, \eps_3>0$ are given in Propositions~\ref{prop:BBg_L2} and \ref{prop:BB*g_Lq} respectively.
For any $p \in [2,\infty]$, the semigroup $S_{\BB_g}$   exhibited in Proposition~\ref{prop-SG-BBg-L2Lp} extends to $L^p_\omega$ and more precisely  
\bear\label{eq-prop:BBg_Lp1}
&&\| S_{\BB_g}(t,\tau) f_\tau \|_{L^p_\omega} \lesssim  \| f_\tau \|_{L^p_\omega}
 \\ \label{eq-prop:BBg_Lp2}
 &&\| S_{\BB_g}(t,\tau) f_\tau \|_{L^p_{\omega_\star}} \le \Theta_{\omega,\omega_\star} (t-\tau)  \| f_\tau \|_{L^p_{\omega}}, 
 \eear
for any $t \ge \tau \ge 0$,  any $f_\tau \in L^p(\omega)$ and any admissible weight function $\omega_\star \preceq \omega$.
\end{prop}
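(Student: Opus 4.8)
The plan is to obtain \eqref{eq-prop:BBg_Lp1}--\eqref{eq-prop:BBg_Lp2} at the two endpoints $p=2$ and $p=\infty$ and then interpolate. For $p=2$ the uniform bound \eqref{eq-prop:BBg_Lp1} is immediate from Proposition~\ref{prop:BBg_L2}: the associated modified weight $\widetilde\omega$ has velocity growth equivalent to $\omega$, and \eqref{eq:SBg-L2H1} gives $\|S_{\BB_g}(t,\tau)f_\tau\|_{L^2(\widetilde\omega)}\le\|f_\tau\|_{L^2(\widetilde\omega)}$, hence $\|S_{\BB_g}(t,\tau)f_\tau\|_{L^2_\omega}\lesssim\|f_\tau\|_{L^2_\omega}$. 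For $p=\infty$ I would argue by duality. First, applying the Green identity of Proposition~\ref{prop:Kolmogorov-duality} to a solution $f$ of \eqref{eq:BBg} and a solution $h$ of the backward dual problem \eqref{eq:dual_BBg}, with $\alpha=\beta=\mathrm{id}$ and test function $\varphi=\chi_R\nearrow1$, and letting $R\to\infty$, the interior terms carrying $\nabla\chi_R$ vanish while the boundary term cancels because $\RRR^*$ in \eqref{eq:reflection*} is exactly the formal adjoint of $\RRR$ for the measure $|n_x\cdot v|\,\dv\,\d\sigma_{\!x}$; here one uses $\gamma f\in L^2(\Gamma;\d\xi^2_\omega)\subset L^1(\Gamma;\d\xi^1)$ and the analogous bound for $\gamma h$ to make every integral meaningful. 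This yields the duality relation $\langle S_{\BB_g}(t,\tau)f_\tau,h\rangle=\langle f_\tau,S_{\BB^*_g}(\tau,t)h\rangle$ on a dense class of $h$. It then remains to bound the dual semigroup on $L^1(\omega^{-1})$: Proposition~\ref{prop:BB*g_Lq} with $q=1$ (whose dissipation term has the good sign), combined with Proposition~\ref{prop-SG-BB*g-L2Lp} for well-posedness and a density/truncation argument, gives $\|S_{\BB^*_g}(\tau,t)h\|_{L^1_{\omega^{-1}}}\lesssim\|h\|_{L^1_{\omega^{-1}}}$, and the duality relation then produces $\|S_{\BB_g}(t,\tau)f_\tau\|_{L^\infty_\omega}\lesssim\|f_\tau\|_{L^\infty_\omega}$.

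Next I would pass to general $p$ by interpolation and extension. The semigroup $S_{\BB_g}$ is so far defined on $L^2_\omega$ only (Proposition~\ref{prop-SG-BBg-L2Lp}); using the two endpoint bounds above together with Stein--Weiss (Riesz--Thorin) interpolation of the weighted Lebesgue scale $[L^2_\omega,L^\infty_\omega]_\theta=L^p_\omega$, $1/p=(1-\theta)/2$, one gets $\|S_{\BB_g}(t,\tau)f\|_{L^p_\omega}\lesssim\|f\|_{L^p_\omega}$ for every $p\in[2,\infty]$ and every $f\in L^p_\omega\cap L^2_\omega$. Since this intersection is dense in $L^p_\omega$ (e.g. via the truncations $f_n:=f\,\mathbf 1_{|v|\le n}$, which converge to $f$ in $L^2_{\omega'}$ for any $\omega'\prec\omega$ with $\omega'/\omega\in L^2$, while staying bounded in $L^p_\omega$) and $S_{\BB_g}$ is continuous on $L^2_{\omega'}$, the operator extends by density to a bounded operator on $L^p_\omega$, the $L^p_\omega$ bound being preserved by weak-$*$ lower semicontinuity. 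This proves \eqref{eq-prop:BBg_Lp1}.

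For the decay \eqref{eq-prop:BBg_Lp2} I would split cases. If $\gamma+s\ge0$ then $\la v\ra^{(\gamma+s)/2}\gtrsim1$, so $\|\cdot\|_{L^2_xH^{1,*}_v(\widetilde\omega)}\gtrsim\|\cdot\|_{L^2(\widetilde\omega)}$ and \eqref{eq:SBg-L2H1} becomes a genuine Gr\"onwall inequality, giving exponential decay of $S_{\BB_g}$ in $L^2_\omega$; dually, in Proposition~\ref{prop:BB*g_Lq} with $q=1$ the factor $\la v\ra^{\gamma+s}\gtrsim1$ gives exponential decay of $S_{\BB^*_g}$ in $L^1_{\omega^{-1}}$, hence of $S_{\BB_g}$ in $L^\infty_\omega$; interpolating and using $\|\cdot\|_{L^p_{\omega_\star}}\lesssim\|\cdot\|_{L^p_\omega}$ for $\omega_\star\preceq\omega$ yields \eqref{eq-prop:BBg_Lp2} with $\Theta_{\omega,\omega_\star}(t)=Ce^{-\lambda t}$. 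If $\gamma+s<0$, take an admissible $\omega_\star\prec\omega$ with parameter $s_\star$ (so necessarily $\gamma+s_\star<0$ too). Applying Proposition~\ref{prop:BBg_L2} to $\omega_\star$ gives $\tfrac12\tfrac{\d}{\dt}\|f\|^2_{L^2(\widetilde{\omega_\star})}+\sigma\|f\|^2_{L^2_xH^{1,*}_v(\widetilde{\omega_\star})}\le0$ and $\|f\|^2_{L^2_xH^{1,*}_v(\widetilde{\omega_\star})}\ge\|f\|^2_{L^2_{\widetilde{\omega_\star}\la v\ra^{(\gamma+s_\star)/2}}}$; feeding this, the uniform $L^2_\omega$ bound from the first step, and Proposition~\ref{prop:GalWeakDissipEstim} (with $p=2$, $\varrho=\omega$, $\varrho_2=\widetilde{\omega_\star}$, $\varrho_1=\widetilde{\omega_\star}\la v\ra^{(\gamma+s_\star)/2}$), one gets $\|S_{\BB_g}(t,\tau)f\|_{L^2_{\omega_\star}}\le\Theta_{\omega,\omega_\star}(t-\tau)\|f\|_{L^2_\omega}$. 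The mirror computation for the dual — Proposition~\ref{prop:BB*g_Lq} with $q=1$ applied to both $\omega$ and $\omega_\star$, Proposition~\ref{prop:GalWeakDissipEstim} with $p=1$, and reversing time so the backward problem becomes forward — gives $\|S_{\BB^*_g}(\tau,t)h\|_{L^1_{\omega^{-1}}}\le\Theta_{\omega,\omega_\star}(t-\tau)\|h\|_{L^1_{\omega_\star^{-1}}}$, whence by the duality relation $\|S_{\BB_g}(t,\tau)f\|_{L^\infty_{\omega_\star}}\le\Theta_{\omega,\omega_\star}(t-\tau)\|f\|_{L^\infty_\omega}$. Since the decay function produced by Proposition~\ref{prop:GalWeakDissipEstim} is independent of the Lebesgue exponent, interpolating the $p=2$ and $p=\infty$ decay estimates yields \eqref{eq-prop:BBg_Lp2} for all $p\in[2,\infty]$ with the same $\Theta_{\omega,\omega_\star}$.

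The main obstacle I expect is the $p=\infty$ step: rigorously justifying the identity $\langle S_{\BB_g}f,h\rangle=\langle f,S_{\BB^*_g}h\rangle$ requires verifying that all interior, initial/final and boundary traces are well defined and that the boundary contribution cancels thanks to the adjoint reflection operator $\RRR^*$ — this relies on the trace theory of Section~\ref{subsec:trace} and on the $L^1(\d\xi^1)$-integrability of the traces — and then promoting the a priori dual $L^1$ bound to a genuine semigroup bound by density. Everything else (the $L^2$ Gr\"onwall argument, the interpolation, and the bookkeeping that the several modified weights $\widetilde\omega,\widetilde{\omega_\star},\widetilde m$ remain comparable to $\omega,\omega_\star,\omega^{-1}$ so no estimate degrades) is routine.
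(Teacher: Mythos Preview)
Your overall strategy is correct and essentially matches the paper's: establish the duality identity between $S_{\BB_g}$ and $S_{\BB^*_g}$, transfer the $L^q(\widetilde m)$ bounds and decay of Proposition~\ref{prop:BB*g_Lq} to $L^p(\omega)$ bounds on $S_{\BB_g}$, then interpolate. (The paper actually does \emph{everything} by duality, including $p=2$, rather than using Proposition~\ref{prop:BBg_L2} directly there, but your mixed route is fine.)

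The gap is exactly where you flag it, and your proposed fix does not close it. Proposition~\ref{prop:Kolmogorov-duality} requires $\alpha,\beta\in W^{2,\infty}(\R)$, so you cannot take $\alpha=\beta=\mathrm{id}$ directly; and the trace information available from the $L^2$ well-posedness, namely $\gamma f\in L^2(\Gamma;\d\xi^2_\omega)$ and $\gamma h\in L^2(\Gamma;\d\xi^2_m)$, is \emph{not} enough to make the boundary term $\int_\Gamma (n_x\cdot v)\,\gamma f\,\gamma h$ absolutely convergent (having each factor in $L^1(\d\xi^1)$ says nothing about their product). The paper's way around this is to first work with data $f_0\in L^4(\mu^{-3/4})$ and $h_T\in L^4(\mu^{1/4})$: Propositions~\ref{prop-SG-BBg-L2Lp} and~\ref{prop-SG-BB*g-L2Lp} were proved precisely to put $f$ and $h$ in $L^\infty_t L^4$ with matching Gaussian weights, so that the trace Theorem~\ref{theo-Kolmogorov-trace} gives $\gamma f\in L^4(\d\xi^2_{\mu^{-3/4}})\subset L^2(\d\xi^1_{\mu^{-1/2}})$ and $\gamma h\in L^4(\d\xi^2_{\mu^{1/4}})\subset L^2(\d\xi^1_{\mu^{1/2}})$. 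These \emph{are} Cauchy--Schwarz dual and the boundary integral is then well defined; the duality identity~\eqref{eq:duality_identity} follows and extends to general $L^2$ data by density. Once you plug in this $L^4$ step, the rest of your argument goes through.
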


\begin{proof}[Proof of Proposition~\ref{prop:BBg_Lp}] 
We shall prove that $S_{\BB^*_g}$ is the adjoint of $S_{\BB_g}$ and we next use the estimate established on $S_{\BB^*_g}$ and a duality argument. 
We set $m := \omega^{-1}$ and $q := p/(p-1) \in [1,2]$ the conjugate exponent associated to $p$. We split the proof into three steps.

\smallskip\noindent
{\sl Step 1.}
Consider $f_0 \in L^4(\mu^{-3/4})$ and $h_T \in L^4(\mu^{1/4})$ for $T > 0$.
We observe that if $f$ is the solution to the primal forward problem \eqref{eq:BBg} associated to $f_0$ given by Proposition~\ref{prop-SG-BBg-L2Lp}  and $h$ is a solution to the backward dual problem \eqref{eq:dual_BBg} associated to $h_T$ given by Proposition~\ref{prop-SG-BB*g-L2Lp}, we may apply Proposition~\ref{prop:Kolmogorov-duality} with the choice $\alpha(\sigma) = \beta(\sigma) = \sigma$, $\varphi \equiv 1$, and we get  
$$
\begin{aligned}
\int_\OO f(T) h_T  
&= \int_\OO f_0 h(0) - \int_0^T \!\! \int_\Sigma (n_x \cdot v) (\gamma f) (\gamma h) \, \d\xi^1  \d s. 
\end{aligned}
$$
The boundary term is well defined because on the one hand $f \in L^\infty(0,T; L^4(\mu^{-3/4}))$ from Proposition~\ref{prop-SG-BBg-L2Lp}  and thus $\gamma f  \in L^4(\d\xi^2_{\mu^{-3/4}}) \subset L^2(\d\xi^1_{\mu^{-1/2}})$ from Theorem~\ref{theo-Kolmogorov-trace} and the Cauchy-Schwarz inequality, and in the other hand $h \in L^\infty(0,T; L^4(\mu^{1/4}))$ from Proposition~\ref{prop-SG-BB*g-L2Lp}  and thus $\gamma h  \in L^4(\d\xi^2_{\mu^{1/4}}) \subset L^2(\d\xi^1_{\mu^{1/2}})$ from Theorem~\ref{theo-Kolmogorov-trace} and the Cauchy-Schwarz inequality. 
We next have 
\bean
\int_\Sigma (n_x \cdot v) (\gamma f) (\gamma h) \, \d\xi^1
&=& \int_{\Sigma_+} (n_x \cdot v)_+ (\gamma_+ f)  (\RRR^* \gamma_- h) \, \d\xi^1
\\
&\quad& + \int_{\Sigma_-} (n_x \cdot v)_- (\RRR \gamma_+ f) ( \gamma_- h) \, \d\xi^1= 0, 
\eean
where we have used the reflection conditions in \eqref{eq:BBg} and in  \eqref{eq:dual_BBg} in the first equality and the very definitions of the reflection operators $\RRR$ in \eqref{eq:reflection} and $\RRR^*$ in \eqref{eq:reflection*} in the second equality. 
We have thus established the duality identity
\beqn\label{eq:duality_identity}
\int_\OO f(T) h_T   
%= \int_\OO f(t) h(t) 
= \int_\OO f_0 h(0), 
\eeqn
and this one extends to any  $f_0 \in L^2(\omega)$ and  $h_T \in L^2(m)$ by a density argument.

\smallskip\noindent
{\sl Step 2.} We first emphasize that for $h_0 \in L^2(m)$, the differential inequality \eqref{eq:prop:BB*g_Lq} in particular implies
\beqn\label{eq:h_Lqm}
\| h(0)  \|_{L^q_{\tilde m}} \le \| h_T \|_{L^q_{\tilde m}}.
\eeqn
The computations in Proposition~\ref{prop:BB*g_Lq} can indeed be rigorously justified in the well-posedness framework introduced in Proposition~\ref{prop-SG-BB*g-L2Lp}.
We then write 
\bean
\| f(T) \|_{L^p_{\tilde m^{-1}}} 
&=& \sup_{h_T \in L^2_m;  \| h_T \|_{L^q_{\tilde m}} \le 1}   \int_\OO f(T) h_T  
\\
&=& \sup_{h_T \in L^2_m; \| h_T \|_{L^q_{\tilde m}} \le 1}   \int_\OO  f_0 h(0).
\\
&=& \sup_{h_T \in L^2_m; \| h_T \|_{L^q_{\tilde m}} \le 1} \| f_0 \|_{L^p_{\tilde m^{-1}}}  \|  h(0) \|_{L^q_{\tilde m}} \le   \| f_0 \|_{L^p_{\tilde m^{-1}}}  , 
\eean
where we have used a classical duality identity in the first line, the identity \eqref{eq:duality_identity} in the second line, 
the H\"older inequality and the estimate  \eqref{eq:h_Lqm} in the last line. 
Observing that $\omega \sim \widetilde m^{-1}$, we have established the first estimate \eqref{eq-prop:BBg_Lp1} for $\tau = 0$. The general case follows by time translation.

\smallskip\noindent
{\sl Step 3.} For $q=1,2$, the differential inequality \eqref{eq:prop:BB*g_Lq} also implies
 $$
- \frac1q \frac{\d}{\d t}  \|  h\|_{L^q  (\tilde m)}^q + \sigma \| h \|_{L^q(m \langle v \rangle^{(\gamma + s)/q})}^q \le 0. 
$$
If $\gamma+s <0$, using last estimate, the estimate \eqref{eq:h_Lqm} associated to a weight function $m_\star := \omega_\star^{-1}$ with $\omega_\star \prec \omega$, and Proposition~\ref{prop:GalWeakDissipEstim}, we deduce 
\begin{equation}\label{eq:h_Lqm1m2}
\| h(0)  \|_{L^q_{\tilde m}} \le \Theta_{m_\star,m}(T) \| h_T \|_{L^q_{\tilde m_\star}}.
\end{equation}
Otherwise if $\gamma+s \ge 0$, we immediately obtain
$$
\| h(0)  \|_{L^q_{\tilde m}} \le e^{-\lambda T} \| h_T \|_{L^q_{\tilde m}},
$$
that is \eqref{eq:h_Lqm1m2} with $m_\star = m$ and $\widetilde m_\star = \widetilde m$.

As in Step~2, we write 
\bean
\| f(T) \|_{L^p_{\tilde m_\star^{-1}}} 
&=& \sup_{h \in L^2_{m_\star}; \| h_T \|_{L^q_{\tilde m_\star}} \le 1}   \int_\OO  f_0 h(0).
\\
&\le&   \sup_{h \in L^2_{m_\star}; \| h_T \|_{L^q_{\tilde m_\star}} \le 1}    \| f_0 \|_{L^p_{\tilde m^{-1}}}  \|  h(0) \|_{L^q_{\tilde m}} 
\\
&\le&   \sup_{h \in L^2_{m_\star}; \| h_T \|_{L^q_{\tilde m_\star}} \le 1}     \| f_0 \|_{L^p_{\tilde m^{-1}}}  \Theta_{m_\star,m}(T) \| h_T \|_{L^q_{\tilde m_\star}}.
\\
&=&   \Theta_{\omega,\omega_\star}(T)     \| f_0 \|_{L^p_{\tilde m^{-1}}} , 
\eean
where we have used H\"older's inequality in the second line, the estimate   \eqref{eq:h_Lqm1m2} in the third line and the fact that 
$\Theta_{m_\star,m}(T) =  \Theta_{\omega,\omega_\star}(T)$ in the last line. That is nothing but \eqref{eq-prop:BBg_Lp2} for $p=2,\infty$ and $\tau = 0$. The general case for $\tau \ge 0$ and $p \in [2,\infty]$ follows by time translation and an interpolation argument. 
\end{proof}

%%%%%%%%%%%%%%%%%%%%%%%%%%%%%%%%%%%
\section{Ultracontractivity property of $S_{\BB_g}$}
\label{sec-SBBg-ultracontractivity}

%%%%%%%%
\subsection{De Giorgi-Nash-Moser type estimate}
\label{sec-SBBg-DeGiorgiNashMoser}

In this section we establish a De Giorgi-Nash-Moser type estimate of gain of integrability for solutions to equation \eqref{eq:dual_BBg} associated to $\BB_g^*$ in the spirit of \cite{MR3923847,MR2068847}.  This will be established in Theorem~\ref{theo:BB*g_L1_L2} below, as a consequence of a series of intermediate results. By a duality argument we shall finally obtain the ultracontractivity of $S_{\BB_g}$ in Theorem~\ref{theo:BBg_L2_Linfty}.
We start by our key estimate associated to the operator $\BB_g^*$.
 
\begin{prop}\label{prop:BB*g_ultracontractivity}
Consider an admissible weight function $\omega$ and define $m := \omega^{-1}$. There exist constants $\eps_4,M_4,R_4 > 0$ such that if  $\| g \|_{\XX_0} \le \eps_4$, then for any $T>0$, any solution $h$ to the linear equation \eqref{eq:dual_BBg} associated to $\BB^*_g$ on $(0,T)$ and any nonnegative test function $\varphi \in C^\infty_c ( (0,T) )$, there holds, for any $M \ge M_4$ and $R \ge R_4$,
\beqn\label{eq:prop:BB*g_ultracontractivity}
\int_0^T \!\!\int_{\OO} h^2 m^2 \la v \ra^{\gamma-3} \, \frac{ (n_x \cdot v)^2}{\delta^{1/2}} \, \varphi^2
+  \int_0^T \| h \|_{L^2_x H^{1,*}_v (m)}^2 \varphi^2 
\lesssim \int_0^T \| h \|_{L^2(m)}^2 \varphi (\varphi')_+ .
\eeqn

\end{prop}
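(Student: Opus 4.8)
The statement is a weighted energy/Caccioppoli-type inequality for the dual problem \eqref{eq:dual_BBg}, with two novelties compared to the pure energy estimate of Proposition~\ref{prop:BB*g_Lq}: (i) the weight $\widetilde m$ is replaced by a degenerating weight involving $\delta(x)^{-1/2}$ near the boundary, and (ii) one keeps the full dissipation $\int \|h\|_{H^{1,*}_v(m)}^2\varphi^2$ on the left while only spending an $L^2(m)$ norm against $\varphi(\varphi')_+$ on the right. The plan is to run the same computation as in the proof of Proposition~\ref{prop:BB*g_Lq} but (a) against a time cutoff $\varphi^2$ rather than working with the running $L^2$ norm, so that the time derivative produces the right-hand term $\int \|h\|_{L^2(m)}^2 \varphi(\varphi')_+$, and (b) with a more carefully chosen spatial multiplier that degenerates like $\delta^{1/2}$ at the boundary, so that the boundary term it generates has a favorable sign and in fact controls the extra boundary-adapted term on the left.

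First I would fix $q=2$ and multiply the equation $-\partial_t h = \BB_g^* h$ by $h\,\widehat m^2\,\varphi^2$, where $\widehat m^2 := m^2\{1 - \tfrac12 (n_x\cdot v)\langle v\rangle^{\gamma-3}\,\zeta(\delta)\}$ for a suitable profile $\zeta$ with $\zeta(0)>0$ and $\zeta$ vanishing away from $\partial\Omega$; integrating by parts in $t$, $x$, $v$ exactly as in Steps~1--4 of the proof of Proposition~\ref{prop:BB*g_Lq}. The $\partial_t$ integration by parts gives $\tfrac12\int h^2\widehat m^2 (\varphi^2)' = \int h^2\widehat m^2 \varphi\varphi'$, whose positive part is the right-hand side. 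The velocity-diffusion term gives, by Lemma~\ref{lem:dissipativity_Lp} together with Lemma~\ref{lem:elementary-abc*bar}, Lemma~\ref{lem:elementary-abc*g} and \eqref{eq:Bnabla=tildenabla}, the coercive term $\gtrsim \int \langle v\rangle^\gamma|\widetilde\nabla_v(\widehat m h)|^2\varphi^2$ plus a lower-order term $\int \varpi^{(\CC_g^+)^*}_{\widehat m,2}h^2\widehat m^2\varphi^2$ which, by Lemma~\ref{lem:varpiC+g_tilde_omega_p} and the $\delta$-dependence being harmless for large $|v|$, is bounded by $\tfrac13\kappa_{\omega,2}\langle v\rangle^{\gamma+s}h^2\widehat m^2\varphi^2 + M_4\mathbf 1_{B_{R_4}}h^2\widehat m^2\varphi^2$ once $M\ge M_4$, $R\ge R_4$, $\|g\|_{\XX_0}\le\eps_4$; together with the $-M\chi_R$ term this absorbs the bad part and produces the interior $\la v\ra^{\gamma+s}$ piece of $\|h\|^2_{H^{1,*}_v(m)}$.

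The crux is the transport term $\int h^2\, v\cdot\nabla_x(\widehat m^2)\,\varphi^2$ and the boundary term $\int_\Sigma |\gamma h|^2 \widehat m^2 (n_x\cdot v)\varphi^2$. Here $\nabla_x(\widehat m^2)$ has two contributions: one from $\nabla_x n_x$ (bounded, lower order, absorbed as in Step~3 of Proposition~\ref{prop:BB*g_Lq}) and one from $\zeta'(\delta)\nabla_x\delta = -\zeta'(\delta)n_x$ near $\partial\Omega$; since $\zeta(\delta)\sim \delta^{1/2}$ we get $\zeta'(\delta)\sim \delta^{-1/2}$, so $v\cdot\nabla_x(\widehat m^2)$ contributes, up to sign, $+\tfrac12 (n_x\cdot v)^2\langle v\rangle^{\gamma-3}m^2\,\delta^{-1/2}$, which is exactly (the positive of) the first term on the left-hand side of \eqref{eq:prop:BB*g_ultracontractivity}. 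One then checks the sign is right — i.e.\ the interior positive term on the left comes with the correct orientation of $\zeta'$ — and that the remaining $(n_x\cdot v)\,h^2\,\langle v\rangle^{\gamma-3}$ piece (linear in $n_x\cdot v$, hence of indefinite sign) is controlled by Young's inequality against the $\delta^{-1/2}$ term and the interior gradient term; this is where choosing the precise power $\delta^{1/2}$ matters. The boundary term is handled by the Darrozès--Guiraud/Maxwell-reflection convexity argument verbatim as in Step~4 of Proposition~\ref{prop:BB*g_Lq} and of Proposition~\ref{prop-SG-BBg-L2Lp}: using $\gamma_+h=\RRR^*\gamma_-h$, the change of variables $v\mapsto\VV_xv$, Jensen's inequality for the probability measure $\MMM(n_x\cdot v)_-\,dv$, and the normalization of $\MMM$, one gets $\int_\Sigma|\gamma h|^2\widehat m^2(n_x\cdot v)\varphi^2\ge 0$ after choosing $A$ large (as in \eqref{eq:choice_A_bis}); note $\zeta(0)>0$ only strengthens the coefficient $K_0$, so the sign condition is still achievable. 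Throwing this nonnegative boundary term away and collecting everything yields \eqref{eq:prop:BB*g_ultracontractivity}.

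The main obstacle I expect is the bookkeeping of the $\delta$-dependent multiplier: one must verify that $\widehat m$ is still comparable to $m$ up to constants (so the left-hand $H^{1,*}_v(m)$ norm and right-hand $L^2(m)$ norm are genuine), that the extra terms produced by $\zeta(\delta)$ in the velocity-diffusion integration by parts are genuinely lower order in $|v|$ (they are, since $\zeta$ does not depend on $v$, so $\varpi^{(\CC_g^+)^*}_{\widehat m,2}$ differs from $\varpi^{(\CC_g^+)^*}_{\widetilde m,2}$ only by $x$-dependent bounded factors), and above all that the sign of the $\delta^{-1/2}$ transport contribution is $+$ rather than $-$ — which dictates the sign convention in the definition of $\widehat m$ and is the analogue of the sign choice $1-\tfrac12(n_x\cdot v)\langle v\rangle^{\gamma-3}$ already made for $\widetilde m$ in \eqref{eq:def:tilde_m}. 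Once these are in place the rest is a routine rerun of the arguments already developed in Section~\ref{sec-SBBg-decay}.
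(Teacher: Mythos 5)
Your approach is essentially the paper's: multiply the dual equation by $h\varphi^2\widehat m^2$ with a $\delta$-dependent correction to the weight, use the time cutoff to produce the right-hand side, extract the $\delta^{-1/2}$ penalization from $v\cdot\nabla_x\widehat m^2$ since $v\cdot\nabla_x\delta=-(n_x\cdot v)$, treat the collision term via Lemma~\ref{lem:varpiC+g_tilde_omega_p}, and discard the boundary term by the Darroz\`es--Guiraud argument as in Step~4 of Proposition~\ref{prop:BB*g_Lq}.

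One slip worth flagging: your three stated requirements on the profile $\zeta$ --- $\zeta(0)>0$, ``$\zeta$ vanishing away from $\partial\Omega$'', and $\zeta(\delta)\sim\delta^{1/2}$ --- are mutually inconsistent, and the middle one is in fact the wrong sign: for $v\cdot\nabla_x\widehat m^2$ to produce $+(n_x\cdot v)^2\,\delta^{-1/2}$ you need $\zeta'(\delta)>0$, i.e.\ $\zeta$ \emph{increasing} away from the boundary. The correct profile is of the form $\zeta(\delta)=c_0+c_1\delta^{1/2}$ with $c_0,c_1>0$ (the paper uses $c_0=\tfrac14$, $c_1=\tfrac{1}{4D^{1/2}}$): the constant $c_0>0$ drives the boundary Darroz\`es--Guiraud estimate, and the $c_1\delta^{1/2}$ part supplies $\zeta'\sim\delta^{-1/2}>0$ for the interior penalization. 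You should also replace $m$ by $m_A$ in the definition of $\widehat m$, as you implicitly acknowledge when you invoke \eqref{eq:choice_A_bis}; without the $A$-cutoff one cannot arrange $K_1-K_2^{-1}-\tfrac12K_0^{-1}\le 0$. With these corrections your argument goes through exactly as in the paper.
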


\begin{rem}\label{rem:choice_MR}
Hereafter we fix contants $M \ge \max (M_1,M_2,M_3,M_4)$ and $R \ge \max(R_1,R_2,R_3,R_4)$ in the definition of the operators $\BB_g$ and $\AA_g$ in \eqref{def:BBg}--\eqref{def:AAg} in such a way that all previous results on $\BB_g$ and $\BB^*_g$ (Propositions~\ref{prop:BBg_L2}, \ref{prop:BB*g_Lq}, \ref{prop-SG-BBg-L2Lp}, \ref{prop-SG-BB*g-L2Lp}, \ref{prop:BBg_Lp}, and \ref{prop:BB*g_ultracontractivity}) are satisfied.
\end{rem}

\begin{proof}[Proof of Proposition~\ref{prop:BB*g_ultracontractivity}]
We define the modified weight function $m_A $ by \eqref{eq:def:m_A} with $q=2$ and then we define $\widetilde{m} = \widetilde{m}(x,v)$ by
$$
\widetilde m^2 = \left\{ 1 - \frac14 (n_x \cdot   v) \langle v \rangle^{\gamma-3}- \frac{\delta^{1/2}}{4 D^{1/2}} \, (n_x \cdot   v)   \langle v \rangle^{\gamma-3}  \right\} m_A^2, 
$$
where $D = \sup_{x \in \Omega} \delta$ is half the diameter of $\Omega$. We already observe that
$$
c_A^{-1}\MMM \le  m_A^2 \le c_A m^2 \quad\text{and}\quad
\frac12 m_A^2 \le \widetilde m^2 \le \frac32 m_A^2,
$$
for come constant $c_A>0$.
As in the proof of Proposition~\ref{prop:BB*g_Lq}, we remark that we can write
$$
\widetilde m^2 = \theta^2 m^2
$$
with
$$
\theta^2 = \left[1-\frac14 \left(1+ \frac{\delta^{1/2}}{D^{1/2}} \right) (n_x \cdot   v) \la v \ra^{\gamma-3} \right] \left[ \chi_A \MMM m^{-2} + 1-\chi_A \right] ,
$$
and we also have, for any $A>0$,
$$
\left| \frac{\partial_{v_i} \theta}{\theta} \right| \lesssim \la v \ra^{-2}, \quad \left| \frac{\partial_{v_i, v_j} \theta}{\theta}  \right| \lesssim \la v \ra^{-3}.
$$

\medskip\noindent
\textit{Step 1.}
We multiply \eqref{eq:dual_BBg} by $h \varphi^2 \widetilde{m}^2$ and we integrate in order to obtain
$$
\int_0^T \!\! \int_{\OO} \left\{ -\partial_t h - v \cdot \nabla_x h - (\CC^+_g)^* h- M \chi_R h \right\} h \varphi^2 \widetilde{m}^2 = 0, 
$$
or in other words
\begin{equation}\label{eq:h2m2}
\begin{aligned}
\frac12 \int_0^T \!\! \int_{\OO} h^2 \partial_t (\varphi^2) \widetilde{m}^2 
+\frac12 \int_0^T \!\! \int_{\OO} h^2  \varphi^2 v \cdot \nabla_x \widetilde{m}^2 
- \frac12 \int_0^T \!\! \int_{\Sigma} (\gamma h)^2  \varphi^2 \widetilde{m}^2 (n_x \cdot v)  \\
=\int_0^T \la   (\CC^+_g)^* h +  M \chi_R h  , h \ra_{L^2_{x,v} (\tilde{m})} \varphi^2. 
\end{aligned}
\end{equation}

\medskip\noindent
\textit{Step 2.} 
Arguing as in Step~2 of the proof of Proposition~\ref{prop:BB*g_Lq}, we have
$$
\begin{aligned}
\int_{\OO}   \left[(\CC^+_g)^*h  - M\chi_R h \right] h  \widetilde m^q
&\le - (\bar C - C_{\XX_0,2}\| g \|_{\XX_0}) \int_{\OO} \la v \ra^{\gamma}| \widetilde \nabla_v (\widetilde m h)|^2 \\
&\quad
+\int_{\OO} \left\{ \varpi^{(\CC^+_g)^*}_{\tilde m,q} - M \chi_R \right\} h^2 \widetilde m^2,
\end{aligned}
$$
where $\varpi^{(\CC^+_g)^*}_{\tilde m,q}$ satisfies the estimate~\eqref{eq:lem:varpiC+g_tilde_omega_p} in Lemma~\ref{lem:varpiC+g_tilde_omega_p}.

\medskip\noindent
\textit{Step 3.}
Observing that $\delta = 0$ on the boundary $\partial\Omega$, the boundary term in~\eqref{eq:h2m2} can be decomposed as
\begin{equation}\label{eq:boundary}
\begin{aligned}
-\frac12 \int_0^T \!\! \int_{\Sigma} (\gamma h)^2  \varphi^2 \widetilde{m}^2 (n_x \cdot v) 
&= -\frac12 \int_0^T \varphi^2 \int_{\Sigma}  (\gamma h)^2 m_A^2 (n_x \cdot v) \\
&\quad
+\frac18 \int_0^T \varphi^2 \int_{\Sigma} (\gamma h)^2 m_A^2 (n_x \cdot v)^2 \la v \ra^{\gamma-3}.
\end{aligned}
\end{equation}
Arguing as in Step~4 of the proof of Proposition~\ref{prop:BB*g_Lq}, we can choose $A>0$ large enough such that
$$
-\frac12 \int_0^T \!\! \int_{\Sigma} (\gamma h)^2  \varphi^2 \widetilde{m}^2 (n_x \cdot v) \ge 0.
$$

\medskip\noindent
\textit{Step 4.}
In order to deal with the second term at the left-hand side of~\eqref{eq:h2m2}, we define  $\psi := \delta^{1/2}  (n_x \cdot v) \la v \ra^{\gamma-3}$. Observing that $\la v \ra \psi \in L^\infty_{x,v}$, $\nabla_v \psi \in L^\infty_{x,v}$ and
$$
-v \cdot \nabla_x \psi = \frac{1}{2 \delta^{1/2}} (n_x \cdot v)^2 \la v \ra^{\gamma-3} - \delta^{1/2} (D_x n_x : v \otimes v) \la v \ra^{\gamma-3},
$$
we compute
$$
\begin{aligned}
v \cdot \nabla_x \widetilde{m}^2
&= \frac14 m_A^2 \la v \ra^{\gamma-3} \left\{  -(D_x n_x : v \otimes v)  + \frac{1}{2 D^{1/2} \delta^{1/2}} (n_x \cdot v)^2   - \frac{\delta^{1/2}}{D^{1/2}} (D_x n_x : v \otimes v) \right\} .
\end{aligned}
$$
Therefore we deduce
$$
\begin{aligned}
\frac12 \int_0^T \!\! \int_{\OO} h^2  \varphi^2 v \cdot \nabla_x \widetilde{m}^2
&\ge \frac{1}{16 D^{1/2}} \int_0^T \!\! \int_{\OO}  h^2  \varphi^2 m_A^2 \la v \ra^{\gamma-3} \frac{(n_x \cdot  v)^2}{\delta^{1/2}} 
- C_1 \int_0^T \!\! \int_{\OO}   h^2  \varphi^2 m_A^2 \la v \ra^{\gamma-1} ,
\end{aligned}
$$
fo some constant $C_1>0$.

\medskip\noindent
\textit{Step 5.}
Gathering previous estimates, it follows
$$
\begin{aligned}
\frac{1}{16 D^{1/2}} \int_0^T \!\! \int_{\OO}  h^2  \varphi^2 m_A^2 \la v \ra^{\gamma-3} \frac{(n_x \cdot  v)^2}{\delta^{1/2}} 
&+(\bar C - C_{\XX_0,2}\| g \|_{\XX_0}) \int_0^T \!\! \int_{\OO} \la v \ra^{\gamma}| \widetilde \nabla_v (\widetilde m h)|^2 \varphi^2\\
&+ \int_0^T \!\! \int_{\OO}  \left\{ M\chi_R - \widetilde \varpi_{\BB^*_g}  \right\} h^2 \widetilde m^2 \varphi^2
\le \int_0^T \!\! \int_{\OO} h^2 \widetilde m^2 \varphi (\varphi')_+
\end{aligned}
$$
with, using the notation of Lemma~\ref{lem:varpiC+g_tilde_omega_p},
$$
 \la v \ra^{-\gamma-s} \widetilde \varpi_{\BB^*_g}
:= \kappa_{\omega,2} + C_{\XX_0,1} \| g \|_{\XXX_0} + \psi +  (C_\TT+C_1)  \langle v \rangle^{-1-s}. 
$$
We can then conclude by arguing as in Step~5 of the proof of Proposition~\ref{prop:BB*g_Lq}. More precisely, we deduce that there are $\eps_4,M_4,R_4>0$ such that for all $\| g \|_{\XX_0} \le \eps_4$, any $M \ge M_4$ and $R \ge R_4$, there holds $(\bar C - C_{\XX_0,2} \| g \|_{\XX_0} )\ge \bar C/2$ and $ M\chi_R - \widetilde \varpi_{\BB^*_g} \ge  \la v \ra^{\gamma+s} |\kappa_{\omega,2}| / 3$, which completes the proof.
\end{proof}

We state and prove an elementary interpolation result which will be useful in the sequel.

\begin{lem}\label{lem:EstimL2poids}
For any function $f : \OO \to \R$, there holds 
\beqn\label{eq:EstimL2poids}
\| \delta^{-1/4} \la v \ra^{-1} f \|_{L^2(\OO)}^2 \lesssim 
\int_{\OO} f^2 \la v \ra^{-2}    \frac{(n_x \cdot v)^{2}}{\delta^{1/2}} + 
\| \nabla_v f \|_{L^2(\OO)}^2.
\eeqn
\end{lem}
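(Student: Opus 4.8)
The plan is to reduce the estimate to a one-dimensional Hardy-type inequality in the normal direction near the boundary, combined with a trivial bound away from the boundary. First I would note that the claim is local and only delicate near $\partial\Omega$: on the region where $\delta(x) \ge \delta_0$ for a fixed small $\delta_0 > 0$, we have $\delta^{-1/4} \lesssim \delta_0^{-1/4}$, so there $\|\delta^{-1/4}\langle v\rangle^{-1} f\|_{L^2}^2 \lesssim \|\langle v\rangle^{-1} f\|_{L^2}^2 \lesssim \|\nabla_v f\|_{L^2}^2 + (\text{lower order})$; actually since we want only $\langle v\rangle^{-1}$ weight, we can even bound $\|\langle v\rangle^{-1}f\|_{L^2}^2 \le \|f\|_{L^2(\langle v\rangle^{-2})}^2$ directly by the first term on the right-hand side once we observe $(n_x\cdot v)^2/\delta^{1/2}$ is not available there — so more carefully, on $\{\delta \ge \delta_0\}$ I would just use $\delta^{-1/4}\langle v\rangle^{-1} \lesssim \langle v\rangle^{-1}$ and absorb $\|\langle v\rangle^{-1} f\|_{L^2(\{\delta\ge\delta_0\})}^2$ into $\|\nabla_v f\|_{L^2}^2$ via a Poincaré-type inequality in $v$ (or simply note it is dominated by $\|f\|_{L^2(\langle v\rangle^{-2})}^2$, which must be controlled — see the final remark below). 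The substantive part is the collar neighborhood $\{0 < \delta(x) < \delta_0\}$.

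In the collar I would use the tubular-neighborhood coordinates $x \leftrightarrow (z, y)$ where $z = \delta(x) \in (0,\delta_0)$ is the distance to the boundary and $y \in \partial\Omega$, so that $\nabla_x \delta = -n_x$ and the Jacobian is bounded above and below. In these coordinates $n_x \cdot v = -\partial_z(\text{position}) \cdot v$ corresponds, after freezing $y$, to the derivative of $x$ along the inward normal; the key point is that for fixed $y$ and $v$, the map $z \mapsto f$ has $z$-derivative comparable to $n_x\cdot v$ times $\nabla_v$-independent data — more precisely, along the normal line the transport derivative $v\cdot\nabla_x$ acts, and $(n_x\cdot v)$ is exactly the speed in the normal variable. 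The inequality to prove then becomes, for each fixed $y$ and $v$,
\[
\int_0^{\delta_0} \frac{|f(z)|^2}{z^{1/2}}\,\frac{\d z}{\langle v\rangle^2} \;\lesssim\; \int_0^{\delta_0} |f(z)|^2 \frac{(n_x\cdot v)^2}{z^{1/2}\langle v\rangle^2}\,\d z \;+\; \text{(}v\text{-derivative terms)},
\]
which is not quite the right split — instead the honest route is: write $\delta^{-1/4} = \delta^{-1/4}$ and interpolate $\delta^{-1/4}\langle v\rangle^{-1}$ between the weight $\langle v\rangle^{-1}\delta^{-1/4}(n_x\cdot v)^{1/2}\cdot(n_x\cdot v)^{-1/2}$; by Cauchy--Schwarz in the measure $\d x\,\d v$,
\[
\int_\OO \frac{f^2}{\delta^{1/2}\langle v\rangle^2}
= \int_\OO \Big(\frac{f}{\delta^{1/4}\langle v\rangle}\,\frac{|n_x\cdot v|^{1/2}}{\langle v\rangle^{0}}\Big)\Big(\frac{f}{\langle v\rangle}\,\frac{1}{|n_x\cdot v|^{1/2}}\Big)\cdots
\]
is circular. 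The clean argument is the direct one: apply Cauchy--Schwarz as
\[
\|\delta^{-1/4}\langle v\rangle^{-1} f\|_{L^2}^2
\le \Big\|\,\frac{f\,|n_x\cdot v|}{\delta^{1/2}\langle v\rangle^2}\cdot\langle v\rangle f^{-1}\cdots
\]
— so rather than force this, I would instead prove the pointwise-in-$(y,v)$ Hardy inequality
\[
\int_0^{\delta_0} \frac{|F(z)|^2}{z^{1/2}}\,\d z \;\lesssim\; \int_0^{\delta_0} z^{3/2}|F'(z)|^2\,\d z \;+\; \int_0^{\delta_0}|F(z)|^2\,\d z,
\]
(a standard weighted Hardy inequality, valid since the weight exponents $-1/2$ and $3/2$ differ by $2$ and $-1/2 > -1$), applied to $F(z) = f(z,y,v)$; here $z^{3/2}|F'|^2$ with $F' = z\cdot(\text{normal transport of }f)/z$ — no.

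The main obstacle, and the step I would spend the most care on, is correctly relating the ``missing'' derivative. The honest mechanism must be: $\delta^{-1/2}$ is integrable near $\delta=0$, whereas the right-hand side only gives $\delta^{-1/2}(n_x\cdot v)^2$, which degenerates when $v$ is tangent. So the $\|\nabla_v f\|_{L^2}^2$ term is what recovers the direction $v$ tangent to the boundary: one decomposes $v = (n_x\cdot v)n_x + v_\parallel$ and writes $\langle v\rangle^{-1} \lesssim \langle v\rangle^{-1}\big(|n_x\cdot v|^{1/2}\langle v\rangle^{1/2} + |v_\parallel|^{1/2}\langle v\rangle^{1/2}\big)/\langle v\rangle^{1/2}$, then uses that on the set where $|n_x\cdot v|$ is small, $f$ can be estimated at the boundary point $z=0$ via the $1$D trace/Hardy inequality $|f(0,y,v)|^2 \lesssim \int_0^{\delta_0}(|f|^2 + \delta|\partial_z f|^2/\delta)\cdots$ and then $\partial_z f$ along the normal is $-(n_x\cdot v)^{-1}\cdot$ nothing — this is the crux and I would realize the cleanest path is: use the weighted Hardy inequality $\int_0^{\delta_0} z^{-1/2}|F|^2\,\d z \lesssim \int_0^{\delta_0} z^{3/2}|F'|^2\,\d z + \int_0^{\delta_0}|F|^2\,\d z$ directly on $F=f$, noting $z^{3/2}|\partial_z f|^2 = z^{-1/2}\cdot z^2|\partial_z f|^2$ and $z^2 = \delta^2$, and then bound $\delta^2|\partial_z f|^2$ — but $\partial_z f$ is not among our data, $\nabla_v f$ is. So ultimately the identity $\partial_z f = \partial_z f$ must be circumvented by observing that in the term $\int_\OO f^2 \langle v\rangle^{-2}(n_x\cdot v)^2\delta^{-1/2}$ we may integrate by parts in the \emph{velocity} variable after writing $(n_x\cdot v)^2\delta^{-1/2} = \delta^{-1/2}(n_x\cdot v)\,\partial_{v}(\tfrac12(n_x\cdot v)^2)\cdot n_x$ — no. I will accept that the precise bookkeeping of which derivative does the work is the hard part, set it up as a one-dimensional weighted Hardy inequality in the normal variable $z$ with the $v$-gradient playing the role of the derivative through the change of variables that turns normal displacement into a shear in $v$, and carry out the routine integrations in coordinates; away from the boundary the bound is immediate. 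I also note one should read the statement with the understanding that $\|f\|_{L^2(\langle v\rangle^{-2})}$ is implicitly controlled (it appears as the harmless lower-order term), consistent with how the lemma is invoked in the proof of Proposition~\ref{prop:BB*g_ultracontractivity}.
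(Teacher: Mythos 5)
Your proposal never converges to a proof, and the route you keep returning to cannot work in principle. You repeatedly try to prove the bound via a one--dimensional Hardy inequality in the normal variable $z=\delta(x)$, which would require controlling $\partial_z f$, i.e.\ a spatial derivative of $f$ in the normal direction. But the lemma is a purely pointwise-in-$x$ statement about an \emph{arbitrary} function $f:\OO\to\R$ — there is no equation linking $x$- and $v$-derivatives — so the $\nabla_v f$ term on the right-hand side cannot be converted into $\partial_z f$ by any ``change of variables that turns normal displacement into a shear in $v$.'' You acknowledge this obstruction (``$\partial_z f$ is not among our data'') but then declare you will ``accept'' it as routine bookkeeping; it is not routine, it is the whole content, and it is impossible along this line.

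The actual mechanism is entirely different and involves no tubular-neighborhood coordinates, no Hardy inequality, and no spatial derivative. Fix $x$ and split the $v$-integral according to whether $|n_x\cdot v|$ is larger or smaller than $\delta^\zeta$ for a well-chosen $\zeta>0$. On the region $\{|n_x\cdot v|>\delta^\zeta\}$ one simply inserts $1\le (n_x\cdot v)^2/\delta^{2\zeta}$ and the first term on the right-hand side dominates (this fixes $\zeta$ so that the exponents match). On the complementary region $\{|n_x\cdot v|\le\delta^\zeta\}$, the point is that this is a \emph{thin slab in $v$-space}: writing $v=(n_x\cdot v)n_x+v_\perp$ and observing $\langle v\rangle^{-3}\in L^\infty_{v_1}L^1_{v_\perp}$, one gets $\int_{\{|n_x\cdot v|\le\delta^\zeta\}}\langle v\rangle^{-3}\,\dv\lesssim\delta^\zeta$. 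Combining H\"older in $v$ (exponents $3$ and $3/2$) with the Sobolev embedding $\dot H^1(\R^3_v)\hookrightarrow L^6(\R^3_v)$ converts the $L^6_v$-norm of $f$ into $\|\nabla_v f\|_{L^2_v}^2$, and the small measure $\delta^{2\zeta/3}$ of the slab exactly compensates the singular factor $\delta^{-\alpha}$ in front. The gain here comes from Sobolev in $v$ applied on a thin $v$-slab, not from a Hardy inequality in $x$; this is why $\nabla_v f$, and only $\nabla_v f$, appears on the right.

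(As a side remark — independent of your argument — the exponent on the left-hand side of the statement is inconsistent with what this mechanism produces: the split $2\zeta+\alpha=1/2$, $\alpha\ge 2\zeta/3$ forces $\alpha\le 1/8$, i.e.\ $\int f^2\langle v\rangle^{-2}\delta^{-1/8}$ on the left, not $\delta^{-1/2}$. This discrepancy is in the source, not in your proposal.)
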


\begin{proof}[Proof of Lemma~\ref{lem:EstimL2poids}] 
For $\zeta > 0$, we start by writing
\bean
\int_{\OO} f^2 \la v \ra^{-2} \delta^{-1/8}
&=& 
\int_{\OO}  f^2 \la v \ra^{-2}\delta^{-1/8}   {\mathbf 1}_{(n_x \cdot v)^{2} > \delta^{2\zeta}} 
+ \int_{\OO}  f^2 \la v \ra^{-2} \delta^{-1/8}  {\mathbf 1}_{|n_x \cdot v|\le \delta^{\zeta}}  \\
&=:& T_1 + T_2.
\eean
For the first term, we have 
\bean
T_1 
\le
\int_{\OO} f^2 \la v \ra^{-2}  {\mathbf 1}_{(n_x \cdot v)^{2} > \delta^{2\zeta}}  \frac{(n_x \cdot v)^{2} }{  \delta^{2\zeta+1/8}} 
\le 
\int_{\OO}  f^2  \la v \ra^{-2}  \frac{(n_x \cdot  v)^{2}}{\delta^{1/2}} ,
\eean
by choosing $\zeta = 3/16$.
For the second term, we compute
\bean
T_2
&\le&  
\int_\Omega  \delta^{-1/8}  \left( \int_{\R^3} f^{6}  \right)^{1/3}  \left( \int_{\R^3} \langle v \rangle^{- 3}    {\mathbf 1}_{ |n_x \cdot v | \le \delta^{\zeta}}   \right)^{2/3}
\\
&\lesssim&  
  \int_{\Omega} \delta^{-1/8+2\zeta/3}    \int_{\R^3} |\nabla_v f|^2,
\eean
where we have used the H\"older inequality in the first line and the Sobolev inequality in the second line together with the observation that $\langle v \rangle^{-3}   \in L^\infty(\R;L^{1}(\R^{2}))$. 
We conclude to \eqref{eq:EstimL2poids}.
\end{proof}

We reformulate Proposition~\ref{prop:BB*g_ultracontractivity} in a more convenient way, where the penalization of the neighborhood of the boundary is made clear.

\begin{prop}\label{prop:BB*g_ultracontractivity_bis}
Under the same setting as in Proposition~\ref{prop:BB*g_ultracontractivity} there holds
$$
\begin{aligned}
\| \delta^{-1/4} \la v \ra^{\frac{\gamma-3}{2}} m h \varphi \|_{L^2(\UU)} 
%+ \| h \varphi \|_{L^2( (0,T) ; L^2_x H^{1,*}_v (m))}
\lesssim  \| m h  \sqrt{\varphi (\varphi')_+ } \|_{L^2(\UU)} ,
\end{aligned}
$$
where we recall $\UU = (0,T) \times \OO$.
\end{prop}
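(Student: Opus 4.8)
The plan is to combine Proposition~\ref{prop:BB*g_ultracontractivity} with the interpolation Lemma~\ref{lem:EstimL2poids} applied to the function $f = mh\varphi$ (with the time variable treated as a parameter and then integrated over $(0,T)$). Concretely, for almost every $t\in(0,T)$ apply \eqref{eq:EstimL2poids} to $f(t,\cdot) := \la v\ra^{(\gamma-1)/2} m(v)\, h(t,\cdot)\,\varphi(t)$. Here the extra factor $\la v\ra^{(\gamma-1)/2}$ is harmless because $\gamma-1<0$ so $\la v\ra^{(\gamma-1)/2}\le 1$; it is inserted only to make the velocity-derivative term on the right-hand side match the dissipation norm $H^{1,*}_v(m)$. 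Integrating the resulting inequality in $t$ over $(0,T)$ yields
\be\label{eq:plan-interp}
\| \delta^{-1/4}\la v\ra^{\frac{\gamma-3}{2}} m h\varphi \|_{L^2(\UU)}^2
\lesssim \int_0^T\!\!\int_\OO h^2 m^2 \la v\ra^{\gamma-3}\frac{(n_x\cdot v)^2}{\delta^{1/2}}\,\varphi^2
+ \int_0^T \| h\varphi \|_{H^{1,*}_v(m)}^2 ,
\ee
where for the last term one uses that $\|\nabla_v(\la v\ra^{(\gamma-1)/2} mh\varphi)\|_{L^2}^2\lesssim \|h\varphi\|_{L^2_xH^{1,*}_v(m)}^2$, since differentiating the weight $\la v\ra^{(\gamma-1)/2}m$ only produces terms controlled by $\la v\ra^{-1}$ times the weight, hence absorbed by the $H^{1,*}_v(m)$ norm (recall the definition of $\|\cdot\|_{H^{1,*}_v(\omega)}$ and of $\widetilde\nabla_v$, together with $|\mathbf B\nabla_v|^2\simeq|\la v\ra^{\gamma/2}\widetilde\nabla_v|^2$ from \eqref{eq:Bnabla=tildenabla}).

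The right-hand side of \eqref{eq:plan-interp} is exactly the left-hand side of the estimate \eqref{eq:prop:BB*g_ultracontractivity} in Proposition~\ref{prop:BB*g_ultracontractivity} (the first term is identical, and the second term $\int_0^T\|h\|_{L^2_xH^{1,*}_v(m)}^2\varphi^2$ dominates $\int_0^T\|h\varphi\|_{H^{1,*}_v(m)}^2$ since $\varphi$ does not depend on $v$). Therefore \eqref{eq:plan-interp} combined with \eqref{eq:prop:BB*g_ultracontractivity} gives
\be
\| \delta^{-1/4}\la v\ra^{\frac{\gamma-3}{2}} m h\varphi \|_{L^2(\UU)}^2
\lesssim \int_0^T \| h\|_{L^2(m)}^2\,\varphi(\varphi')_+ = \| m h\sqrt{\varphi(\varphi')_+}\|_{L^2(\UU)}^2,
\ee
which is the desired conclusion after taking square roots.

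The only genuinely delicate point is checking that the interpolation lemma is being applied in a legitimate functional framework: one needs $h(t,\cdot)m \in H^1_{\mathrm{loc},v}$ for a.e.\ $t$ and the various integrals finite, which is guaranteed by the well-posedness and regularity already recorded (Proposition~\ref{prop-SG-BB*g-L2Lp} gives $h\in L^2((0,T)\times\Omega;H^{1,*}_v(\widetilde m))$, and $\varphi\in C^\infty_c((0,T))$). A minor bookkeeping step is to confirm the equivalences $m\simeq m_A\simeq\widetilde m$ so that the weights appearing in Proposition~\ref{prop:BB*g_ultracontractivity} (stated with $m$) and those in the interpolation estimate are interchangeable up to constants; this is immediate from the bounds $c_A^{-1}\MMM\le m_A^2\le c_Am^2$ and $\frac12 m_A^2\le\widetilde m^2\le\frac32 m_A^2$ together with $m=\omega^{-1}$ and the admissibility of $\omega$. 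There is essentially no real obstacle here — the proposition is a straightforward repackaging of the previous two results — so the "hard part" is merely being careful that the $\la v\ra$-weight gymnastics keeps everything inside the dissipation norm rather than generating uncontrolled lower-order terms.
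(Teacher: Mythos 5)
Your proposal is correct and is essentially the paper's proof: apply Lemma~\ref{lem:EstimL2poids} to $f = \la v\ra^{(\gamma-1)/2} m h\varphi$, observe that $\|\nabla_v(hm\la v\ra^{(\gamma-1)/2})\|_{L^2_v}\lesssim\|h\|_{H^{1,*}_v(m)}$, integrate in $t$, and invoke Proposition~\ref{prop:BB*g_ultracontractivity}. (Minor note: the true reason the extra factor $\la v\ra^{(\gamma-1)/2}$ is harmless is not that it is bounded by $1$ — that fails at $\gamma=1$ — but that its logarithmic derivative is $O(\la v\ra^{-1})$, so differentiating it only produces the term $hm\la v\ra^{(\gamma-3)/2}$, which is controlled by the $\la v\ra^{\gamma/2+s/2}$ piece of the $H^{1,*}_v(m)$ norm since $s\ge 0 > -3$.)
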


\begin{proof}[Proof of Proposition~\ref{prop:BB*g_ultracontractivity_bis}]
Observing that 
$$
\| \nabla_v (h m \la v \ra^{\frac{\gamma-1}{2}}) \|_{L^2_v} 
\lesssim \| \la v \ra^{\frac{\gamma-1}{2}} \nabla_v (h m) \|_{L^2_v}  + \| h m \la v \ra^{\frac{\gamma-3}{2}} \|_{L^2_v} 
\lesssim \| h \|_{H^{1,*}_v(m)}, 
$$
the estimate is a direct consequence of Proposition~\ref{prop:BB*g_ultracontractivity} and Lemma~\ref{lem:EstimL2poids}. 
\end{proof}

 On the other hand, we may establish a penalized gain of integrability as a simple consequence of available results known to hold on the whole space \cite{MR3923847}.

\begin{prop}\label{prop:BB*g_EstimLploc}
Under the same setting as in Proposition~\ref{prop:BB*g_ultracontractivity}, for any $p \in (2, 7/3)$ and any $\alpha > p$ there holds
$$
\begin{aligned}
\left\| \delta^{\alpha/p} \la v \ra^{-\frac{(\gamma+4)}{2}} m h \varphi \right\|_{L^p(\UU)} \lesssim \left( T^{\frac{7}{p} -3} + T^{\frac{7}{p} - \frac52} \right)  \left( \| m h \sqrt{\varphi (\varphi')_+}  \|_{L^2(\UU)} + \| m h \varphi' \|_{L^2(\UU)} \right) ,
\end{aligned}
$$
where we recall $\UU = (0,T) \times \OO$.
\end{prop}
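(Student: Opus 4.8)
The plan is to derive this local $L^p$ gain-of-integrability estimate by transferring the known De Giorgi--Nash--Moser regularization results for kinetic Fokker--Planck equations on the whole space (as in \cite{MR3923847}) to our situation via a localization (cut-off) argument. First I would observe that $h$ solves the backward Kolmogorov-type equation $-\partial_t h - v\cdot\nabla_x h = (\CC^+_g)^* h + M\chi_R h$, whose principal part $(\CC^+_g)^*$ is a uniformly elliptic second-order operator in $v$ with bounded coefficients on compact sets (thanks to Lemma~\ref{lem:elementary-abc*bar}, Lemma~\ref{lem:elementary-abc*g}, and the smallness of $\|g\|_{\XX_0}$). Setting $u := m\,h\,\varphi$, a direct computation shows that $u$ satisfies a kinetic Fokker--Planck equation $-\partial_t u - v\cdot\nabla_x u - \partial_{v_i}(\sigma_{ij}\partial_{v_j}u) = \mathcal{G}$ in $(0,T)\times\OO$, where the source $\mathcal{G}$ collects: terms proportional to $\varphi'$ coming from the time cut-off (these produce the $\|mh\varphi'\|_{L^2}$ contribution), terms with weight derivatives of $m$ (controlled by $\langle v\rangle^{-2}$ factors as in Lemma~\ref{lem:varpiC+g_tilde_omega_p}), and the zeroth-order penalization $M\chi_R$, all of which are in $L^2(\UU)$ by Proposition~\ref{prop:BB*g_ultracontractivity} (the $L^2_xH^{1,*}_v(m)$ bound already gives control of the $v$-gradient in the right weighted norm).

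Next I would spatially localize: since we only want an estimate weighted by a power of $\delta$, we can multiply by a smooth cut-off that vanishes near $\partial\Omega$, or rather directly invoke the De Giorgi--Nash--Moser $L^2\to L^p$ estimate of \cite{MR3923847} (valid for $p\in(2,7/3)$, the endpoint $7/3$ being dictated by the scaling $7 = 2+2\cdot 3 - 1$ of the kinetic Fokker--Planck equation in $d=3$) on a countable family of unit kinetic cylinders contained in the interior, then sum up. The power $\delta^{\alpha/p}$ with $\alpha>p$ is precisely what is needed to make the sum over cylinders approaching the boundary converge (each cylinder at distance $\sim\delta$ from $\partial\Omega$ contributes an $L^p$ bound by the local $L^2$ norm, and the $\delta^{\alpha/p}$ factor provides summable decay), while the velocity weight $\langle v\rangle^{-(\gamma+4)/2}$ accounts for the loss of one power of $\langle v\rangle$ per spatial derivative and the degenerate ellipticity constant $\langle v\rangle^{\gamma}$ of the operator (roughly $\sigma_{ij}\sim\langle v\rangle^{\gamma+2}$ on the $v^\perp$ directions and $\langle v\rangle^\gamma$ in the $v$-direction, so the effective parabolic scaling carries a $\langle v\rangle^{\gamma/2}$ or so, and passing to large $|v|$ costs the stated power). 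The time factors $T^{7/p-3}+T^{7/p-\frac52}$ come from rescaling the local estimate from unit cylinders to cylinders of the appropriate size relative to $T$ (the two terms corresponding to the two ways the source enters, via $\sqrt{\varphi(\varphi')_+}$ scaling like $T^{-1/2}$ and via $\varphi'$ scaling like $T^{-1}$).

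The main obstacle I expect is the careful bookkeeping of the localization near the boundary: one must cover a neighborhood of $\partial\Omega$ inside $\OO$ by kinetic cylinders whose size is adapted to the distance to $\partial\Omega$, apply the interior De Giorgi estimate on each (which is legitimate because the coefficients $\sigma_{ij}=a_{ij}*(\mu+g)$ and the zeroth order terms are smooth and, on each such region, uniformly elliptic and bounded by Lemmas~\ref{lem:elementary-abc*bar}--\ref{lem:elementary-abc*g}), and then sum the resulting local $L^p$ bounds against the global $L^2$ bound furnished by Proposition~\ref{prop:BB*g_ultracontractivity}. The constraint $\alpha > p$ is exactly the threshold making the geometric series in $\delta$ summable, and verifying that the weights match up — that $\delta^{\alpha/p}\langle v\rangle^{-(\gamma+4)/2}$ on the left is dominated, cylinder by cylinder, by $\langle v\rangle^{\gamma/2}m$ weighted quantities on the right controlled by Proposition~\ref{prop:BB*g_ultracontractivity} (including the crucial $\delta^{-1/4}$-weighted term there, which is what lets us absorb the singular behavior of $\nabla_x$ of the cut-off near the boundary) — is where the real work lies. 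I would also need to check that the gradient bound $\|h\|_{L^2_xH^{1,*}_v(m)}$ in Proposition~\ref{prop:BB*g_ultracontractivity}, together with the anisotropic structure $|\widetilde\nabla_v(mh)|^2\sim\langle v\rangle^{-\gamma}|\mathbf{B}\nabla_v(mh)|^2$, is precisely the input required by the whole-space De Giorgi lemma after the change of variables straightening the boundary locally; this is routine but must be done with care to track the velocity weights, which is why the final exponent is $-(\gamma+4)/2$ rather than something simpler.
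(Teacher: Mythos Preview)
Your broad strategy---localize away from the boundary, apply an interior gain-of-integrability result, then sum with a $\delta^{\alpha/p}$ weight---matches the paper's, but the implementation differs in two substantive ways. First, the paper does \emph{not} invoke a De Giorgi--Nash--Moser iteration from \cite{MR3923847}. Instead it sets $m_0 := \langle v\rangle^{-(\gamma+4)/2} m$ and $\bar h := h\varphi\zeta m_0$ for a spatial cut-off $\zeta\in C^\infty_c(\Omega)$, and rewrites the equation for $\bar h$ as the \emph{constant-coefficient} Kolmogorov equation $-\partial_t\bar h - v\cdot\nabla_x\bar h - \Delta_v\bar h = \operatorname{Div}_v S_1 + S_0$ on the whole space, throwing $(\widetilde a_{ij}-\delta_{ij})\partial_{v_j}\bar h$ into $S_1$. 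The precise exponent $-(\gamma+4)/2$ is exactly what makes $|S_1|\lesssim \langle v\rangle^{\gamma+2}|\nabla_v(m_0 h)| = \langle v\rangle^{\gamma/2}m|\nabla_v h|$, i.e.\ the $H^{1,*}_v(m)$ norm controlled by Proposition~\ref{prop:BB*g_ultracontractivity}; your explanation of this weight via ``effective parabolic scaling'' is too vague to pin this down. The $L^p$ bound then follows from the explicit Kolmogorov fundamental solution and Young's inequality, and the time factors $T^{7/p-3}$, $T^{7/p-5/2}$ come directly from $\|\nabla_v G\|_{L^{2p/(p+2)}}$ and $\|G\|_{L^{2p/(p+2)}}$, not from cylinder rescaling. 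Second, the localization is much simpler than you propose: no kinetic cylinders are needed, only a dyadic family $\zeta_k$ with $\mathbf 1_{\{\delta>2^{-(k+1)}\}}\le\zeta_k\le\mathbf 1_{\{\delta>2^{-k}\}}$ and $\|\zeta_k\|_{W^{1,\infty}}\lesssim 2^k$; summing the resulting bounds against $\delta^\alpha$ gives a geometric series converging precisely when $\alpha>p$.

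One correction: the $\delta^{-1/4}$-weighted term from Proposition~\ref{prop:BB*g_ultracontractivity} plays \emph{no role} in the proof of this proposition---only the $L^2_xH^{1,*}_v(m)$ bound is used. The $\delta^{-1/4}$ control enters separately (via Proposition~\ref{prop:BB*g_ultracontractivity_bis}) when this result is interpolated with it in Corollary~\ref{cor:L2_Lr}. So your plan to ``absorb the singular behavior of $\nabla_x$ of the cut-off near the boundary'' using that term is misplaced: the cut-off gradient $\nabla_x\zeta_k$ is handled simply by the factor $\|\zeta_k\|_{W^{1,\infty}}\lesssim 2^k$ in the $S_0$ bound, and this growth is what the $\delta^\alpha$ weight with $\alpha>p$ compensates.
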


\begin{proof}[Proof of Proposition~\ref{prop:BB*g_EstimLploc}]
We split the proof into four steps.

\medskip\noindent
\textit{Step 1.}
Let $m_0 = \la v \ra^{-\frac{(\gamma+4)}{2}} m$ and $\zeta \in C^\infty_c (\Omega)$, with $0 \le \zeta \le 1$, and define $\bar h = h \varphi \zeta  m_0$. From \eqref{eq:dual_BBg} and using the shorthands $\widetilde a = a * [ \mu+g]$, $\widetilde  b = b *[ \mu+g]$, and $\widetilde c = c * [ \mu+g]$, 
we see that $\bar h$ satisfies 
\begin{equation}\label{eq:barh0}
-\partial_t  \bar h  - v \cdot \nabla_x   \bar h  = 
- m_0 h \left( \partial_t + v \cdot \nabla_x \right) (\varphi \zeta)
+ \varphi \zeta m_0 \left\{ \partial_{v_i} (\widetilde a_{ij}  \partial_{v_j} h )  +  \widetilde b_i \partial_{v_i} h - M \chi_R h \right\}.
\end{equation}
Observing that 
$$
\begin{aligned} 
\partial_{v_i} \left[\widetilde a_{ij}  \partial_{v_j} (hm_0) \right] 
&= m_0  \partial_{v_i} (\widetilde a_{ij}  \partial_{v_j} h ) +  h \widetilde b_i  \partial_{v_i} m_0  + 2  \widetilde a_{ij}  \partial_{v_j} h \partial_{v_i} m_0  + h \widetilde a_{ij}  \partial_{ij} m_0 \\
&= m_0 \partial_{v_i} (\widetilde a_{ij}  \partial_{v_j} h ) +  m_0 h \widetilde b_i  \frac{\partial_{v_i} m_0}{m_0}  + 2  \widetilde a_{ij}   m_0 \partial_{v_j} h \frac{\partial_{v_i} m_0}{m_0}  + m_0 h \widetilde a_{ij}  \frac{\partial_{ij} m_0}{m_0},
\end{aligned}
$$
we use that $m_0 \partial_{v_i} h = \partial_{v_i} (h m_0) - m_0 h \frac{\partial_{v_i} m_0}{m_0} $ to obtain
$$
\begin{aligned} 
\partial_{v_i} \left[\widetilde a_{ij}  \partial_{v_j} (hm_0) \right] 
&= m_0 \partial_{v_i} (\widetilde a_{ij}  \partial_{v_j} h ) 
+  m_0 h \widetilde b_i  \frac{\partial_{v_i} m_0}{m_0}  
+ 2  \widetilde a_{ij}  \partial_{v_i} (h m_0) \frac{\partial_{v_j} m_0}{m_0}\\
&\quad
- 2 m_0 h  \widetilde a_{ij} \frac{\partial_{v_j} m_0}{m_0} \frac{\partial_{v_i} m_0}{m_0}
+ m_0 h \widetilde a_{ij}  \frac{\partial_{ij} m_0}{m_0}.
\end{aligned}
$$
This implies
$$
\begin{aligned}
\varphi \zeta m_0  \partial_{v_i} (\widetilde a_{ij}  \partial_{v_j} h ) 
&= \partial_{v_i} (\widetilde a_{ij}  \partial_{v_j} \bar h ) 
-  \widetilde b_i  \frac{\partial_{v_i} m_0}{m_0} \bar h
-2  \widetilde a_{ij}\frac{\partial_{v_j} m_0}{m_0}  \partial_{v_i} \bar h \\
&\quad
+2   \widetilde a_{ij} \frac{\partial_{v_j} m_0}{m_0} \frac{\partial_{v_i} m_0}{m_0} \bar h
-  \widetilde a_{ij}  \frac{\partial_{ij} m_0}{m_0} \bar h
\end{aligned}
$$
and
$$
\begin{aligned}
\varphi \zeta m_0 \widetilde b_i \partial_{v_i} h
&= \varphi \zeta \widetilde b_i \partial_{v_i} (h m_0)
- \varphi \zeta m_0 h \widetilde b_i \frac{\partial_{v_i} m_0}{m_0} \\
&=  \widetilde b_i \partial_{v_i} \bar h
-  \widetilde b_i \frac{\partial_{v_i} m_0}{m_0} \bar h.
\end{aligned}
$$

Coming back to \eqref{eq:barh0}, we hence obtain that $\bar h$ is a solution to
\begin{equation}\label{eq:barh}
-\partial_t \bar h - v \cdot \nabla_x \bar h - \Delta_v \bar h = \Div_v S_1 + S_0 \quad \text{in} \quad (0,\infty) \times \R^3_x \times \R^3_v
\end{equation}
where
$$
S_{1,i} =  \left( \widetilde a_{ij} - \delta_{ij} \right) \partial_{v_j} \bar h 
$$
and
$$
\begin{aligned}
S_0
&= \left(-2 \widetilde a_{ij}  \frac{\partial_{v_j} m_0}{m_0} + \widetilde b_i   \right) \partial_{v_i} \bar h \\
&\quad
+ \left( - \widetilde a_{ij}  \frac{\partial_{v_i,v_j} m_0}{m_0}
+ 2 \widetilde a_{ij}  \frac{\partial_{v_i} m_0}{m_0} \frac{\partial_{v_j} m_0}{m_0} - 2 \widetilde b_i \frac{\partial_{v_i} m_0}{m_0} - M \chi_R  \right) \bar h \\
&\quad
- m_0 h \left( \partial_t + v \cdot \nabla_x \right) (\varphi \zeta).
\end{aligned}
$$

\medskip\noindent
\textit{Step 2.}
We now claim that $S_0,S_1 \in L^2_{t,x,v}$ with
\begin{equation}\label{eq:bound_S0_S1}
\begin{aligned}
&\| S_0 \|_{L^2_{t,x,v} ([0,T] \times \R^3_x \times \R^3_v)} + \| S_1 \|_{L^2_{t,x,v} ([0,T] \times \R^3_x \times \R^3_v)} \\
&\qquad
\lesssim  \| \zeta \|_{W^{1,\infty}_x} \left( \| m h \sqrt{\varphi (\varphi')_+}  \|_{L^2_{t,x,v}} + \| m h \varphi' \|_{L^2_{t,x,v}} \right).
\end{aligned}
\end{equation}
Indeed, on the one hand we have
$$
\begin{aligned}
|S_{1,i}| 
&\lesssim \la v \ra^{\gamma+2}(1+ \| g \|_{\XX_0}) |\nabla_v (m_0 h)| \varphi \| \zeta \|_{L^{\infty}_x} \\
&\lesssim (1+ \| g \|_{\XX_0}) \left[ \la v \ra^{\gamma+2} m_0 |\nabla_v h| + \la v \ra^{\gamma+1} m_0 |h| \right] \varphi \| \zeta \|_{L^{\infty}_x} \\
&\lesssim (1+ \| g \|_{\XX_0}) \left[ \la v \ra^{\gamma/2} m|\nabla_v h| + \la v \ra^{\gamma/2-1} m |h| \right] \varphi \| \zeta \|_{L^{\infty}_x} ,
\end{aligned}
$$
and thus
$$
\begin{aligned}
\| S_1 \|_{L^2_{t,x,v}} 
&\lesssim  (1+ \| g \|_{\XX_0}) \| h \varphi \|_{L^2_t L^2_x H^{1,*}_v(m)}  \| \zeta \|_{L^{\infty}_x} .
\end{aligned}
$$
On the other hand, thanks to Lemma~\ref{lem:elementary-abc*g}, we have
$$
\left| \widetilde a_{ij}  \frac{\partial_{v_j} m_0}{m_0} \right| 
+\left|(a_{ij} *[\mu + g]) \frac{\partial_{v_i,v_j} m_0}{m_0} \right|
+\left| (a_{ij} *[\mu + g]) \frac{\partial_{v_i} m_0}{m_0}  \frac{\partial_{v_j} m_0}{m_0} \right|
\lesssim \la v \ra^{\gamma}(1+ \| g \|_{\XX_0}),
$$
as well as
$$
\left| \widetilde b_i \frac{\partial_{v_i} m_0}{m_0} \right| 
\lesssim \la v \ra^{\gamma}(1+ \| g \|_{\XX_0}), \quad
| \widetilde b_i | 
\lesssim \la v \ra^{\gamma+1}(1+ \| g \|_{\XX_0}),
$$
which implies
$$
\begin{aligned}
|S_{0}| 
&\lesssim (1+ \| g \|_{\XX_0} ) \left[ \la v \ra^{\gamma+1} m_0 |\nabla_v h| + \la v \ra^{\gamma} m_0 |h| \right] \varphi \| \zeta \|_{L^{\infty}_x}  \\
&\quad
+ m_0 |h| |\varphi'|  \| \zeta \|_{L^{\infty}_x}
+ \la v \ra m_0 |h| \varphi  \| \nabla_x \zeta \|_{L^{\infty}_x}  \\
&\lesssim (1+ \| g \|_{\XX_0}) \left[ \la v \ra^{\gamma/2-1} m |\nabla_v h| + \la v \ra^{\gamma/2-2} m |h| \right]  \\
&\quad
+ m |h| |\varphi'|  \| \zeta \|_{L^{\infty}_x}
+ \la v \ra^{\gamma/2} m |h| \varphi  \| \nabla_x \zeta \|_{L^{\infty}_x}.
\end{aligned}
$$
We therefore deduce
$$
\begin{aligned}
\| S_0 \|_{L^2_{t,x,v}} 
&\lesssim  (1+ \| g \|_{\XX_0}) \| h \varphi \|_{L^2_t L^2_x H^{1,*}_v(m)}  \| \zeta \|_{W^{1,\infty}_x} 
+\| m h \varphi' \|_{L^2_{t,x,v}}   \| \zeta \|_{L^{\infty}_x},
\end{aligned}
$$
from which we obtain \eqref{eq:bound_S0_S1} by using Proposition~\ref{prop:BB*g_ultracontractivity} to estimate the term $\| h \varphi \|_{L^2_t L^2_x H^{1,*}_v(m)} $.

\medskip\noindent
\textit{Step 3.} 
We observe that from \eqref{eq:barh}, the function $H$ defined by $H(t,x,v) = \bar h(-t,x,-v)$ satisfies the Kolmogorov equation with source term
\begin{equation}\label{eq:H}
\begin{aligned}
\partial_t H  +  v \cdot \nabla_x H - \Delta_v H = -\Div_v R_1 + R_0, \quad \text{in} \quad (-\infty,0) \times \R^3_x \times \R^3_v  ,
\end{aligned}
\end{equation}
with $R_1(t,x,v) = S_1(-t,x,-v)$ and $R_0(t,x,v)=S_0(-t,x,-v)$.  In particular $\| \bar h \|_{L^q ([0,T] \times \R^3_x \times \R^3_v)} = \| H \|_{L^q ([-T,0] \times \R^3_x \times \R^3_v)}$ for any $q \in [1,\infty]$ and any $T>0$.

We recall that the fundamental solution of the Kolmogorov equation is given by (see for instance \cite{MR1503147})
\begin{equation}\label{eq:G}
G(t,x,v) = \frac{c_0}{t^6} \, \exp \left( -\frac{c_1}{t^3} |x - \tfrac{t}{2} v |^2 - \frac{c_2}{t} |v|^2  \right) \quad \text{if} \quad t >0
\end{equation}
for some constants $c_0,c_1,c_2>0$ and $G(t,x,v) = 0$ if $t \le 0$, and it satisfies the bound
\begin{equation}\label{eq:nablavG}
|\nabla_v G (t,x,v)| \lesssim \frac{\bar c_0}{t^{6+1/2}} \, \exp \left( -\frac{\bar c_1}{t^3} |x - \tfrac{t}{2} v |^2 - \frac{\bar c_2}{t} |v|^2  \right)
\end{equation}
for constants $\bar c_0, \bar c_1, \bar c_2>0$. Therefore the solution $H$ of \eqref{eq:H} is given by, for any $(t,x,v) \in (-\infty,0) \times \R^3_x \times \R^3_v$,
$$
\begin{aligned}
H(t,x,v) 
&= \int G(t-t', x-x' - (t-t') v' , v-v') \left[\Div_v R_1(t',x,',v') + R_0 (t',x',v') \right] \d t' \, \d x' \, \d v' \\
&= -\int \nabla_v G(t-t', x-x' - (t-t') v' , v-v') R_1(t',x,',v') \, \d t' \, \d x' \, \d v' \\
&\quad
+ \int G(t-t', x-x' - (t-t') v' , v-v')  R_0 (t',x',v') \, \d t' \, \d x' \, \d v',
\end{aligned}
$$
where we have performed an integration by parts. For any $r \ge 1$, we have from \eqref{eq:G}
$$
\| G \|_{L^r ([0,T] \times \R^3_x \times \R^3_v)} \lesssim T^{\frac{7}{r} - 6} ,
$$
as well as
$$
\| \nabla_v G \|_{L^r ([0,T] \times \R^3_x \times \R^3_v)} \lesssim T^{\frac{7}{r} - 6 - \frac12}
$$
from the estimate \eqref{eq:nablavG}.
Applying Young's inequality to the above representation formula for $H$ gives
$$
\begin{aligned}
\|  H \|_{L^p ([-T,0] \times \R^3_x \times \R^3_v)}
&\lesssim \| \nabla_v G \|_{L^{\frac{2p}{p+2}} ([0,T] \times \R^3_x \times \R^3_v)} \| R_1 \|_{L^2 ([-T,0] \times \R^3_x \times \R^3_v)} \\
&\quad
+ \| G \|_{L^{\frac{2p}{p+2}}  ([0,T] \times \R^3_x \times \R^3_v)} \| R_0 \|_{L^2 ([-T,0] \times \R^3_x \times \R^3_v)} \\
&\lesssim T^{\frac{7}{p} - 3}\| R_1 \|_{L^2 ([-T,0] \times \R^3_x \times \R^3_v)} + T^{\frac{7}{p} - \frac52}
\| R_0 \|_{L^2 ([-T,0] \times \R^3_x \times \R^3_v)},
\end{aligned}
$$
because $2 < p < 7/3$.

Coming back to $\bar h =  m_0 h \varphi \zeta$ and using that $\| R_i \|_{L^2([-T,0] \times \R^3_x \times \R^3_v)} = \| S_i \|_{L^2([0,T] \times \R^3_x \times \R^3_v)}$ together with the bounds of Step~2, we deduce
\begin{equation}\label{eq:barh_L2_Lp}
\| m_0 h \varphi \zeta \|_{L^p ([0,T] \times \R^3_x \times \R^3_v)}
\lesssim C_T \| \zeta \|_{W^{1,\infty}_x} \left( \| m h \sqrt{\varphi (\varphi')_+}  \|_{L^2_{t,x,v}} + \| m h \varphi' \|_{L^2_{t,x,v}} \right)\end{equation}
with $C_T = T^{\frac{7}{p} - 3} + T^{\frac{7}{p} - \frac52}$.

\medskip\noindent
\textit{Step 4.}
We define $\Omega_k = \{ x \in \Omega \mid \delta(x) > 2^{-k} \}$ and choose $\zeta_k \in C^\infty_c (\Omega)$ such that $\mathbf 1_{\Omega_{k+1}} \le \zeta_k \le \mathbf 1_{\Omega_k}$ and $ \| \zeta_k \|_{W^{1,\infty}_x} \lesssim 2^k$ for all $k \in \N^*$. 

Denoting $\UU_{k} = (0,T) \times \Omega_k \times \R^3_v$, we deduce from \eqref{eq:barh_L2_Lp} that 
$$
\| m_0 h \varphi \|_{L^p(\UU_{k+1})}  \lesssim  2^{k} C_T \left( \| m h \sqrt{\varphi (\varphi')_+}  \|_{L^2(\UU)} + \| m h \varphi' \|_{L^2(\UU)} \right), \quad \forall \, k \ge 1.
$$
Summing up and observing that $\alpha>p$, we obtain 
$$
\begin{aligned}
\int_{\UU} \delta^{\alpha}  |m_0 h \varphi|^{p} 
&= \sum_{k=1}^\infty \int_{\UU_{k+1} \backslash \UU_{k}} \delta^{\alpha}  |m_0 h \varphi|^p \\
&\lesssim \sum_{k=1}^\infty  2^{-k \alpha} \int_{\UU_{k+1}}  |m_0 h \varphi|^p \\
&\lesssim  \sum_{k=1}^\infty 2^{ k(p-  \alpha)} C_T^p  \left( \| m h \sqrt{\varphi (\varphi')_+} \|_{L^2(\UU)} + \| m h \varphi' \|_{L^2(\UU)} \right)^p \\
&\lesssim C_T^p  \left( \| m h \sqrt{\varphi (\varphi')_+}  \|_{L^2(\UU)} + \| m h \varphi' \|_{L^2(\UU)} \right)^p, 
\end{aligned}
$$
which completes the proof.
\end{proof}
 
As a consequence of the above bounds, we establish now the following key estimate of the De Giorgi-Nash-Moser theory.

\begin{cor}\label{cor:L2_Lr}
Let $p \in (2, 7/3)$, $\alpha > p$ and consider the same setting as in  Proposition~\ref{prop:BB*g_ultracontractivity}. Then there holds
\begin{equation}\label{eq:estimate_Lp_L2}
\begin{aligned}
\|m_r h \varphi \|_{L^r(\UU)}
  \lesssim T^{\theta \left(\frac{7}{p} - 3 \right)} \left( \| m h \sqrt{\varphi (\varphi')_+} \|_{L^2(\UU)} + \| m h \varphi' \|_{L^2(\UU)}  \right),
\end{aligned}
\end{equation}
for any $T \in (0,1)$, with
\begin{equation}\label{eq:mr}
m_r := \la v \ra^{ -\frac{(3-\gamma)}{2} - \left(\gamma+\frac12 \right)\theta } m, \quad \theta := \frac{p}{p+4\alpha} \in (0, \tfrac15) , \quad r := \frac{p+4\alpha}{1+2\alpha} \in (2,p).
\end{equation}
\end{cor}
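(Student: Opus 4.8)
The plan is to interpolate between the two a priori bounds just established: the penalized $L^2$-control near the boundary from Proposition~\ref{prop:BB*g_ultracontractivity_bis}, namely
$$
\| \delta^{-1/4} \la v \ra^{\frac{\gamma-3}{2}} m h \varphi \|_{L^2(\UU)} \lesssim \| m h \sqrt{\varphi(\varphi')_+} \|_{L^2(\UU)},
$$
and the penalized gain of integrability from Proposition~\ref{prop:BB*g_EstimLploc}, namely
$$
\| \delta^{\alpha/p} \la v \ra^{-\frac{\gamma+4}{2}} m h \varphi \|_{L^p(\UU)} \lesssim \left( T^{\frac7p-3} + T^{\frac7p-\frac52} \right) \left( \| m h \sqrt{\varphi(\varphi')_+} \|_{L^2(\UU)} + \| m h \varphi' \|_{L^2(\UU)} \right).
$$
The point of the interpolation is that the first estimate carries a \emph{negative} power $\delta^{-1/4}$ of the distance to the boundary while the second carries a \emph{positive} power $\delta^{\alpha/p}$; combining them with a suitable convex weight will cancel the $\delta$-dependence entirely, at the price of lowering the integrability exponent from $p$ to some $r \in (2,p)$ and adjusting the velocity weight. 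This is exactly the mechanism that removes the boundary penalization and produces a clean interior-free gain-of-integrability estimate.

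First I would fix $\theta \in (0,1)$ as the interpolation parameter and apply Hölder's inequality in the form $\|fg\|_{L^r} \le \|f\|_{L^2}^{1-\theta}\|g\|_{L^p}^{\theta}$ with $\frac1r = \frac{1-\theta}{2} + \frac{\theta}{p}$, writing $m_r h\varphi$ as the product of $(\delta^{-1/4}\la v\ra^{\frac{\gamma-3}{2}} m h\varphi)^{1-\theta}$ and $(\delta^{\alpha/p}\la v\ra^{-\frac{\gamma+4}{2}} m h\varphi)^{\theta}$, so that $m h\varphi$ appears to the total power $(1-\theta)+\theta = 1$ as required. The $\delta$-exponent in the product is $-\frac{1-\theta}{4} + \frac{\alpha\theta}{p}$, and I would choose $\theta$ precisely so that this vanishes: $\frac{1-\theta}{4} = \frac{\alpha\theta}{p}$, i.e. $\theta = \frac{p}{p+4\alpha}$, which lies in $(0,\tfrac15)$ since $\alpha > p$ forces $p + 4\alpha > 5p$. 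With this choice one computes $\frac1r = \frac{1-\theta}{2}+\frac{\theta}{p} = \frac{1+2\alpha}{p+4\alpha}$, so $r = \frac{p+4\alpha}{1+2\alpha} \in (2,p)$, and the velocity weight becomes $m_r = \la v\ra^{\frac{(\gamma-3)(1-\theta)}{2} - \frac{(\gamma+4)\theta}{2}} m = \la v\ra^{-\frac{3-\gamma}{2} - (\gamma+\frac12)\theta} m$ after collecting exponents (using $(1-\theta)\frac{\gamma-3}{2} = \frac{\gamma-3}{2} - \theta\frac{\gamma-3}{2}$ and adding the $\theta$-terms). These are exactly the quantities declared in \eqref{eq:mr}.

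Then the estimate follows by raising the two input bounds to the powers $1-\theta$ and $\theta$ respectively and multiplying, which gives
$$
\|m_r h\varphi\|_{L^r(\UU)} \lesssim \left(T^{\frac7p-3} + T^{\frac7p-\frac52}\right)^{\theta} \left( \|m h\sqrt{\varphi(\varphi')_+}\|_{L^2(\UU)} + \|m h\varphi'\|_{L^2(\UU)} \right);
$$
here the mixed term $\|m h\sqrt{\varphi(\varphi')_+}\|_{L^2}^{1-\theta}\|(\cdots)\|_{L^2}^{\theta}$ is absorbed into the sum on the right by Young's inequality $a^{1-\theta}b^\theta \le a + b$, and one absorbs $\|m h\sqrt{\varphi(\varphi')_+}\|_{L^2}$ into the same bracket. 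Finally, for $T \in (0,1)$ one has $\frac7p - \frac52 > \frac7p - 3$ (since $p < 7/3$ keeps both exponents with the correct sign to dominate), so $T^{\frac7p-3} + T^{\frac7p-\frac52} \lesssim T^{\frac7p-3}$, and $(T^{\frac7p-3})^\theta = T^{\theta(\frac7p-3)}$, yielding \eqref{eq:estimate_Lp_L2}.

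The bookkeeping is elementary once the interpolation parameter is pinned down; the only mild subtlety — and the step I would double-check carefully — is the arithmetic of the velocity-weight exponents and the verification that $r > 2$ and $\theta < \tfrac15$ come out exactly as stated, together with the observation that for $T \in (0,1)$ the slower-decaying power $T^{\frac7p - 3}$ dominates the sum so that the single factor $T^{\theta(\frac7p-3)}$ is legitimate. No genuine obstacle is expected; this is purely a reorganization of the two preceding propositions via Hölder and Young.
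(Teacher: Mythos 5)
Your proposal is correct and follows essentially the same route as the paper: Hölder interpolation of the two penalized estimates with $\theta$ chosen so the $\delta$-exponents cancel, yielding the stated $m_r$, $\theta$, and $r$, followed by applying Propositions~\ref{prop:BB*g_ultracontractivity_bis} and~\ref{prop:BB*g_EstimLploc} and absorbing the resulting $A^{1-\theta}B^{\theta}$ via Young's inequality. The arithmetic ($\theta<1/5$, $r\in(2,p)$, the weight exponent, and the domination $T^{7/p-3}+T^{7/p-5/2}\lesssim T^{7/p-3}$ for $T\in(0,1)$ since $7/p-3>0$ when $p<7/3$) all checks out.
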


\begin{proof}[Proof of Corollary~\ref{cor:L2_Lr}]
By interpolation we have
$$
\| m_r h \varphi \|_{L^r(\UU)} 
\le \| \delta^{-1/4} \la v \ra^{\frac{\gamma-3}{2}} m h \varphi \|_{L^2(\UU)}^{1-\theta} \| \delta^{\alpha/p}\la v \ra^{-\frac{(\gamma+4)}{2}} m h \varphi \|_{L^p(\UU)}^\theta,
$$
with 
$$
\frac{1}{r} = \frac{1-\theta}{2}+\frac{\theta}{p}
$$
and
$$
\begin{aligned}
m_r 
&= \left( \delta^{-1/4} \la v \ra^{\frac{\gamma-3}{2}} m\right)^{1-\theta} \left( \delta^{\alpha/p} \la v \ra^{-\frac{(\gamma+4)}{2}} m \right)^{\theta} \\
&= \delta^{-\frac14  + \left(\frac14+\frac{\alpha}{p} \right) \theta} \la v \ra^{ -\frac{(3-\gamma)}{2} - \left(\gamma+\frac12 \right)\theta }  m.
\end{aligned}
$$
We choose $\theta = \frac{p}{p+4\alpha}$ so that $-\frac14  + \left(\frac14+\frac{\alpha}{p} \right) \theta=0$, which implies $r = \frac{p+4\alpha}{1+2\alpha}$.
We conclude to estimate~\eqref{eq:estimate_Lp_L2} by applying Propositions~\ref{prop:BB*g_ultracontractivity_bis} and~\ref{prop:BB*g_EstimLploc} and by using Young's inequality associated to the exponent $1/\theta$ and its conjugated exponent. 
\end{proof}

\subsection{Proof of the ultracontractivity property}

From the material developed in the previous sections, we first deduce a gain of integrability for solutions to the linear equation \eqref{eq:dual_BBg} associated to $\BB^*_g$. For simplicity, and because it is enough for our purposes, we shall only consider exponential admissible weight functions.
We recall that $\eps_2 , \eps_3, \eps_4>0$ are given by Propositions~\ref{prop:BBg_L2}, \ref{prop:BB*g_Lq}, and \ref{prop:BB*g_ultracontractivity}, respectively.

\begin{theo}\label{theo:BB*g_L1_L2} 
Consider two admissible exponential weight functions $\omega$ and $\omega_1$ such that $\omega_1 \prec \omega$, and define $m := \omega^{-1}$ and  $m_1 := \omega_1^{-1}$. If $\| g \|_{\XX_0} \le \min(\eps_2, \eps_3,\eps_4)$, then for any $T \in (0,1)$, any $h_T \in L^1_{x,v}(m_1)$ and any solution $h$ to the linear equation \eqref{eq:dual_BBg} associated to $\BB^*_g$, there holds, for any $0 \le t < T$,
\beqn\label{eq:theo:BB*g_L1_L2}
\| h(t) \|_{L^2_m} \lesssim {(T-t)^{-\vartheta}} \, \| h_T \|_{L^1_{m_1}}, 
\eeqn
for some $\vartheta \in (0,\infty)$.
\end{theo}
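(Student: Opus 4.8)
The plan is to obtain \eqref{eq:theo:BB*g_L1_L2} by chaining three estimates: first an $L^1 \to L^2$ ultracontractivity step on a short time interval, second the $L^2$ gain of integrability $L^2 \to L^r$ coming from Corollary~\ref{cor:L2_Lr}, and third an iteration (De Giorgi--Nash--Moser style levels of time--cutoffs) to pass from the locally-in-space penalized $L^r$ bound back to a genuine $L^2_m$ bound at an interior time. Since $\omega_1 \prec \omega$, the loss of velocity weight in each step can be absorbed by sacrificing a bit of the exponential gain, which is the standard trick; concretely one fixes a finite ladder of admissible exponential weights $\omega_1 = \tilde\omega_0 \succ \tilde\omega_1 \succ \cdots \succ \tilde\omega_N = \omega$ with $\tilde\omega_{j-1}/\tilde\omega_j$ not growing too fast, so that the weight $m_r$ in \eqref{eq:mr} at step $j$ is controlled by $\tilde m_{j-1} = \tilde\omega_{j-1}^{-1}$ on the region $\{\langle v\rangle \le \Lambda\}$ and decays much faster than $\tilde m_j$ for $\langle v\rangle \ge \Lambda$, so a cut-off in velocity gives the required comparison.

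First I would set up the time-cutoff iteration. Fix $0 \le t < T$, let $\tau := T-t$, and choose a decreasing sequence of times $T_k := T - \tau(1 - 2^{-k})$, $k \ge 0$, so that $T_0 = T$ and $T_k \downarrow t$, together with smooth nonnegative cutoffs $\varphi_k$ with $\varphi_k \equiv 1$ on $[0,T_{k+1}]$, $\varphi_k \equiv 0$ near $T_k$, and $\|\varphi_k'\|_\infty \lesssim 2^k/\tau$. Applying Corollary~\ref{cor:L2_Lr} on the interval $(0,T_k)$ with the cutoff $\varphi_k$ yields $\|m_r h \varphi_k\|_{L^r((0,T_k)\times\OO)} \lesssim (2^k/\tau)^{1/2} T_k^{\theta(7/p-3)} \| m h \|_{L^2((0,T_k)\times\OO)}$ (using $\varphi_k(\varphi_k')_+ \lesssim 2^k/\tau$ on its support and absorbing $\|\varphi_k'\|$ in the same way). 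Combined with the monotone $L^2_m$ estimate \eqref{eq:h_Lqm} from Proposition~\ref{prop:BB*g_Lq} (which gives $\|m h(s)\|_{L^2} \le \|m h_T\|_{L^2}$, hence an $L^\infty_t L^2_{x,v}$ bound, hence an $L^2_{t,x,v}$ bound on any finite interval), this is the engine. To go from $L^r$ in space-velocity-time back to $L^2$ at a later time, I would use the elementary observation that $r>2$ and interpolate between the $L^r$ bound and the trivial $L^\infty_t L^1$ bound (obtained by applying the same machinery with $q=1$ in Proposition~\ref{prop:BB*g_Lq} to control $\|m_1 h(s)\|_{L^1}$), in the spirit of the classical Nash iteration, to get an improvement of the time-integrability exponent from $2$ to something strictly larger at each step; summing the geometric loss $2^{k}$ against the gain produces a finite constant and a finite power $\tau^{-\vartheta}$.

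More precisely, the cleanest route is: (i) prove an intermediate $L^1_{m_1} \to L^2_{m'}$ bound with a worse weight $m' = \omega'^{-1}$, $\omega_1 \prec \omega' \prec \omega$, by the single-shot argument "Corollary~\ref{cor:L2_Lr} + $L^1$ a~priori bound + velocity cutoff + interpolation" on a short interval, absorbing the $\delta$-penalization using that $m_r$ in \eqref{eq:mr} already contains a positive power of $\delta$ only through the ratio bounded above, i.e. one genuinely loses nothing near the boundary because $m_r$ carries $\delta^0$ after the choice $\theta=p/(p+4\alpha)$; (ii) then bootstrap $L^2_{m'} \to L^2_{m}$ by iterating the same estimate finitely many times along the weight ladder, each step costing a harmless fixed power of the (short) time and a fixed geometric constant. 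Rescaling/translating in time reduces everything to $T\in(0,1)$ as assumed. The final constant is constructive since every ingredient (Propositions~\ref{prop:BB*g_Lq}, \ref{prop:BB*g_ultracontractivity}, \ref{prop:BB*g_ultracontractivity_bis}, \ref{prop:BB*g_EstimLploc}, Corollary~\ref{cor:L2_Lr}) is.

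The main obstacle I anticipate is bookkeeping the interplay between the three losses that occur simultaneously at each iteration step: the loss of a power of $\langle v\rangle$ in the weight (cured by the exponential ladder $\omega_1 \prec \cdots \prec \omega$, but one must check the ratios stay admissible and the geometric series in $k$ still converges), the degeneracy near $\partial\Omega$ encoded in the $\delta$-weight of Proposition~\ref{prop:BB*g_ultracontractivity_bis} versus the $\delta^{\alpha/p}$ growth in Proposition~\ref{prop:BB*g_EstimLploc} (cured by the interpolation in Corollary~\ref{cor:L2_Lr}, which is why $m_r$ ends up $\delta$-free, so really the boundary is not an obstacle once Corollary~\ref{cor:L2_Lr} is invoked), and the short-time singularity $T^{\theta(7/p-3)}$ with a negative exponent (which forces the time-cutoff scaling $\|\varphi_k'\| \sim 2^k/\tau$ and means the final exponent $\vartheta$ is a sum of a geometric-type series — one has to verify it converges, which it does since the per-step exponent is fixed and the number of weight-ladder steps $N$ is finite). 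Modulo this combinatorial care, the proof is a fairly standard Moser iteration built on the a~priori estimates already established in this section.
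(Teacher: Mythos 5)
Your proposal heads toward the right ingredients (Corollary~\ref{cor:L2_Lr}, the $L^1$ and $L^2$ a~priori bounds from Proposition~\ref{prop:BB*g_Lq}, time cutoffs) but takes a genuinely different, heavier route than the paper. You propose a De Giorgi--Nash--Moser iteration over a shrinking sequence of time windows $T_k\downarrow t$ with geometric constants $2^k/\tau$ together with a ladder of admissible weights; the paper instead proves the estimate in a single shot. The mechanism is an interpolation identity for the weight: pick $\beta := r/(2(r-1))\in(0,1)$ and choose $m_1$ (the specific polynomial multiple of $m$ defined in the proof) so that $m = m_1^{1-\beta}\,m_r^\beta$. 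H\"older in $(t,x,v)$ then splits each of the two terms $\|mh\varphi'\|_{L^2(\UU)}$, $\|mh\sqrt{\varphi(\varphi')_+}\|_{L^2(\UU)}$ into an $L^1_{m_1}$ factor to the power $1-\beta$ and an $L^r_{m_r}$ factor to the power $\beta$. Substituting Corollary~\ref{cor:L2_Lr} into the $L^r$ factor puts the same quantity $\|mh\varphi'\|_{L^2}+\|mh\sqrt{\varphi(\varphi')_+}\|_{L^2}$ back on the right with a power $\beta<1$, which is absorbed; this yields the key estimate~\eqref{eq:theo:BB*g_L1_L2-step1} directly, with no sum over $k$. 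Then a single fixed polynomial cutoff $\varphi_0(\tau)=\tau^k(1-\tau)^k$, combined with the backward monotonicity of $\|h(t)\|_{L^2(m)}$ ($q=2$ in Proposition~\ref{prop:BB*g_Lq}) on the left and the $L^1_{m_1}$ conservation ($q=1$) on the right, produces $\|h(0)\|_{L^2(m)}\lesssim T^{-\vartheta}\|h_T\|_{L^1(m_1)}$. The Moser-style iteration you describe is essentially the pre-interpolated form of this argument and could be carried out, but it buys nothing here beyond extra bookkeeping (convergence of the geometric series in $k$, verifying the weight ratios stay admissible on every rung).

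There are also some slips worth flagging, because if carried out as written the iteration stalls. First, the opening sentence lists ``an $L^1\to L^2$ ultracontractivity step'' as the first brick in the chain, which is circular: that is the very statement being proved, and the only genuine gain-of-integrability available a~priori is the $L^2\to L^r$ estimate of Corollary~\ref{cor:L2_Lr}. Second, the weight ladder is written backwards: since $\omega_1\prec\omega$ you have $m_1\succeq m$, so the ladder should run $\omega_1=\tilde\omega_0\prec\tilde\omega_1\prec\cdots\prec\tilde\omega_N=\omega$. Third, in your plan (i)+(ii) you take $\omega_1\prec\omega'\prec\omega$ and propose to ``bootstrap $L^2_{m'}\to L^2_m$''; but $\omega'\prec\omega$ means $m'\succeq m$, hence $\|h\|_{L^2(m)}\lesssim\|h\|_{L^2(m')}$ already and step (ii) is vacuous --- (i) alone would already imply the theorem, which is exactly the one-shot structure the paper uses. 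Had you meant $\omega'\succ\omega$ the bootstrap would require a \emph{gain} of velocity weight along $S_{\BB_g^*}$, which is not among the estimates established in this section. The identity $m=m_1^{1-\beta}m_r^\beta$ that reconciles the weight lost in Corollary~\ref{cor:L2_Lr} ($m\to m_r$) against the weight allowed on the $L^1$ side ($m\to m_1$) is the piece your sketch is missing.
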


\begin{proof}[Proof of Theorem~\ref{theo:BB*g_L1_L2}]
For simplicity we only consider $t=0$, the general case being similar.
Let $p \in (2,7/3)$, $\alpha >p$, and define $m_r$, $\theta$ and $r$ by \eqref{eq:mr} in Corollary~\ref{cor:L2_Lr}. 
Let $\beta := \frac{r}{2(r-1)} \in (0,1)$ and define the function
$$
m_1 := \la v \ra^{\left[\frac{(3-\gamma)}{2}  + \left(\gamma+\frac12 \right) \theta \right]\frac{r}{r-2}  } m
$$
in such a way that $m = m_1^{1-\beta} m_r^\beta$.
Applying H\"older's inequality, we obtain
\bean
\| m h \varphi' \|_{L^2(\UU)}
\lesssim
\| (\varphi'/\varphi)^{2q} \varphi  m_1 h \|_{L^1(\UU)}^{1-\beta}
\| \varphi m_r h \|_{L^r(\UU)}^{\beta}, 
\eean
where $q := \frac{r-1}{r-2}$, and similarly
\bean
\| m h \sqrt{\varphi'_+\varphi} \|_{L^2(\UU)}
\lesssim 
\| (\varphi'/\varphi)^{q} \varphi  m_1 h \|_{L^1(\UU)}^{1-\beta}
\| \varphi m_r h \|_{L^r(\UU)}^{\beta}. 
\eean
Adding these theses two estimates, using \eqref{eq:estimate_Lp_L2} from Corollary~\ref{cor:L2_Lr} and then simplifying yields
\begin{equation}\label{eq:theo:BB*g_L1_L2-step1}
\begin{aligned}
\| m h \varphi' \|_{L^2(\UU)}
&+ \| m h \sqrt{\varphi'_+\varphi} \|_{L^2(\UU)}\\
&\lesssim T^{\frac{\theta \beta}{1-\beta} \left( \frac{7}{p} - 3\right)} \left( \| (\varphi'/\varphi)^{2q} \varphi  m_1 h \|_{L^1(\UU)} + \| (\varphi'/\varphi)^{q} \varphi  m_1 h \|_{L^1(\UU)} \right).
\end{aligned}
\end{equation}

For a nonconstant nonnegative function $\varphi_0 \in C^1([0,1])$, to be specified below, and $T \in (0,1)$, we set $\varphi (t) := \varphi_0(t/T) $. 
Writing
$$
T^{-1/2} \| \varphi'_0 \|_{L^2_t(0,1)} \| h(0) \|_{L^2(m)}
= \left( \int_0^T \varphi'(t)^2 \, \d t  \, \| h(0) \|_{L^2( m)}^2  \right)^{1/2},
$$ 
we then compute
\bean
&&\left( \int_0^T \varphi'(t)^2 \, \d t  \, \| h(0) \|_{L^2( m)}^2  \right)^{1/2} \\
&&\qquad\lesssim  \left( \int_0^T  \varphi(t)^2 \| h(t) \|_{L^2(m)}^2 \, \d t  \right)^{1/r}
\\
&&\qquad \lesssim  T^{\frac{\theta \beta}{1-\beta} \left( \frac{7}{p} - 3\right)} \left( \int_0^T (\varphi'/\varphi)^{q} \varphi  \| h(t) \|_{L^1(m_1)} \, \d t   +\int_0^T  (\varphi'/\varphi)^{2q} \varphi  \| h(t) \|_{L^1(m_1)} \, \d t  \right)
\\
&&\qquad \lesssim  T^{\frac{\theta \beta}{1-\beta} \left( \frac{7}{p} - 3\right)} \left( \int_0^T (\varphi'/\varphi)^{q} \varphi  \, \d t  +\int_0^T  (\varphi'/\varphi)^{2q} \varphi \, \d t  \right)  \| h_T \|_{L^1(m_1)}
\\
&&\qquad= T^{\frac{\theta \beta}{1-\beta} \left( \frac{7}{p} - 3\right)} \left( T^{1-q} \int_0^1 (\varphi_0'/\varphi_0)^{q} \varphi_0  \, \d \tau  + T^{1-2q} \int_0^1  (\varphi_0'/\varphi_0)^{2q} \varphi_0\, \d \tau  \right)  \| h_T \|_{L^1(m_1)},
\eean 
where we have used Proposition~\ref{prop:BB*g_Lq} with $q:=2$ in the second line, estimate~\eqref{eq:theo:BB*g_L1_L2-step1} in the third line, and Proposition~\ref{prop:BB*g_Lq} with $q:=1$ in the fourth one. In other words, we have established 
$$
\| h(0) \|_{L^2(m)} \lesssim T^{-\vartheta} \| h_T \|_{L^1(m_1)}, \quad \forall \, T \in (0,1), 
$$
with
$$
\vartheta := 2q-\frac{3}{2} - \frac{\theta \beta}{1-\beta} \left( \frac{7}{p} - 3\right)
= \frac{r}{2(r-2)} \left[ 1 - 2 \theta \left(\frac7p - 3 \right) \right] + \frac{1}{(r-2)} >0, 
$$
provided that $\varphi_0$ is such that $A_q < \infty$ and $A_{2q} < \infty$ with 
$$
A_\alpha :=  \int_0^1 (|\varphi_0'|/\varphi_0)^{\alpha} \varphi_0  \, \d \tau. 
$$
These last conditions are for instance fulfilled by $\varphi_0(\tau) := \tau^k (1-\tau)^k$ when $k \ge 2q$.
\end{proof}

We finally formulate the ultracontractivity property in terms of the semigroup $S_{\BB_g}$, which will be obtained as a direct consequence of \eqref{eq:theo:BB*g_L1_L2} and a duality argument.
 
\begin{theo}\label{theo:BBg_L2_Linfty}
Consider some exponential admissible weight functions $\omega, \omega_{\star}$, $\omega_{\star,1}$ such that  $\omega_{\star,1} \prec \omega_\star \preceq \omega$.
If $g \in \XX_0$ is such that  $\| g \|_{\XX_0} \le \min(\eps_2,\eps_3,\eps_4)$, then the non-autonomous semigroup $S_{\BB_g}$ satisfies the ultracontractivity estimate
\beqn\label{eq:ThSBg-L2Lp}
\| S_{\BB_g}(t,\tau) \|_{\BBB(L^2(\omega), L^\infty (\omega_{\star,1}))} \lesssim \frac{\Theta_{\omega, \omega_\star} (t-\tau)}{ \min( (t-\tau)^\vartheta,1)}, \quad \forall \, t > \tau \ge  0,
\eeqn
with $\nu>0$ given by Theorem~\ref{theo:BB*g_L1_L2} and where we take $\omega_\star = \omega$ if $\gamma+ s \ge 0$, so that $\Theta_{\omega,\omega}$ is exponential ; and $\omega_\star \prec \omega$ if $\gamma+s<0$, so that the $\Theta_{\omega,\omega_\star}$ is given by Proposition~\ref{prop:GalWeakDissipEstim}.
\end{theo}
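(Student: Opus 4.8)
The plan is to obtain \eqref{eq:ThSBg-L2Lp} by duality from the $L^1 \to L^2$ gain of integrability for $S_{\BB^*_g}$ established in Theorem~\ref{theo:BB*g_L1_L2}, combined with the $L^p$ boundedness and decay of $S_{\BB_g}$ from Proposition~\ref{prop:BBg_Lp}. First I would record the duality identity \eqref{eq:duality_identity}, i.e.\ that $S_{\BB^*_g}(t_0,t)$ is the adjoint of $S_{\BB_g}(t,t_0)$ in the sense $\int_\OO (S_{\BB_g}(t,\tau)f_\tau)\, h_t = \int_\OO f_\tau\, (S_{\BB^*_g}(\tau,t) h_t)$, which was already proved in the course of Proposition~\ref{prop:BBg_Lp}. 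Thus for $f_\tau \in L^2(\omega)$ and any test function $\phi \in L^1(\omega_{\star,1}^{-1})$, writing $h_t := \phi$ and running the dual equation backward from time $t$ down to $\tau$,
\[
\left| \int_\OO (S_{\BB_g}(t,\tau) f_\tau)\, \phi \right|
= \left| \int_\OO f_\tau \, (S_{\BB^*_g}(\tau,t)\phi) \right|
\le \| f_\tau \|_{L^2(\omega)} \, \| S_{\BB^*_g}(\tau,t)\phi \|_{L^2(\omega^{-1})}.
\]
Taking the supremum over $\phi$ with $\| \phi \|_{L^1(\omega_{\star,1}^{-1})} \le 1$ then identifies $\| S_{\BB_g}(t,\tau) f_\tau \|_{L^\infty(\omega_{\star,1})}$ with the dual quantity, so the whole matter reduces to bounding $\| S_{\BB^*_g}(\tau,t)\phi \|_{L^2(\omega^{-1})}$ by $\Theta_{\omega,\omega_\star}(t-\tau)\min((t-\tau)^\vartheta,1)^{-1} \| \phi \|_{L^1(\omega_{\star,1}^{-1})}$.

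For that dual estimate I would split the time interval $[\tau,t]$ at its midpoint $\tau' := (\tau+t)/2$ (or, when $t-\tau \ge 1$, split off a unit-length final piece, say at $\tau' := t-1/2$). On the second half $[\tau',t]$, which has length at most $1$, Theorem~\ref{theo:BB*g_L1_L2} gives, with $m := \omega^{-1}$ and an intermediate weight $m_1 := \omega_1^{-1}$ with $\omega_{\star,1} \prec \omega_1 \prec \omega_\star$,
\[
\| S_{\BB^*_g}(\tau',t)\phi \|_{L^2(\omega^{-1})} \lesssim (t-\tau')^{-\vartheta} \, \| \phi \|_{L^1(\omega_1^{-1})}.
\]
On the first half $[\tau,\tau']$ I would use the $L^1$-type decay for the dual semigroup: the differential inequality \eqref{eq:prop:BB*g_Lq} with $q=1$, together with Proposition~\ref{prop:GalWeakDissipEstim} (as already exploited in Step~3 of the proof of Proposition~\ref{prop:BBg_Lp}), yields $\| S_{\BB^*_g}(\tau,\tau')\psi \|_{L^1(\omega_1^{-1})} \le \Theta_{\omega_\star,\omega_1}(\tau'-\tau)\, \| \psi \|_{L^1(\omega_{\star,1}^{-1})}$ when $\gamma+s<0$, and exponential decay $e^{-\lambda(\tau'-\tau)}$ when $\gamma+s\ge 0$ (here $\omega_\star=\omega$). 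Composing the two half-intervals and using $\Theta_{\omega_\star,\omega_1}(t/2)\,(t/2)^{-\vartheta}\lesssim \Theta_{\omega,\omega_\star}(t)\,\min(t^\vartheta,1)^{-1}$ up to relabelling constants — here one uses that $\Theta_{\omega,\omega_\star}$ and $\Theta_{\omega_\star,\omega_1}$ have the same functional form and that the singular factor is harmless once $t-\tau\ge1$ — delivers \eqref{eq:ThSBg-L2Lp} for the endpoint exponents $p=2$ and (via the boundedness on $L^\infty$ from Proposition~\ref{prop:BBg_Lp}, or directly) for the target space $L^\infty$. The general case $t>\tau\ge0$ follows by time translation, since the smallness hypothesis on $g$ is translation invariant.

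The main obstacle is bookkeeping the weights: one must thread a chain of strict orderings $\omega_{\star,1} \prec \omega_1 \prec \omega_\star \preceq \omega$ so that (i) Theorem~\ref{theo:BB*g_L1_L2} applies on the short interval with the pair $(\omega,\omega_1)$, which requires $\omega_1 \prec \omega$ to generate the gain of integrability and a $\gamma/2$-type loss, and (ii) the $L^1$ decay step applies with the pair $(\omega_1,\omega_{\star,1})$. When $\gamma+s\ge0$ one simply takes $\omega_\star=\omega$ and all the $\Theta$'s are exponential, so the only care is the integrable time singularity $(t-\tau)^{-\vartheta}$ near $t=\tau$, which is absorbed into $\min((t-\tau)^\vartheta,1)^{-1}$; when $\gamma+s<0$ one must verify that the composition of two polynomial-in-$\log/t$ decay rates of the form in Proposition~\ref{prop:GalWeakDissipEstim} again produces a bound of the same shape $\Theta_{\omega,\omega_\star}$, which is exactly the semigroup-type sub-multiplicativity already implicit in the statement. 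None of the individual steps is hard given the earlier results; the work is in organizing the interpolation of weights and the two-interval splitting cleanly.
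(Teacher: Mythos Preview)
Your overall strategy --- duality via \eqref{eq:duality_identity} together with a short-time/long-time splitting --- is exactly what the paper does. But your composition is mismatched: with $\tau' = t - \tfrac12$ you apply Theorem~\ref{theo:BB*g_L1_L2} on $[\tau',t]$, producing $S_{\BB^*_g}(\tau',t)\phi \in L^2(\omega^{-1})$, and then you try to follow with an $L^1$ decay estimate on $[\tau,\tau']$. In the factorisation $S_{\BB^*_g}(\tau,t) = S_{\BB^*_g}(\tau,\tau')\circ S_{\BB^*_g}(\tau',t)$ the input to $S_{\BB^*_g}(\tau,\tau')$ is already in $L^2$, not in $L^1(\omega_{\star,1}^{-1})$, so the two pieces do not chain and you never recover a bound on $\|S_{\BB^*_g}(\tau,t)\phi\|_{L^2(\omega^{-1})}$.

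The fix is immediate: on the long interval $[\tau,\tau']$ use the $L^2$ decay of the dual instead (i.e.\ \eqref{eq:prop:BB*g_Lq} with $q=2$ together with the argument of Step~3 in the proof of Proposition~\ref{prop:BBg_Lp}), and apply Theorem~\ref{theo:BB*g_L1_L2} with the pair $(\omega_\star,\omega_{\star,1})$ rather than $(\omega,\omega_1)$, so that the chain reads $L^1(\omega_{\star,1}^{-1}) \to L^2(\omega_\star^{-1}) \to L^2(\omega^{-1})$ with rate $(t-\tau')^{-\vartheta}\,\Theta_{\omega,\omega_\star}(\tau'-\tau)$. This is precisely the dual of the paper's argument, which stays on the primal side after the small-time step: it first proves $\|S_{\BB_g}(t,\tau)\|_{\BBB(L^2(\omega_\star),L^\infty(\omega_{\star,1}))} \lesssim (t-\tau)^{-\vartheta}$ for $0<t-\tau\le1$ by direct duality with Theorem~\ref{theo:BB*g_L1_L2}, and then for $t-\tau>1$ writes $S_{\BB_g}(t,\tau)=S_{\BB_g}(t,t-1)S_{\BB_g}(t-1,\tau)$ and uses the primal $L^2(\omega)\to L^2(\omega_\star)$ decay of Proposition~\ref{prop:BBg_Lp} on the long piece. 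No intermediate weight $\omega_1$ is needed and the weight bookkeeping collapses to the single relation $\omega_{\star,1}\prec\omega_\star\preceq\omega$.
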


\begin{proof}[Proof of Theorem~\ref{theo:BBg_L2_Linfty}] 
Let $0 \le \tau < t$ and define $m_\star := \omega_\star^{-1}$ and $m_{\star,1} := \omega_{\star,1}^{-1}$.
Let $f_\tau \in L^2(\omega)$ and consider the solution $f$ to the primal forward problem \eqref{eq:BBg} associated to $\BB_g$ such that $f(\tau) = f_\tau$. 

If $0 < t-\tau \le 1$, for any $h_t \in L^1 (m_{\star,1})$, we consider the solution $h$ to the dual backward problem \eqref{eq:dual_BBg} associated to $\BB_g^*$ on the interval $(\tau,t)$ and to the final datum $h_t$. 
We then deduce
$$
\begin{aligned}
\| f(t) \|_{L^\infty(\omega_{\star,1})}
&= \sup_{ \| h_t \|_{L^1 (m_{\star,1})} \le 1} \int_{\OO}  f(t) h_t \\
&= \sup_{ \| h_t \|_{L^1(m_{\star,1})} \le 1} \int_{\OO} f_\tau h(\tau) \\
&\le \sup_{ \| h_t \|_{L^1(m_{\star,1})} \le 1} \| f_s \|_{L^2 (\omega_\star)} \| h(\tau) \|_{L^2(m_{\star})} \\
&\lesssim (t-\tau)^{-\vartheta} \| f_s \|_{L^2 (\omega_\star)} \sup_{ \| h_t \|_{L^1(m_1)} \le 1} \| h_t \|_{L^1(m_{\star,1})} ,
\end{aligned}
$$
where we have used the duality identity~\eqref{eq:duality_identity} at the second line, H\"older's inequality in the third line, and estimate~ \eqref{eq:theo:BB*g_L1_L2} of Theorem~\ref{theo:BB*g_L1_L2} in the last one. From this estimate, it follows
\beqn\label{eq:ThSBg-L2Lp-smallt}
\| S_{\BB_g}(t,\tau) \|_{\BBB(L^2(\omega_\star),L^\infty(\omega_{\star,1}))} \lesssim (t-\tau)^{-\vartheta}, \quad \forall \, 0 < t-\tau \le 1,
\eeqn
which gives \eqref{eq:ThSBg-L2Lp} for $0<t-\tau \le 1$ since $\omega_\star \preceq \omega$.

Otherwise, when  $t-\tau>1,$ we write $f(t) = S_{\BB_g}(t,\tau) f_\tau = S_{\BB_g}(t,t-1) S_{\BB_g}(t-1,\tau) f_\tau$, so that
$$
\begin{aligned}
\| f(t) \|_{L^\infty(\omega_{\star,1})}
&= \| S_{\BB_g}(t,t-1) S_{\BB_g}(t-1,\tau) f_\tau \|_{L^\infty (\omega_{\star,1})} \\
&\lesssim \| S_{\BB_g}(t-1,\tau) f_\tau \|_{L^2 (\omega_\star)} \\
&\lesssim \Theta_{\omega,\omega_\star}(t-\tau-1) \| f_\tau \|_{L^2(\omega)},
\end{aligned}
$$
where we have used \eqref{eq:ThSBg-L2Lp-smallt} in the second line and Proposition~\ref{prop:BBg_Lp} in the third one. The proof is then complete by observing that $\Theta_{\omega,\omega_\star}(t-\tau-1) \lesssim \Theta_{\omega,\omega_\star}(t-\tau)$.
\end{proof}

%%%%%%%%%%%%%%%%%%%%%%%%%%%%%%%%%%%%%%%%%%%%%
\section{Hypocoercivity property of  $\LL_g$}\label{sec-LLg-hypo}

In this section we establish the $L^2$ hypocoercivity property as announced in Step~(3) of Section~\ref{subsec:strategy}
and the straightforward consequence in a semigroup formulation.

\begin{theo}\label{theo:hypo}
There exists an inner product $\la \! \la \cdot , \cdot \ra \!\ra_{L^2_{x,v} (\mu^{-1/2})}$ on $L^2_{x,v} (\mu^{-1/2})$ such that the associated norm $\Nt \cdot \Nt_{L^2_{x,v} (\mu^{-1/2})}$ is equivalent to the usual norm $\| \cdot \|_{L^2_{x,v} (\mu^{-1/2})}$ and for which the linear operator $\LL_g$ satisfies the following coercive estimate. There is $\eps_5>0$ small enough and  some constants $\lambda, \sigma >0$ such that that for any $g \in \XX_0$ 
with $\| g \|_{\XX_0} \le \eps_5$, there holds
\begin{equation}\label{eq:theo:hypo}
\la \! \la \LL_g f , f \ra \!\ra_{L^2_{x,v} (\mu^{-1/2})} \le - \lambda \Nt  \la v \ra^{\frac{\gamma}{2}+1} f \Nt_{L^2_{x,v} (\mu^{-1/2})}^2 - \sigma \| f \|_{L^2_x H^{1,*}_v (\mu^{-1/2})}^2,
\end{equation}
for any $f \in \mathrm{Dom}(\LL_g)$ satisfying the boundary condition and the mass condition $ \la \! \la f \ra\!\ra = 0$ 
(and the additional condition $ \la \! \la f |v|^2\ra\!\ra =  \la \! \la f R\cdot v \ra\!\ra = 0$ for any $R \in \RR_\Omega$ in the pure specular case $\iota\equiv0$).
\end{theo}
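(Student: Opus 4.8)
The plan is to prove \eqref{eq:theo:hypo} by the by-now standard strategy of adding a small ``twisted'' cross term to the natural $L^2_{xv}(\mu^{-1/2})$ norm, in the spirit of \cite{MR4581432}, but taking care that the cross term respects the Maxwell reflection boundary condition. First I would record the microscopic dissipativity of $\CC$: there is $\sigma_1>0$ such that, for all $f$,
\[
\la \CC f, f\ra_{L^2_v(\mu^{-1/2})} \le -\sigma_1 \| (I-\pi)f \|_{H^{1,*}_v(\mu^{-1/2})}^2,
\]
which combined with the transport term and the boundary term (using \eqref{eq:invariantsBoundary1} and the convexity/Darroz\`es--Guiraud argument as in Step~4 of the proof of Proposition~\ref{prop:LLg_L2} to get $-\int_\Sigma (\gamma f)^2 \mu^{-1} (n_x\cdot v) \le 0$) gives
\[
\la\! \la \LL f,f\ra\!\ra_0 := \la \LL f, f \ra_{L^2_{xv}(\mu^{-1/2})} \le -\sigma_1 \| (I-\pi) f\|_{L^2_x H^{1,*}_v(\mu^{-1/2})}^2.
\]
The contribution of $Q^\perp(g,f)$ is absorbed for $\|g\|_{\XX_0}\le\eps_5$ small thanks to \eqref{eq:Qperpgf_omega}, since $\pi Q^\perp = 0$ and hence only the $(I-\pi)f$ part of $f$ is seen. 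The remaining task is to control $\pi f$, i.e.\ the five macroscopic fields $(\rho,u,\vartheta)(x)$ obtained from $\la\! \la f \ra\!\ra$-type moments; this is where the space variable and the boundary enter.

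The heart of the argument is the construction of the twisting operator. Writing $f = \pi f + f^\perp$ with $\pi f = (\rho + u\cdot v + \vartheta \tfrac{|v|^2-3}{\sqrt6})\mu$, I would build an auxiliary field $u_f(x)\in\R^3$ (and similarly fields conjugate to $\rho$ and $\vartheta$) by solving elliptic problems on $\Omega$ — typically $u_f = \nabla_x \phi_f$ with $-\Delta_x\phi_f = \rho_f$ (or a Stokes-type system for the divergence-free part), complemented with the boundary conditions adapted to $\RRR$: the Neumann condition $\partial_n\phi_f = 0$ on $\partial\Omega$ in the diffusive case $\iota\not\equiv0$, and in the pure specular case $\iota\equiv0$ the additional orthogonality to $\RR_\Omega$ must be built in, which is exactly why condition \eqref{eq:C2} is imposed there. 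Then set
\[
\la\! \la f,g\ra\!\ra_{L^2(\mu^{-1/2})} := \la f,g\ra_{L^2_{xv}(\mu^{-1/2})} + \eta\, \la\!\la \Phi f, g\ra\!\ra_{\mathrm{cross}},
\]
for a small $\eta>0$, where $\Phi$ collects the cross terms $\la u_f\cdot v\mu,\cdot\ra$, $\la\nabla_x\theta_f\cdot\ldots\ra$, etc. Equivalence with $\|\cdot\|_{L^2_{xv}(\mu^{-1/2})}$ is immediate from elliptic regularity and $\eta$ small. Differentiating along the flow of $\LL_g$, the transport term $-v\cdot\nabla_x$ acting through the cross term produces, after integration by parts in $x$ (using Poincaré--Wirtinger for the zero-average $\rho$ once \eqref{eq:C1} is used), a negative definite quadratic form in the macroscopic gradients $\nabla_x\rho,\nabla_x u,\nabla_x\vartheta$, which — again via Poincaré and the fact that $\RR_\Omega$-components are killed — dominates $\|\pi f\|^2_{L^2_x}$; all the ``error'' terms (the collision part hitting the cross term, the $x$-boundary terms generated by the integration by parts, and the $Q^\perp(g,f)$ contributions) are bounded by $C\eta\|(I-\pi)f\|^2_{H^{1,*}}$ plus $C\|g\|_{\XX_0}\|f\|^2$, hence absorbed. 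Choosing $\eta$ then $\eps_5$ small gives \eqref{eq:theo:hypo}, with the weight $\la v\ra^{\gamma/2+1}$ coming from the fact that $\|\pi f\|_{L^2_x H^{1,*}_v} \simeq \|\pi f\|_{L^2_x L^2_v}$ combined with the elementary bound $\la v\ra^{\gamma/2+1}\mu^{1/2}\in L^2_v$.

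The main obstacle is the boundary: the cross-term integration by parts in $x$ generates surface integrals on $\partial\Omega$ involving the macroscopic fields and the trace $\gamma f$, and these must be shown to be either zero (by the choice of boundary conditions for $\phi_f, u_f$) or controllable by the boundary dissipation already produced by the Darroz\`es--Guiraud term, uniformly over the two regimes $\iota\equiv0$ and $\iota\not\equiv0$; this is genuinely delicate in the specular case where the natural boundary term for $u\cdot n_x$ is only tangentially constrained, forcing the use of $\RR_\Omega$ and condition \eqref{eq:C2}. A secondary but nontrivial technical point is that $f$ lives only in a weighted $L^2_v(\mu^{-1/2})$ with a velocity-weighted dissipation norm $H^{1,*}_v$, so one must check that all moment estimates defining the macroscopic fields and all absorptions are compatible with that anisotropic norm (this is where Proposition~\ref{prop:EstimOpLandau3} and the embedding structure of $H^{1,*}_v(\mu^{-1/2})$ are used). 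Finally, I would state the semigroup consequence: integrating \eqref{eq:theo:hypo} in time and using Grönwall yields the decay of $S_{\LL_g}$ in $L^2(\mu^{-1/2})$ together with the space-time bound $\int_0^T\|S_{\LL_g}(t,\tau)f_\tau\|^2_{L^2_xH^{1,*}_v(\mu^{-1/2})}\,dt \lesssim \|f_\tau\|^2_{L^2(\mu^{-1/2})}$, which is the form used later in \eqref{eq:SLg-L2H1*}.
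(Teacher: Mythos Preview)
Your proposal is correct and follows essentially the same route as the paper, which defines the twisted inner product via the elliptic auxiliary problems of \cite{MR4581432} (a Poisson equation with Robin/Neumann boundary condition for $\varrho$ and $\theta$, and a Lam\'e-type system with symmetric gradient---not Stokes---for the momentum $j$, the latter choice being what makes the Korn inequality available in the specular case), quotes \cite[Theorem~4.1]{MR4581432} for the coercivity \eqref{eq:llaLLffrra} of $\LL$ itself, and then absorbs $Q^\perp(g,f)$ for small $g$. One small correction: the observation $\pi Q^\perp=0$ is not used to restrict the main $L^2$ term to the $(I-\pi)f$ part of $f$ (the bound \eqref{eq:Qperpgf_omega} involves the full $\|f\|_{H^{1,*}}$, which is fine once the twist already controls $\pi f$), but rather to kill the cross-term contributions $u[\theta[Q^\perp]]$, $U[j[Q^\perp]]$, $u_N[\varrho[Q^\perp]]$, leaving only the two surviving moments $M_p[Q^\perp(g,f)]$ and $M_q[Q^\perp(g,f)]$ to be bounded via \eqref{eq:Qgf-moments} and the elliptic $H^2$ estimates.
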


\begin{proof}[Proof of Theorem~\ref{theo:hypo}]
We denote by $u[S]=  u \in H^1(\Omega)$ the solution to the Poisson equation 
\begin{equation}\label{eq:elliptic}
\left\{
\begin{aligned}
- \Delta_x u &= S \quad\text{in}\quad \Omega n  \\
(2-\iota(x))\nabla_x u \cdot n_x + \iota(x) u &= 0 \quad\text{on}\quad \partial \Omega,   
\end{aligned}
\right.
\end{equation}
for a scalar source term $S : \Omega \to \R$. 
Remark that  \eqref{eq:elliptic} corresponds to the Poisson equation with homogeneous Neumann boundary condition when $\iota \equiv 0$, and we denote by $u_N[S]$ the corresponding solution in that case. 
Otherwise,~\eqref{eq:elliptic} corresponds to the Poisson equation with homogeneous Robin (or mixed) boundary condition.   We recall (see for instance \cite[Section~2.1]{MR4581432})
that defining $V_\iota := H^1(\Omega)$ if $\iota \not\equiv 0$ and $V_\iota := \{ u \in H^1(\Omega), \ \langle u \rangle = 0 \}$ if $\iota \equiv 0$, 
%for any $S \in L^2(\Omega)$ there exists a unique $u \in V_\iota$ solution to \eqref{eq:elliptic} in the variational sense,  
for any $S \in L^2(\Omega)$, with the additional assumption $\langle S \rangle = 0$ when $\iota\equiv0$,  there exists a unique $u \in V_\iota$ solution to 
\eqref{eq:elliptic} in the variational sense and this one satisfies 
\beqn\label{eq:EstimH2Poisson}
\| u \|_{H^2(\Omega)} \lesssim \| S \|_{L^2(\Omega)}. 
\eeqn

\smallskip
We similarly denote by $U[S]=U \in H^1(\Omega)$ the solution to the elliptic Lamé-type system  
\begin{equation}\label{eq:elliptic-korn}
\left\{
\begin{aligned}
- \Div_x (\nabla^s U)  = S \quad&\text{in}\quad \Omega , \\
U \cdot n_x = 0 \quad&\text{on}\quad \partial \Omega, \\  
(2-\iota) \left[\nabla^s_x U  n_x - (\nabla^s_x U : n_x \otimes n_x) n_x \right] + \iota (x) U = 0 \quad&\text{on}\quad \partial \Omega,
\end{aligned}
\right.\qquad
\end{equation}
for a vector-field source term  $S : \Omega \to \R^3$ and where $\nabla^s U$ stands for the symmetric gradient defined through
$
(\nabla^{s}_x  U)_{ij}  :=  ( \partial_{x_j}{U_i} + \partial_{x_i} U_j )/2$.   
We also define the skew-symmetric gradient of $U$ by $(\nabla^{a}_x U)_{ij} := \frac{1}{2} \left( \partial_{x_j} U_i - \partial_{x_i} U_j \right)$, next the functional spaces
$$
\VV_\iota := \left\{ U : \Omega \to \R^d \mid U \in H^1(\Omega), \; U \cdot n_x = 0 \text{ on } \partial \Omega  \right\},
$$
 if $\iota \not\equiv 0$, and 
 $$
\VV_\iota :=  \left\{ U : \Omega \to \R^d \mid W \in H^1(\Omega), \; U \cdot n_x = 0 \text{ on } \partial \Omega , \; P_\Omega \la \nabla^a U \ra = 0 \right\}, 
$$
 if $\iota \equiv 0$, where $P_\Omega$ denotes the orthogonal projection onto the set $\AA_\Omega = \{ A \in \MM^a_{3 \times 3}(\R); \;  Ax \in \RR_\Omega \}$ of all skew-symmetric matrices giving rise to a centered infinitesimal rigid displacement field preserving $\Omega$ (see \eqref{eq:RROmega} for the definition of $\RR_\Omega$).
From \cite[Theorem~2.11]{MR4581432},  we know that for any $S \in L^2(\Omega)$, with the additional assumption  $\la S , Ax \ra = 0$ for any $Ax \in \RR_\Omega$  when $\iota\equiv0$, 
    there exists a unique $U \in \VV_\iota$ solution to  \eqref{eq:elliptic-korn} in the variational sense, 
and this one satisfies 
\beqn\label{eq:EstimH2LamE}
\| U \|_{H^2(\Omega)} \lesssim \| S \|_{L^2(\Omega)}. 
\eeqn

\smallskip

 We also define the mass, momentum and energy of a function $f : \OO \to \R$  respectively by
$$
\varrho[f](x) = \int_{\R^3} f (x,v) \, dv , \quad 
j[f](x) = \int_{\R^3} v f (x,v) \, dv  
$$
and
$$
\theta[f](x) = \int_{\R^3} \frac{(|v|^2 - 3)}{\sqrt{6}} \, f (x,v) \, dv .
$$

As in \cite{MR4581432}, we define the inner product $\la \! \la \cdot , \cdot \ra \!\ra_{L^2_{x,v} (\mu^{-1/2})}$ in the following way:
$$
\begin{aligned}
\la \! \la f , g \ra \!\ra_{L^2_{x,v} (\mu^{-1/2})}
&= \la f,g \ra_{L^2_{x,v} (\mu^{-1/2})} \\
&\quad + \eta_1 \la -\nabla_x u[\theta[f]] ,  M_p [g]  \ra_{L^2_x(\Omega)}
+ \eta_1\la -\nabla_x u[\theta [g]],  M_p [f]  \ra_{L^2_x(\Omega)} \\
&\quad
+ \eta_2 \la -\nabla_x^s U[j[f]] ,  M_q [g]  \ra_{L^2_x(\Omega)}
+ \eta_2 \la -\nabla_x^s U[j[g]] ,  M_q [f]  \ra_{L^2_x(\Omega)} \\
&\quad
+ \eta_3 \la -\nabla_x u_{\mathrm{N}}[\varrho[f]] ,  m [g]  \ra_{L^2_x(\Omega)}
+ \eta_3 \la -\nabla_x u_{\mathrm{N}}[\varrho[g]] ,  j[f]  \ra_{L^2_x(\Omega)},
\end{aligned}
$$
with contants $0 \ll \eta_3 \ll \eta_2 \ll \eta_1 \ll 1$, where thus $u[\theta[f]]$ is the solution of the Poisson equation~\eqref{eq:elliptic} with source term $\theta[f]$; $U[j[f]]$ is the solution to the Lam\'e system \eqref{eq:elliptic-korn} with source term $j[f]$; $u_{\mathrm{N}}[\varrho[f]]$ is the solution to the Poisson equation~\eqref{eq:elliptic}  with homogeneous Neumann boundary condition  with source term $\varrho[f]$, and similarly for the terms depending on $g$ ; and where the moments $M_p$ and $M_q$ are given by
$$
M_p[h] = \frac{1}{\sqrt{6}}\int_{\R^3} v(|v|^2-5) h \, \d v
$$
and
$$
M_q [h] = \int_{\R^3} (v \otimes v - I) h \, \d v. 
$$
We already observe that 
$$
\| f \|_{L^2_{x,v} (\mu^{-1/2})} \lesssim \Nt f \Nt_{L^2_{x,v} (\mu^{-1/2})} \lesssim \| f \|_{L^2_{x,v} (\mu^{-1/2})}.
$$

\smallskip
Summarizing results from \cite{MR1463805,BM,GuoLandau1,Mouhot-coerc,MS}, (see also   \cite[(2.6)]{MR3625186})  we have 
$$
\la \CC f, f \ra_{L^2_{v} (\mu^{-1/2})} \le - \lambda_m \| (I-\pi) f \|_{H^{1,*}_v(\mu^{-1/2})}  , 
$$
for any $f \in H^{1,*}_v(\mu^{-1/2})$ and for some microscopic coercivity constant $\lambda_m > 0$.
Using next the arguments leading to \cite[Theorem 4.1]{MR4581432}, we  know that we can choose $\eta_i$ such that 
\beqn\label{eq:llaLLffrra}
\la \! \la \LL f , f \ra \!\ra_{L^2_{x,v} (\mu^{-1/2})} \le - \lambda \Nt  \la v \ra^{\frac{\gamma}{2}+1}  f \Nt_{L^2_{x,v} (\mu^{-1/2})}^2
 -  \sigma_0  \| \la v \ra^{\frac{\gamma}{2}} \widetilde \nabla_v f \|_{L^2_v (\mu^{-1/2})}^2
% \| f \|_{L^2_x H^{1,*}_v (\mu^{-1/2})}^2
\eeqn
for some constants $\lambda, \sigma_0 >0$.

\smallskip
We are now in position to  estimate  the term $\la \! \la \LL_g f , f \ra \!\ra_{L^2_{x,v} (\mu^{-1/2})}$. Observing that 
$$
\varrho [Q^\perp(g,f)] = j [Q^\perp(g,f)] = \theta[Q^\perp(g,f)] = 0, 
$$
we have
$$
\begin{aligned}
\la \! \la \LL_g f , f \ra \!\ra_{L^2_{x,v} (\mu^{-1/2})}
&= \la \! \la \LL f , f \ra \!\ra_{L^2_{x,v} (\mu^{-1/2})} + \la Q^\perp(g,f) , f \ra_{L^2_{x,v} (\mu^{-1/2})} 
\\
&\quad 
% + \eta_1 \la -\nabla_x u[\theta[Q(g,f)]] ,  M_p [f]  \ra_{L^2_x(\Omega)}
+ \eta_1\la -\nabla_x u[\theta [f]],  M_p [Q^\perp(g,f)]  \ra_{L^2_x(\Omega)} \\
&\quad
%+ \eta_2 \la -\nabla_x^s U[m[Q(g,f)]] ,  M_q [f]  \ra_{L^2_x(\Omega)}
+ \eta_2 \la -\nabla_x^s U[j[f]] ,  M_q [Q^\perp(g,f)]  \ra_{L^2_x(\Omega)}.
%&\qquad + \eta_3 \la -\nabla_x u_{\mathrm{N}}[\varrho[f]] ,  m[Q(g,f)]  \ra_{L^2_x(\Omega)}. 
\end{aligned}
$$
The first term is bounded by \eqref{eq:llaLLffrra}. 
For the second term in the right-hand side, we use \eqref{eq:Qperpgf_omega} to obtain
$$
\begin{aligned}
\la Q^\perp(g,f) , f \ra_{L^2_{x,v} (\mu^{-1/2})} 
&\lesssim \int_\Omega \| g \|_{L^\infty_{\omega_0}} \| f \|_{H^{1,*}_v(\mu^{-1/2})}^2 \, \d x \\
&\lesssim \| g \|_{\XX_0} \| f \|_{L^2_x H^{1,*}_v(\mu^{-1/2})}^2.
\end{aligned}
$$
We next compute
$$
\begin{aligned}
\la -\nabla_x u[\theta [f]],  M_p [Q^\perp(g,f)]  \ra_{L^2_x(\Omega)}
&\lesssim \| \nabla_x u[\theta [f]] \|_{L^2_x} \| M_p [Q^\perp(g,f)] \|_{L^2_x} \\
&\lesssim \| \theta [f] \|_{L^2_x L^2_v} \| g \|_{L^\infty_x L^\infty_{\omega_0}} \| f \|_{L^2_x H^{1,*}_v(\mu^{-1/2})} \\
&\lesssim  \| g \|_{\XX_0} \| f \|_{L^2_x H^{1,*}_v(\mu^{-1/2})}^2, 
\end{aligned}
$$
where we have used the Cauchy-Schwarz inequality in the first line, the estimates \eqref{eq:EstimH2Poisson} and \eqref{eq:Qgf-moments} in the second line, and the Cauchy-Schwarz inequality again in the last line. 
We finally estimate the fourth term by
$$
\begin{aligned}
\la -\nabla_x^s U[j[f]] ,  M_q [Q^\perp(g,f)]  \ra_{L^2_x(\Omega)}
 &\lesssim \| \nabla_x^s U[j[f]] \|_{L^2_x} \| M_q [Q^\perp(g,f)] \|_{L^2_x} \\
&\lesssim \| j [f]] \|_{L^2_x L^2_v} \| g   \|_{L^\infty_x L^\infty_{\omega_0}} \| f \|_{L^2_x H^{1,*}_v(\mu^{-1/2})} \\
&\lesssim \| g \|_{\XX_0}  \| f \|_{L^2_x H^{1,*}_v(\mu^{-1/2})}^2, 
\end{aligned}
$$
where we have used the Cauchy-Schwarz inequality in the first line, the estimate \eqref{eq:EstimH2LamE} and \eqref{eq:Qgf-moments} in the second line, and the Cauchy-Schwarz inequality again  in the third line.

Gathering the previous estimates, we obtain
$$
\la \! \la \LL_g f , f \ra \!\ra_{L^2_{x,v} (\mu^{-1/2})} \le - \lambda \Nt f \Nt_{L^2_{x,v} (\mu^{-1/2})}^2 - \left( \sigma_0 - C\| g \|_{\XX_0}  \right) \| f \|_{L^2_x H^{1,*}_v (\mu^{-1/2})}^2
$$
for some constant $C>0$. We then conclude by using the condition  $\| g \|_{\XX_0} \le \eps_5$ and choosing $\eps_5 >0$ small enough such that $C\eps_5 \le \sigma_0/2$.
\end{proof}

We conclude this section by formulating the above hypocoercivity result in a semigroup way, which will be useful in the next section.

\begin{prop}\label{prop:SGSgH*}
 For any $g \in \XX_0$, $\| g \|_{\XX_0} \le \eps_5$, any $t_0 \ge 0$ and any $f_{t_0} \in L^2(\mu^{-1/2}) \cap \CCC_\iota$, 
 the solution $f := S_{\LL_g}(\cdot, t_0)  f_{t_0}$ provided by Theorem~\ref{theo-SG-LLg} 
satisfies 
\beqn\label{eq:prop:SGSgH*}
\Nt f_{t_1} \Nt_{L^2_{xv}(\mu^{-1/2})}^2 +  \sigma \int_{t_0}^{t_1} \| f_s \|_{L^2_x H^{1,*}_v (\mu^{-1/2})}^2 ds \le \Nt f_{t_0} \Nt_{L^2_{xv}(\mu^{-1/2})}^2,  
\eeqn
for any $t_1 \in [t_0,\infty)$. 
\end{prop}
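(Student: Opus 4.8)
The plan is to derive the differential inequality \eqref{eq:prop:SGSgH*} from the hypocoercivity estimate \eqref{eq:theo:hypo} of Theorem~\ref{theo:hypo} by testing the equation against the twisted inner product, and then integrating in time. First I would recall that, by Theorem~\ref{theo-SG-LLg}, the function $f := S_{\LL_g}(\cdot,t_0) f_{t_0}$ is the unique weak solution of \eqref{eq:linear_g} lying in $C([t_0,T];L^2_\omega) \cap L^2((t_0,T)\times\Omega;H^{1,*}_v(\widetilde\omega))$, that it satisfies the Maxwell reflection boundary condition pointwise in the trace sense of Theorem~\ref{theo-Kolmogorov-trace}, and that $f_s \in \CCC_\iota$ for all $s \ge t_0$ because $f_{t_0} \in \CCC_\iota$. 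Taking the weight $\omega = \mu^{-1/2}$ (which is admissible) the solution in particular belongs to $C([t_0,T];L^2(\mu^{-1/2})) \cap L^2_{t}(L^2_xH^{1,*}_v(\mu^{-1/2}))$, so all the terms appearing below are well defined.

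The main computation is to justify that, for a.e.\ $t \ge t_0$,
\begin{equation}\label{eq:ddt_twisted}
\frac12 \frac{\d}{\dt} \Nt f_t \Nt_{L^2_{xv}(\mu^{-1/2})}^2 = \lla \LL_g f_t , f_t \rra_{L^2_{xv}(\mu^{-1/2})} .
\end{equation}
To obtain \eqref{eq:ddt_twisted} I would first establish the non-twisted energy identity: using the Green renormalized formula \eqref{eq:FPK-traceL2} (applied as in Step~4 of the proof of Theorem~\ref{theo-Kolmogorov-WellP}, i.e.\ with $\beta(s)=s^2$ suitably truncated and $\varphi=\chi_R$, then passing $R\to\infty$) one gets
\[
\frac12 \frac{\d}{\dt}\| f_t\|_{L^2(\mu^{-1/2})}^2 = \la \CC_g f_t, f_t\ra_{L^2(\mu^{-1/2})} - \frac12 \int_\Sigma (\gamma f_t)^2 \mu^{-1} (n_x\cdot v),
\]
where the boundary term is $\le 0$ by the convexity (Darroz\`es--Guiraud) argument already used, e.g., in Step~4 of the proof of Proposition~\ref{prop:LLg_L2} with the weight $\omega=\mu^{-1/2}$ (for which $K_1=K_2=1$ by the normalisation of $\MMM$, so the boundary contribution is exactly nonpositive and no truncation parameter $A$ is needed). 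The correction terms in the twisted product involve only the macroscopic quantities $\varrho[f_t],j[f_t],\theta[f_t]$ through the elliptic lifts $u[\cdot]$, $U[\cdot]$, $u_N[\cdot]$; differentiating these in time uses the transport equation $\partial_t f_t = \LL_g f_t$ tested against $\mu$, $v_i\mu$, $|v|^2\mu$, which produces local balance laws with boundary fluxes that vanish thanks to the conservation laws \eqref{eq:invariantsBoundary1}--\eqref{eq:invariantsBoundary3} and the precise boundary conditions built into \eqref{eq:elliptic} and \eqref{eq:elliptic-korn} (this is exactly the mechanism behind \cite[Theorem~4.1]{MR4581432} and the proof of Theorem~\ref{theo:hypo}). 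Hence all the extra terms are absorbed into $\lla \LL_g f_t, f_t\rra$, giving \eqref{eq:ddt_twisted}. Since $f_t \in \CCC_\iota$ for every $t$, Theorem~\ref{theo:hypo} applies to $f_t$, so the right-hand side of \eqref{eq:ddt_twisted} is $\le -\lambda \Nt \la v\ra^{\gamma/2+1} f_t\Nt^2 - \sigma\|f_t\|_{L^2_xH^{1,*}_v(\mu^{-1/2})}^2 \le -\sigma\|f_t\|_{L^2_xH^{1,*}_v(\mu^{-1/2})}^2$.

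Finally I would integrate this differential inequality over $[t_0,t_1]$, which is licit because $t\mapsto \Nt f_t\Nt^2$ is absolutely continuous (the right-hand side of \eqref{eq:ddt_twisted} is in $L^1_t$ by the $L^2_tH^{1,*}_v$ regularity of $f$), to conclude
\[
\Nt f_{t_1}\Nt_{L^2_{xv}(\mu^{-1/2})}^2 + \sigma \int_{t_0}^{t_1} \|f_s\|_{L^2_xH^{1,*}_v(\mu^{-1/2})}^2\,\ds \le \Nt f_{t_0}\Nt_{L^2_{xv}(\mu^{-1/2})}^2,
\]
which is exactly \eqref{eq:prop:SGSgH*}. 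I expect the main obstacle to be the rigorous justification of \eqref{eq:ddt_twisted}, namely handling the boundary terms when differentiating the twisted inner product: one must check that the fluxes created by testing the transport equation against the elliptic lifts cancel against each other or vanish by the boundary conditions in \eqref{eq:elliptic}--\eqref{eq:elliptic-korn}, and that the renormalised Green formula of Theorem~\ref{theo-Kolmogorov-trace} can be applied with these unbounded weights (which is why one works with truncations $\chi_R$ and uses the integrability afforded by $\gamma f \in L^2(\Gamma,\d\xi^2_{\mu^{-1/2}})\subset L^1(\Gamma,\d\xi^1)$). All of this parallels the arguments already invoked for Theorem~\ref{theo:hypo} and Theorem~\ref{theo-Kolmogorov-WellP}, so in the write-up it would mostly be a matter of citing those and pointing to the relevant cancellations rather than redoing them.
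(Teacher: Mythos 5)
The paper gives no explicit proof of Proposition~\ref{prop:SGSgH*}, treating it as a direct consequence of Theorem~\ref{theo:hypo} and the well-posedness/trace theory established in Theorems~\ref{theo-SG-LLg} and~\ref{theo-Kolmogorov-trace}; your proposal reconstructs precisely that argument, with the differential identity for the twisted norm, the Darroz\`es--Guiraud nonpositivity of the trace term, and the integration in time identified as the key steps. The only cosmetic discrepancy is a factor of $2$ (the differential inequality $\tfrac12\tfrac{\d}{\dt}\Nt f_t\Nt^2\le-\sigma\|f_t\|^2_{L^2_xH^{1,*}_v}$ yields $2\sigma$ rather than $\sigma$ in the integral estimate), which is immaterial since constants are not tracked.
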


%%%%%%%%%%%%%%%%%%%%%%%%%%%%%%%%%%%
\section{Semigroup estimates for $\LL_g$}\label{sec-proof-LLg}

Using an  extension trick, we deduce from the previous information on $\LL_g$ and $\BB_g$ a similar result on $\LL_g$ as Theorem~\ref{theo:BBg_L2_Linfty} on $\BB_g$. 
We fix hereafter
\begin{equation}\label{def:eps0}
\eps_0 := \min (\eps_1,\eps_2,\eps_3,\eps_4, \eps_5) >0.
\end{equation}

\begin{theo}\label{theo:LLg_Linfty}
Consider an admissible weight function $\omega$. If $\| g \|_{\XX_0} \le \eps_0$, the semigroup $S_{\LL_g}$ associated to  the evolution problem \eqref{eq:linear_g} satisfies the uniform  estimate, for some constant $C_0>0$, 
\beqn\label{eq:ThSLg-Linfty-bound}
 \| S_{\LL_g}(t,\tau ) f_\tau \|_{L^\infty_{\omega } } \le C_0 \| f_\tau \|_{L^\infty_{\omega}}, \quad \forall \, t \ge \tau  \ge 0, \ \forall \, f_\tau \in L^\infty_{\omega} \cap \CCC_\iota, 
%\| S_{\LL_g}(t,s) \|_{\BBB(L^\infty_{\omega},L^\infty_{\omega_{\star}} )} \lesssim \Theta_{\omega,\omega_{\star}} (t-s), \quad \forall \, t \ge s  \ge 0, 
\eeqn
and the decay estimate
\beqn\label{eq:ThSLg-Linfty-decay}
  \| S_{\LL_g}(t,\tau) f_\tau \|_{L^\infty_{\omega_{\sharp}} } \lesssim \Theta_{\omega} (t-\tau) \| f \|_{L^\infty_{\omega}}, \quad \forall \, t \ge \tau  \ge 0, \ \forall \, f_\tau \in L^\infty_{\omega} \cap \CCC_\iota, 
\eeqn
with $\omega_\sharp = \omega$ or $\omega_\sharp = \omega_0$ and $ \Theta_{\omega}$ defined in the statement of~Theorem~\ref{thm:stabNL-inhom}. 
\end{theo}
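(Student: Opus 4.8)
The plan is to use the classical enlargement/extension argument in the spirit of \cite{MR3779780,MR4265692,MR3625186}: we have the splitting $\LL_g = \AA_g + \BB_g$ with the two key ingredients already available, namely (i) $\BB_g$ enjoys decay and ultracontractivity in the weighted $L^\infty$ scale (Proposition~\ref{prop:BBg_Lp} and Theorem~\ref{theo:BBg_L2_Linfty}), and (ii) $\LL_g$ is hypocoercive, hence $S_{\LL_g}$ decays exponentially in $L^2(\mu^{-1/2})$ on the invariant subspace $\CCC_\iota$ (Theorem~\ref{theo:hypo}, Proposition~\ref{prop:SGSgH*}). The bridge between the two is the Duhamel formula written in the non-autonomous form
\begin{equation*}
S_{\LL_g}(t,\tau) = S_{\BB_g}(t,\tau) + \int_\tau^t S_{\LL_g}(t,s) \, \AA_g \, S_{\BB_g}(s,\tau) \, ds,
\end{equation*}
together with the crucial boundedness property of the remainder operator $\AA_g = \CC^-_g + M\chi_R$: by Lemma~\ref{lem:borneA0} and Proposition~\ref{prop:EstimOpLandau3} (estimate~\eqref{eq:piQgf_omega}) one has $\AA_g : L^\infty_\omega(\OO) \to L^2_{\mu^{-\vartheta}}(\OO)$ for $\vartheta \in (0,1)$ close to $1$, and also $\AA_g : L^\infty_\omega(\OO) \to L^\infty_{\omega'}(\OO)$ for any admissible $\omega' \preceq \omega$ (the zeroth-order term $M\chi_R$ is trivially bounded and compactly supported, while $\CC^-_g$ gains any amount of velocity decay). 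The exponentially weighted $L^2$ norm $L^2(\mu^{-1/2})$ is stronger than any polynomially or sub-exponentially weighted $L^2_\omega$, so the hypocoercive decay of $S_{\LL_g}$ in $L^2(\mu^{-1/2})$ transfers to decay in $L^2_\omega$ for all admissible $\omega$.

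\textbf{Key steps, in order.} First, I would record the boundedness of $\AA_g$ between the relevant spaces and fix the constants $M,R$ as in Remark~\ref{rem:choice_MR}. Second, I would establish~\eqref{eq:ThSLg-Linfty-bound}: iterating the Duhamel formula a finite number of times, each application of $\AA_g S_{\BB_g}$ either stays in $L^\infty$ (using uniform boundedness of $S_{\BB_g}$ from Proposition~\ref{prop:BBg_Lp}) or lands in the $L^2(\mu^{-1/2})$ framework where one then invokes the contractivity of $S_{\LL_g}$ on $\CCC_\iota$ (Proposition~\ref{prop:SGSgH*}) followed by one more use of ultracontractivity $S_{\BB_g}: L^2 \to L^\infty$ (Theorem~\ref{theo:BBg_L2_Linfty}) to return to $L^\infty$; the integrable time singularity $(t-s)^{-\vartheta}$, $\vartheta<1$, from ultracontractivity guarantees the time integrals converge. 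This yields the uniform bound with a constant $C_0$ independent of $t\geq\tau$. Third, for the decay estimate~\eqref{eq:ThSLg-Linfty-decay}, I would run the same Duhamel iteration but now keep track of the decay factors: $S_{\BB_g}$ contributes $\Theta_{\omega,\omega_\star}(t-\tau)$, the hypocoercive middle factor contributes $e^{-\lambda' t}$ in $L^2(\mu^{-1/2})$, and these combine — using the semigroup/splitting-of-time-interval trick (split $[\tau,t]$ into $[\tau,(\tau+t)/2]$ and $[(\tau+t)/2,t]$, decay on the first half, ultracontractive regularization on the second) — to give the rate $\Theta_\omega(t-\tau)$ with the target weight $\omega_\sharp$. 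The weakly dissipative case $\gamma+s<0$ (where $\Theta_\omega$ is only algebraic) is handled exactly as in the subexponential/polynomial framework: the loss of weight from $\omega$ to $\omega_0=\langle v\rangle^{k_0}$ in $\omega_\sharp$ is precisely what Proposition~\ref{prop:GalWeakDissipEstim} and the $\omega_\star \prec \omega$ freedom in Proposition~\ref{prop:BBg_Lp}/Theorem~\ref{theo:BBg_L2_Linfty} are designed to absorb.

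\textbf{Main obstacle.} The bookkeeping in the Duhamel iteration is the delicate part: one must choose the number of iterations so that the composition $\AA_g S_{\BB_g} \cdots \AA_g S_{\BB_g}$ maps $L^\infty_\omega$ into $L^2$ (to enter the hypocoercive regime) while controlling the accumulated time-convolution of the singular kernels $(t-s)^{-\vartheta}$ and the decay factors $\Theta$ — and crucially to do this with non-autonomous semigroups, where one cannot simply Laplace-transform. The standard device is to work on finite windows and use that convolutions of the form $\int (t-s)^{-\vartheta}\Theta(s)\,ds \lesssim \Theta(t)$ (valid since $\vartheta<1$ and $\Theta$ is either exponential or a regularly varying algebraic rate), iterating this absorption lemma. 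A second, more structural point requiring care is that the intermediate quantities $\AA_g S_{\BB_g}(s,\tau) f_\tau$ need not lie in $\CCC_\iota$, so before applying the hypocoercive contractivity one must project: since $\LL_g$ preserves $\CCC_\iota$ and $\pi\, Q^\perp \equiv 0$ kills the collisional contribution, the only subtlety is the $M\chi_R$ term, for which one argues that its $\Pi$-component is lower order and can be reabsorbed, exactly as in \cite{MR3625186}. Modulo this routine but lengthy bootstrap, the result follows.
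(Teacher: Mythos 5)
Your plan is essentially the paper's proof: the splitting $\LL_g=\AA_g+\BB_g$, an iterated two-sided Duhamel expansion (right-hand $\AA S_{\BB_g}$ factors to pass from $L^\infty_\omega$ into the $L^2$ scale, a hypocoercive middle factor, then left-hand $S_{\BB_g}\AA$ factors with ultracontractivity and the midpoint time-split to return to $L^\infty$), with the weakly dissipative weight loss to $\omega_0$ absorbed through $\Theta_{\omega,\omega_\star}$ and Proposition~\ref{prop:GalWeakDissipEstim}. The one place where you diverge is the conservation-law projection: you propose to project intermediate terms and reabsorb the $\Pi$-component coming from the $M\chi_R$ part of $\AA_g$, whereas the paper sidesteps this entirely by invoking the commutation $\Pi^\perp S_{\LL_g}=S_{\LL_g}\Pi^\perp$ of Theorem~\ref{theo-SG-LLg}, writing Duhamel directly for $S^\perp_{\LL_g}:=\Pi^\perp S_{\LL_g}$ with the notations $S_{\BB_g}^\perp:=S_{\BB_g}\Pi^\perp$ and ${}^\perp S_{\BB_g}:=\Pi^\perp S_{\BB_g}$, so the projection lands automatically in front of the hypocoercive factor and no structural analysis of $\AA_g$ is needed.
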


\begin{proof}[Proof of Theorem~\ref{theo:LLg_Linfty}]
We shall only consider the case in which the admissible weight function $\omega$ verifies $\gamma+s < 0$, the other case $\gamma+ s \ge 0$ being treated in a similar, and even simpler, way.
We split the proof into four steps.

\medskip\noindent
\textit{Step 1: Convolution and Duhamel formula.} 
For $(U(t,\tau))_{0 \le \tau \le t}$ and $(V(t,\tau))_{0 \le \tau \le t}$ two two-parameters family of operators, we define a new two-parameters family $((U \star V)(t,\tau))_{0 \le \tau \le t}$ of operators given by, for all $0 \le \tau \le t$,
$$
(U \star V)(t,\tau) := \int_\tau^t U(t,\theta) V(\theta,\tau) \, \d \theta ,
$$
and iteratively $U^{\star1} := U$, $U^{\star(k+1)} := U^{\star k} \star U$.

Recalling the splitting $\LL_g = \AA_g + \BB_g$ in \eqref{def:BBg}--\eqref{def:AAg}, using the identity $S_{\LL_g} \Pi^\perp =  \Pi^\perp S_{\LL_g}$ established in Theorem~\ref{theo-SG-LLg} and the shorthand notations $S^\perp_{\LL_g}  = \Pi^\perp S_{\LL_g}$, 
 $S^\perp_{\BB_g}  =  S_{\BB_g}\Pi^\perp $ 
and $^\perp S_{\BB_g}  = \Pi^\perp S_{\BB_g}$, Duhamel's formula gives
\begin{equation}\label{eq:duhamel_SL_SB}
S_{\LL_g}^\perp = S_{\BB_g}^\perp +  (S_{\BB_g}  \AA) \star  S_{\LL_g}^\perp 
\quad\text{and}\quad
S_{\LL_g}^\perp = {^\perp S_{\BB_g}} +  S_{\LL_g}^\perp  \star  (\AA S_{\BB_g}) .
\end{equation}
Iterating \eqref{eq:duhamel_SL_SB}  we also have  
\begin{equation}\label{eq:SL_splitting}
\begin{aligned}
S_{\LL_g}^\perp
&= S_{\BB_g}^\perp
+ \sum_{j=1}^{N-1} (S_{\BB_g} \AA)^{\star j} \star S^\perp_{\BB_g} + (S_{\BB_g} \AA)^{\star N} \star  {^\perp S_{\BB_g} } \\
&\quad
+ (S_{\BB_g} \AA)^{\star N} \star {^\perp S_{\BB_g} } \star (\AA S_{\BB_g})
+ (S_{\BB_g} \AA)^{\star N} \star S^\perp_{\LL_g} \star (\AA S_{\BB_g})^{\star 2}
\end{aligned}
\end{equation}
for any integer $N \in \N^*$.

Giving a function $\Theta : \R_+ \ni t \mapsto \Theta(t) \in \R_+$, we can define the function $\TTT_{+} \ni (t,\tau) \mapsto \Theta(t-\tau) \in \R_+$, where $\TTT_{+} := \{ (t,\tau) \in \R^2 \mid 0 \le \tau \le t \}$, and by abuse of notation we also denote this mapping by $\Theta$.
Considering two such functions $\Theta_1$ and $\Theta_2$, we observe that, for all $0 \le s \le t$, we have 
$$
(\Theta_1 \star \Theta_2)(t,\tau) 
= \int_\tau^t \Theta_1 (t-\theta) \Theta_2(\theta - \tau) \, \d \theta
= \int_0^{t-\tau} \Theta_1 (t-\tau - \theta) \Theta_2(\theta) \, \d \theta 
= (\Theta_1 * \Theta_2)(t-\tau)
$$
where $*$ stands for the usual convolution in one variable. In particular if $\Theta_1 \in L^1(\R_+)$ and $\Theta_2 \in L^\infty(\R_+)$, then one has
$(t,\tau) \mapsto (\Theta_1 \star \Theta_2)(t,\tau) \in L^\infty(\TTT_+)$ with
\begin{equation}\label{eq:convolution_Theta1_Theta2}
\| \Theta_1 \star \Theta_2 \|_{L^\infty(\TTT_+)} \lesssim \| \Theta_1 \|_{L^1(\R_+)} \| \Theta_2 \|_{L^\infty(\R_+)}.
\end{equation}
As a consequence we also obtain that, if $\Theta_1, \ldots, \Theta_n \in L^1(\R_+)$ and $\Theta_{n+1} \in L^\infty(\R_+)$, then 
$(t,\tau) \mapsto (\Theta_1 \star \cdots \star \Theta_{n+1})(t,\tau) \in L^\infty(\TTT_+)$ with
\begin{equation}\label{eq:convolution_Theta1_Thetan+1}
\| \Theta_1 \star \cdots \star \Theta_{n+1} \|_{L^\infty(\TTT_+)} \lesssim \| \Theta_1 \|_{L^1(\R_+)} \cdots \| \Theta_n \|_{L^1(\R_+)} \| \Theta_{n+1} \|_{L^\infty(\R_+)}.
\end{equation}

\medskip\noindent
\textit{Step 2. $L^2$ decay in a reference space.} 
Let $\tau \ge 0$ be fixed and $f_\tau \in L^\infty_\omega \cap \CCC_\iota$.
Denoting $f_{t,\tau} = S_{\LL_g}(t,\tau) f_\tau = S_{\LL_g}^\perp(t,\tau) f_\tau$ for all $t \ge \tau$, the hypocoercivity inequality \eqref{eq:theo:hypo} of Theorem~\ref{theo:hypo} yields
\begin{equation}\label{eq:dt_f_Nt}
\frac{\d}{\d t} \Nt f_{t,\tau} \Nt_{L^2 (\mu^{-1/2})}^2 +  \lambda \Nt  \la v \ra^{\frac{\gamma}{2}+1} f_{t,\tau} \Nt_{L^2 (\mu^{-1/2})}^2 \le 0.
\end{equation}
Assume first that $\gamma \in [-3,2)$. We then fix two admissible weight functions $\nu$ and $\bar \nu$ such that $\bar \nu  \succ \nu   \succ \mu^{-1/2}$. Using Proposition~\ref{prop:BBg_Lp}, Proposition~\ref{prop:SGSgH*} and \eqref{eq:duhamel_SL_SB} we compute 
\bean
\|S_{\LL_g}^\perp   \|_{\BBB(L^2_\nu)} 
&\le& \| S_{\BB_g}   \|_{\BBB(L^2_\nu)} +  \| S_{\BB_g}   \AA \|_{\BBB(L^2(\mu^{-1/2}),L^2_\nu)}   \star  \| S_{\LL_g}^\perp   \|_{\BBB(L^2(\nu),L^2(\mu^{-1/2}))}  \\
&\lesssim& \| S_{\BB_g}  \|_{\BBB(L^2_\nu)} +  \| S_{\BB_g}  \|_{\BBB(L^2_{\bar \nu} ,L^2_\nu)}  \| \AA \|_{\BBB(L^2(\mu^{-1/2}) , L^2_{\bar \nu} )}  \star  \| S_{\LL_g}   \|_{\BBB( L^2(\mu^{-1/2}))}  
\\
&\lesssim & 1 + \Theta_{\bar \nu, \nu} \star 1 
\lesssim  1,
\eean
where we have used Lemma~\ref{lem:borneA0} and $L^2_{\nu} \subset L^2(\mu^{-1/2}) $ in the first line as well as the bound \eqref{eq-prop:BBg_Lp1}, the time-integrable decay estimate \eqref{eq-prop:BBg_Lp2} for $\Theta_{\bar \nu,\nu}$ and the convolution rule \eqref{eq:convolution_Theta1_Theta2} in the third line. With this estimate together with \eqref{eq:dt_f_Nt}, we can apply Proposition~\ref{prop:GalWeakDissipEstim} which yields, for any $0\le \tau \le t$,
\begin{equation}\label{eq:SLg_decay_L2}
\| S_{\LL_g}^\perp(t,\tau) \|_{\BBB(L^2_\nu , L^2 (\mu^{-1/2}))} \lesssim \Theta_{\nu , \mu^{-1/2}}(t-\tau),
\end{equation}
and we observe that $\Theta_{\nu , \mu^{-1/2}} \in L^1(\R_+)$.

Otherwise if $\gamma \ge -2$, we immediately deduce from \eqref{eq:dt_f_Nt} and Gr\"onwall's lemma 
$$
\| S_{\LL_g}^\perp (t,\tau) \|_{\BBB (L^2 (\mu^{-1/2}))} \lesssim e^{-\lambda (t-\tau)} ,
$$
so that estimate \eqref{eq:SLg_decay_L2} also holds in this case with $\nu = \mu^{-1/2}$.

\medskip\noindent
\textit{Step 3: Uniform $L^\infty$ estimate.} 
Writing the splitting \eqref{eq:SL_splitting}, we estimate the norm $\| \cdot \|_{\BBB(L^\infty_{\omega} )}$ of each term separately. From Proposition~\ref{prop:BBg_Lp}, we have 
$$
  \| S_{\BB_g}^\perp \|_{\BBB(L^\infty_{\omega} )} \in L^\infty (\TTT_+)
\quad\text{and}\quad
  \| {^\perp S_{\BB_g}} \|_{\BBB(L^\infty_{\omega}  )} \in L^\infty (\TTT_+),
$$
so that in particular the first term in  \eqref{eq:SL_splitting} is adequately bounded.
We now fix an admissible exponential weight function $\varsigma \succ \omega$ and observe that, from Proposition~\ref{prop:BBg_Lp}, we have
$$
 \| S_{\BB_g} \|_{\BBB( L^\infty_{\varsigma} , L^\infty_{\omega} )} \lesssim \Theta_{\varsigma, \omega}
$$
with $\Theta_{\varsigma, \omega} \in L^1(\R_+)$, and similarly for $S_{\BB_g}^\perp$ and ${^\perp S_{\BB_g}}$. Thanks to Proposition~\ref{prop:BBg_Lp} and using Lemma~\ref{lem:borneA0}, we obtain
$$
\begin{aligned}
\| (S_{\BB_g} \AA) \star S_{\BB_g}^\perp  \|_{\BBB( L^\infty_{\omega}   )}
&\lesssim \left(  \| S_{\BB_g} \|_{\BBB( L^\infty_{\varsigma} , L^\infty_{\omega} )} \| \AA \|_{\BBB( L^\infty_{\omega} , L^\infty_{\varsigma} )} \right) \star   \| S_{\BB_g}^\perp \|_{\BBB( L^\infty_{\omega})}  \\
&\lesssim \Theta_{\varsigma,\omega} \star 1 
\lesssim 1,
\end{aligned}
$$
where we have used \eqref{eq:convolution_Theta1_Theta2} in the last inequality.
All the other terms appearing in the second term in \eqref{eq:SL_splitting} can be estimated in the same manner, and we get for all $j=2, \ldots, N-1$
$$
\begin{aligned}
\| (S_{\BB_g} \AA)^{\star j} \star S_{\BB_g}^\perp  \|_{\BBB( L^\infty_{\omega}  )} 
&\lesssim \left( \| S_{\BB_g} \|_{\BBB( L^\infty_{\varsigma} , L^\infty_{\omega} )} \| \AA \|_{\BBB( L^\infty_{\omega} , L^\infty_{\varsigma} )} \right)^{\star j} \star  \| S_{\BB_g}^\perp \|_{\BBB( L^\infty_{\omega} )}  \\
&\lesssim (\Theta_{\varsigma,\omega})^{\star j} \star 1 
\lesssim 1.
\end{aligned}
$$
The third and fourth terms in \eqref{eq:SL_splitting} can also be estimated in a similar fashion, thus we omit the details.

We now investigate the last term in \eqref{eq:SL_splitting}. We fix exponential admissible weight functions $\varsigma_\star$ and $\varsigma_{\star,1}$ such that $\omega \preceq \varsigma_{\star,1} \prec \varsigma_{\star} \prec \varsigma$. We observe that $ \Theta_{\varsigma, \varsigma_\star} \in L^1(\R_+)$ and we shall apply Theorem~\ref{theo:BBg_L2_Linfty} with the weights $(\varsigma, \varsigma_\star, \varsigma_{\star,1})$ using that $L^\infty_{\varsigma_{\star,1}} \subset L^\infty_{\omega}$.

We first claim that for $N \in \N^*$ large enough (namely such that $\vartheta+2-N \in [0,1)$, where $\vartheta$ is given by Theorem~\ref{theo:BBg_L2_Linfty}), there holds
\begin{equation}\label{eq:SBA*SB_L2_Linfty}	
\| [(S_{\BB_g} \AA)^{\star (N-2)} \star  S_{\BB_g}] \|_{\BBB (L^2_{\varsigma} , L^\infty_{\omega} )} \lesssim \Theta_{\varsigma, \varsigma_\star}.
\end{equation}
Indeed, we compute, for all $0 \le \tau \le t$,
\begin{equation}\label{eq:SBA*SB}
\begin{aligned}
\| S_{\BB_g} \AA \star S_{\BB_g} (t,\tau) \|_{\BBB (L^2_{\varsigma} , L^\infty_{\omega} )}
&\lesssim \int_\tau^{(t+\tau)/2} \| S_{\BB_g}(t,\theta) \AA S_{\BB_g}(\theta,\tau) \|_{\BBB (L^2_{\varsigma} , L^\infty_{\omega} )} \, \d \theta \\
&\quad
+ \int_{(t+\tau)/2}^t \| S_{\BB_g}(t,\theta) \AA S_{\BB_g}(\theta,\tau) \|_{\BBB (L^2_{\varsigma} , L^\infty_{\omega} )} \, \d \theta .
\end{aligned}
\end{equation}
For the first term in \eqref{eq:SBA*SB} we write
$$
\| S_{\BB_g}(t,\theta) \AA S_{\BB_g}(\theta, \tau) \|_{\BBB (L^2_{\varsigma} , L^\infty_{\omega} )} \lesssim \| S_{\BB_g}(t,\theta) \|_{\BBB(L^2_{\varsigma} , L^\infty_{\omega})} \| \AA \|_{\BBB (L^2_{\omega} , L^2_{\varsigma})}  \| S_{\BB_g}(\theta, \tau) \|_{\BBB (L^2_{\varsigma} L^2_{\omega} )} .
$$
Using respectively Theorem~\ref{theo:BBg_L2_Linfty}, Lemma~\ref{lem:borneA0} and Proposition~\ref{prop:BBg_Lp}, we deduce
$$
\begin{aligned}
&\int_\tau^{(t+\tau)/2} \| S_{\BB_g}(t,\theta) \AA S_{\BB_g}(\theta, \tau) \|_{\BBB (L^2_{\varsigma} , L^\infty_{\omega} )} \, \d \theta \\
&\qquad
\lesssim \int_\tau^{(t+\tau)/2} \frac{\Theta_{\varsigma,\varsigma_\star}(t-\theta)}{\min((t-\theta)^\vartheta,1)} \,  \Theta_{\varsigma,\omega}(\theta-\tau) \, \d \theta \\
&\qquad
\lesssim \Theta_{\varsigma,\varsigma_\star}((t-\tau)/2) \int_\tau^{(t+\tau)/2} \frac{\Theta_{\varsigma,\omega}(\theta-\tau)}{\min((t-\theta)^\vartheta,1)}  \, \d \theta
\\
&\qquad
\lesssim \frac{\Theta_{\varsigma,\varsigma_\star}((t-\tau)/2)}{\min((t-\tau)^{\vartheta-1},1)} 
\lesssim \frac{\Theta_{\varsigma,\varsigma_\star}(t-\tau)}{\min((t-\tau)^{\vartheta-1},1)} . 
\end{aligned}
$$
For the second term in \eqref{eq:SBA*SB}, we use the same estimates as above but in the reverse order. More precisely, writing
$$
\| S_{\BB_g}(t,\theta) \AA S_{\BB_g}(\theta, \tau) \|_{\BBB (L^2_{\varsigma} , L^\infty_{\omega} )} 
\lesssim \| S_{\BB_g}(t,\theta) \|_{\BBB(L^\infty_{\varsigma} , L^\infty_{\omega})} \| \AA \|_{\BBB (L^\infty_{\omega} , L^\infty_{\varsigma})}  \| S_{\BB_g}(\theta, \tau) \|_{\BBB (L^2_{\varsigma} , L^\infty_{\omega} )} ,
$$ 
we then apply Proposition~\ref{prop:BBg_Lp}, Lemma~\ref{lem:borneA0} and Theorem~\ref{theo:BBg_L2_Linfty} respectively, which gives
$$
\begin{aligned}
&\int_{(t+\tau)/2}^t \| S_{\BB_g}(t,\theta) \AA S_{\BB_g}(\theta,\tau) \|_{\BBB (L^2_{\varsigma} , L^\infty_{\omega} )} \, \d \theta \\
&\qquad
\lesssim \int_{(t+\tau)/2}^t   \Theta_{\varsigma,\omega}(t-\theta) \frac{\Theta_{\varsigma,\varsigma_\star}(\theta - \tau)}{\min((\theta-\tau)^\vartheta,1)}  \, \d \theta \\
&\qquad
\lesssim \Theta_{\varsigma,\varsigma_\star}((t-\tau)/2) \int_{(t+\tau)/2}^t  \frac{\Theta_{\varsigma, \omega}(t-\theta)}{\min((\theta-\tau)^\vartheta,1)}  \, \d \theta
\\
&\qquad
\lesssim \frac{\Theta_{\varsigma,\varsigma_\star}((t-\tau)/2)}{\min((t-\tau)^{\vartheta-1},1)} 
\lesssim \frac{\Theta_{\varsigma,\varsigma_\star}(t-\tau)}{\min((t-\tau)^{\vartheta-1},1)} . 
\end{aligned}
$$
Gathering the previous estimates, it follows
$$
\| S_{\BB_g} \AA \star S_{\BB_g} (t,s) \|_{\BBB (L^2_{\varsigma} , L^\infty_{\omega} )} \lesssim \frac{\Theta_{\varsigma,\varsigma_\star}(t-s)}{\min((t-s)^{\vartheta-1},1)}.
$$
We conclude the claim~\eqref{eq:SBA*SB_L2_Linfty} by iterating this estimate.

Coming back to the last term in \eqref{eq:SL_splitting}, we choose an admissible weight function $ \varsigma_1 \succ \varsigma$ such that $L^\infty_{\varsigma_1} \subset L^2_{\varsigma}$. Using previous estimates and Proposition~\ref{prop:BBg_Lp} again, we then compute
$$
\begin{aligned}
& \| (S_{\BB_g} \AA) \star [(S_{\BB_g} \AA)^{\star (N-2)} \star (S_{\BB_g} \AA)] \star S_{\LL_g}^\perp \star \AA S_{\BB_g}  \star \AA S_{\BB_g}\|_{\BBB( L^\infty_{\omega} )} \\
&\quad
\lesssim \left( \| S_{\BB_g} \|_{\BBB( L^\infty_{\varsigma} , L^\infty_{\omega} )} \| \AA \|_{\BBB( L^\infty_{\omega} , L^\infty_{\varsigma} )} \right) \star
\left(  \| (S_{\BB_g} \AA)^{\star (N-2)} \star S_{\BB_g} \|_{\BBB (L^2_{\varsigma} , L^\infty_{\omega} )} \| \AA \|_{\BBB(L^{2}(\mu^{-1/2}) , L^2_{\varsigma}) } \right) \\
&\quad\quad
\star \left( \| S_{\LL_g}^\perp \|_{\BBB( L^2_{\nu} , L^{2}(\mu^{-1/2}) )} \right) \star \left(  \| \AA \|_{\BBB(L^2_{\omega} , L^{2}_{\nu} )} \| S_{\BB_g} \|_{\BBB( L^2_{\varsigma} , L^2_{\omega} )} \right) 
\star \left(  \| \AA \|_{\BBB(L^\infty_{\omega} , L^\infty_{\varsigma_1} )} \| S_{\BB_g} \|_{\BBB( L^\infty_{\omega} )} \right) \\
&\quad
\lesssim \Theta_{\varsigma,\omega} \star \Theta_{\varsigma,\varsigma_\star} \star \Theta_{\nu, \mu^{-1/2}} \star \Theta_{\varsigma,\omega} \star 1 
\lesssim 1.
\end{aligned}
$$
We conclude the proof of \eqref{eq:ThSLg-Linfty-bound} by putting together the previous estimates.

\medskip\noindent
\textit{Step 4: $L^\infty$ decay.} 
We write the splitting \eqref{eq:SL_splitting} and we estimate the norm $\| \cdot \|_{\BBB(L^\infty_\omega , L^\infty_{\omega_\sharp})}$ of each term separately.

We first fix an admissible weight function $\omega_\star$ in the following way: If $\omega \preceq \mu^{-1/2}$ then we choose $\omega_\star \preceq \omega$ ; otherwise if $\omega \succ \mu^{-1/2}$ then we choose $\mu^{-1/2} \preceq \omega_\star \preceq \omega$. We next consider an admissible exponential weight function $\varsigma$ such that $\varsigma \succ \omega$ and $\Theta_{\omega, \omega_{\star}}^{-1} \Theta_{\varsigma, \omega_\star} \in L^1(\R_+)$. We finally choose an admissible exponential weight function $\nu$ as in Step~2 such that $\Theta_{\omega,\omega_\star}^{-1} \Theta_{\nu , \mu^{-1/2}} \in L^1(\R_+)$.

Thanks to Proposition~\ref{prop:BBg_Lp} we have
$$
\Theta_{\omega,\omega_\star}^{-1} \| S_{\BB_g}^\perp \|_{\BBB(L^\infty_{\omega} , L^\infty_{\omega_{\star}} )} \in L^\infty (\TTT_+)
\quad\text{and}\quad
\Theta_{\omega,\omega_\star}^{-1} \| {^\perp S_{\BB_g}} \|_{\BBB(L^\infty_{\omega} , L^\infty_{\omega_{\star}} )} \in L^\infty (\TTT_+).
$$
Using Lemma~\ref{lem:borneA0} we also deduce
$$
\begin{aligned}
& \Theta_{\omega,\omega_\star}^{-1} \| (S_{\BB_g} \AA) \star S_{\BB_g}^\perp  \|_{\BBB( L^\infty_{\omega} , L^\infty_{\omega_\star} )} \\
&\quad
\lesssim \left( \Theta_{\omega,\omega_\star}^{-1}\| S_{\BB_g} \|_{\BBB( L^\infty_{\varsigma} , L^\infty_{\omega_\star} )} \| \AA \|_{\BBB( L^\infty_{\omega_\star} , L^\infty_{\varsigma} )} \right) \star \left( \Theta_{\omega,\omega_\star}^{-1} \| S_{\BB_g}^\perp \|_{\BBB( L^\infty_{\omega} , L^\infty_{\omega_\star} )} \right) \\
&\quad
\lesssim \Theta_{\omega,\omega_\star}^{-1} \Theta_{\varsigma,\omega_\star} \star 1 
\lesssim 1 ,
\end{aligned}
$$
where we have used Proposition~\ref{prop:BBg_Lp} and \eqref{eq:convolution_Theta1_Theta2}. All the other terms appearing in the second term in \eqref{eq:SL_splitting} can be estimated in the same manner, and we get for all $j=2, \ldots, N-1$
$$
\begin{aligned}
& \Theta_{\omega,\omega_\star}^{-1} \| (S_{\BB_g} \AA)^{\star j} \star S_{\BB_g}^\perp  \|_{\BBB( L^\infty_{\omega} , L^\infty_{\omega_\star} )} \\
&\quad
\lesssim \left( \Theta_{\omega,\omega_\star}^{-1}\| S_{\BB_g} \|_{\BBB( L^\infty_{\varsigma} , L^\infty_{\omega_\star} )} \| \AA \|_{\BBB( L^\infty_{\omega_\star} , L^\infty_{\varsigma} )} \right)^{\star j} \star \left( \Theta_{\omega,\omega_\star}^{-1} \| S_{\BB_g}^\perp \|_{\BBB( L^\infty_{\omega} , L^\infty_{\omega_\star} )} \right) \\
&\quad
\lesssim (\Theta_{\omega,\omega_\star}^{-1} \Theta_{\varsigma,\omega_\star})^{\star j} \star 1 
\lesssim 1 .
\end{aligned}
$$
The third and fourth terms in \eqref{eq:SL_splitting} can also be estimated in a similar fashion, which gives
$$
\begin{aligned}
& \Theta_{\omega,\omega_\star}^{-1} \| (S_{\BB_g} \AA)^{\star N} \star {^\perp S_{\BB_g}} \|_{\BBB( L^\infty_{\omega} , L^\infty_{\omega_\star} )} \\
&\quad
\lesssim \left( \Theta_{\omega,\omega_\star}^{-1}\| S_{\BB_g} \|_{\BBB( L^\infty_{\varsigma} , L^\infty_{\omega_\star} )} \| \AA \|_{\BBB( L^\infty_{\omega_\star} , L^\infty_{\varsigma} )} \right)^{\star N} 
\star \left( \Theta_{\omega,\omega_\star}^{-1} \| {^\perp S_{\BB_g}} \|_{\BBB( L^\infty_{\omega} , L^\infty_{\omega_\star} )} \right) \\
&\quad
\lesssim (\Theta_{\omega,\omega_\star}^{-1} \Theta_{\varsigma,\omega_\star})^{\star N} \star 1 
\lesssim 1,
\end{aligned}
$$
as well as
$$
\begin{aligned}
& \Theta_{\omega,\omega_\star}^{-1} \| (S_{\BB_g} \AA)^{\star N} \star {^\perp S_{\BB_g}} \star (\AA S_{\BB_g}) \|_{\BBB( L^\infty_{\omega} , L^\infty_{\omega_\star} )} \\
&\quad
\lesssim \left( \Theta_{\omega,\omega_\star}^{-1}\| S_{\BB_g} \|_{\BBB( L^\infty_{\varsigma} , L^\infty_{\omega_\star} )} \| \AA \|_{\BBB( L^\infty_{\omega_\star} , L^\infty_{\varsigma} )} \right)^{\star N} 
\star \left( \Theta_{\omega,\omega_\star}^{-1} \| {^\perp S_{\BB_g}} \|_{\BBB( L^\infty_{\varsigma} , L^\infty_{\omega_\star} )} \right) \\
&\qquad
\star \left( \Theta_{\omega,\omega_\star}^{-1} \| \AA \|_{\BBB( L^\infty_{\omega_\star} , L^\infty_{\varsigma} )} \| S_{\BB_g} \|_{\BBB( L^\infty_{\omega} , L^\infty_{\omega_\star} )} \right) \\
&\quad
\lesssim (\Theta_{\omega,\omega_\star}^{-1} \Theta_{\varsigma,\omega_\star})^{\star N} \star \Theta_{\omega,\omega_\star}^{-1} \Theta_{\varsigma,\omega_\star} \star 1 
\lesssim 1.
\end{aligned}
$$

We now investigate the last term in \eqref{eq:SL_splitting}. We fix exponential admissible weight functions $\varsigma_\star$ and $\varsigma_{\star,1}$ such that $\varsigma_{\star,1} \prec \varsigma_{\star} \prec \varsigma$, $\omega_\star \preceq \varsigma_{\star,1}$ and $\Theta^{-1}_{\omega, \omega_\star} \Theta_{\varsigma, \varsigma_\star} \in L^1(\R_+)$, in such a way that we shall be able to apply Theorem~\ref{theo:BBg_L2_Linfty} below with the weights $(\varsigma, \varsigma_\star, \varsigma_{\star,1})$ and observing that $L^\infty_{\varsigma_{\star,1}} \subset L^\infty_{\omega_\star}$.
Arguing exactly as for obtaining \eqref{eq:SBA*SB_L2_Linfty} in Step~3, we deduce
\begin{equation}\label{eq:SBA*SB_L2_Linfty_bis}	
\| (S_{\BB_g} \AA)^{\star (N-2)} \star  S_{\BB_g} \|_{\BBB (L^2_{\varsigma} , L^\infty_{\omega_\star} )} \lesssim \Theta_{\varsigma, \varsigma_\star}.
\end{equation}
Coming back to the last term in \eqref{eq:SL_splitting}, we choose an admissible weight function $ \varsigma_1 \succ \varsigma$ such that $L^\infty_{\varsigma_1} \subset L^2_{\varsigma}$. We then compute, using previous estimates and Proposition~\ref{prop:BBg_Lp} again,
$$
\begin{aligned}
& \Theta_{\omega,\omega_\star}^{-1} \| (S_{\BB_g} \AA) \star [(S_{\BB_g} \AA)^{\star (N-2)} \star (S_{\BB_g} \AA)] \star S_{\LL_g}^\perp \star \AA S_{\BB_g}  \star \AA S_{\BB_g}\|_{\BBB( L^\infty_{\omega} , L^\infty_{\omega_\star} )} \\
&\quad
\lesssim \left( \Theta_{\omega,\omega_\star}^{-1}\| S_{\BB_g} \|_{\BBB( L^\infty_{\varsigma} , L^\infty_{\omega_\star} )} \| \AA \|_{\BBB( L^\infty_{\omega_\star} , L^\infty_{\varsigma} )} \right) \star
\left( \Theta_{\omega,\omega_\star}^{-1} \| (S_{\BB_g} \AA)^{\star (N-2)} \star S_{\BB_g} \|_{\BBB (L^2_{\varsigma} , L^\infty_{\omega_\star} )} \| \AA \|_{\BBB(L^{2}(\mu^{-1/2}) , L^2_{\varsigma}) } \right) \\
&\quad\quad
\star \left( \Theta_{\omega,\omega_\star}^{-1} \| S_{\LL_g}^\perp \|_{\BBB( L^2_{\nu} , L^{2}(\mu^{-1/2}) )} \right) \star \left( \Theta_{\omega,\omega_\star}^{-1} \| \AA \|_{\BBB(L^2_{\omega_{\star}} , L^{2}_{\nu} )} \| S_{\BB_g} \|_{\BBB( L^2_{\varsigma} , L^2_{\omega_{\star}} )} \right) \\
&\quad\quad
\star \left( \Theta_{\omega,\omega_\star}^{-1} \| \AA \|_{\BBB(L^\infty_{\omega_{\star}} , L^\infty_{\varsigma_1} )} \| S_{\BB_g} \|_{\BBB( L^\infty_{\omega} , L^\infty_{\omega_{\star}} )} \right) \\
&\quad
\lesssim \Theta_{\omega,\omega_\star}^{-1} \Theta_{\varsigma,\omega_\star} \star \Theta_{\omega,\omega_\star}^{-1} \Theta_{\varsigma,\varsigma_\star} \star \Theta_{\omega,\omega_\star}^{-1} \Theta_{\nu,\mu^{-1/2}} \star \Theta_{\omega,\omega_\star}^{-1} \Theta_{\varsigma,\omega_\star} \star 1 
\lesssim 1.
\end{aligned}
$$
We conclude the proof of \eqref{eq:ThSLg-Linfty-decay} by gathering the previous estimates.
\end{proof}

%%%%%%%%%%%%%%%%%%%%%%%%
\section{Proof of the main results}\label{sec-proofMainTheo}

\subsection{Proof of Theorem~\ref{thm:stabNL-inhom}}\label{sec:theo1}

We first define the ball
$$
\BBB_0 := \{ g \in \XX_0; \, \| g \|_{\XX_0} \le \eps_0 \},  
$$
where we recall that $\XX_0 = L^\infty_{\omega_0} ( (0,\infty) \times \OO)$ with $\omega_0 = \la v \ra^{k_0}$ defined in \eqref{eq:defomega0} and $\eps_0>0$ is defined in \eqref{def:eps0}. We next define 
$$
\eps^* :=  \frac{\eps_0}{C_0},  
$$
where $C_0>0$ is defined in the statement of Theorem~\ref{theo:LLg_Linfty}.  
We fix $f_0 \in L^\infty_\omega$ such that $\| f_0 \|_{L^\infty_\omega} \le \eps^*$ and we define 
$$
\Phi : \BBB_0 \to \BBB_0, \quad g \mapsto \Phi(g) = G  := S_{\LL_{g}} f_0. 
$$
It is worth emphasizing that the fact that $S_{\LL_{g}} f_0 \in \BBB_0$ is a direct consequence of \eqref{eq:ThSLg-Linfty-bound} in Theorem~\ref{theo:LLg_Linfty} and of the choice of $\eps_0$ and $\eps^*$. 
We endow $\BBB_0$ with the weak-$*$  topology of $\XX_0$, so that $\BBB_0$ is clearly compact, and we claim that $\Phi$ is continuous for this topology.

Indeed, consider a sequence $(g_n)$ in $\BBB_0$ such that  $g_n \wto g$ weakly-$*$ in $\XX_0$ as $n \to \infty$ and define $G_n := S_{\LL_{g_n}} f_0$. 
From \eqref{eq:ThSLg-Linfty-bound} in Theorem~\ref{theo:LLg_Linfty}, we have 
$$
\| G_n (t,\cdot) \|_{L^\infty_{\omega_0}}
\le \| G_n (t,\cdot) \|_{L^\infty_{\omega}}
\le C_0  \| f_0 \|_{L^\infty_\omega}, \quad \forall \, t \ge 0, 
$$
so that $G_n \in \BBB_0$, and thus there exist a subsequence $(G_{n'})$ and $G \in \BBB_0$ such that $G_{n'} \wto G$ weakly-$*$ in $\XX_0$ as $n' \to \infty$. 
On the one hand, from the dissipativity estimate \eqref{eq:SLg-L2H1*} established in Theorem~\ref{theo-SG-LLg}, we know that $(\nabla_v G_{n'})$ is bounded in $L^2((0,T) \times \OO)$ for all $T>0$. 

\smallskip
On the other hand, we observe that 
$$
\partial_t G_{n'} + v \cdot \nabla_x  G_{n'} = S_{n'} :=    Q(\mu,G_{n'}) +    Q(G_{n'},\mu) +    Q^\perp(g_{n'},G_{n'}), 
$$
where $S_{n'} = \partial_{v_i v_j}^2 A_{n',ij} + \partial_{v_i} B_{n',i}  +  C_{n'}$ from the  expression \eqref{eq:oplandau3} of $Q$, with   $(\la v \ra^3 A_{n',ij})$, $(\la v \ra^3 B_{n',i})$ and $(\la v \ra^3 C_{n'})$ bounded in $L^2((0,\infty) \times \OO)$. 
For any truncated (in $t,x$) version $(\bar G_{n'})$ of $(G_{n'})$, we may thus apply \cite[Theorem~1.3]{MR1949176}, which gives that $(\bar G_{n'})$ is bounded in $H^{1/4} ( \R_t \times \R^3_x \times \R^3_v)$. Therefore we deduce that 
$$
(G_{n'} ) \hbox{ is relatively compact in } L^2((0,T) \times \OO_R). 
$$
for any $T,R > 0$, where $\OO_R := \{ (x,v) \in \OO ; \, d(x,\Omega^c) > 1/R, \, |v| < R \}$.

\smallskip
From the already known weak-$*$ convergence in $\XX_0$ and the decay estimate \eqref{eq:ThSLg-Linfty-decay}, we have established (for instance) that 
$$
G_{n'}  \to G \  \hbox{ strongly in } \ L^2((0,\infty) \times \OO), 
$$
as $n'\to \infty$. 
Using the formulation \eqref{eq:oplandau3} of the Landau operator $Q$ and the above convergence, we have 
\bean
\int_0^T \!\! \int_\OO Q(g_{n'},G_{n'}) \varphi 
&=& 
\int_0^T \!\! \int_\OO \left\{ 
 (a_{ij}*g_{n'} ) G_{n'} \partial^2_{v_iv_j} \varphi + 2   (b_i * g_{n'}) G_{n'}  \partial^2_{v_i} \varphi \right\}
\\
&\to&
\int_0^T \!\! \int_\OO Q(g,G) \varphi, 
\eean
as $n' \to \infty$ for any $\varphi \in \DD((0,T) \times \OO)$. From the very definition of $Q^\perp$, we deduce that 
$$
Q^\perp(g_{n'},G_{n'}) \ \wto \ Q^\perp (g,G) \quad\hbox{in}\ \DD'((0,T) \times \OO)
$$
as $n' \to \infty$.  Thanks to the above convergence and Proposition~\ref{prop:Kolmogorov-stability}, we may thus pass  to the limit in the evolution PDE  
$$
\partial_t G_{n'} = \LL_{g_{n'}} G_{n'} + Q^\perp(g_{n'},G_{n'}), \quad \gamma_- G_{n'} = \RRR \gamma_+ G_{n'}  , \quad
(G_{n'})_{| t=0} = f_0, 
$$
associated to the semigroup definition of $G_{n'}$,   that is, $G_{n'}$ is a weak solution to the above equation in the sense of~Theorem~\ref{theo-SG-LLg}. We obtain that $G$ is weak a solution to
$$
\partial_t G = \LL_{g} G+ Q^\perp(g,G), \quad \gamma_- G  = \RRR \gamma_+ G, \quad  G_{| t=0} = f_0, 
$$
in the sense of Theorem~\ref{theo-SG-LLg}, with moreover 
$$
\| G \|_{\XX_0} \le \eps_0, \quad \int_0^T \!\! \int_\OO |\nabla_v  G|^2 \,  \d v \, \d x \, \d t \le  C(T) \| f_0 \|^2_{L^\infty(\omega)}, 
$$
and by uniqueness in Theorem~\ref{theo-SG-LLg}, we get $G = S_{\LL_{g}} f_0$. By the uniqueness of the possible limit, we have thus established that $\Phi$ is continuous. 

Using now the Schauder-Tychonoff fixed-point theorem, the mapping $\Phi$ has at least one fixed point, that is  there exists $f \in \BBB_0$ such that $f = \Phi(f)$. This function $f$ is a global weak solution to the Landau equation~\eqref{eq:Landau_perturb} in the sense of Theorem~\ref{theo-SG-LLg}, which concludes the proof of Theorem~\ref{thm:stabNL-inhom}.  \qed

\subsection{Proof of Theorem~\ref{theo:iftheo}} \label{sec-proof-iftheorem}

We consider  now a global weak solution $F$ to the Landau equation \eqref{eq:Landau_F}--\eqref{eq:reflect_F}, in the sense of Theorem~\ref{theo-SG-LLg}, which satisfies
\eqref{eq:intro-iftheo1} and \eqref{eq:intro-iftheo2}, for some admissible weight function $\omega_\infty$.
By interpolation, for any $p \in (1,\infty)$, we then have 
$$
\| F_t - \mu \|_{L^p_{\omega_p}(\OO)} \le \eps_p (t) \to 0, \ \hbox{as} \ t \to \infty,
$$
for any admissible weight function $\omega_p$ verifying $\omega_0 \prec \omega_p \prec \omega_\infty$.
We define $f := F - \mu$ which satisfies 
$$
\partial_t f = \mathbf{B}_F f + Q(f,\mu), \quad \mathbf{B}_F f := - v \cdot \nabla_x f + Q(F,f). 
$$
We observe that because of \eqref{eq:intro-iftheo1} and \eqref{eq:intro-iftheo2}, there exists $\EE_0,\HH_0 \in (0,\infty)$ such that 
$$
\rho_F(t,x) \ge \rho_0, \quad \EE_F(t,x) \le E_0, \quad \HH_F (t,x) \le H_0, 
$$
with 
$$
\rho_F := \int_{\R^3} F \, \d v, \quad \EE_F := \int_{\R^3} F\, |v|^2 \, \d v, \quad \HH_F := \int_{\R^3} F \log F \, \d v .
$$
From \cite[Proposition~4]{MR1737547} and \cite[Proposition~2.1]{MR3375485}, there exists then $a_0 = a_0(\rho_0,\EE_0, \HH_0) > 0$ such that 
\beqn\label{eq:aij*F-inIFth}
(a_{ij}* F) \xi_i \xi_j \ge a_0 \langle v \rangle^{\gamma}  |\xi|^2, \quad \forall \, \xi \in \R^3. 
\eeqn
Applying to the dual semigroup $S^*_{\mathbf{B}_F}$ associated to the operator 
$$
\mathbf{B}^*_F h= - v \cdot \nabla_x h +  (a_{ij}* F) \partial_{v_iv_j} h  + 2 (b_{i}* F) \partial_{v_i} h
$$
and the dual reflection condition \eqref{eq:dual_BBg}--\eqref{eq:reflection*}
the same job as done in Proposition~\ref{prop:BB*g_Lq}, we may first establish that 
\beqn\label{eq:S*bfF-Lqm}
\| S^*_{\mathbf{B}_F} (t,s) h  \|_{L^q(m')} \le C_1 e^{C_2 (t-s)} 
\|  h  \|_{L^q(m')}, \quad \forall \, t \ge s \ge 0,
\eeqn
for any $m' := (\omega')^{-1}$ associated to an admissible weight function $\omega'$ such that $s' + \gamma \le 0$, some constants $C_i$ and any $h \in L^q(m')$, $q=1,2$.  The key observation is 
that, with obvious notations taken from the proof of  Proposition~\ref{prop:BB*g_Lq}, 
\bean
\int_\OO (\mathbf{B}^*_F h) h |h|^{q-2} \widetilde m'^q
&=&
 -\frac{4(q-1)}{q^2} \int_{\OO}  (a_{ij} * F)\partial_{v_i} H \partial_{v_j} H  
+\int_{\OO}   \varpi^{(\CC^+_{F-\mu})^*}_{\tilde m',q}  |h|^q \widetilde m'^q,
\\
&&
+ \frac1q \int_\OO |h|^q \, v \cdot \nabla_x (\widetilde m'^q) 
+ \int_{\Sigma} |\gamma h|^q \widetilde m'^q (n_x \cdot v),
\eean
where the first term is nonpositive, the second term is bounded by $\| h \|_{L^q(\tilde m')}^q$ (and it is here that we use the condition $s'+\gamma \le 0$), and the two last terms are identical as
those considered during the proof of  Proposition~\ref{prop:BB*g_Lq}. We immediately deduce \eqref{eq:S*bfF-Lqm} by writing the associated evolution equation and using Gr\"onwall's Lemma. 
By duality, we get a similar conclusion as established  in Proposition~\ref{prop:BBg_Lp}, namely 
\beqn\label{eq:SbfF-Lqm}
\| S_{\mathbf{B}_F} (t,s) f  \|_{L^p(\omega')} \le C_1 e^{C_2 (t-s)} 
\|  f  \|_{L^p(\omega')}, \quad \forall \, t \ge s \ge 0,
\eeqn
for $p=2,\infty$.
It is worth emphasizing here that the semigroups  $S^*_{\mathbf{B}_F}$ and  $S_{\mathbf{B}_F}$ are well defined thanks to Theorem~\ref{theo-Kolmogorov-WellP} as used in Theorem~\ref{theo-SG-LLg}. 
 
\smallskip
We next observe that in the present situation exactly the  same conclusion as the one of Proposition~\ref{prop:BB*g_ultracontractivity} holds, namely any solution $h$ to the backward dual problem \eqref{eq:dual_BBg} associated to $\mathbf{B}^*_F$ instead of $\BB^*_g$ satisfies \eqref{eq:prop:BB*g_ultracontractivity}. We just need to repeat the proof  of Proposition~\ref{prop:BB*g_ultracontractivity}  using in a crucial way the estimate \eqref{eq:aij*F-inIFth}. 
 We may then repeat the proofs  (with no changes!) of Section~\ref{sec-SBBg-DeGiorgiNashMoser}, of Theorem~\ref{theo:BB*g_L1_L2} (with the help of \eqref{eq:S*bfF-Lqm})
and Theorem~\ref{theo:BBg_L2_Linfty} in order to get 
\beqn\label{eq:SbfF-L2inftym}
\| S_{\mathbf{B}_F} (t,s) f  \|_{L^\infty(\omega'_\infty)} \le C_1   \frac{e^{C_2(t-s)} }{ (t-s)^{\eta}}
\|  f  \|_{L^2(\omega_2')}, \quad \forall \, t \ge s \ge 0,
\eeqn
for any admissible weight $\omega_\infty' \prec \omega_2'$.

Interpolating \eqref{eq:SbfF-Lqm} and \eqref{eq:SbfF-L2inftym}, there in particular exists $p \in (2,\infty)$   such that  
$$
\| S_{\mathbf{B}_F}(t,s) h \|_{L^\infty(\omega'_\infty)} \le C_1 \frac{e^{C_2(t-s)} }{ (t-s)^{1/2}}\|  h \|_{L^p(\omega'_p)}, \quad \forall \, t > s \ge 0.
$$
For $t \ge 1$, the Duhamel formula writes 
$$
f(t) = S_{\mathbf{B}_F}(t,t-1) f_{t-1} + \int_{t-1}^t S_{\mathbf{B}_F}(t,\tau)  Q(f_\tau ,\mu) \, \d\tau =: f^1(t) + f^2(t).
$$
On the one hand, choosing $\omega'_\infty \succeq \omega $ and $\omega'_p \preceq \omega_p$ we have 
\bean
\| f^1 (t) \|_{L^\infty(\omega)}  &=&\| S_{\mathbf{B}_F}(t,t-1) f_{t-1} \|_{L^\infty(\omega)} 
\\
&\le& {C_1 e^{C_2} }\| f_{t-1} \|_{L^p(\omega_p)} \le C_1 e^{C_2} \eps_p(t-1).
\eean
On the other hand, we have  
\bean
\| f^2 (t)\|_{L^\infty(\omega)}  &\le & \int_{t-1}^t \| S_{\mathbf{B}_F}(t,\tau)  Q(f_\tau ,\mu)  \|_{L^\infty(\omega)}  \, \d \tau    
\\
&\le&  \int_{t-1}^t  C_1 \frac{e^{C_2(t-\tau)} }{ (t-\tau)^{1/2}}  \| Q(f_\tau ,\mu)  \|_{L^p(\omega_p')}  \, \d \tau   
\\
&\lesssim&    \sup_{(t-1,t)} \eps_p(\tau)  ,
\eean
where we have used Lemma~\ref{lem:borneA0} in order to bound $\| Q(f_\tau ,\mu)  \|_{L^p(\omega_p')}$.
Both estimate together implies that there exists $T > 0$ such that 
$$
\sup_{t \ge T} \| f_t \|_{L^\infty(\omega)}  \le \eps_0, 
$$
where $\eps_0 > 0$ is given by Theorem~\ref{thm:stabNL-inhom}. We may thus apply Theorem~\ref{thm:stabNL-inhom} (or repeat the proof of) and we deduce that the accurate rate of convergence 
\eqref{eq:decay-g-inhom-bis} also holds for the solution $F$. This completes the proof of Theorem~\ref{theo:iftheo}. \qed

%%%%%%%%%%%%%%%
\bigskip
%\bibliographystyle{acm}
%\bibliography{bib-Landau}

\end{document}